\numberwithin{equation}{section}
\newtheorem{theorem}{Theorem}[section]
\newtheorem{proposition}[theorem]{Proposition}
\newtheorem{corollary}[theorem]{Corollary}
\newtheorem{lemma}[theorem]{Lemma}
\theoremstyle{definition}
\newtheorem{remark}[theorem]{Remark}
\newtheorem{example}[theorem]{Example}
\newtheorem{definition}[theorem]{Definition}
\newtheorem{question}[theorem]{Question}
\newcommand{\Aut}{\operatorname{Aut}}
\newcommand{\Mat}{\operatorname{Mat}}
\newcommand{\deff}{\operatorname{Def}}
\newcommand{\Z}{{\mathbb Z}}
\newcommand{\Tr}{\operatorname{Tr}}
\renewcommand{\eqref}[1]{{\rm (\ref{#1})}}
\begin{document}

\title[Quivers with potentials~I]
{Quivers with potentials and their representations~I: Mutations}

\author{Harm Derksen}
\address{\noindent Department of Mathematics, University of Michigan,
Ann Arbor, MI 48109, USA} \email{hderksen@umich.edu}

\author{Jerzy Weyman}
\address{\noindent Department of Mathematics, Northeastern University,
 Boston, MA 02115}
\email{j.weyman@neu.edu}

\author{Andrei Zelevinsky}
\address{\noindent Department of Mathematics, Northeastern University,
 Boston, MA 02115}
\email{andrei@neu.edu}

\subjclass[2000] {Primary 16G10, 
Secondary 16G20, 
16S38. 
}

\begin{abstract}
We study quivers with relations given by non-commutative analogs
of Jacobian ideals in the complete path algebra. This framework
allows us to give a representation-theoretic interpretation of
quiver mutations at arbitrary vertices. This gives a far-reaching
generalization of Bernstein-Gelfand-Ponomarev reflection functors.
The motivations for this work come from several sources:
superpotentials in physics, Calabi-Yau algebras, cluster algebras.
\end{abstract}

\date{April 18, 2007; revised July 10, 2007 and March 12, 2008}

 \thanks{Research of H.~D. supported
by the NSF grant DMS-0349019.
Research of J.~W. supported
by the NSF grant DMS-0600229.
Research of A.~Z. supported by the
NSF grant DMS-0500534 and by a Humboldt Research Award.}

\maketitle
\tableofcontents

\section{Introduction}

The main objects of study in this paper are \emph{quivers with
potentials} (QPs for short). Roughly speaking, a QP is a
quiver~$Q$ together with an element $S$ of the path algebra of~$Q$
such that~$S$ is a linear combination of cyclic paths. We
associate to~$S$ the two-sided ideal~$J(S)$ in the path algebra
generated by the (noncommutative) partial derivatives of $S$ with
respect to the arrows of $Q$. We refer to $J(S)$ as the
\emph{Jacobian ideal}, and to the quotient of the path algebra
modulo $J(S)$ as the \emph{Jacobian algebra}. They appeared in
physicists' work on \emph{superpotentials} in the context of the
Seiberg duality in mirror symmetry (see e.g., \cite{dm,bd,b}).
Since in some of their work the superpotentials are required to
satisfy some form of Serre duality, we prefer not to use this
terminology, and just refer to~$S$ as a \emph{potential}; another
reason for this is that we are working with the completed path
algebra, so our potentials are possibly infinite linear
combinations of cyclic paths.
The Jacobian algebras also play an important role in the recent
work on Calabi-Yau algebras \cite{boc,gi,ir,kr}.

In this paper we introduce and study \emph{mutations} for QPs and their
(decorated) representations.
In the context of Calabi-Yau algebras, the mutations were
discussed in \cite{ir} but our approach is much more elementary
and down-to-earth.
Namely, we develop the setup that directly extends to QPs the
Bernstein-Gelfand-Ponomarev reflection functors
\cite{bgp} and their ``decorated" version \cite{mrz}.

The original motivation for our study comes from the theory of
cluster algebras introduced and studied in a series of papers
\cite{ca1,ca2,ca3,ca4}.
In this paper, we deal only with the underlying combinatorics of
this theory embodied in skew-symmetrizable integer matrices and
their mutations.
Furthermore, we restrict our attention to \emph{skew-symmetric}
integer matrices.
Such matrices can be encoded by quivers without loops and oriented $2$-cycles.
Namely, a skew-symmetric integer $n \times n$ matrix $B = (b_{i,j})$
corresponds to a quiver $Q(B)$ with vertices $1, \dots, n$,
and $b_{i,j}$ arrows from $j$ to $i$ whenever $b_{i,j} > 0$.
For every vertex~$k$, the \emph{mutation}
at~$k$ transforms~$B$ into another skew-symmetric integer
$n \times n$ matrix $\mu_k(B) = \overline B = (\overline b_{i,j})$.
The formula for $\overline b_{i,j}$ is given below in
\eqref{eq:B-mutation}.
It is well-known (see Proposition~\ref{pr:A-B-mutation} below)
that the quiver $Q(\overline B)$ can be obtained from
$Q(B)$ by the following three-step procedure:
\begin{enumerate}
\item[{\bf Step 1.}] For every incoming arrow $a:j \to k$ and every outgoing
arrow $b:k \to i$, create a ``composite" arrow $[ba]:j \to i$;
thus, whenever $b_{i,k}, b_{k,j} > 0$, we create $b_{i,k} b_{k,j}$
new arrows from $j$ to $i$.
\item[{\bf Step 2.}] Reverse all arrows at~$k$; that is, replace each arrow
$a:j \to k$ with $a^\star: k \to j$, and $b:k \to i$ with
$b^\star: i \to k$.
\item[{\bf Step 3.}] Remove any maximal disjoint collection
of oriented $2$-cycles (that can appear as a result
of creating new arrows in Step~$1$).
\end{enumerate}

In the case where~$k$ is a source or a sink of $Q(B)$, the first
and last steps of the above procedure are not applicable, so
$Q(\overline B)$ is obtained from $Q(B)$ by just reversing all the
arrows at~$k$.
In this situation, J.Bernstein, I.~Gelfand, and V.~Ponomarev
\cite{bgp} introduced the \emph{reflection functor} at~$k$ sending
representations of a quiver $Q(B)$ (without relations) into
representations of $Q(\overline B)$.
A modification of these functors acting on  \emph{decorated
representations} was introduced in \cite{mrz} to establish a link
between cluster algebras and quiver representations (the
definition of decorated representations for general QPs
is given below in Section~\ref{sec:reps}).

The elementary approach of \cite{mrz} has not been further pursued until now,
giving way to a more sophisticated approach via cluster categories and
cluster-tilted algebras developed in \cite{BMRRT,BMR1,BMR2,BMR3,CC,CCS,CK1,CK2}
and many other publications.
Most of the results in these papers are for the quivers obtained by mutations from hereditary algebras
(i.e., quivers without oriented cycles and without relations).
In this paper we return to the more elementary point of view of
\cite{mrz} and propose an alternative approach (which is in fact more
general, since we do not impose any restrictions on quivers in question).
In this approach, the mutations at arbitrary vertices (not just sources or sinks)
are defined for QPs and their decorated representations.
The construction for QPs is carried out in
Section~\ref{sec:mutations}, and for their representations in
Section~\ref{sec:reps}.
It turns out to be rather delicate and requires a
lot of technical preparation.
The first two steps of the above mutation procedure extend to QPs
in a relatively straightforward way, but Step 3 presents a real challenge:
we need to accompany the removal of oriented $2$-cycles from a
quiver with a suitable modification of the potential, leaving the
corresponding Jacobian algebra unchanged.
Our main device in dealing with this difficulty is
Theorem~\ref{th:trivial-reduced-splitting}, which is the crucial
technical result of the paper.
Roughly speaking, Theorem~\ref{th:trivial-reduced-splitting}
asserts that every potential~$S$ can be transformed by an
automorphism of the path algebra into the sum of two potentials
$S_{\rm triv}$ and $S_{\rm red}$ on the disjoint sets of arrows,
where the \emph{trivial} part $S_{\rm triv}$
is a linear combination of cyclic $2$-paths, while
the \emph{reduced} part $S_{\rm red}$ involves only cyclic $d$-paths
with $d \geq 3$.
Furthermore, the Jacobian algebra of $S_{\rm red}$ is isomorphic
to that of~$S$.

Several comments on this result are in order.
First, our arguments heavily depend on the setup using completed
path algebras, thus allowing potentials to involve infinite sums of cyclic paths.
Second, the reduction $S \mapsto S_{\rm red}$ is not given by a
canonical procedure.
As a consequence, our construction of mutations for QPs and their representations
is not functorial in any obvious sense.
On the positive side, we prove that every mutation is a
well-defined transformation on the right-equivalence
classes of QPs (and their representations), where, roughly speaking, two QPs are
\emph{right-equivalent} if they can be obtained from each other by an
automorphism of the path algebra (for more precise definitions
see Definitions~\ref{def:AS-isomorphism} and
\ref{def:right-equiv-reps}).

Finally, it is important to keep in mind that, even with the help
of Theorem~\ref{th:trivial-reduced-splitting},
in order to get rid of all oriented $2$-cycles in the
mutated QP, one needs to impose some ``genericity" conditions on
the initial potential~$S$.
These conditions are studied in Section~\ref{sec:generic}.
They are not very explicit in general, but
we introduce an important class or \emph{rigid} QPs
(see  Definitions~\ref{def:deff-space}
and \ref{def:rigid-QP}) for which the absence of oriented $2$-cycles
after any sequence of QP mutation is guaranteed.

We now describe the contents of the paper in more detail.
In Section~\ref{sec:path-algebras} we introduce an algebraic setup
for dealing with quivers and their path algebras.
We fix a base field $K$, and encode a quiver with
the vertex set $Q_0$ and the arrow set $Q_1$ by its \emph{vertex
span} $R = K^{Q_0}$ and \emph{arrow span} $A=K^{Q_1}$.
Thus, $R$ is a finite-dimensional commutative $K$-algebra, and~$A$
is a finite-dimensional $R$-bimodule.
We then introduce the \emph{path algebra}
$$R\langle A\rangle=\bigoplus_{d=0}^\infty A^d,$$
and, more importantly for our purposes, the \emph{complete path algebra}
$$R\langle\langle A \rangle\rangle=\prod_{d=0}^\infty A^d;$$
here $A^d$ stands for the $d$-fold tensor power of~$A$ as an
$R$-bimodule.
We view $R\langle\langle A \rangle\rangle$ as a topological
algebra via the ${\mathfrak m}$-adic topology, where
${\mathfrak m}$ is the two-sided ideal generated by~$A$.

In Section~\ref{sec:potentials} we introduce some of our main objects of study:
potentials and their Jacobian ideals.
It is natural to view potentials as elements of the
\emph{trace space}
$R\langle\langle A \rangle\rangle/\{R\langle\langle A
\rangle\rangle, R\langle\langle A \rangle\rangle\}$,
where $\{R\langle\langle A
\rangle\rangle, R\langle\langle A \rangle\rangle\}$
is the closure of the vector subspace
in~$R\langle\langle A \rangle\rangle$ spanned by all commutators.
It is more convenient for us to define a potential~$S$ as an element
of the cyclic part of $R\langle\langle A \rangle\rangle$; for all practical purposes, $S$ can be
replaced by a \emph{cyclically equivalent} potential, that is, the
one with the same image in the trace space.
To define the Jacobian ideal $J(S)$ and derive its basic properties,
we develop the formalism of \emph{cyclic derivatives}, in
particular, establishing ``cyclic" versions of the Leibniz rule and the chain rule.
The main result of Section~\ref{sec:potentials} is Proposition~\ref{pr:automorphism-respects-jacobian}
that asserts that any isomorphism~$\varphi$ of path algebras sends
$J(S)$ to $J(\varphi(S))$.
Note that cyclic derivatives for general non-commutative algebras
were introduced in \cite{rss}, and the results we present can be
easily deduced from those in loc.cit.
For the convenience of the reader, we
present complete independent proofs.
Victor Ginzburg informed us that in the context of path algebras
of quivers, cyclic derivatives were introduced and studied in
\cite{blb,gi1}, and that Proposition~\ref{pr:automorphism-respects-jacobian}
is a consequence of the geometric interpretation of $J(S)$
given in \cite[Definition~5.1.1, Lemma~5.1.3]{gi}.

In Section~\ref{sec:QP} we introduce quivers with potentials
(QPs) and define the right-equivalence relation on them, which plays
an important role in the paper.
We then state and prove the key technical result of the paper: Splitting
Theorem~\ref{th:trivial-reduced-splitting}, already discussed above.
The proof is elementary but pretty involved; it
uses in an essential way the topology in a complete path algebra.
In order not to interrupt the argument, we move to the Appendix
our treatment of the topological properties needed for the proof of one
of the technical lemmas.

In Section~\ref{sec:mutations} we finally introduce the mutations of QPs.
Using Theorem~\ref{th:trivial-reduced-splitting}, we prove that
the mutation at an arbitrary vertex is a well-defined involution on
the set of right-equivalence classes of reduced QPs
(Theorem~\ref{th:mutation-involutive}).

In Section~\ref{sec:mut-invariants}, we study some mutation
invariants of QPs.
In particular, we show that mutations preserve the class of QPs
with finite-dimensional Jacobian algebras
(Corollary~\ref{cor:mutation-preserves-fin-dim}).
Another important property of QPs preserved by mutations is
\emph{rigidity} (Corollary~\ref{cor:mutation-preserves-rigidity}),
which was already mentioned above.
For the precise definition of rigid QPs see Definitions~\ref{def:deff-space}
and \ref{def:rigid-QP} below; intuitively,
a QP is rigid if its right-equivalence class is invariant under
infinitesimal deformations.

In Section~\ref{sec:generic}, we introduce and study \emph{nondegenerate} QPs,
that is, those to which one can apply an arbitrary sequence of mutations without creating
oriented $2$-cycles.
In Corollary~\ref{cor:nondegenerate-exists} we show that nondegeneracy is guaranteed
by non-vanishing of countably many nonzero polynomial functions on
the space of potentials.
In particular, if the base field~$K$ is uncountable,
a nondegenerate QP exists for every underlying quiver.

Section~\ref{sec:QP-matrix-mutations} contains some examples of
rigid and non-rigid potentials and some further
results illustrating the importance of rigidity.
A simple but important Proposition~\ref{pr:rigidity-no-2-cycles}
asserts that rigid QPs have no oriented $2$-cycles.
Combining this with the fact that rigidity is preserved by mutations,
we conclude that every rigid QP is nondegenerate.
Using a result by Keller-Reiten \cite{kr}, we show in
Example~\ref{ex:triangular-grid} that
the class of rigid  QPs (as well as the class of QPs with
finite-dimensional Jacobian algebras) is strictly greater
than the class of QPs mutation-equivalent to acyclic ones.
On the other hand, Example~\ref{ex:double-triangle}
exhibits an underlying quiver without oriented $2$-cycles
that does not admit a rigid QP; thus, the class of nondegenerate QPs
is strictly greater than the class of rigid ones.

In Section~\ref{sec:Dynkin-rigid}, we consider quivers that are
mutation-equivalent to a Dynkin quiver.
For every such underlying quiver, we compute explicitly
the corresponding rigid QP (Proposition~\ref{pr:finite-type-potential}).
Comparing this result with the description of cluster-tilted
algebras obtained in \cite{CCS,BMR2}, we conclude in
Corollary~\ref{cor:jacobian-cluster-tilted}
that in the case in question, every cluster-tilted algebra can be
identified with the Jacobian algebra of the corresponding rigid QP.
Thus, Jacobian algebras  can be viewed as generalizations of cluster-tilted algebras.

In Section~\ref{sec:reps} we introduce decorated representations
of QPs (QP-representations, for short)
and their right-equivalence (Definitions~\ref{def:dec-rep}
and \ref{def:right-equiv-reps}).
We then present a representation-theoretic extension of
Splitting Theorem~\ref{th:trivial-reduced-splitting}
by defining the reduced part of a QP-representation $\mathcal M$
(Definition~\ref{def:reduced-part-rep}) and proving that, up to
right-equivalence, it is determined by the right-equivalence class
of $\mathcal M$ (Proposition~\ref{pr:rep-splitting}).
We use this result to introduce mutations of QP-representations
and to prove a representation-theoretic extension of
Theorem~\ref{th:mutation-involutive}: the mutation at every vertex
is an involution on the set of right-equivalence classes
of reduced QP-representations
(Theorem~\ref{th:rep-mutation-involutive}).

Some examples of QP-representations and their mutations are given
in Section~\ref{sec:band}.
All these examples treat quivers with three vertices.
In particular, we describe the effect of mutations on a special
family of \emph{band representations} coming from the theory of
string algebras \cite{BR,F}.

The concluding Section~\ref{sec:open-problems} contains some open problems about QPs and their
representations that we find essential for better understanding of the theory.

In the forthcoming continuation of this paper, we plan to discuss
applications of QP-representations and their mutations to the structure of
the corresponding cluster algebras.

\section{Quivers and path algebras}
\label{sec:path-algebras}

A \emph{quiver} $Q=(Q_0,Q_1,h,t)$ consists of a pair of finite
sets $Q_0$ (\emph{vertices}) and $Q_1$ (\emph{arrows}) supplied
with two maps $h:Q_1 \to Q_0$ (\emph{head}) and $t:Q_1 \to Q_0$
(\emph{tail}).
It is represented as a directed graph with the set of vertices
$Q_0$ and directed edges $a: ta \rightarrow ha$ for $a \in Q_1$.
Note that this definition allows the underlying graph to have
multiple edges and (multiple) loops.

We fix a field $K$, and associate to a quiver $Q$ two vector
spaces $R = K^{Q_0}$ and $A=K^{Q_1}$ consisting of $K$-valued
functions on $Q_0$ and $Q_1$, respectively.
We will sometimes refer to $R$ as the \emph{vertex span} of~$Q$,
and to~$A$ as the \emph{arrow span} of~$Q$.
The space $R$ is a commutative algebra under the pointwise multiplication of functions.
The space $A$ is an $R$-bimodule, with the bimodule structure defined as follows:
if $e \in R$ and $f \in A$ then $(e\cdot f)(a)=e(ha)f(a)$
and $(f\cdot e)(a)=f(a)e(ta)$ for all $a\in Q_1$.

We denote by~$Q^\star$ the \emph{dual} or \emph{opposite} quiver~$Q^\star$
obtained by reversing the arrows in~$Q$ (i.e., replacing
$Q=(Q_0,Q_1,h,t)$ with $Q^\star = (Q_0,Q_1,t,h)$).
The corresponding arrow span is naturally identified with
the dual bimodule $A^\star$ (the dual vector space of $A$
with the standard $R$-bimodule structure).

For a given vertex set $Q_0$ with the vertex span~$R$, every finite-dimensional
$R$-bimodule~$B$ is the arrow span of some quiver on~$Q_0$.
To see this, consider the elements $e_i \in R$ for $i \in Q_0$
given by $e_i(j)=\delta_{i,j}$ (the Kronecker delta symbol).
They form a basis of idempotents of $R$, hence every
$R$-bimodule~$B$ has a direct sum decomposition
$$B = \bigoplus_{i, j \in Q_0} B_{i,j},$$
where $B_{i,j}= e_i B e_j \subseteq B$ for every $i,j\in Q_0$.
If~$B$ is finite-dimensional, we can identify the (finite) set of arrows
$Q_1$ with a $K$-basis in $B$ which is the union of bases in all components $B_{i,j}$;
under this identification, every $a \in Q_1 \cap B_{i,j}$ has
$h(a) = i$ and $t(a) = j$.

It is convenient to represent an $R$-bimodule $B$ by a matrix of vector spaces
$(B_{i,j})$ whose rows and columns are labeled by vertices.
In this model, the left (resp. right) action of an element
$c = \sum_i c_i e_i \in R$ is given by the left (resp. right) multiplication
by the diagonal matrix with diagonal entries $c_i$.
And the tensor product over $R$ is given by a usual matrix
multiplication: if $B = \bigoplus_{i, j} B_{i,j}$ and
$C = \bigoplus_{i, j} C_{i,j}$, then
$$(B \otimes_R C)_{i,j} = \bigoplus_{k} (B_{i,k} \otimes C_{k,j}).$$

Returning to a quiver~$Q$ with the arrow span~$A$, for each nonnegative integer~$d$,
let $A^d$  denote the $R$-bimodule
$$
A^d
= \underbrace{A\otimes_R A\otimes_R \cdots \otimes_R A}_d,
$$
with the convention $A^0=R$.

\begin{definition}
\label{def:path-algebra}
The \emph{path algebra} of $Q$ is
defined as
the (graded) tensor algebra
$$
R\langle A\rangle=\bigoplus_{d=0}^\infty A^d.
$$
For each $i, j \in Q_0$, the component
$R\langle A\rangle_{i,j} = e_i R\langle A\rangle e_j$
is called the \emph{space of paths} from $j$ to $i$.
\end{definition}

As above, we identify the set of arrows $Q_1$ with some basis of $A$
consisting of homogeneous elements, that is, each
$a \in Q_1$ belongs to some component $A_{i,j}$.
Then for every $d \geq 1$,  the products $a_1  \cdots a_d$ such
that all $a_k$ belong to $Q_1$, and $t(a_k) = h(a_{k+1})$ for
$1 \leq k < d$, form a $K$-basis of $A^d$.
We call this basis the \emph{path basis} of~$A^d$ associated
to~$Q_1$.
For $d=0$, we call $\{e_i \mid i \in Q_0\}$ the path basis of
$A^0 = R$.
We refer to the union of path bases for all~$d$ as the path basis
of $R\langle A\rangle$.
The elements of the path basis will be sometimes referred to
simply as \emph{paths}.
We depict $a_1  \cdots a_d$ as a path of length~$d$ starting in
the vertex $t(a_d)$ and ending in $h(a_1)$.
Note that the product $(a_1  \cdots a_d)(a_{d+1}  \cdots a_{d+k})$
of two paths is~$0$ unless $t(a_d) = h(a_{d+1})$, in which case
the product is given by concatenation of paths.
This description implies the following:
\begin{equation}
\label{eq:nonzero-products}
\text{If $0 \neq p \in A^k e_i$ and $0 \neq q \in e_i A^\ell$
for some vertex~$i$
then $pq \neq 0$.}
\end{equation}

\begin{definition}
\label{def:complete-path-algebra}
The \emph{complete path algebra} of $Q$ is defined as
$$
R\langle\langle A \rangle\rangle=\prod_{d=0}^\infty A^d.
$$
Thus, the elements of $R\langle\langle A \rangle\rangle$ are
(possibly infinite) $K$-linear combinations of the elements of a
path basis in $R\langle A \rangle$; and the multiplication in
$R\langle\langle A \rangle\rangle$ naturally extends the multiplication in
$R\langle A \rangle$.
\end{definition}

Note that, if the quiver $Q$ is \emph{acyclic} (that is, has no oriented cycles),
then $A^d = \{0\}$ for $d \gg 0$, hence in this case $R\langle\langle A \rangle\rangle
= R\langle A \rangle$, and this algebra is finite-dimensional.

\begin{example}
Consider the quiver $Q=(Q_0,Q_1)$ with $Q_0=\{1\}$ and $Q_1=\{a\}$
with $a:1\to 1$. This is the loop quiver:
$$
\xymatrix{
1\ar@(ul,dl)[]_{a}
}\ .
$$
In this case $R=K^{Q_0}=K$, and $A=K^{Q_1}= Ka$.
We have $R\langle A\rangle=K[a]$, and $R\langle\langle A\rangle\rangle=K[[a]]$,
the algebra of formal power series.
\end{example}

Let ${\mathfrak m} = {\mathfrak m}(A)$ denote the (two-sided) ideal
of $R\langle\langle A\rangle\rangle$ given by
\begin{equation}
\label{eq:max-ideal}
{\mathfrak m}  = {\mathfrak m}(A) = \prod_{d=1}^\infty A^d.
\end{equation}
Thus the powers of~${\mathfrak m}$ are given by
$${\mathfrak m}^n = \prod_{d=n}^\infty A^d.$$
We view $R\langle\langle A\rangle\rangle$ as a topological $K$-algebra
via the \emph{${\mathfrak m}$-adic topology} having
the powers of ${\mathfrak m}$ as a basic system of open
neighborhoods of~$0$.
Thus, the closure of any subset $U \subseteq R\langle\langle A\rangle\rangle$
is given by
\begin{equation}
\label{eq:closure}
\overline U = \bigcap_{n=0}^\infty (U + {\mathfrak m}^n).
\end{equation}
It is clear that $R\langle A\rangle$ is a  dense subalgebra
of $R\langle\langle A\rangle\rangle$.

In dealing with $R\langle\langle A\rangle\rangle$,
the following fact is quite useful: every
(non-commutative) formal power series over~$R$
in a finite number of variables can be evaluated at arbitrary elements of ${\mathfrak m}$
to obtain a well-defined element of $R\langle\langle A\rangle\rangle$.
To illustrate, let us show that ${\mathfrak m}$ is the unique maximal
two-sided ideal of $R\langle\langle A\rangle \rangle$
having zero intersection with $R = A^0$.
Indeed, it is enough to show that any element
$x \in R\langle\langle A\rangle \rangle - {\mathfrak m}$
generates an ideal having nonzero intersection with $R$.
Let $x = c + y$ with $c$ a nonzero element of $R$,
and $y \in {\mathfrak m}$.
Multiplying $x$ on both sides by suitable elements of $R$, we can
assume that $c = e_i$ for some $i \in Q_0$, and $e_i y = y e_i = y$.
But then $z = e_i - y + y^2 - y^3 + \cdots$ is a well-defined element
of $R\langle\langle A\rangle \rangle$, and we have $xz = e_i$,
proving our claim.

This characterization of ${\mathfrak m}$ implies that it
is invariant under any algebra automorphism~$\varphi$
of $R\langle\langle A\rangle \rangle$ such that $\varphi|_R$ is
the identity.
Thus,~$\varphi$ is continuous, i.e., is an automorphism of
$R\langle\langle A\rangle \rangle$ as a topological algebra.

The same argument shows that, more generally, if $A$ and $A'$ are finite-dimensional
$R$-bimodules then any algebra homomorphism 
$\varphi: R\langle\langle A\rangle \rangle \to R\langle\langle A'\rangle \rangle$
such that $\varphi|_R = {\rm id}$, sends ${\mathfrak m}(A)$ into ${\mathfrak m}(A')$, hence is
continuous.
Thus,~$\varphi$ is uniquely determined by its
restriction to $A^1 = A$, which is a $R$-bimodule homomorphism
$A \to {\mathfrak m}(A') = A' \oplus {\mathfrak m}(A')^2$.
We write $\varphi|_A = (\varphi^{(1)}, \varphi^{(2)})$, where
$\varphi^{(1)}:A \to A'$ and $\varphi^{(2)}:A \to {\mathfrak m}(A')^2$ are
$R$-bimodule homomorphisms.

\begin{proposition}
\label{pr:automorphisms}
Any pair $(\varphi^{(1)}, \varphi^{(2)})$
of $R$-bimodule homomorphisms
$\varphi^{(1)}: A \to A'$ and $\varphi^{(2)}: A \to {\mathfrak m}(A')^2$
gives rise to a unique homomorphism of topological algebras
$\varphi: R\langle\langle A\rangle \rangle \to R\langle\langle A'\rangle \rangle$
such that $\varphi|_R = {\rm id}$, and $\varphi|_A = (\varphi^{(1)}, \varphi^{(2)})$.
Furthermore, $\varphi$ is an isomorphism
if and only if $\varphi^{(1)}$ is a $R$-bimodule isomorphism $A \to A'$.
\end{proposition}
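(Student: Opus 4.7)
The plan is to construct $\varphi$ via the universal property of the tensor algebra, extend by continuity to the completion, and then establish the isomorphism criterion by passing to the associated graded with respect to the $\mathfrak{m}$-adic filtration.

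First I would note that uniqueness and automatic continuity of $\varphi$ were already handled in the paragraph preceding the proposition, so only existence is at stake. The pair $(\varphi^{(1)},\varphi^{(2)})$ combines into a single $R$-bimodule homomorphism $A \to \mathfrak{m}(A')$. By the universal property of the tensor algebra it extends uniquely to an algebra homomorphism $R\langle A\rangle \to R\langle\langle A'\rangle\rangle$ fixing $R$; because $A$ is sent into $\mathfrak{m}(A')$, multiplicativity forces $A^n$ to land in $\mathfrak{m}(A')^n$. Hence this tentative $\varphi$ is continuous in the $\mathfrak{m}$-adic topology and admits a unique continuous extension to the completion $R\langle\langle A\rangle\rangle$.

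For the isomorphism criterion, the nontrivial direction is sufficiency. Assume $\varphi^{(1)}$ is a bimodule isomorphism. The key observation is that since $\varphi^{(2)}(A)\subseteq \mathfrak{m}(A')^2$, the map induced by $\varphi$ on the associated graded pieces $\mathfrak{m}(A)^n/\mathfrak{m}(A)^{n+1}\cong A^n$ coincides with the $n$-fold tensor power $(\varphi^{(1)})^{\otimes n}$, which is again an isomorphism. To prove surjectivity I would argue by successive approximation: given $y\in R\langle\langle A'\rangle\rangle$, inductively choose $x_n\in A^n$ so that $y-\varphi(x_0+\cdots+x_N)\in \mathfrak{m}(A')^{N+1}$ for every $N$, using the graded isomorphism at each step to solve for $x_{N+1}$. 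The series $\sum_{n\geq 0}x_n$ converges in the $\mathfrak{m}$-adic topology, and continuity of $\varphi$ gives $\varphi(\sum x_n)=y$. Injectivity follows from $\bigcap_n \mathfrak{m}(A)^n=0$: any nonzero $x$ has a nonzero leading component in some $\mathfrak{m}(A)^d/\mathfrak{m}(A)^{d+1}$, and the graded isomorphism keeps this image nonzero modulo $\mathfrak{m}(A')^{d+1}$, so $\varphi(x)\neq 0$.

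For the converse, if $\varphi$ is an isomorphism then $\varphi^{-1}$ is again a topological algebra automorphism fixing $R$, and by the characterization of $\mathfrak{m}$ recalled in the paragraph preceding the proposition it must send $\mathfrak{m}(A')$ into $\mathfrak{m}(A)$. Therefore $\varphi$ restricts to isomorphisms $\mathfrak{m}(A)\to \mathfrak{m}(A')$ and $\mathfrak{m}(A)^2\to \mathfrak{m}(A')^2$, and the induced isomorphism of quotients is precisely $\varphi^{(1)}$ under the canonical identifications $\mathfrak{m}(A)/\mathfrak{m}(A)^2\cong A$ and $\mathfrak{m}(A')/\mathfrak{m}(A')^2\cong A'$. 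The main obstacle will be the successive-approximation step for surjectivity: one must verify both that each truncation can be corrected and that the resulting formal series converges in the completion and is mapped termwise by $\varphi$. Both points rely crucially on the hypothesis $\varphi^{(2)}(A)\subseteq \mathfrak{m}(A')^2$ rather than merely $\mathfrak{m}(A')$, since this is what ensures the induced graded map is the tensor power of $\varphi^{(1)}$.
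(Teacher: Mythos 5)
Your proof is correct and follows essentially the same path as the paper: both construct $\varphi$ by extending the bimodule map on generators (the paper via formal power series substitution, you via the universal property of the tensor algebra plus continuous extension), and both establish invertibility from the fact that $\varphi$ acts as $\varphi^{(1)}$ on the associated graded of the $\mathfrak m$-adic filtration. The paper normalizes $\varphi^{(1)}=\mathrm{id}$ and summarizes bijectivity as ``the matrix in the degree-ordered path basis is unitriangular, hence invertible,'' while your successive-approximation argument for surjectivity together with $\bigcap_n\mathfrak m^n=0$ for injectivity is exactly what makes that terse claim rigorous.
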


\begin{proof}
The uniqueness of $\varphi$ is clear.
To show the existence, let $Q_1= \{a_1, \dots, a_N\} \subset A = A^1$ be the set of arrows in~$A$.
Writing an element $x \in R\langle\langle A\rangle\rangle$ as an
infinite $K$-linear combination of the elements of the
corresponding path basis in $R\langle A\rangle$, we express~$x$ as
a (non-commutative) formal power series $F(a_1, \dots, a_N)$.
Therefore, a desired algebra homomorphism can be obtained by setting
$\varphi(x) = F(\varphi^{(1)}(a_1) + \varphi^{(2)}(a_1), \dots,
\varphi^{(1)}(a_N) + \varphi^{(2)}(a_N))$.

If $\varphi$ is an isomorphism then $\varphi^{(1)}: A \to A'$ is
clearly an isomorphism of $R$-bimodules.
To show the converse implication, we can identify $A$ and $A'$
with the help of~$\varphi^{(1)}$, and so assume that $\varphi^{(1)}$ is
the identity automorphism of~$A$.
Then the (infinite) matrix that expresses $\varphi$ as a
$K$-linear map in the path basis of $R\langle\langle A\rangle\rangle$
is lower-triangular with all the diagonal entries equal to~$1$
(we order the basis elements so that their degrees weakly increase).
Clearly, such a matrix is invertible, completing the proof of
Proposition~\ref{pr:automorphisms}.
\end{proof}

\begin{definition}
\label{def:automorphisms}
Let $\varphi$ be the automorphism of $R\langle\langle A\rangle \rangle$
corresponding to a pair $(\varphi^{(1)}, \varphi^{(2)})$ as in
Proposition~\ref{pr:automorphisms}.
If~$\varphi^{(2)} = 0$, then we call $\varphi$ a \emph{change of arrows}.
If $\varphi^{(1)}$ is the identity automorphism of~$A$, we say that~$\varphi$
is a \emph{unitriangular} automorphism; furthermore, we
say that $\varphi$ is of \emph{depth} $d \geq 1$, if $\varphi^{(2)}(A)
\subset {\mathfrak m}^{d+1}$.
\end{definition}

The following property of unitriangular automorphisms is immediate
from the definitions:
\begin{align}
\label{eq:unitriangular-d}
&\text{If~$\varphi$ is an unitriangular automorphism of $R\langle\langle A
\rangle\rangle$ of depth~$d$,}\\
\nonumber
&\text{then
$\varphi(u) - u \in  {\mathfrak m}^{n+d}$ for $u \in {\mathfrak m}^{n}$.}
\end{align}

\section{Potentials and their Jacobian ideals}
\label{sec:potentials}

In this section we introduce some of our main objects of study:
potentials and their Jacobian ideals in the
complete path algebra $R\langle\langle A\rangle\rangle$ given by
Definition~\ref{def:complete-path-algebra}.
We fix a path basis in $R\langle A\rangle$;
recall that it consists of the elements $e_i \in R = A^0$ together with the products
$a_1  \cdots a_d$ (paths) such that all $a_k$ belong to $Q_1$, and $t(a_k) = h(a_{k+1})$ for
$1 \leq k < d$.
Then each space $A^d$ has a direct $R$-bimodule
decomposition $A^d = \bigoplus_{i,j \in Q_0} A^d_{i,j}$, where
the component $A^d_{i,j}$ is spanned by the paths $a_1  \cdots a_d$
with $h(a_1) = i$ and $t(a_d) = j$.

\pagebreak

\begin{definition}\
\label{def:cyclic-stuff}
\begin{itemize}
\item For each $d \geq 1$, we define the \emph{cyclic part} of $A^d$ as the
sub-$R$-bimodule $A^d_{\rm cyc} = \bigoplus_{i \in Q_0} A^d_{i,i}$.
Thus, $A^d_{\rm cyc}$ is the span of all paths $a_1  \cdots a_d$
with $h(a_1) = t(a_d)$; we call such paths \emph{cyclic}.
\item We define a closed vector subspace
$R\langle\langle A\rangle\rangle_{\rm cyc} \subseteq R\langle\langle A\rangle\rangle$
by setting
$$R\langle\langle A\rangle\rangle_{\rm cyc} =
\prod_{d=1}^\infty A^d_{\rm cyc},$$
and call the elements of $R\langle\langle A\rangle\rangle_{\rm cyc}$
\emph{potentials}.
\item For every $\xi \in A^\star$, we define the \emph{cyclic
derivative} $\partial_\xi$ as the continuous $K$-linear map
$R\langle\langle A\rangle\rangle_{\rm cyc} \to R\langle\langle A\rangle\rangle$
acting on paths by
\begin{equation}
\label{eq:cyclic-derivative}
\partial_\xi (a_1 \cdots a_d) =
\sum_{k=1}^d \xi(a_k) a_{k+1} \cdots a_d a_1 \cdots a_{k-1}.
\end{equation}
\item
For every potential~$S$, we
define its \emph{Jacobian ideal} $J(S)$ as the closure of
the (two-sided) ideal in $R\langle\langle A\rangle\rangle$
generated by the elements $\partial_\xi(S)$ for all $\xi \in A^\star$
(see \eqref{eq:closure}); clearly, $J(S)$ is a two-sided
ideal in $R\langle\langle A\rangle\rangle$.
\item We call the quotient $R\langle\langle A\rangle\rangle/J(S)$
the \emph{Jacobian algebra} of~$S$, and denote
it by ${\mathcal P}(Q,S)$ or ${\mathcal P}(A,S)$.
\end{itemize}
\end{definition}

An easy check shows that a cyclic derivative
$\partial_\xi: R\langle\langle A\rangle\rangle_{\rm cyc}
\to R\langle\langle A\rangle\rangle$
does not depend on the choice of a path basis.
Furthermore, cyclic derivatives do not distinguish between the
potentials that are equivalent in the following sense.

\begin{definition}
\label{def:cyclic-equivalence}
Two potentials $S$ and $S'$ are \emph{cyclically equivalent}
if $S - S'$ lies in the closure of the span of
all elements of the form $a_1 \cdots a_d - a_2 \cdots a_d a_1$,
where $a_1 \cdots a_d$ is a cyclic path.
\end{definition}

The following proposition is immediate from \eqref{eq:cyclic-derivative}.

\begin{proposition}
\label{pr:cyclic-equivalence}
If two potentials $S$ and $S'$ are cyclically equivalent,
then $\partial_\xi (S) = \partial_\xi(S')$ for all $\xi \in A^\star$,
hence
$J(S) = J(S')$ and ${\mathcal P}(A,S) = {\mathcal P}(A,S')$.
\end{proposition}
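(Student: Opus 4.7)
The plan is to reduce everything to the single identity
\[
\partial_\xi(a_1 \cdots a_d) \;=\; \partial_\xi(a_2 \cdots a_d a_1)
\]
for any cyclic path $a_1\cdots a_d$ and any $\xi \in A^\star$, and then propagate this by linearity, continuity, and the definition of the Jacobian ideal.

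First I would verify the identity above by direct term-by-term comparison using formula \eqref{eq:cyclic-derivative}. Applied to $a_1\cdots a_d$, the right-hand side of \eqref{eq:cyclic-derivative} is a sum of $d$ terms indexed by $k=1,\dots,d$, the $k$-th of which is $\xi(a_k)\, a_{k+1}\cdots a_d a_1\cdots a_{k-1}$. Writing $b_i = a_{i+1}$ for $i<d$ and $b_d = a_1$, the cyclic derivative $\partial_\xi(a_2\cdots a_d a_1) = \partial_\xi(b_1\cdots b_d)$ is likewise a sum of $d$ terms indexed by $j$. A quick relabeling (the term with $j$ matches the term with $k=j+1 \pmod d$) shows the two sums have exactly the same set of summands, so they are equal.

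Next I would extend this to cyclic equivalence. By $K$-linearity, $\partial_\xi$ annihilates every finite $K$-linear combination of elements of the form $a_1\cdots a_d - a_2\cdots a_d a_1$. Since $\partial_\xi : R\langle\langle A\rangle\rangle_{\rm cyc} \to R\langle\langle A\rangle\rangle$ is continuous by assumption (it is built from the continuous $K$-linear action on the path basis and extends to the ${\mathfrak m}$-adic completion), it annihilates the closure of that span as well. Thus if $S - S'$ lies in this closure, as required by Definition~\ref{def:cyclic-equivalence}, then $\partial_\xi(S) - \partial_\xi(S') = \partial_\xi(S-S') = 0$, giving the first claim.

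The remaining two conclusions are then formal. Since $J(S)$ is by definition the closure of the two-sided ideal generated by $\{\partial_\xi(S) : \xi \in A^\star\}$, and we have shown that this generating set coincides with $\{\partial_\xi(S') : \xi \in A^\star\}$, the ideals $J(S)$ and $J(S')$ have identical generators and hence are equal. The equality of Jacobian algebras ${\mathcal P}(A,S) = {\mathcal P}(A,S')$ follows immediately from $J(S) = J(S')$. There is no real obstacle here; the only mildly nontrivial point is invoking continuity of $\partial_\xi$ to pass from the finite span to its closure, which is why the proposition is phrased around the closure in Definition~\ref{def:cyclic-equivalence}.
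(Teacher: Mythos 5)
Your argument is correct and follows exactly the route the paper intends: the paper simply states that Proposition~\ref{pr:cyclic-equivalence} is immediate from \eqref{eq:cyclic-derivative}, and your proof is a careful spelling-out of that immediacy — verifying $\partial_\xi(a_1\cdots a_d)=\partial_\xi(a_2\cdots a_d a_1)$ by reindexing, then invoking linearity and continuity to pass to the closure, and finally noting that equal generating sets give equal Jacobian ideals and algebras.
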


It is easy to see that the definition of cyclical equivalence does
not depend on the choice of a path basis.
In fact, it can be given in more invariant terms as follows.

\begin{definition}
\label{def:trace-space}
For any topological $K$-algebra $U$, its \emph{trace space}
$\Tr(U)$ is defined as
$\Tr (U) = U/\{U,U\}$, where $\{U,U\}$ is the closure of the vector subspace
in~$U$ spanned by all commutators.
We denote by $\pi = \pi_U: U \to \Tr(U)$  the canonical projection.
\end{definition}

The following proposition is a direct consequence of the definitions.

\begin{proposition}
\label{pr:cyclical-equiv-thru-trace}
Two potentials $S$ and $S'$ are cyclically
equivalent if and only if
$\pi_{R\langle\langle A\rangle\rangle}(S) =
\pi_{R\langle\langle A\rangle\rangle}(S')$.
Thus, the Jacobian ideal and the Jacobian algebra of a
potential~$S$ depend only on the image of~$S$ in
$\Tr(R\langle\langle A\rangle\rangle)$.
\end{proposition}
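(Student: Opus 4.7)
The plan is to prove the stated biconditional; the second sentence then follows immediately by combining it with Proposition~\ref{pr:cyclic-equivalence}. The forward implication is essentially tautological: each generator $a_1 \cdots a_d - a_2 \cdots a_d a_1$ of cyclic equivalence coincides with the commutator $[a_1,\, a_2 \cdots a_d]$, so the closure of the span of such generators is contained in $\{R\langle\langle A\rangle\rangle, R\langle\langle A\rangle\rangle\}$, and hence $\pi_{R\langle\langle A\rangle\rangle}$ carries cyclically equivalent potentials to the same image.

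For the converse I would introduce the continuous $K$-linear ``cyclic projection'' $p : R\langle\langle A\rangle\rangle \to R\langle\langle A\rangle\rangle_{\rm cyc}$ defined by $p(x) = \sum_{d \geq 1} \sum_{i \in Q_0} e_i x_d e_i$, where $x = \sum_{d \geq 0} x_d$ is the grading decomposition. Because $p$ preserves the grading, $p({\mathfrak m}^n) \subseteq {\mathfrak m}^n$, so $p$ is continuous; moreover $p$ restricts to the identity on $R\langle\langle A\rangle\rangle_{\rm cyc}$. The key step is a short case analysis showing that, for two path-basis elements $u = a_1 \cdots a_k$ and $v = a_{k+1} \cdots a_{k+\ell}$, the element $p([u,v])$ is either zero or a cyclic-shift difference. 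Indeed, if $uv$ is nonzero and cyclic then $vu$ is also defined and equals the cyclic shift of $uv$ by $k$ positions, so $[u,v]$ already has the required form; otherwise at most one of $uv, vu$ is nonzero and that one is a non-cyclic path, hence killed by $p$. By bilinearity $p$ sends the span of commutators of path-basis elements into the span $W$ of cyclic-shift differences inside $R\langle\langle A\rangle\rangle_{\rm cyc}$. A truncation argument then extends this to arbitrary commutators: given $x, y \in R\langle\langle A\rangle\rangle$ and $n \geq 1$, write $x = x_n + x'_n$ and $y = y_n + y'_n$ with $x_n, y_n$ finite path combinations and $x'_n, y'_n \in {\mathfrak m}^n$, so that $[x,y] - [x_n, y_n] \in {\mathfrak m}^n$; this places every commutator in the closure of commutators of path-basis elements. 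Continuity of $p$ then yields $p\bigl(\{R\langle\langle A\rangle\rangle, R\langle\langle A\rangle\rangle\}\bigr) \subseteq \overline{W}$. If $\pi_{R\langle\langle A\rangle\rangle}(S) = \pi_{R\langle\langle A\rangle\rangle}(S')$, then $S - S'$ lies in $\{R\langle\langle A\rangle\rangle, R\langle\langle A\rangle\rangle\}$, and being cyclic it is fixed by $p$; hence $S - S' \in \overline{W}$, i.e., $S$ and $S'$ are cyclically equivalent.

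The main obstacle I anticipate is purely topological bookkeeping: the subspace $\{U,U\}$ is by definition a closure, so an arbitrary commutator cannot be decomposed as a finite $K$-linear combination of commutators of paths, while the elementary case analysis works only at the level of such path-commutators. The continuous cyclic projection $p$ together with the truncation step are precisely what convert the tautological case analysis into the required global statement, after which the second sentence of the proposition is a one-line consequence.
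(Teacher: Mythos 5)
Your proof is correct. The paper itself gives no proof at all, asserting only that the proposition is ``a direct consequence of the definitions,'' so what you have written is precisely the verification a careful reader is expected to supply. The forward implication via the identity $a_1\cdots a_d - a_2 \cdots a_d a_1 = [a_1,\,a_2\cdots a_d]$ is exactly the right observation. For the converse, the continuous degree-preserving projection $p$ onto $R\langle\langle A\rangle\rangle_{\rm cyc}$ is the right device: your case analysis correctly shows that for two paths $u,v$ of positive length, $[u,v]$ is either zero, a single non-cyclic path (killed by $p$), or a cyclic-shift difference (when both $uv$ and $vu$ are nonzero, each is forced to be cyclic, and $vu$ is the $k$-fold cyclic shift of $uv$, which telescopes into a finite sum of the degree-one generators of Definition~\ref{def:cyclic-equivalence}). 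The truncation step is also needed and correctly handled: since $\{R\langle\langle A\rangle\rangle, R\langle\langle A\rangle\rangle\}$ is defined as a \emph{closure} of a span, one cannot reduce an arbitrary commutator to a finite combination of path-commutators algebraically; approximating $x,y$ by their truncations $x_n,y_n$ modulo ${\mathfrak m}^n$ and using $[x,y]-[x_n,y_n]\in{\mathfrak m}^n$ places every commutator in the closure of the span of path-commutators, and continuity of $p$ then transports the inclusion through the closure. One very minor omission: your case analysis is stated only for paths $u,v$ of positive length, whereas the path basis also contains the idempotents $e_i$; but $[e_i,v]$ is always either zero or $\pm v$ with $v$ non-cyclic, hence killed by $p$, so the argument extends with no change. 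The final deduction that $S-S'$, being cyclic and hence fixed by $p$, lands in $\overline W$, closes the argument cleanly, and the second sentence of the proposition indeed follows at once from Proposition~\ref{pr:cyclic-equivalence}.
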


Recall that we identify the set of arrows $Q_1$ with a $K$-basis in $A = A^1$.
For $a \in Q_1$, we will use the notation $\partial_a$ for the
cyclic derivative $\partial_{a^\star}$, where
$Q_1^\star = \{a^\star \mid a \in Q_1\}$ is the dual basis of $Q_1$ in $A^\star$.

\begin{example}
\label{ex:two-arrows}
Consider the quiver $Q=(Q_0,Q_1)$ with $Q_0=\{1,2\}$
and $Q_1=\{a,b\}$, where $a:1\to2$ and $b:2\to1$:
$$
\xymatrix{
1\ar@<.5ex>[r]^a & 2\ar@<.5ex>[l]^b}\ .
$$
The vertex and arrow spans of~$Q$ are given by
 $R=K^{Q_0}=Ke_1 \oplus Ke_2$, and $A=K^{Q_1}= Ka\oplus Kb$.
The paths in $R\langle\langle A\rangle\rangle$
are $e_1$, $e_2$ and all products of the generators $a$ and $b$ in which the
factors $a$ and $b$ alternate.
The potentials are (possibly infinite) linear combinations
of the elements $(ab)^n$ and $(ba)^n$ for all $n \geq 1$.
Using \eqref{eq:cyclic-derivative}, we obtain
$$\partial_a ((ab)^n) = \partial_a ((ba)^n)= nb (ab)^{n-1}, \quad
\partial_b ((ab)^n) = \partial_b ((ba)^n)= na (ba)^{n-1}
\quad (n \geq 1).$$
Up to cyclical equivalence, every potential can be written in the form
$$
\sum_{n=1}^\infty \alpha_n (ab)^n \quad (\alpha_n \in K).
$$
\end{example}

\medskip

Returning to the general theory,
it is clear that every algebra homomorphism
$$\varphi:R\langle\langle A\rangle\rangle\to R\langle\langle A'\rangle \rangle$$
such that $\varphi|_R = {\rm id}$,
sends potentials to potentials.

\begin{proposition}
\label{pr:automorphism-respects-jacobian}
Every algebra isomorphism
$$\varphi:R\langle\langle A\rangle\rangle\to R\langle\langle A'\rangle \rangle$$
such that $\varphi|_R = {\rm id}$,
sends
$J(S)$ onto $J(\varphi(S))$, inducing an isomorphism of Jacobian algebras
${\mathcal P}(A,S) \to {\mathcal P}(A',\varphi(S))$.
\end{proposition}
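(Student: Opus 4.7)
The plan is to use a non-commutative chain rule for cyclic derivatives, derived along the way, to show that each generator $\partial_\xi(\varphi(S))$ of $J(\varphi(S))$ lies in $\varphi(J(S))$; the converse inclusion then follows by applying the same argument to $\varphi^{-1}$ and the potential $\varphi(S)$. Since $\varphi|_R = \operatorname{id}$, Proposition~\ref{pr:automorphisms} guarantees that both $\varphi$ and $\varphi^{-1}$ are continuous, so $\varphi$ is a homeomorphism of topological algebras and in particular preserves closures of two-sided ideals. Consequently $\varphi(J(S))$ equals the closure of the two-sided ideal in $R\langle\langle A'\rangle\rangle$ generated by $\{\varphi(\partial_a(S)) : a \in Q_1\}$, and the whole problem reduces to establishing the chain rule
$$\partial_\xi(\varphi(u)) = \sum_{a \in Q_1}\sum_\alpha V_{a,\alpha}\,\varphi(\partial_a(u))\,U_{a,\alpha}$$
for every cyclic $u \in R\langle\langle A\rangle\rangle_{\rm cyc}$ and every $\xi \in (A')^\star$, where the coefficients $U_{a,\alpha},V_{a,\alpha}\in R\langle\langle A'\rangle\rangle$ are read off from a non-cyclic formal derivative of $\varphi(a)$.

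To derive the chain rule, I introduce the continuous formal derivative $\delta_\xi$ on $R\langle\langle A'\rangle\rangle$ sending a path $b_1\cdots b_m$ to $\sum_j \xi(b_j)(b_1\cdots b_{j-1})\otimes(b_{j+1}\cdots b_m)$ and taking values in the natural completion of $R\langle\langle A'\rangle\rangle\otimes_R R\langle\langle A'\rangle\rangle$. A direct check on the path basis yields the Leibniz rule $\delta_\xi(xy) = \delta_\xi(x)(1\otimes y) + (x\otimes 1)\delta_\xi(y)$ with respect to the outer bimodule action, together with the identity $\partial_\xi = \mu\circ\tau\circ\delta_\xi$ on cyclic elements, where $\mu$ and $\tau$ denote multiplication and the swap of tensor factors. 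Writing $\delta_\xi(\varphi(a)) = \sum_\alpha U_{a,\alpha}\otimes V_{a,\alpha}$, the iterated Leibniz rule applied to $\varphi(a_1\cdots a_d) = \varphi(a_1)\cdots\varphi(a_d)$ gives
$$\delta_\xi(\varphi(a_1\cdots a_d)) = \sum_{k=1}^d\sum_\alpha \varphi(a_1\cdots a_{k-1})\,U_{a_k,\alpha}\otimes V_{a_k,\alpha}\,\varphi(a_{k+1}\cdots a_d),$$
and applying $\mu\tau$ (using cyclicity of $u=a_1\cdots a_d$ to concatenate the rotations into valid paths in $A'$) yields the chain rule for cyclic monomials. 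The extension from monomials to an arbitrary potential $S = \sum_d S_d$ is by continuity, using only the bounds $\varphi(A^d_{\rm cyc})\subseteq \mathfrak{m}(A')^d$ and $\partial_\xi(\mathfrak{m}^d)\subseteq \mathfrak{m}^{d-1}$, which ensure $\mathfrak{m}$-adic convergence of the resulting double sum in $R\langle\langle A'\rangle\rangle$.

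Each term $V_{a,\alpha}\,\varphi(\partial_a(S))\,U_{a,\alpha}$ on the right-hand side of the chain rule lies in the two-sided ideal generated by $\varphi(\partial_a(S))$, so $\partial_\xi(\varphi(S))$ lies in the closure of the ideal generated by $\{\varphi(\partial_a(S)) : a \in Q_1\}$, which equals $\varphi(J(S))$. This proves $J(\varphi(S))\subseteq \varphi(J(S))$; swapping $\varphi\leftrightarrow\varphi^{-1}$ and $S\leftrightarrow\varphi(S)$ yields the reverse inclusion, and the induced isomorphism $\mathcal{P}(A,S)\to\mathcal{P}(A',\varphi(S))$ is then obtained by passing to the quotient. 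The main technical obstacle is the chain rule itself: one must set up the completed tensor product so that $\delta_\xi(\varphi(a))$ is unambiguously defined when $\varphi(a)$ is an infinite series, and carefully track the outer bimodule action throughout the non-commutative Leibniz computation. A cleaner alternative that avoids term-by-term bookkeeping is to observe that $u\mapsto\delta_\xi(\varphi(u))$ and the candidate expression on the right are two continuous derivations on $R\langle\langle A\rangle\rangle$ that agree on the generating set $A$, and hence coincide by uniqueness.
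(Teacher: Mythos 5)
Your argument reproduces the paper's proof almost verbatim: your $\delta_\xi$ is the paper's $\Delta_\xi$, your factorization $\partial_\xi=\mu\circ\tau\circ\delta_\xi$ is the paper's $\square$ operation in disguise, and the chain rule followed by the symmetric application to $\varphi^{-1}$ is exactly how the paper obtains both $J(\varphi(S))\subseteq\varphi(J(S))$ and the reverse inclusion. One slip worth correcting: $\delta_\xi$ must take values in (the completion of) $R\langle\langle A'\rangle\rangle\otimes_{K}R\langle\langle A'\rangle\rangle$, not $\otimes_R$ --- in the $j$-th term of $\delta_\xi(b_1\cdots b_m)$ the left factor ends at $h(b_j)$ while the right factor starts at $t(b_j)$, so tensoring over $R$ would annihilate every term in which $b_j$ is not a loop, forcing $\delta_\xi\equiv 0$ on loop-free quivers.
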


\begin{proof}
We start by developing some ``differential calculus" for cyclic derivatives.
We need a few pieces of notation.
We set
$$R\langle\langle A\rangle\rangle \widehat \otimes R\langle\langle A\rangle\rangle
= \prod_{d,e \geq 0} (A^d \otimes A^e)$$
(the tensor product on the right is over the base field~$K$),
and view this space as a topological vector space with
a basic system of open neighborhoods of~$0$ formed by the sets
$
\prod_{d+e \geq n} (A^d \otimes A^e)$ for all $n \geq 0$;
thus, $R\langle A\rangle \otimes R\langle A\rangle$ is dense in
$R\langle\langle A\rangle\rangle \widehat \otimes R\langle\langle A\rangle \rangle$.
Now, for every $\xi \in A^\star$, we define a continuous $K$-linear map
$$
\Delta_\xi: R\langle\langle A\rangle\rangle\to
R\langle\langle A\rangle\rangle \widehat \otimes R\langle\langle A\rangle\rangle
$$
by setting $\Delta_\xi (e) = 0$ for $e \in R = A^0$, and
\begin{equation}
\label{eq:delta-xi}
\Delta_\xi(a_1 \cdots a_d) = \sum_{k=1}^d \xi(a_k) a_1 \cdots
a_{k-1} \otimes a_{k+1} \cdots a_d
\end{equation}
for any path $a_1 \cdots a_d$ of length $d \geq 1$.
Note that $\Delta_\xi$ does not depend on the choice of a path basis.
We will use the same convention as for cyclic derivatives: for $a \in Q_1$,
we write $\Delta_a$ instead of $\Delta_{a^\star}$.
For instance, in the situation of Example~\ref{ex:two-arrows}, we have
$$\Delta_a ((ab)^n) = \sum_{k=1}^n (ab)^{k-1} \otimes b (ab)^{n-k}, \quad
\Delta_b ((ab)^n) = \sum_{k=1}^n (ab)^{k-1}a \otimes (ab)^{n-k}.$$

Next, we denote by $(f,g) \mapsto f \square g$
a continuous $K$-bilinear map
$(R\langle\langle A\rangle\rangle \widehat \otimes
R\langle \langle A\rangle\rangle) \times R\langle\langle A\rangle\rangle
\to R\langle\langle A\rangle\rangle$
given by
\begin{equation}
\label{eq:square}
(u \otimes v) \square g= v g u
\end{equation}
for $u, v \in R\langle A\rangle$.
We are now ready to state the Leibniz rule.

\begin{lemma}[Cyclic Leibniz rule]
Let $f \in R\langle\langle A\rangle\rangle_{i,j}$ and
$g \in R\langle\langle A\rangle\rangle_{j,i}$ for some
vertices $i$ and $j$.
Then for every $\xi \in A^\star$, we have
\begin{equation}
\label{eq:leibniz}
\partial_\xi (fg)=\Delta_\xi(f)\square g + \Delta_\xi(g)\square f.
\end{equation}
More generally, for any finite sequence of vertices
$i_1, \dots, i_d, i_{d+1} = i_1$ and for any $f_1, \dots f_d$ such that
$f_k \in R\langle\langle A\rangle\rangle_{i_k,i_{k+1}}$,
we have
\begin{equation}
\label{eq:leibniz-several}
\partial_\xi (f_1 \cdots f_d)= \sum_{k=1}^d
\Delta_\xi(f_k)\square (f_{k+1}\cdots f_d f_1 \cdots f_{k-1}).
\end{equation}
\end{lemma}

\begin{proof}
It is enough to check \eqref{eq:leibniz} in the case where
$f = a_1 \cdots a_d$ and $g = a_{d+1} \cdots a_{d+s}$ are two
paths such that $t(a_d) = h(a_{d+1})$ and $t(a_{d+s}) = h(a_1)$.
Using \eqref{eq:cyclic-derivative}, we obtain
$$\partial_\xi (fg) =
\sum_{k=1}^{d+s} \xi(a_k) a_{k+1} \cdots a_{d+s} a_1 \cdots a_{k-1}.$$
Comparing this expression with \eqref{eq:delta-xi} and
\eqref{eq:square}, we see that the part of the last sum where~$k$
runs from~$1$ to~$d$ (resp.  from~$d+1$ to~$d+s$) is equal to
$\Delta_\xi(f)\square g$ (resp. to $\Delta_\xi(g)\square f$),
proving \eqref{eq:leibniz}.
The identity \eqref{eq:leibniz-several} follows from
\eqref{eq:leibniz} by induction on~$d$.
\end{proof}

\begin{lemma}[Cyclic chain rule]
\label{lem:chain-rule}
Suppose that $\varphi:R\langle\langle A\rangle\rangle\to R\langle\langle
A'\rangle\rangle$ is an algebra homomorphism as in
Proposition~\ref{pr:automorphisms}.
Then, for every potential $S \in R\langle\langle A\rangle\rangle)_{\rm cyc}$
and $\xi \in A'^\star$, we have:
\begin{equation}
\label{eq:chain-rule}
\partial_{\xi}(\varphi(S)) =
\sum_{a \in Q_1} \Delta_\xi (\varphi(a)) \square
\varphi(\partial_{a}(S)).
\end{equation}
\end{lemma}

\begin{proof}
It suffices to treat the case where $S = a_{1} \cdots a_{d}$ is a cyclic path.
Applying \eqref{eq:leibniz-several} and \eqref{eq:cyclic-derivative}, we obtain
\begin{align*}
\partial_\xi(\varphi(S)) &=
\sum_{k=1}^d
\Delta_\xi(\varphi(a_k)) \square (\varphi(a_{k+1} \cdots
a_d a_1 \cdots a_{k-1}))\\
&= \sum_{a \in Q_1} \Delta_\xi (\varphi(a)) \square
\varphi(\sum_{k: a_k = a} a_{k+1} \cdots
a_d a_1 \cdots a_{k-1})\\
&=
\sum_{a \in Q_1} \Delta_\xi (\varphi(a)) \square
\varphi(\partial_{a}(S)),
\end{align*}
as desired.
\end{proof}

Now we are ready to prove
Proposition~\ref{pr:automorphism-respects-jacobian}.
By Lemma~\ref{lem:chain-rule}, for every
$\xi \in A'^\star$, the element
$\partial_\xi(\varphi(S))$ lies in the ideal generated by
the elements $\varphi(\partial_{a}(S))$ for $a \in Q_1$, hence, it
lies in $\varphi(J(S))$.
Thus, we have the inclusion
$$
J(\varphi(S)) \subseteq \varphi(J(S)).
$$
We can also apply this to the inverse isomorphism $\varphi^{-1}$
and the potential $\varphi(S)$:
$$
J(S)=J(\varphi^{-1}(\varphi(S))) \subseteq \varphi^{-1}(J(\varphi(S)).
$$
Applying $\varphi$ to both sides yields
$$
\varphi(J(S))\subseteq J(\varphi(S)),
$$
completing the proof.
\end{proof}

\section{Quivers with potentials}
\label{sec:QP}

We now introduce our main objects of study.

\begin{definition}
\label{def:QP}
Suppose $Q$ is a quiver with the arrow span~$A$, and
$S \in  R \langle \langle A \rangle \rangle_{\rm cyc}$ is a potential.
We say that a pair $(Q,S)$ (or $(A,S)$) is a \emph{quiver with potential}
(QP for short) if it satisfies the following two conditions:
\begin{equation}
\label{eq:no-loops}
\text{The quiver~$Q$ has no loops, i.e., $A_{i,i} = 0$ for all $i \in Q_0$.}
\end{equation}
\begin{equation}
\label{eq:no-cyclic-repeats}
\text{No two cyclically equivalent cyclic paths appear in the
decomposition of~$S$.}
\end{equation}
\end{definition}

In view of \eqref{eq:no-loops}, every potential~$S$ belongs to
${\mathfrak m}(A)^2$; and condition \eqref{eq:no-cyclic-repeats}
excludes, for instance, any non-zero potential~$S$ cyclically
equivalent to~$0$.

\begin{definition}
\label{def:AS-isomorphism}
Let $(A,S)$ and $(A',S')$ be QPs 
on the same vertex set~$Q_0$.
By a \emph{right-equivalence} between $(A,S)$ and $(A',S')$ we mean
an algebra isomorphism
$\varphi: R \langle \langle A \rangle \rangle \to
R \langle \langle A' \rangle \rangle$ such that $\varphi|_R = {\rm id}$, and
$\varphi(S)$ is
cyclically equivalent to~$S'$ (see
Definition~\ref{def:cyclic-equivalence}).
\end{definition}

In view of Proposition~\ref{pr:cyclical-equiv-thru-trace},
any algebra homomorphism
$R \langle \langle A \rangle \rangle \to
R \langle \langle A' \rangle \rangle$ such that $\varphi|_R = {\rm id}$,
sends cyclically equivalent potentials to cyclically
equivalent ones.
It follows that right-equivalences of QPs 
have the expected properties: the composition of two right-equivalences,
as well as the inverse of a right-equivalence, is again a right-equivalence.
Note also that an isomorphism $\varphi: R \langle \langle A \rangle \rangle \to
R \langle \langle A' \rangle \rangle$ induces an isomorphism of
$R$-bimodules $A$ and $A'$ (cf. Proposition~\ref{pr:automorphisms}), so in dealing with
right-equivalent QPs 
we can assume without loss of generality that $A = A'$.

In view of Propositions~\ref{pr:cyclic-equivalence}
and \ref{pr:automorphism-respects-jacobian},
any right-equivalence of QPs 
$(A,S) \cong (A',S')$ induces
an isomorphism of the Jacobian ideals $J(S) \cong J(S')$
and of the Jacobian algebras ${\mathcal P}(A,S) \cong {\mathcal P}(A',S')$.

For every two QPs 
$(A,S)$ and $(A',S')$ (on the same set of vertices~$Q_0$), we can form their \emph{direct sum}
$(A,S) \oplus (A',S') = (A \oplus A',S + S')$; it is well-defined since both complete path
algebras $R\langle\langle A\rangle\rangle$ and $R\langle\langle A'\rangle\rangle$
have canonical embeddings into $R\langle\langle A \oplus
A'\rangle\rangle$ as closed $R$-subalgebras.

We start our analysis of QPs 
with the case $S \in A^2$.
In this case,  $J(S)$ is the closure of the ideal generated by
the subspace
\begin{equation}
\label{eq:partial-S}
\partial S = \{\partial_\xi(S) \mid \xi \in A^\star\} \subseteq A.
\end{equation}

\begin{definition}
\label{def:trivial}
We say that a QP 
$(A,S)$ is \emph{trivial}
if $S \in A^2$, and $\partial S = A$, or, equivalently,
${\mathcal P}(A,S) = R$.
\end{definition}

The following description of trivial QPs 
is seen by standard linear algebra.

\begin{proposition}
\label{pr:trivial-potential}
A QP 
$(A,S)$ with $S \in A^2$ is trivial
if and only if the set of arrows $Q_1$
consists of $2N$ distinct arrows
$a_1, b_1, \dots, a_N, b_N$ such that
each $a_k b_k$ is a cyclic $2$-path, and
there is a change of arrows~$\varphi$
(see Definition~\ref{def:automorphisms}) such that
$\varphi(S)$ is cyclically equivalent to $a_1 b_1 + \cdots + a_N b_N$.
\end{proposition}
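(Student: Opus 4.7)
I'll treat the two implications in turn. For the easy direction ($\Leftarrow$), suppose $Q_1 = \{a_1, b_1, \ldots, a_N, b_N\}$ consists of $2N$ distinct arrows with each $a_k b_k$ a cyclic $2$-path, and that a change of arrows~$\varphi$ exists with $\varphi(S)$ cyclically equivalent to $W := a_1 b_1 + \cdots + a_N b_N$. Formula \eqref{eq:cyclic-derivative} gives $\partial_{a_k}(W) = b_k$ and $\partial_{b_k}(W) = a_k$ (all other contributions vanish because the $2N$ arrows are distinct), so $\partial W = A$ and hence $J(W) = \mathfrak{m}$. By Proposition~\ref{pr:cyclic-equivalence} we get $J(\varphi(S)) = J(W) = \mathfrak{m}$, and by Proposition~\ref{pr:automorphism-respects-jacobian} we get $\varphi(J(S)) = J(\varphi(S)) = \mathfrak{m}$. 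Since~$\varphi$ fixes~$R$ it preserves the unique maximal ideal $\mathfrak{m}$, so $J(S) = \mathfrak{m}$ and ${\mathcal P}(A,S) = R$, proving that $(A,S)$ is trivial.

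For the hard direction ($\Rightarrow$), I encode~$S$ as a collection of bilinear forms. By \eqref{eq:no-loops}, $S \in A^2_{\rm cyc} = \bigoplus_{i \neq j} A_{i,j} \otimes_K A_{j,i}$, and for each unordered pair $\{i,j\}$ the cyclic equivalence $b \otimes a \sim a \otimes b$ identifies $A_{j,i} \otimes A_{i,j}$ with $A_{i,j} \otimes A_{j,i}$; the two components $S^{(i,j)} \in A_{i,j}\otimes A_{j,i}$ and $S^{(j,i)} \in A_{j,i}\otimes A_{i,j}$ of~$S$ thus descend to a single element of $A_{i,j}\otimes A_{j,i}$, which I interpret in the arrow basis as a bilinear form $B_{\{i,j\}}\colon A_{i,j} \times A_{j,i} \to K$. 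Condition \eqref{eq:no-cyclic-repeats} is exactly what makes this assembly well-defined, since the coefficient tables of $S^{(i,j)}$ and $S^{(j,i)}$ cannot both be non-zero at the same cyclic class. Computing cyclic derivatives directly from \eqref{eq:cyclic-derivative} yields $\partial_a S = \sum_{b \in A_{i,j}} B_{\{i,j\}}(b,a)\, b$ for every arrow $a \in A_{j,i}$, so the triviality condition $\partial S = A$ translates exactly into non-degeneracy of every $B_{\{i,j\}}$. In particular $\dim A_{i,j} = \dim A_{j,i}$, which allows me to label the arrows as $a_1, b_1, \ldots, a_N, b_N$ with $|Q_1| = 2N$ and each $a_k b_k$ cyclic.

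It remains to construct~$\varphi$. I specify a bimodule automorphism $T\colon A \to A$ by independently choosing its restrictions $T_{i,j}\colon A_{i,j} \to A_{i,j}$; a direct calculation of $(T\otimes T)(S)$ shows that the new bilinear form equals $T_{i,j}\, B_{\{i,j\}}\, T_{j,i}^t$ (the two blocks $S^{(i,j)}$ and $S^{(j,i)}$ contribute correctly combined terms). Invertibility of $B_{\{i,j\}}$ then lets me pick, say, $T_{i,j} = \id$ and $T_{j,i} = (B_{\{i,j\}}^t)^{-1}$, making the transformed bilinear form equal to the identity. By Proposition~\ref{pr:automorphisms}, $T$ extends (with $\varphi^{(2)} = 0$) to a change of arrows~$\varphi$. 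The potentials $\varphi(S)$ and $\sum_k a_k b_k$ now induce the same bilinear form on every pair, so they have the same image in $\Tr(R\langle\langle A\rangle\rangle)$ and are cyclically equivalent by Proposition~\ref{pr:cyclical-equiv-thru-trace}. The main technical obstacle is the bookkeeping in the previous paragraph --- checking that the entries of $B_{\{i,j\}}$ are consistently assembled from the two blocks of~$S$ living at different idempotents, and that the transformation rule for $B_{\{i,j\}}$ combines both blocks correctly under $T \otimes T$.
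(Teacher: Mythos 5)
The paper gives no proof of this proposition beyond the remark that it is ``seen by standard linear algebra''; your argument is a correct and complete version of exactly that argument: encode the degree-$2$ potential as a family of bilinear forms $B_{\{i,j\}}$, observe via \eqref{eq:cyclic-derivative} that $\partial S = A$ is equivalent to nondegeneracy of each $B_{\{i,j\}}$ (forcing $\dim A_{i,j} = \dim A_{j,i}$), and then diagonalize by a congruence-type change of arrows. One minor overstatement: condition~\eqref{eq:no-cyclic-repeats} is not what makes the assembly of the forms $B_{\{i,j\}}$ well-defined --- adding the coefficient tables of the two blocks $S^{(i,j)}$ and $S^{(j,i)}$ after a transposition is always well-defined, and the forms depend only on the image of $S$ in the trace space; that hypothesis merely normalizes the chosen representative $S$ and plays no further role in the proof.
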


Returning to general QPs, 
we now show that taking direct sums with trivial ones
does not affect the Jacobian algebra.

\begin{proposition}
\label{pr:jacobian-algebra-invariant}
If $(A,S)$ is an arbitrary QP, 
and $(C,T)$ is a trivial one, then the canonical embedding
$R\langle\langle A \rangle\rangle \to R\langle\langle A \oplus
C \rangle\rangle$
induces an isomorphism of Jacobian algebras
${\mathcal P}(A, S) \to {\mathcal P}(A \oplus C, S + T)$.
\end{proposition}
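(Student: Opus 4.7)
The plan is to construct an explicit two-sided inverse to the algebra homomorphism $\bar\iota:\mathcal{P}(A,S)\to\mathcal{P}(A\oplus C,S+T)$ induced by the canonical embedding $\iota: R\langle\langle A\rangle\rangle\hookrightarrow R\langle\langle A\oplus C\rangle\rangle$.

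First I would reduce to a standard form for $T$. By Proposition~\ref{pr:trivial-potential}, there is a change of arrows $\varphi_0$ on $C$ such that $\varphi_0(T)$ is cyclically equivalent to $T^\circ=a_1b_1+\cdots+a_Nb_N$, with each $(a_k,b_k)$ a cyclic $2$-path in $C$. Extending $\varphi_0$ by the identity on $A$ via Proposition~\ref{pr:automorphisms} yields an automorphism of $R\langle\langle A\oplus C\rangle\rangle$ providing a right-equivalence $(A\oplus C,S+T)\simeq(A\oplus C,S+T^\circ)$ that is the identity on the subalgebra $R\langle\langle A\rangle\rangle$. By Proposition~\ref{pr:automorphism-respects-jacobian}, the induced isomorphism $\mathcal{P}(A\oplus C,S+T)\cong\mathcal{P}(A\oplus C,S+T^\circ)$ commutes with $\bar\iota$, so we may assume $T=T^\circ$.

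Next I would verify that $\bar\iota$ is well defined. The embedding $\iota$ is continuous, since $\iota^{-1}(\mathfrak{m}(A\oplus C)^n)=\mathfrak{m}(A)^n$. For every $\xi\in A^\star$ (extended by zero on $C$), we have $\partial_\xi(T)=0$ because $T$ involves no $A$-arrows, hence $\iota(\partial_\xi(S))=\partial_\xi(S+T)\in J(S+T)$. Continuity of $\iota$ and closedness of $J(S+T)$ then give $\iota(J(S))\subseteq J(S+T)$.

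For the inverse, I invoke Proposition~\ref{pr:automorphisms} to define the continuous algebra homomorphism $\rho:R\langle\langle A\oplus C\rangle\rangle\to R\langle\langle A\rangle\rangle$ corresponding to the pair $\rho^{(1)}=(\mathrm{id}_A,0):A\oplus C\to A$ and $\rho^{(2)}=0$; clearly $\rho\circ\iota=\mathrm{id}$. I would then check $\rho(J(S+T))\subseteq J(S)$ on generators: for $\xi\in A^\star$, $\rho(\partial_\xi(S+T))=\partial_\xi(S)\in J(S)$; for $\xi=a_k^\star$ (resp.\ $b_k^\star$), $\partial_\xi(S+T)=b_k$ (resp.\ $a_k$) since $S$ involves no $C$-arrows, and $\rho$ kills these. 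Continuity of $\rho$ and closedness of $J(S)$ then yield a well-defined $\bar\rho:\mathcal{P}(A\oplus C,S+T)\to\mathcal{P}(A,S)$.

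The composition $\bar\rho\circ\bar\iota$ is the identity because $\rho\circ\iota=\mathrm{id}$. The main obstacle is to show $\bar\iota\circ\bar\rho=\mathrm{id}$; since $x-\iota\rho(x)\in\ker\rho$ for every $x$, this reduces to the inclusion $\ker\rho\subseteq J(S+T)$. Here the explicit form of $T=T^\circ$ is essential: each $a_k,b_k$ is a cyclic derivative of $T$ and thus belongs to $J(S+T)$, so the entire $R$-bimodule $C$ lies in $J(S+T)$. Hence every path containing an arrow of $C$ lies in the two-sided ideal $J(S+T)$, and since $\ker\rho$ is the closure of the $K$-span of such paths while $J(S+T)$ is closed, we obtain $\ker\rho\subseteq J(S+T)$. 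The only delicate point is this passage from individual paths to the closed kernel, and it is handled by the closedness of $J(S+T)$ in the $\mathfrak{m}(A\oplus C)$-adic topology.
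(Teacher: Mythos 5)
Your proof is correct and follows essentially the same route as the paper's: your retraction $\rho$ is the projection of $R\langle\langle A\oplus C\rangle\rangle$ onto $R\langle\langle A\rangle\rangle$ along the closed ideal $L=\ker\rho$ generated by $C$, and your two inclusions $\ker\rho\subseteq J(S+T)$ and $\rho(J(S+T))\subseteq J(S)$ simply unpack the paper's one-line assertion that $R\langle\langle A\oplus C\rangle\rangle=R\langle\langle A\rangle\rangle\oplus L$ with $J(S+T)=J(S)\oplus L$. The normalization of $T$ to $\sum a_kb_k$ is harmless but unnecessary: the defining property $\partial T=C$ of a trivial QP already gives $\{\partial_\xi(S+T)\mid\xi\in C^\star\}=\{\partial_\xi T\mid\xi\in C^\star\}=C$, hence $C\subseteq J(S+T)$, without choosing coordinates.
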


\begin{proof}
Let~$L$ denote the closure of the two-sided ideal in
$R \langle \langle A \oplus C \rangle \rangle$ generated by $C$;
thus, $L$ is the set of all (possibly infinite) linear combinations of paths,
each of which contains at least one arrow from $C$.
The definitions readily imply that
$$R \langle \langle A \oplus C \rangle \rangle =
R \langle \langle A \rangle \rangle \oplus L,$$
and
$$J(S+T) = J(S) \oplus L$$
(in the last equality, $J(S)$ is understood as the Jacobian ideal
of~$S$ in $R \langle \langle A \rangle \rangle$).
Therefore,
\begin{align*}
{\mathcal P}(A \oplus C, S + T) &=
R \langle \langle A \oplus C \rangle \rangle/J(S+T) =
(R \langle \langle A \rangle \rangle \oplus L)/ (J(S) \oplus L)\\
& \cong R \langle \langle A \rangle \rangle/ J(S) =
{\mathcal P}(A, S),
\end{align*}
as desired.
\end{proof}

For an arbitrary QP $(A,S)$, we denote by $S^{(2)} \in A^2$ the degree~$2$
homogeneous component of~$S$.
We call $(A,S)$ \emph{reduced}
if $S^{(2)} = 0$, i.e., $S \in {\mathfrak m}(A)^3$.
We define the \emph{trivial} and \emph{reduced} arrow spans of $(A,S)$
as the finite-dimensional $R$-bimodules given by
\begin{equation}
\label{eq:A-triv}
A_{\rm triv} = A_{\rm triv}(S) = \partial S^{(2)}, \quad
A_{\rm red} = A_{\rm red}(S) = A/\partial S^{(2)} \ .
\end{equation}
(see \eqref{eq:partial-S}).


The following statement will play a crucial role in later sections.

\begin{theorem}[Splitting Theorem]
\label{th:trivial-reduced-splitting}
For every QP $(A,S)$ with the trivial arrow span $A_{\rm triv}$
and the reduced arrow span $A_{\rm red}$, there exist a trivial
QP $(A_{\rm triv}, S_{\rm triv})$ and a reduced QP
$(A_{\rm red}, S_{\rm red})$ such that $(A,S)$
is right-equivalent to the direct sum
$(A_{\rm triv}, S_{\rm triv}) \oplus (A_{\rm red}, S_{\rm red})$.
Furthermore, the right-equivalence class of
each of the QPs $(A_{\rm triv}, S_{\rm triv})$ and
$(A_{\rm red}, S_{\rm red})$
is determined by the right-equivalence class of $(A,S)$.
\end{theorem}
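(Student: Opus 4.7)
The plan is to prove existence via an iterative construction of unitriangular automorphisms converging in the ${\mathfrak m}$-adic topology, and uniqueness via a parallel argument that straightens any right-equivalence between two split presentations. To begin the existence proof, first normalize $S^{(2)}$. The component $S^{(2)}\in A^2$ assembles from bilinear pairings $A_{ij}\otimes A_{ji}\to K$ for pairs $i\neq j$ (loops are ruled out by \eqref{eq:no-loops}). Standard linear algebra for bilinear forms produces a change of arrows (an $R$-bimodule automorphism of $A$ in the sense of Definition~\ref{def:automorphisms} with $\varphi^{(2)}=0$) and a complementary subbimodule $A_{\rm red}'$ so that $A = A_{\rm triv}\oplus A_{\rm red}'$ and $S^{(2)}\in A_{\rm triv}^2$; Proposition~\ref{pr:trivial-potential} then lets me further assume $S^{(2)}$ is cyclically equivalent to $\sum_{k=1}^N a_k b_k$ for a distinguished basis $a_1,b_1,\ldots,a_N,b_N$ of $A_{\rm triv}$. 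Thus I may write $S=\sum_k a_k b_k + T$ with $T\in{\mathfrak m}^3$, and the task reduces to eliminating from $T$ every cyclic monomial containing at least one arrow of $A_{\rm triv}$ (henceforth \emph{bad} monomials).

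The elimination is an inductive construction of unitriangular automorphisms $\varphi_n$ ($n\geq 3$) of strictly increasing depth. Assume the current potential equals $\sum_k a_k b_k + T_n$ cyclically, with all bad monomials of $T_n$ in ${\mathfrak m}^n$ (the base case $n=3$ being the starting position). Cyclically rotating, write the bad degree-$n$ part of $T_n$ as $\sum_k(a_k P_k + b_k Q_k)$ with $P_k,Q_k\in{\mathfrak m}^{n-1}$ having the appropriate vertex structure, and define $\varphi_n(a_k)=a_k-Q_k$, $\varphi_n(b_k)=b_k-P_k$, $\varphi_n(v)=v$ for $v\in A_{\rm red}'$; the depth of $\varphi_n$ is $n-2$. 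A direct expansion gives
\begin{equation*}
\varphi_n\!\Big(\sum_k a_k b_k\Big) = \sum_k a_k b_k - \sum_k a_k P_k - \sum_k Q_k b_k + \sum_k Q_k P_k,
\end{equation*}
where $\sum_k Q_k P_k\in{\mathfrak m}^{2n-2}\subseteq{\mathfrak m}^{n+1}$ for $n\geq 3$, while $-\sum_k Q_k b_k\equiv -\sum_k b_k Q_k$ cyclically. Combined with the bound \eqref{eq:unitriangular-d} giving $\varphi_n(T_n)\equiv T_n\pmod{{\mathfrak m}^{n+1}}$, the bad part of $\varphi_n(\sum_k a_k b_k + T_n)$ is pushed into ${\mathfrak m}^{n+1}$, completing the induction. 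The sequence of partial compositions $\Phi_n=\varphi_n\circ\cdots\circ\varphi_3$ is Cauchy on every ${\mathfrak m}^m$ (again by \eqref{eq:unitriangular-d}) and converges to a continuous automorphism $\varphi$ of $R\langle\langle A\rangle\rangle$; then $\varphi(S)$ is cyclically equivalent to $\sum_k a_k b_k + S_{\rm red}$ with $S_{\rm red}\in R\langle\langle A_{\rm red}'\rangle\rangle_{\rm cyc}\cap{\mathfrak m}^3$, establishing existence of the splitting.

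For uniqueness, suppose $(A_{\rm triv}\oplus A_{\rm red}, S_{\rm triv}+S_{\rm red})$ and $(A_{\rm triv}\oplus A_{\rm red}, S'_{\rm triv}+S'_{\rm red})$ are related by a right-equivalence $\varphi$. Extracting degree-$2$ parts and using $\varphi(S_{\rm red})\in{\mathfrak m}^3$ yields $\varphi^{(1)}(S_{\rm triv})\equiv S'_{\rm triv}$ cyclically, whence Proposition~\ref{pr:automorphism-respects-jacobian} (in the guise of its degree-$1$ consequence) forces $\varphi^{(1)}(A_{\rm triv})=A_{\rm triv}$; the restriction $\varphi^{(1)}|_{A_{\rm triv}}$ then supplies a right-equivalence $(A_{\rm triv},S_{\rm triv})\cong(A_{\rm triv},S'_{\rm triv})$. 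For the reduced pieces, I would modify $\varphi$ iteratively, at each step post-composing with a unitriangular automorphism of the target (constructed in the same style as the existence iteration, powered by the nondegeneracy of $S'_{\rm triv}$) designed to kill at the next degree both the off-diagonal components of the degree-$1$ part and the cross-terms of the higher-order parts. In the ${\mathfrak m}$-adic limit the modified automorphism respects the direct-sum decomposition at the level of arrow spans, and restriction to $R\langle\langle A_{\rm red}\rangle\rangle$ delivers the desired right-equivalence $(A_{\rm red},S_{\rm red})\cong(A_{\rm red},S'_{\rm red})$. The main obstacle throughout is the topological bookkeeping: verifying that an infinite composition of unitriangular automorphisms with depths tending to infinity genuinely converges to a continuous $R$-algebra automorphism (not merely a $K$-linear map), and that the inductive cancellation at step $n$ does not reintroduce bad monomials of degree $<n$ via the cyclic equivalence relation. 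These are exactly the technical topological points the authors defer to the Appendix.
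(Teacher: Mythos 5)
Your existence argument is sound and essentially identical in mechanism to the paper's. You normalize as in Proposition~\ref{pr:trivial-potential}, isolate the bad cyclic monomials as $\sum_k(a_kP_k + b_kQ_k)$ after cyclic rotation, and kill them with unitriangular substitutions $\varphi_n(a_k)=a_k-Q_k$, $\varphi_n(b_k)=b_k-P_k$. The paper does the same thing but organizes the iteration so the ``split depth'' doubles at each step (a potential is $d$-split, then $2d$-split), whereas you increment by one; both produce an $\mathfrak m$-adically convergent composition, so the difference is cosmetic. Your treatment of the trivial summand's uniqueness is also correct: extracting degree~$2$, observing $\partial(\varphi^{(1)}S_{\rm triv})=\varphi^{(1)}(\partial S_{\rm triv})=\varphi^{(1)}(A_{\rm triv})$ while $\partial S'_{\rm triv}=A_{\rm triv}$, hence $\varphi^{(1)}$ preserves $A_{\rm triv}$ and restricts to a change-of-arrows right-equivalence of the trivial pieces.

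The gap is in the uniqueness of the reduced part, and it is not merely a deferred technicality. You propose to modify $\varphi$ by post-composing with unitriangular corrections until the limit respects the $R$-algebra splitting $R\langle\langle A_{\rm triv}\rangle\rangle\,\widehat\otimes\,R\langle\langle A_{\rm red}\rangle\rangle$ and then restrict. But that is not how the paper proceeds, and for good reason: the central obstruction is that if one simply projects $\varphi$ along the ideal $L$ generated by $A_{\rm triv}$, the induced map $\psi=p\varphi|_{R\langle\langle A_{\rm red}\rangle\rangle}$ is an automorphism of $R\langle\langle A_{\rm red}\rangle\rangle$, but $\psi(S_{\rm red})$ is \emph{not} cyclically equivalent to $S'_{\rm red}$; the discrepancy $S'_{\rm red}-\psi(S_{\rm red})$ is only controlled modulo $J(\psi(S_{\rm red}))^2$ (this comes from $T\in C^2$ being pushed, via $\varphi(L)\subseteq\varphi(J)+L$, into $\psi(J)^2$ after projection). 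Showing that this $J^2$-error does not change the right-equivalence class is the heart of the matter, and the paper isolates it as Proposition~\ref{pr:up-to-square}: if $S'-S\in J(S)^2$ for reduced QPs then $J(S')=J(S)$ and $(A,S)\cong(A,S')$. Its proof needs the noncommutative Taylor expansion (Lemma~\ref{lem:noncommutative-Taylor}) together with a genuinely topological closure statement (Lemma~\ref{lem:tr-IJ}, the one actually proved in the Appendix), and then an $\mathfrak m$-adic iteration whose correction terms $b_{kn}$ must lie in $\mathfrak m^{n+1}\cap J(S)$ -- not just $\mathfrak m^{n+1}$. Your sketch never surfaces this error term or any claim in the shape of Proposition~\ref{pr:up-to-square}, so the iteration you describe has no identified target statement and no mechanism to guarantee that the corrections both kill the cross-terms and preserve the image potential up to cyclic equivalence. (Relatedly, what the Appendix defers is the closed-ideal Lemma~\ref{lem:tr-IJ}, not the convergence of infinite unitriangular compositions, which is elementary.) To complete your argument you would need to formulate and prove the ``up to $J^2$'' reduction explicitly; as written, the reduced-part uniqueness is a genuine gap.
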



Let us first prove the existence of a desired right-equivalence
\begin{equation}
\label{eq:decomposition}
(A,S) \cong (A_{\rm triv}, S_{\rm triv}) \oplus (A_{\rm red}, S_{\rm red}).
\end{equation}
There is nothing to prove if $(A,S)$ is reduced, so let us assume that $S^{(2)} \neq 0$.
Using Proposition~\ref{pr:trivial-potential} and replacing~$S$
if necessary by a cyclically equivalent
potential, we can assume that~$S$ is of the form
\begin{equation}
\label{eq:S-normal-form}
S = \sum_{k=1}^N (a_k b_k + a_k u_k + v_k b_k) + S',
\end{equation}
where each $a_k b_k$ is a cyclic $2$-path, the arrows $a_1, b_1, \dots, a_N, b_N$
form a basis of $A_{\rm triv}$, the elements $u_k$ and $v_k$ belong to ${\mathfrak m}^2$,
and the potential $S' \in {\mathfrak m}^3$
is a linear combination of cyclic paths containing none of the arrows~$a_k$ or~$b_k$.
The existence of a right-equivalence \eqref{eq:decomposition}
becomes a consequence of the following lemma.

\begin{lemma}
\label{lem:killing-u-v}
For every potential~$S$ of the form
\eqref{eq:S-normal-form}, there exists
a unitriangular automorphism $\varphi$
of $R\langle\langle A \rangle\rangle$ such that
$\varphi(S)$ is cyclically equivalent to a
potential of the form \eqref{eq:S-normal-form} with $u_k = v_k = 0$ for all~$k$.
\end{lemma}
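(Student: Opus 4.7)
\emph{Strategy.}  My plan is to construct $\varphi$ as the ${\mathfrak m}$-adic limit of an infinite composition of unitriangular automorphisms $\psi_2, \psi_3, \ldots$ of strictly increasing depth, each one killing the lowest-degree piece of the correction $\sum_k (a_k u_k + v_k b_k)$.  More precisely, I will argue by induction on $n \geq 2$ that $S$ can be brought, via a unitriangular right-equivalence, to a potential of the form \eqref{eq:S-normal-form} with the sharpened property $u_k, v_k \in {\mathfrak m}^n$; the base case $n = 2$ is the hypothesis, and passing to the limit along the resulting tower of automorphisms forces $u_k = v_k = 0$.

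\emph{The kill step.}  For the inductive passage from $n$ to $n+1$, I use Proposition~\ref{pr:automorphisms} to define $\psi_n$ by $\psi_n^{(1)} = \mathrm{id}_A$ and $\psi_n^{(2)}(a_k) = -v_k$, $\psi_n^{(2)}(b_k) = -u_k$, with $\psi_n^{(2)}(c) = 0$ for every other arrow $c$ --- a unitriangular automorphism of depth $n-1$.  The guiding observation is the quadratic cancellation
\[
(a_k - v_k)(b_k - u_k) + (a_k - v_k)\,u_k + v_k\,(b_k - u_k) \;=\; a_k b_k - v_k u_k,
\]
so the substitution converts $a_k b_k + a_k u_k + v_k b_k$ into $a_k b_k$ plus the higher-order leftover $-v_k u_k \in {\mathfrak m}^{2n}$, up to corrections coming from the fact that $u_k, v_k$ are themselves moved by $\psi_n$.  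By \eqref{eq:unitriangular-d} those corrections lie in ${\mathfrak m}^{2n-1}$ on the factors, hence in ${\mathfrak m}^{2n}$ after multiplication by an arrow; and the same inequality bounds $\psi_n(S') - S' \in {\mathfrak m}^{n+2}$.  Together these give $\psi_n(S) = \sum_k a_k b_k + S' + R_n$ with $R_n \in {\mathfrak m}^{n+2}$.

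\emph{Renormalization, convergence, and main obstacle.}  To return $\psi_n(S)$ to the normal form \eqref{eq:S-normal-form}, I rewrite $R_n$ up to cyclic equivalence: each cyclic monomial of $R_n$ containing some $a_k$ is rotated to begin with that $a_k$ and collected into a new $a_k u_k^{\mathrm{new}}$; monomials containing no $a_j$ but some $b_k$ are rotated to end in $b_k$ and collected into $v_k^{\mathrm{new}} b_k$; monomials in none of the distinguished arrows feed an updated $S'$.  Because $R_n \subseteq {\mathfrak m}^{n+2}$ and each stripped arrow has length one, the new $u_k^{\mathrm{new}}, v_k^{\mathrm{new}}$ lie in ${\mathfrak m}^{n+1}$, closing the induction.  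The partial compositions $\Psi_N = \psi_N \circ \cdots \circ \psi_2$ are Cauchy in the ${\mathfrak m}$-adic topology, since \eqref{eq:unitriangular-d} gives $\Psi_{N+1}(x) - \Psi_N(x) \in {\mathfrak m}^{k+N}$ for $x \in {\mathfrak m}^k$; the limit $\varphi$ has $\varphi^{(1)} = \mathrm{id}_A$, hence is a unitriangular automorphism by Proposition~\ref{pr:automorphisms}, and $\varphi(S)$ is cyclically equivalent to the target form.  The most delicate step is the renormalization: I must fix a convention (for instance, first occurrence under some chosen ordering of the distinguished arrows) for assigning each cyclic monomial to a unique slot, and verify that this convention is well-defined modulo cyclic equivalence, does not re-create cross-terms of depth $\leq n$, and is compatible with iteration.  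This bookkeeping --- tracking how cyclic equivalence classes of paths get partitioned across the various slots as the induction runs --- is where the real technical work lies.
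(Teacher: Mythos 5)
Your construction is the same as the paper's: iterate the unitriangular substitution $a_k \mapsto a_k - v_k$, $b_k \mapsto b_k - u_k$, renormalize by cyclic rotation, and take the ${\mathfrak m}$-adic limit; the paper's Lemma~\ref{lem:u-v-d} packages exactly this kill step, with the minor cosmetic difference that it doubles the filtration level each iteration (``$d$-split'' to ``$2d$-split'') rather than incrementing by one. Two calibration remarks. The renormalization you flag as the crux is in fact the routine part: cyclic equivalence lets you rotate any cyclic monomial so that one distinguished arrow sits at the boundary, stripping that arrow drops the degree by exactly one, and since rotation preserves degree no lower-degree cross-terms can reappear---no delicate convention is needed. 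The genuinely subtle point is the one you dispatch in a clause, namely that $\varphi(S)$ is cyclically equivalent to the limiting normal form $\lim_n S_n$. Convergence of the partial compositions gives $\varphi(S)=\lim_N \Psi_N(S)$, but you only know that $\Psi_N(S)$ is cyclically equivalent to $S_{N+1}$ for each finite $N$, and this relation does not automatically pass to the limit. The paper handles it by recording the quantitative estimate $C_n := \psi_n(S_n) - S_{n+1} \in \{R\langle\langle A\rangle\rangle, R\langle\langle A\rangle\rangle\}\cap {\mathfrak m}^{d_n}$ with $d_n\to\infty$ (this is precisely why Lemma~\ref{lem:u-v-d} also asserts the ${\mathfrak m}$-adic bound $\varphi(S)-\tilde S\in{\mathfrak m}^{2d+2}$, and not merely cyclic equivalence), telescoping to write $\Psi_N(S) = S_{N+1} + \sum_{n\le N}\psi_N\cdots\psi_{n+1}(C_n)$, and then using that $\{R\langle\langle A\rangle\rangle, R\langle\langle A\rangle\rangle\}$ is closed and preserved by continuous algebra automorphisms, so that the correction term converges to an element of it. You should supply an argument of this kind; as written, the final cyclic equivalence is asserted but not established.
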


We say that a potential~$S$ is \emph{$d$-split} if it is
of the form \eqref{eq:S-normal-form} with $u_k, v_k \in {\mathfrak m}^{d+1}$ for all~$k$.
To prove Lemma~\ref{lem:killing-u-v}, we first show the following.

\begin{lemma}
\label{lem:u-v-d}
Suppose a potential~$S$ is $d$-split for some $d \geq 1$.
There exists a unitriangular automorphism $\varphi$
of $R\langle\langle A \rangle\rangle$ having depth $d$ and such that
$\varphi(S)$ is cyclically equivalent to a $2d$-split
potential $\tilde S$ with $\varphi(S) - \tilde S \ \in {\mathfrak m}^{2d+2}$.
\end{lemma}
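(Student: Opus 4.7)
The plan is to cancel the cross terms $a_k u_k$ and $v_k b_k$ at linear order by an explicit depth-$d$ unitriangular automorphism, then compute the quadratic residue, and finally use cyclic rotation to put that residue into the shape required by a $2d$-split potential. Concretely, I would define $\varphi$ by the rule $\varphi^{(2)}(a_k) = -v_k$, $\varphi^{(2)}(b_k) = -u_k$ for $k = 1,\dots,N$, and $\varphi^{(2)}(c) = 0$ on every other arrow~$c$. Since $u_k, v_k \in {\mathfrak m}^{d+1}$, Proposition~\ref{pr:automorphisms} yields a well-defined unitriangular automorphism of depth~$d$ that acts on arrows by $\varphi(a_k) = a_k - v_k$, $\varphi(b_k) = b_k - u_k$, and fixes every arrow not among the $a_j, b_j$.

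Next I would compute $\varphi(S)$ modulo ${\mathfrak m}^{2d+2}$. Expanding $\varphi(a_k b_k) = (a_k - v_k)(b_k - u_k) = a_k b_k - a_k u_k - v_k b_k + v_k u_k$ exactly cancels the unwanted cross terms in~$S$, leaving $v_k u_k \in {\mathfrak m}^{2d+2}$. For the summands $\varphi(a_k u_k)$ and $\varphi(v_k b_k)$, the depth-$d$ estimate \eqref{eq:unitriangular-d} gives $\varphi(u_k)-u_k,\; \varphi(v_k)-v_k \in {\mathfrak m}^{2d+1}$; combined with $a_k \in {\mathfrak m}$ and $v_k \in {\mathfrak m}^{d+1}$, this shows $\varphi(a_k u_k) \equiv a_k u_k$ and $\varphi(v_k b_k) \equiv v_k b_k$ modulo~${\mathfrak m}^{2d+2}$. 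Finally, $\varphi$ fixes every arrow occurring in~$S'$, so $\varphi(S') = S'$ on the nose. Summing these pieces yields
$$\varphi(S) \;=\; \sum_{k=1}^N a_k b_k + S' + T, \qquad T \in {\mathfrak m}^{2d+2}.$$

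Finally I would rewrite $T$ cyclically. Each cyclic path appearing in $T$ has length at least $2d+2$; if it passes through some $a_k$, rotate it to begin with $a_k$, contributing a term of the form $a_k w$ with $w \in {\mathfrak m}^{2d+1}$ of the correct head and tail; otherwise, if it passes through some $b_\ell$, rotate it to end with $b_\ell$; otherwise leave it in a residual $T' \in {\mathfrak m}^{2d+2}$ involving none of the $a_k, b_k$. This produces a cyclic equivalence $T \sim \sum_k (a_k \tilde u_k + v'_k b_k) + T'$ with $\tilde u_k, v'_k \in {\mathfrak m}^{2d+1}$, and every rotation difference used stays inside ${\mathfrak m}^{2d+2}$. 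Setting $\tilde S := \sum_k (a_k b_k + a_k \tilde u_k + v'_k b_k) + (S' + T')$ therefore gives a $2d$-split potential cyclically equivalent to $\varphi(S)$ with $\varphi(S) - \tilde S \in {\mathfrak m}^{2d+2}$, as required. The main delicate point is this last step: one must pick a consistent anchor arrow when a cycle contains several of the $a_k$'s (or $b_\ell$'s), and must handle a possibly infinite family of rotations. The latter is harmless because every rewriting lives inside the closed ideal ${\mathfrak m}^{2d+2}$, so convergence in the ${\mathfrak m}$-adic topology is automatic; the former choice is irrelevant up to cyclic equivalence, which is exactly the equivalence relation the conclusion speaks of.
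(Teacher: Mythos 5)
Your proposal is correct and follows essentially the same route as the paper: define the unitriangular automorphism by $\varphi(a_k)=a_k-v_k$, $\varphi(b_k)=b_k-u_k$, verify the cancellation of the cross terms, observe that the residue lies in ${\mathfrak m}^{2d+2}$, and then extract the new $\tilde u_k$, $\tilde v_k$ by cyclic rotation. The only stylistic difference is that you carry out the intermediate algebra modulo ${\mathfrak m}^{2d+2}$ and then unpack, whereas the paper keeps the terms $a_k u'_k + v'_k b_k$ explicit before applying the rotation to the leftover $S_1$; this is a bookkeeping difference, not a mathematical one.
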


\begin{proof}
Let us write~$S$ in the form \eqref{eq:S-normal-form} with
$u_k, v_k \in {\mathfrak m}^{d+1}$.
Let $\varphi$ be the unitriangular automorphism
of $R\langle\langle A \rangle\rangle$
acting on arrows as follows:
$$\varphi(a_k) =a_k -v_k, \,\, \varphi(b_k) = b_k-u_k, \,\, \varphi(c) = c
\quad (c \in Q_1 - \{a_1, b_1, \dots, a_N, b_N\}).$$
Then $\varphi$ is of depth~$d$, so by \eqref{eq:unitriangular-d}, for each~$k$, we have
$$\varphi(u_k) = u_k + u'_k, \,\, \varphi(v_k) = v_k + v'_k \quad
(u'_k, v'_k \in {\mathfrak m}^{2d+1}).$$
Therefore, we obtain
\begin{align*}
\varphi(S) &= \sum_k((a_k-v_k)(b_k-u_k) + (a_k-v_k)(u_k+u'_k) + (v_k+v'_k)(b_k-u_k)) + S'\\
& = \sum_k(a_k b_k + a_k u'_k + v'_k b_k) + S_1  + S',
\end{align*}
where
$$S_1 = -\sum_k (v_k u_k + v_k u'_k + v'_k u_k) \in {\mathfrak m}^{2d+2}.$$
In view of Definition~\ref{def:cyclic-equivalence},
$S_1$ is cyclically equivalent to a potential of the form
$\sum_k(a_k u''_k + v''_k b_k) + S''$, where $u''_k, v''_k \in {\mathfrak m}^{2d+1}$,
and $S''$ is a linear combination of cyclic paths containing none of the~$a_k$ or~$b_k$.
Furthermore, we have
$$S_1 - S'' - \sum_k (a_k u''_k + v''_k b_k) \in {\mathfrak m}^{2d+2}.$$
We see that the desired potential $\tilde S$ can be chosen as
$$\tilde S = \sum_k (a_k b_k + a_k(u'_k+u''_k) + (v'_k+v''_k)b_k) + S' + S'',$$
completing the proof of Lemma~\ref{lem:u-v-d}.
\end{proof}

\begin{proof}[Proof of Lemma~\ref{lem:killing-u-v}]
Starting with a potential~$S$ of the form
\eqref{eq:S-normal-form} and using repeatedly
Lemma~\ref{lem:u-v-d}, we construct a sequence of
potentials~$S_1, S_2, \dots$, and a sequence of unitriangular automorphisms
$\varphi_1, \varphi_2, \dots$, with the following properties:

\begin{enumerate}
\item $S_1 = S$.
\item $S_d$ is $2^{d-1}$-split.
\item $\varphi_d$ is of depth $2^{d-1}$.
\item $\varphi_d (S_d)$ is cyclically equivalent to
$S_{d+1}$, and $\varphi_d(S_d) - S_{d+1} \ \in {\mathfrak m}^{2^d+2}$.
\end{enumerate}

By property (3), setting
\begin{equation}
\label{eq:limit-automorphism}
\varphi = \lim_{n \to \infty} \varphi_n  \varphi_{n-1}
 \cdots  \varphi_1,
\end{equation}
we obtain a well defined unitriangular automorphism $\varphi$ of
$R\langle\langle A \rangle\rangle$; indeed, in view of
\eqref{eq:unitriangular-d}, for any $u \in R\langle\langle A
\rangle\rangle$, if we write $\varphi_n \varphi_{n-1}
 \cdots  \varphi_1(u)$ as $\sum_{d=0}^\infty
u_n^{(d)}$ with $u_n^{(d)} \in A^d$, then each homogeneous
component $u_n^{(d)}$ stabilizes as $n \to \infty$.

We claim that this automorphism $\varphi$ satisfies the required properties
in Lemma~\ref{lem:killing-u-v}.
To see this, for $d \geq 1$, denote
$C_d = \varphi_d(S_d) - S_{d+1}$.
By (4), $C_d \in \{R\langle\langle A \rangle\rangle, R\langle\langle A
\rangle\rangle\} \cap {\mathfrak m}^{2^d+2}$
(recall from Definition~\ref{def:trace-space} that
$\{R\langle\langle A \rangle\rangle, R\langle\langle A \rangle\rangle\}$
denotes the closure of the vector subspace
in~$R\langle\langle A \rangle\rangle$ spanned by all commutators).
Using (1), it is easy to see that
$$\varphi_n  \varphi_{n-1}
 \cdots  \varphi_1(S) = S_{n+1} +
\sum_{d=1}^n \varphi_n  \varphi_{n-1}
 \cdots  \varphi_{d+1}(C_d)$$
for every $n \geq 1$; passing to the limit as $n \to \infty$
yields
$$\varphi(S) = \lim_{n \to \infty} S_n +
\varphi(\sum_{d=1}^\infty (\varphi_d \cdots
\varphi_1)^{-1}(C_d))$$
(the convergence of the series on the right is clear since
any automorphism of $R\langle\langle A \rangle\rangle$ preserves
the powers of ${\mathfrak m}$).
We conclude that $\varphi(S)$ is cyclically equivalent to
$\lim_{n \to \infty} S_n$.
In view of (2), the latter element is
of the form \eqref{eq:S-normal-form} with $u_k = v_k = 0$ for all~$k$.
This completes the proofs of Lemma~\ref{lem:killing-u-v} and
of the existence of a right-equivalence \eqref{eq:decomposition}.
\end{proof}

The above argument makes it clear that the right-equivalence class of $(A_{\rm triv}, S_{\rm triv})$
is determined by the right-equivalence class of $(A,S)$ .
To prove Theorem~\ref{th:trivial-reduced-splitting},
it remains to show that the same is true for $(A_{\rm red}, S_{\rm red})$.
Changing notation a little bit, we need to prove the following.

\begin{proposition}
\label{pr:cancel-trivial}
Let $(A,S)$ and $(A,S')$ be reduced QPs, and $(C,T)$ a trivial QP.
If $(A \oplus C, S+T)$ is right-equivalent to $(A \oplus C, S'+T)$ then
$(A, S)$ is right-equivalent to $(A, S')$.
\end{proposition}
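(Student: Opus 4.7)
The plan is to show the stronger statement that any right-equivalence $\psi:R\langle\langle A\oplus C\rangle\rangle\to R\langle\langle A\oplus C\rangle\rangle$ with $\psi(S+T)$ cyclically equivalent to $S'+T$ can be replaced by one that splits as a direct sum $\tilde\psi=\tilde\psi_A\oplus\tilde\psi_C$ preserving the decomposition $A\oplus C$; then $\tilde\psi_A$ will serve as the desired right-equivalence between $(A,S)$ and $(A,S')$.

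To see that a split $\tilde\psi$ suffices, I would first note that the canonical continuous $K$-linear projection $\pi_A:R\langle\langle A\oplus C\rangle\rangle\to R\langle\langle A\rangle\rangle$ killing every path-basis element involving a $C$-arrow (and fixing pure $A$-paths) carries commutators to commutators: for path-basis elements $p,q$, the commutator $[p,q]$ lies in $R\langle\langle A\rangle\rangle$ precisely when both $p$ and $q$ are pure $A$-paths, and in that case $\pi_A[p,q]=[p,q]$; otherwise $\pi_A[p,q]=0$. Continuity of $\pi_A$ therefore sends $\{R\langle\langle A\oplus C\rangle\rangle,R\langle\langle A\oplus C\rangle\rangle\}$ into $\{R\langle\langle A\rangle\rangle,R\langle\langle A\rangle\rangle\}$. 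Once $\tilde\psi=\tilde\psi_A\oplus\tilde\psi_C$ is produced with $\tilde\psi_A\in\Aut R\langle\langle A\rangle\rangle$, $\tilde\psi_C\in\Aut R\langle\langle C\rangle\rangle$, and $\tilde\psi_A(S)+\tilde\psi_C(T)$ cyclically equivalent to $S'+T$, applying $\pi_A$ yields $\tilde\psi_A(S)$ cyclically equivalent to $S'$, as required.

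The construction of $\tilde\psi$ from $\psi$ proceeds in two stages, both modeled on the arguments already appearing in this section. In the first stage one analyzes the linear part $\psi^{(1)}:A\oplus C\to A\oplus C$. Since $S,S'\in{\mathfrak m}^3$, the degree-$2$ part of $\psi(S+T)$ equals $\psi^{(1)}(T)$, and Proposition~\ref{pr:cyclic-equivalence} forces $\partial(\psi^{(1)}(T))=\partial T=C$ as a subspace of $A\oplus C$. Decomposing $\psi^{(1)}(T)=P_{AA}+P_{AC}+P_{CA}+P_{CC}$ according to the four summands of $(A\oplus C)\otimes_R(A\oplus C)$, and projecting the cyclic derivatives onto $A$ and $C$, one deduces that $P_{AA}$ and $P_{AC}+P_{CA}$ are cyclically equivalent to zero and that $P_{CC}$ is a nondegenerate potential on $C$ in the sense of Proposition~\ref{pr:trivial-potential}. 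Writing $\psi^{(1)}$ in block form, this forces the $C\to C$ block to be a bimodule isomorphism, and hence a standard Schur-complement argument lets one block-diagonalize $\psi^{(1)}$ by pre- and post-composing $\psi$ with changes of arrows (Definition~\ref{def:automorphisms}); absorbing any resulting distortion of the potentials $S+T$ and $S'+T$ by the unitriangular automorphism supplied by Lemma~\ref{lem:killing-u-v}, one obtains a modified right-equivalence $\psi_1:(A\oplus C,S+T)\to(A\oplus C,S'+T)$ whose linear part preserves the decomposition $A\oplus C$.

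In the second stage I would iterate in the style of Lemma~\ref{lem:u-v-d}. Call a right-equivalence $\psi$ \emph{$d$-split} if $\psi^{(1)}$ preserves $A\oplus C$ and, for each $a\in A$ (respectively $c\in C$), the image $\psi(a)$ (respectively $\psi(c)$) differs from some element of $R\langle\langle A\rangle\rangle$ (respectively $R\langle\langle C\rangle\rangle$) by an element of ${\mathfrak m}(A\oplus C)^{d+1}$. Starting from the $1$-split $\psi_1$, the inductive step would use the Cyclic Leibniz and Chain rules (in particular Lemma~\ref{lem:chain-rule}) together with the nondegeneracy of $T$ established above to solve for a unitriangular automorphism $\varphi$ of depth~$d$, supported on the cross terms, such that $\varphi\psi$ is $2d$-split and $\varphi\psi(S+T)$ is still cyclically equivalent to $S'+T$. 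Passing to the limit
$$\tilde\psi=\lim_{n\to\infty}\varphi_n\varphi_{n-1}\cdots\varphi_1\psi_1$$
as in~\eqref{eq:limit-automorphism} produces the desired split right-equivalence, and the restriction principle from the second paragraph concludes.

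The main obstacle will be the inductive splitting step: at each depth one must choose the correction $\varphi$ so as to simultaneously cancel the $A\to C$ and $C\to A$ cross terms while preserving the cyclic equivalence class of $\psi(S+T)$. The nondegeneracy of $T$ is precisely what renders the resulting linear systems solvable, and the depth arithmetic and topological convergence mirror the proof of Lemma~\ref{lem:u-v-d}; thus the technical content is a careful bookkeeping analogue of the existence half of Theorem~\ref{th:trivial-reduced-splitting}.
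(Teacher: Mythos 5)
Your proposal takes a genuinely different route from the paper, and the hard part of it is not carried out. The preliminary reduction is fine: the projection $\pi_A$ along $L$ (the closed ideal generated by $C$) is indeed a continuous algebra homomorphism, hence carries cyclic equivalences to cyclic equivalences, so a \emph{split} right-equivalence $\tilde\psi=\tilde\psi_A\oplus\tilde\psi_C$ would imply the statement. (This projection $\pi_A$ is the same map $p$ used in the paper's own proof.) But the paper never attempts to replace $\varphi$ by a split automorphism. Instead it sets $\psi = p\,\varphi|_{R\langle\langle A\rangle\rangle}$, uses the ideal decompositions $R\langle\langle A\oplus C\rangle\rangle=R\langle\langle A\rangle\rangle\oplus L$, $J(S+T)=J(S)\oplus L$, $J(S'+T)=J(S')\oplus L$ to show directly that $\psi$ is an automorphism of $R\langle\langle A\rangle\rangle$, then establishes (via the nontrivial inclusion $L\subseteq J(S')+\varphi(L)$, proved by an iterative containment plus a closedness argument) that $S'-\psi(S)$ is cyclically equivalent to an element of $\psi(J(S)^2)$; Proposition~\ref{pr:up-to-square} then upgrades $\psi$ to an exact right-equivalence. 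The genuinely new ideas — working modulo $J(S)^2$ and invoking Proposition~\ref{pr:up-to-square} — are exactly the ones your plan avoids, and that avoidance is where the gap lies.

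The gap is in your second stage. At each step you need a unitriangular $\varphi_d$ that both (i) pushes the cross terms of the current $\psi$ to higher $\mathfrak m$-degree and (ii) is an auto-right-equivalence of $(A\oplus C, S'+T)$, so that $\varphi_d\psi(S+T)$ remains cyclically equivalent to $S'+T$. You assert that nondegeneracy of $T$ makes the ``resulting linear systems'' solvable, but this is not shown and is not obvious: the natural correction (subtracting the leading cross part $\epsilon_a$ from $\psi(a)$) typically violates (ii), since by Lemma~\ref{lem:noncommutative-Taylor} it shifts $S'+T$ by $-\sum_a\epsilon_a\,\partial_a S'$ modulo higher order, which need not be cyclically trivial. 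One would then have to simultaneously tune the $C$-arrow corrections via $\partial_c T$ to compensate, check that this does not reintroduce cross terms on the $C$-side, and verify the depth arithmetic; none of this bookkeeping appears, and you yourself flag it as ``the main obstacle.'' The first stage has a related soft spot: pre- and post-composing $\psi$ with changes of arrows that mix $A$ and $C$ changes both potentials $S+T$ and $S'+T$, and Lemma~\ref{lem:killing-u-v} (which normalizes a \emph{potential} of a specific shape, not an \emph{automorphism}) is not the tool that restores the required form. In short, the plan describes a plausible-looking alternative strategy but omits precisely the step that carries the mathematical content; the paper's proof circumvents that step entirely by relaxing ``split'' to ``equal modulo $J(S)^2$'' and calling on Proposition~\ref{pr:up-to-square}.
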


We deduce Proposition~\ref{pr:cancel-trivial} from the following
result of  independent interest.

\begin{proposition}
\label{pr:up-to-square}
Let $(A,S)$ and $(A,S')$ be reduced QPs such that $S'-S \in J(S)^2$.
Then we have:
\begin{enumerate}
\item $J(S') = J(S)$.
\item $(A, S)$ is right-equivalent to $(A, S')$.
More precisely, there exists an algebra automorphism
$\varphi$ of $R \langle \langle A \rangle \rangle$ such that
$\varphi(S)$ is cyclically equivalent to~$S'$, and
$\varphi(u) - u \in J(S)$ for all
$u \in R \langle \langle A \rangle \rangle$.
\end{enumerate}
\end{proposition}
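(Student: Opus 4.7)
The plan is to prove part~(2) first via a Newton-type iterative construction of $\varphi$, and then deduce part~(1) from it. The automorphism $\varphi$ will be built as a convergent infinite composition $\varphi = \lim_{n\to\infty} \varphi_n \cdots \varphi_1$ of unitriangular automorphisms $\varphi_n(a) = a + \eta_n(a)$ with $\eta_n : A \to J(S)$ a bimodule map of growing depth; at stage $n$, $\eta_{n+1}$ is chosen to cancel the leading homogeneous component of the remaining cyclic discrepancy between $\psi_n(S) := \varphi_n \cdots \varphi_1(S)$ and $S'$, exactly mimicking the iteration in the proof of Lemma~\ref{lem:killing-u-v}.

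The central technical input is the following solvability lemma: \emph{for any reduced QP $(A, \tilde S)$ and any cyclic $T \in J(\tilde S)^2 \cap \mathfrak{m}(A)^k$, there exists an $R$-bimodule map $\eta : A \to J(\tilde S) \cap \mathfrak{m}(A)^{k-2}$ such that $T$ is cyclically equivalent to $\sum_{a \in Q_1} \eta(a)\, \partial_a(\tilde S)$.} To prove it, write $T$ as a (limit of) finite sums of elementary products $u_\alpha\, \partial_{a_\alpha}(\tilde S)\, v_\alpha$ with $u_\alpha \in J(\tilde S)$; the cyclic identity $u\, \partial_a(\tilde S)\, v \sim v u\, \partial_a(\tilde S)$ then rewrites $T \sim \sum_a H_a\, \partial_a(\tilde S)$ with $H_a \in R\langle\langle A\rangle\rangle \cdot J(\tilde S) \subseteq J(\tilde S)$. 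Projecting each $H_a$ onto the bimodule component $e_{h(a)}(\cdot)e_{t(a)}$ gives a bimodule map (the discarded cross-components are of the form $e_i x e_j$ with $i \neq j$, hence are commutators and cyclically trivial). Using the topological decomposition $J(\tilde S) = \bigoplus_d (J(\tilde S) \cap A^d)$ and the fact that $T \in \mathfrak{m}(A)^k$ forces all contributions of degree $< k$ in $\sum_a H_a\, \partial_a(\tilde S)$ to be cyclically trivial, one can replace $H_a$ by its degree-$(\geq k-2)$ truncation (which still lies in $J(\tilde S)$ by homogeneity of the ideal) to achieve the degree bound, absorbing the correction terms (of degree $\geq k$ and already in $J(\tilde S)^2$) into the next step of the iteration.

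With the lemma in hand, the induction proceeds as follows. Set $\psi_0 = {\rm id}$, $E_0 = S' - S \in J(S)^2 \cap \mathfrak{m}(A)^4$ (using $J(S) \subseteq \mathfrak{m}(A)^2$), and $k_0 = 4$. Assume inductively that $\psi_n$ is unitriangular with $\psi_n(u) - u \in J(S)$ for all $u$, that $\psi_n(S)$ is reduced (automatic, since $\psi_n(S) - S \in \mathfrak{m}(A)^2$ while $S \in \mathfrak{m}(A)^3$), and that $\psi_n(S) \sim S' - E_n$ for some cyclic $E_n \in J(S)^2 \cap \mathfrak{m}(A)^{k_n}$. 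The property $\psi_n(u) - u \in J(S)$ applied to both $\psi_n$ and $\psi_n^{-1}$ gives $\psi_n(J(S)) = J(S)$, so by Proposition~\ref{pr:automorphism-respects-jacobian} we have $J(\psi_n(S)) = J(S)$, hence $E_n \in J(\psi_n(S))^2$. Apply the solvability lemma to $(A, \psi_n(S))$ and $T = E_n$ to obtain $\eta_{n+1} : A \to J(S) \cap \mathfrak{m}(A)^{k_n - 2}$ with $\sum_a \eta_{n+1}(a)\, \partial_a(\psi_n(S)) \sim E_n$, and set $\varphi_{n+1}(a) = a + \eta_{n+1}(a)$. Expanding $\varphi_{n+1}(\psi_n(S))$ by substitution, the first-order contribution cyclically cancels $E_n$, while every higher-order term contains at least two factors $\eta_{n+1}(\cdot) \in J(S) \cap \mathfrak{m}(A)^{k_n - 2}$ and therefore lies in $J(S)^2 \cap \mathfrak{m}(A)^{2k_n - 3}$. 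Thus $\psi_{n+1}(S) \sim S' - E_{n+1}$ with $k_{n+1} \geq 2k_n - 3 \to \infty$; the depths of the $\varphi_n$ grow accordingly, so the infinite composition $\varphi = \lim_n \varphi_n \cdots \varphi_1$ converges (cf.~\eqref{eq:limit-automorphism}) to a well-defined automorphism with $\varphi(S) \sim S'$. The property $\varphi(u) - u \in J(S)$ is preserved at each stage (since $\varphi_n(a) - a \in J(S)$ and $J(S)$ is a closed two-sided ideal) and survives the $\mathfrak{m}$-adic limit.

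Part~(1) follows easily from part~(2). For $J(S') \subseteq J(S)$: by the cyclic Leibniz rule~\eqref{eq:leibniz}, for $f, g \in J(S)$, $\partial_\xi(fg) = \Delta_\xi(f) \square g + \Delta_\xi(g) \square f$ is a sum of elements of the form $vgu$ or $vfu$, each belonging to the two-sided ideal $J(S)$; by continuity and linearity, $\partial_\xi(J(S)^2) \subseteq J(S)$, so $\partial_\xi(S') = \partial_\xi(S) + \partial_\xi(S' - S) \in J(S)$. For $J(S) \subseteq J(S')$: Proposition~\ref{pr:automorphism-respects-jacobian} gives $\varphi(J(S)) = J(\varphi(S)) = J(S')$; applying $\varphi(v) - v \in J(S)$ to $v = \varphi^{-1}(j)$ shows $\varphi^{-1}(j) - j \in J(S)$, so $\varphi^{-1}(J(S)) \subseteq J(S)$, and applying $\varphi$ yields $J(S) \subseteq \varphi(J(S)) = J(S')$. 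The hardest part of the whole proof is the solvability lemma, specifically the simultaneous control of bimodule type, ideal membership in $J(\tilde S)$, and the degree bound on $\eta$ under cyclic equivalence; once the lemma is established, the rest is convergence bookkeeping.
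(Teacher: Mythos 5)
Your overall architecture mirrors the paper's: a Newton-type iteration building $\varphi$ as a convergent infinite product of unitriangular automorphisms of growing depth, with a ``first-order cancellation plus second-order error'' step at each stage. The reduction of part~(1) to part~(2) is also fine, and your reordering is legitimate since your proof of~(2) never invokes~(1) (you get $\psi_n(J(S))=J(S)$ directly from $\psi_n(u)-u\in J(S)$ rather than from~(1), whereas the paper proves~(1) first and uses it at the analogous point).

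However, there is a genuine gap in your solvability lemma, specifically in the degree bound $\eta(a)\in J(\tilde S)\cap\mathfrak m(A)^{k-2}$. You justify it by invoking ``the topological decomposition $J(\tilde S)=\bigoplus_d\bigl(J(\tilde S)\cap A^d\bigr)$'' and ``homogeneity of the ideal'', but for a general reduced potential $\tilde S$ the Jacobian ideal is \emph{not} graded: if $\tilde S$ has nonzero components in several degrees, then $\partial_a\tilde S$ is inhomogeneous, and the degree-$d$ truncation of an element of $J(\tilde S)$ need not lie in $J(\tilde S)$. So replacing $H_a$ by its degree-$(\geq k-2)$ part is not legitimate, and the degree bound that drives convergence of $\varphi=\lim_n\varphi_n\cdots\varphi_1$ is unjustified. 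The paper avoids this entirely. Its Lemma~\ref{lem:tr-IJ} is stated for an \emph{arbitrary closed ideal} $I$, and in the inductive step it is applied to $I=\mathfrak m^{n+2}\cap J(S)$ (not $J(S)$ itself), so the resulting $b_{k,n+1}$ land in $\mathfrak m^{n+2}\cap J(S)$ by construction. The crucial preliminary is that the paper's Lemma~\ref{lem:noncommutative-Taylor} places the Newton remainder in $\mathfrak m\bigl(\mathfrak m^{n+1}\cap J(S)\bigr)^2\subseteq(\mathfrak m^{n+2}\cap J(S))J(S)$ — a \emph{factored} containment, not merely $J(S)^2\cap\mathfrak m^{\text{big}}$ — which is exactly the form the lemma needs. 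To repair your argument you would need to replace the hypothesis ``$T\in J(\tilde S)^2\cap\mathfrak m^k$'' of your solvability lemma by ``$T$ cyclically equivalent to an element of $(\mathfrak m^{k-2}\cap J(\tilde S))\,J(\tilde S)$'', verify that the iteration actually produces remainders of that factored form (which it does, by an argument akin to Lemma~\ref{lem:noncommutative-Taylor}), and drop the spurious appeal to homogeneity of $J(\tilde S)$.

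A secondary, more minor point: you sweep under the rug the topological subtlety of extracting well-defined coefficients $H_a\in R\langle\langle A\rangle\rangle$ from an $\mathfrak m$-adic limit of finite sums $\sum u_\alpha\,\partial_{a_\alpha}(\tilde S)\,v_\alpha$, and of showing the resulting span is closed. This is precisely what the paper's Appendix (Lemmas~\ref{lem:If} and~\ref{lem:tr-IJ-general}) is for; it should at least be acknowledged rather than treated as obvious.
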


\begin{proof}
(1) Since $(A,S)$ is reduced, we have $J(S) \subseteq {\mathfrak m}^2$.
As an easy consequence of the cyclic Leibniz rule
\eqref{eq:leibniz}, we see that
$$\partial_\xi (J(S)^2)_{\rm cyc}\subseteq
{\mathfrak m}J(S)+J(S){\mathfrak m}$$
for any $\xi \in A^\star$.
It follows that
\begin{equation}
\label{eq:diff-of-partials}
\partial_\xi S'- \partial_\xi S
\in
{\mathfrak m}J(S)+J(S){\mathfrak m},
\end{equation}
implying that $J(S')\subseteq J(S)$.

To show the reverse inclusion, note that \eqref{eq:diff-of-partials}
also implies that
$$J(S) \subseteq J(S') + ({\mathfrak m}J(S)+J(S){\mathfrak m}).$$
Applying the same inclusion to each of the terms $J(S)$ on the
right, we obtain
$$J(S) \subseteq J(S') +
({\mathfrak m}^2 J(S)+ {\mathfrak m}J(S){\mathfrak m}
+ J(S){\mathfrak m}^2).$$
Continuing in the same way, we get
$$J(S) \subseteq J(S') + \sum_{k=0}^n
{\mathfrak m}^k J(S){\mathfrak m}^{n-k}
\subseteq J(S') + {\mathfrak m}^{n+2}$$
for any $n \geq 1$.
Remembering the definition of topology in
$R\langle \langle A \rangle \rangle$ (see \eqref{eq:closure})
and the fact that $J(S')$ is closed,
we conclude that $J(S) \subseteq J(S')$, finishing the proof of
part (1) of Proposition~\ref{pr:up-to-square}.

(2) Let $Q_1 = \{a_1, \dots, a_N\}$ be the set of arrows (that is,
a basis of~$A$).
Then a unitriangular automorphism $\varphi$ of $R\langle \langle A \rangle \rangle$
is specified by a $N$-tuple of elements $b_1, \dots, b_N \in {\mathfrak m}^2$ such that
\begin{equation}
\label{eq:phi-on-arrows}
\varphi(a_k) = a_k + b_k \,\, (k = 1, \dots, N).
\end{equation}

\begin{lemma}
\label{lem:noncommutative-Taylor}
Let $(A,S)$ be a reduced QP, and let $\varphi$ be a unitriangular automorphism
of $R\langle \langle A \rangle \rangle$ given by \eqref{eq:phi-on-arrows}.
Then the potential
$\varphi(S) - S - \sum_{k=1}^N b_k \partial_{a_k}S$
is cyclically equivalent to an element of
${\mathfrak m} I^2$, where $I$
is the closure of the ideal in $R\langle\langle A \rangle\rangle$
generated by $b_1,\dots,b_N$.
\end{lemma}

\begin{proof}
First consider the case where $S = a_{k_1} \cdots a_{k_d}$ is a
cyclic path of length $d \geq 3$.
Then $\varphi(S) = (a_{k_1}+ b_{k_1})\cdots (a_{k_d}+ b_{k_d})$.
Expanding this product, we see that the term that contains no
factors $b_{k_j}$ is equal to~$S$, while the sum of the terms that
contain exactly one factor $b_{k_j}$ is easily seen to be cyclically equivalent to
$\sum_{k=1}^N b_k \partial_{a_k}S$
(cf.\eqref{eq:cyclic-derivative}), and the rest of the terms are
cyclically equivalent to elements of
$\sum_{k=1}^N {\mathfrak m}
({\mathfrak m}^{d-1} \cap I) b_k$.

Writing a general potential $S \in {\mathfrak m}^3$ as a linear
combination of cyclic paths,we see that
$\varphi(S) - S - \sum_{k=1}^N b_k \partial_{a_k}S$
is cyclically equivalent to
$\sum_{k=1}^N c_k b_k$, where each $c_k$ is of the form
$$c_k = \sum_{\ell=1}^N  a_\ell \sum_{d=3}^\infty c_{k \ell}^{(d)}$$
with $c_{k \ell}^{(d)} \in {\mathfrak m}^{d-1} \cap I$.
Since $I$ is closed, each $c_k$ is a well-defined element of ${\mathfrak m}I$, implying the
assertion of Lemma~\ref{lem:noncommutative-Taylor}.
\end{proof}

We will also need one more lemma whose proof will be given in
Section~\ref{sec:topological-appendix}.

\begin{lemma}
\label{lem:tr-IJ}
Let $I$ be a closed ideal of $R\langle\langle
A\rangle\rangle$, and $J$ be the closure of an ideal generated by
finitely many elements $f_1, f_2, \dots, f_N$,
which are bi-homogeneous with respect to the vertex bigrading.
Then every potential belonging to the ideal $IJ$ is cyclically equivalent to
an element of the form
$\sum_{k=1}^N b_k f_k$,
where all $b_k$ belong to $I$.
\end{lemma}

To prove part (2) of
Proposition~\ref{pr:up-to-square},
we construct a sequence of $N$-tuples
$$(b_{1n}, \dots, b_{Nn}) \quad (n \geq 1)$$
of elements of ${\mathfrak m}^2$ and the corresponding unitriangular
automorphisms $\varphi_n$ of $R\langle \langle A \rangle \rangle$
(so that $\varphi_n(a_k) = a_k + b_{kn}$
for $k = 1, \dots, N$)
such that, for all $n \geq 1$, we have

\begin{enumerate}
\item $b_{kn} \in {\mathfrak m}^{n+1} \cap J(S)$
for $k = 1, \dots, N$.
\item $S'$ is cyclically equivalent to
$\varphi_0 \varphi_1 \cdots \varphi_{n-1}
(S + \sum_{k=1}^N b_{kn} \partial_{a_k}S)$
(with the convention that $\varphi_0$ is the identity
automorphism).
\end{enumerate}

We proceed by induction on~$n$.
In the basic case $n=1$, the existence of an $N$-tuple
$(b_{11}, \dots, b_{N1})$ with desired properties
follows from Lemma~\ref{lem:tr-IJ} applied to
$I = J = J(S)$ and $f_k = \partial_{a_k}S$
(note that $J(S) \subseteq {\mathfrak m}^{2}$,
since $(A,S)$ is assumed to be reduced).

Now assume that, for some $n \geq 1$, we have already defined
the elements $b_{k\ell}$ for $k=1, \dots, N$ and $\ell =1, \dots, n$,
satisfying (1) and (2).
Applying Lemma~\ref{lem:noncommutative-Taylor} to $b_k =
b_{kn}$ (so that $\varphi = \varphi_n$), we obtain that
$\varphi_n(S) - (S + \sum_{k=1}^N b_{kn} \partial_{a_k}S)$
is cyclically equivalent to an element of
${\mathfrak m}({\mathfrak m}^{n+1} \cap J(S))^2$.
We have
$${\mathfrak m}({\mathfrak m}^{n+1} \cap J(S))^2
\subseteq ({\mathfrak m}^{n+2} \cap J(S))J(S).$$
This implies in particular that $\varphi_n(S) - S$
is cyclically equivalent to an element of $J(S)^2$.
Combining Proposition~\ref{pr:automorphism-respects-jacobian}
with the already proved part (1) of
Proposition~\ref{pr:up-to-square}, we conclude that
$\varphi_n (J(S))  = J(\varphi_n(S)) = J(S)$.
It follows that $\varphi_n(S) - (S + \sum_{k=1}^N b_{kn} \partial_{a_k}S)$
is cyclically equivalent to an element of
$\varphi_n(({\mathfrak m}^{n+2} \cap J(S))J(S))$.

Applying Lemma~\ref{lem:tr-IJ} to
$I = {\mathfrak m}^{n+2} \cap J(S)$, $J = J(S)$ and $f_k = \partial_{a_k}S$,
we see that every potential in $({\mathfrak m}^{n+2} \cap J(S))J(S)$
is cyclically equivalent to a potential of the form
$$\sum_{k=1}^N b_{k,n+1} \partial_{a_k}S$$
for some $b_{k,n+1} \in {\mathfrak m}^{n+2} \cap J(S)$.
It follows that
$S + \sum_{k=1}^N b_{kn} \partial_{a_k}S$
is cyclically equivalent to
$\varphi_n(S + \sum_{k=1}^N b_{k,n+1} \partial_{a_k}S)$.
Thus, conditions (1) and (2) get satisfied with $n$ replaced by $n+1$,
completing our inductive step.

In view of condition (1),
$\lim_{n \to \infty} \varphi_1 \cdots \varphi_n$ is a
well-defined automorphism~$\varphi$ of $R\langle \langle A \rangle \rangle$
such that $\varphi(u) - u \in J(S)$ for all
$u \in R \langle \langle A \rangle \rangle$.
Passing to the limit $n \to \infty$ in condition (2), we conclude that
$S'$ is cyclically equivalent to $\varphi(S)$, completing the proof
of part (2) of Proposition~\ref{pr:up-to-square}.
\end{proof}

\begin{proof}[Proof of Proposition~\ref{pr:cancel-trivial}]
We abbreviate $J = J(S)$ and $J' = J(S')$
(understood as the Jacobian ideals of $S$ and $S'$ in
$R\langle \langle A \rangle \rangle$).
As in Proposition~\ref{pr:jacobian-algebra-invariant},
let~$L$ denote the closure of the two-sided ideal in
$R \langle \langle A \oplus C \rangle \rangle$ generated by $C$.
Then we have
\begin{equation}
\label{eq:L-splits}
R \langle \langle A \oplus C \rangle \rangle =
R \langle \langle A \rangle \rangle \oplus L, \quad
J(S+T) = J \oplus L, \quad
J(S'+T) = J' \oplus L.
\end{equation}

Let $\varphi$ be an automorphism
of $R\langle \langle A \oplus C\rangle \rangle$,
such that $\varphi(S+T)$ is cyclically equivalent to $S'+T$.
In view of \eqref{eq:L-splits} and
Proposition~\ref{pr:automorphism-respects-jacobian}, we have
\begin{equation}
\label{eq:J'L-to-JL}
\varphi(J \oplus L) = J' \oplus L.
\end{equation}

Let $\psi: R\langle \langle A \rangle \rangle \to R\langle \langle A \rangle \rangle$
denote the restriction to $R\langle \langle A \rangle \rangle$
of the composition $p \varphi$, where $p$ is the projection of
$R\langle \langle A \oplus C \rangle \rangle$ onto $R\langle \langle A \rangle \rangle$ along $L$.
In view of Proposition~\ref{pr:up-to-square}, it suffices to show
the following:
\begin{eqnarray}
\label{eq:psi-does-the-job}
&\text{$\psi$ is an automorphism
of $R\langle \langle A \rangle \rangle$
such that}\\
\nonumber
&\text{$S' - \psi(S)$ is cyclically equivalent to an element of $\psi(J^2)$}
\end{eqnarray}
(indeed, assuming \eqref{eq:psi-does-the-job} and using
Proposition~\ref{pr:automorphism-respects-jacobian}, we see that
$\psi(J^2) = J(\psi(S))^2$, hence one can apply Proposition~\ref{pr:up-to-square}
to potentials $S'$ and $\psi(S)$).

Clearly, $\psi$ is an algebra homomorphism, so can be represented by a pair
$(\psi^{(1)}, \psi^{(2)})$ as in Proposition~\ref{pr:automorphisms}.
To show that $\psi$ is an automorphism of $R\langle \langle A \rangle
\rangle$, it suffices to show that $\psi^{(1)}$ is an $R$-bimodule
automorphism of $A$.
By the definition, if we write the $R$-bimodule
automorphism~$\varphi^{(1)}$ of $A \oplus C$ as a matrix
$$\begin{pmatrix}
\varphi_{AA} & \varphi_{AC} \cr
\varphi_{CA} & \varphi_{CC} \cr
\end{pmatrix},$$
then $\psi^{(1)} = \varphi_{AA}$.
Since
$$\varphi(C) \subset \varphi(J \oplus L) = J' \oplus L
\subseteq {\mathfrak m}(A)^2 \oplus L,$$
it follows that $\varphi_{AC} = 0$, implying that $\psi^{(1)} = \varphi_{AA}$
is an $R$-bimodule automorphism of $A$, and that
$\psi$ is an automorphism of $R\langle \langle A \rangle \rangle$.

Since $S'+T$ is cyclically equivalent to $\varphi(S+T)$, the same is true for the potentials
obtained from them by applying the projection~$p$; it follows
that $S' - \psi(S)$ is cyclically equivalent to $p \varphi(T)$.
Since $T \in C^2$, the claim that $S' - \psi(S)$
is cyclically equivalent to an element of $\psi(J^2)$
follows from the fact that
$p \varphi(L) \subseteq \psi(J)$, or, equivalently, that
$\varphi(L) \subseteq \varphi(J) + L$.
Applying the inverse automorphism $\varphi^{-1}$ to both
sides, it suffices to show that $L \subseteq J + \varphi^{-1}(L)$.
Using the obvious symmetry between $J$ and $J'$, it is enough to
show the inclusion $L \subseteq J' + \varphi(L)$.

Let us abbreviate $M = {\mathfrak m}(A \oplus C)$, and
$I = J' + \varphi(L)$.
Since $\varphi(J) \subseteq J' \oplus L$, and
$J \subseteq {\mathfrak m}(A)^2$, it follows that
$\varphi(J) \subseteq J' \oplus (L \cap M^2) =
J' + ML + LM$.
Therefore, we have
$$L \subseteq J' + L = \varphi(J) + \varphi(L)
\subseteq I + ML + LM.$$
Substituting this upper bound for~$L$ into its right hand side, we
deduce the inclusion
$$L \subseteq I + M^2 L + MLM + L M^2.$$
Continuing in the same way, for every $n > 0$, we
have the inclusion
$$L \subseteq I + \sum_{k=0}^n M^k L M^{n-k} \subseteq I + M^{n+1}.$$
In view of \eqref{eq:closure}, it follows that $L$ is contained
in $\overline I$, the closure of~$I$ in $R\langle \langle A \oplus C \rangle \rangle$.
However, it is easy to see that $I = J' + \varphi(L)$ is closed in
$R\langle \langle A \oplus C\rangle \rangle$
(indeed, the closedness of~$I$ is equivalent to that of
$\varphi^{-1}(I) = \varphi^{-1}(J') + L$, and so,
by symmetry, it is enough to show that
$\varphi(J) + L$ is closed; but this is clear since
$\varphi(J) + L = p^{-1}(\psi(J))$ is the inverse image of the
closed ideal $\psi(J)$ of $R\langle \langle A \rangle \rangle$).
This completes the proofs
of Proposition~\ref{pr:cancel-trivial} and
Theorem~\ref{th:trivial-reduced-splitting}.
\end{proof}

\begin{definition}
\label{def:reduced-part}
We call the component $(A_{\rm red}, S_{\rm red})$ in the
decomposition \eqref{eq:decomposition} the \emph{reduced part} of
a QP $(A,S)$ (by Theorem~\ref{th:trivial-reduced-splitting}, it is
determined by $(A,S)$ up to right-equivalence).
\end{definition}

\begin{definition}
\label{def:2-acyclic}
We call a quiver $Q$ (as well as its arrow span~$A$) \emph{$2$-acyclic}
if it has no oriented $2$-cycles, i.e., satisfies the following
condition:
\begin{equation}
\label{eq:no-2-cycles}
\text{For every pair of vertices $i \neq j$, either $A_{i,j} = \{0\}$ or $A_{j,i} = \{0\}$.}
\end{equation}
\end{definition}

In the rest of this section we study the conditions on a QP
$(A,S)$ guaranteeing that its reduced part is $2$-acyclic.
We need some preparation.

For a quiver~$Q$ with the arrow span~$A$, let $\mathcal C =
{\mathcal C}(A)$ denote the set of cyclic paths on~$A$ up to
cyclical equivalence.
Thus, $\mathcal C$ is either empty (if $Q$ has no oriented cycles at all), or countable.
The space of potentials up to cyclical equivalence is naturally
identified with $K^{\mathcal C}$.
We say that a $K$-valued function on $K^{\mathcal C}$ is
\emph{polynomial} if it depends on finitely many components of a
potential~$S$ and can be expressed as a polynomial in these
components.
For a nonzero polynomial function~$F$, we denote by $U(F) \subset K^{\mathcal C}$
the set of all potentials~$S$ such that $F(S) \neq 0$.
By a regular function on $U(F)$ we mean a ratio of two polynomial
functions on $K^{\mathcal C}$ such that the denominator vanishes
nowhere on $U(F)$; in particular, any function of the form
$G/F^n$, where $G$ is a polynomial, is regular on $U(F)$.
If $A'$ is the arrow span of another quiver $Q'$, we say that a map
$K^{{\mathcal C}(A)} \to K^{{\mathcal C}(A')}$ is \emph{polynomial}
if its every component is a polynomial function; similarly, a map
$U(F) \to K^{{\mathcal C}(A')}$ is \emph{regular}
if its every component is a regular function on $U(F)$.

Now suppose that the arrow span $A$ satisfies \eqref{eq:no-loops},
and let $\{a_1, b_1, \dots, a_N, b_N\}$ be any maximal collection of
distinct arrows in~$Q$ such that $b_k a_k$ is a cyclic $2$-path
for $k = 1, \dots, N$.
Then the quiver obtained from~$Q$ by removing this
collection of arrows is clearly $2$-acyclic.
To such a collection we associate a nonzero polynomial function on $K^{{\mathcal C}(A)}$ given by
\begin{equation}
\label{eq:max-minor}
D_{a_1, \dots, a_N}^{b_1, \dots, b_N}(S) = \det (x_{b_q a_p})_{p,q = 1, \dots, N},
\end{equation}
where $x_{b_q a_p}$ is the sum of the coefficients of $b_q a_p$ and $a_p b_q$
in a potential~$S$, with the convention that $x_{b_q a_p} = 0$
unless $b_q a_p$ is a cyclic $2$-path.

\begin{proposition}
\label{pr:reduced-2-acyclic}
The reduced part $(A_{\rm red},S_{\rm red})$ of a QP $(A,S)$ is
$2$-acyclic if and only if $D_{a_1, \dots, a_N}^{b_1, \dots, b_N}(S) \neq 0$
for some collection of arrows as above.
Furthermore, if $A'$ is the arrow span of the quiver obtained from~$Q$ by
removing all arrows $a_1, b_1, \dots, a_N, b_N$, then there exists a regular map
$H: U(D_{a_1, \dots, a_N}^{b_1, \dots, b_N}) \to K^{{\mathcal C}(A')}$
such that, for any QP $(A,S)$ with $S \in U(D_{a_1, \dots, a_N}^{b_1, \dots, b_N})$, the
reduced part $(A_{\rm red},S_{\rm red})$ is right-equivalent to $(A', H(S))$.
\end{proposition}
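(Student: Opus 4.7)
The plan is to prove the two claims separately, both by reducing to the quadratic part $S^{(2)}$ and then combining with the Splitting Theorem.

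For the iff characterization, I would decompose $S^{(2)} = \sum_{\{i,j\}} S^{(2)}_{\{i,j\}}$ over unordered pairs of distinct vertices; by \eqref{eq:no-loops} every cyclic $2$-path contributes to exactly one summand, and up to cyclical equivalence $S^{(2)}_{\{i,j\}}$ is encoded by a bilinear pairing $M_{ij}\colon A_{i,j} \times A_{j,i} \to K$ whose matrix entries are precisely the $x_{ba}$ of \eqref{eq:max-minor}. A direct calculation from \eqref{eq:cyclic-derivative} shows that $\partial S^{(2)}\cap A_{j,i}$ is the row space of $M_{ij}$ and $\partial S^{(2)}\cap A_{i,j}$ is its column space, both of dimension $\operatorname{rank}(M_{ij})$. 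Hence $A_{\rm red}$ is $2$-acyclic iff every $M_{ij}$ has rank equal to $\min(\dim A_{i,j},\dim A_{j,i})$, i.e.\ admits a nonzero maximal square submatrix. Unfolding \eqref{eq:max-minor}, $D_{a_1,\dots,a_N}^{b_1,\dots,b_N}(S)$ factors (up to a sign) as a product, over vertex pairs $\{i,j\}$ entering the collection, of such maximal minors of $M_{ij}$; so $A_{\rm red}$ is $2$-acyclic iff $D_{a_1,\dots,a_N}^{b_1,\dots,b_N}(S) \neq 0$ for some maximal collection.

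For the existence of a regular $H$, fix a maximal collection $\{a_1,b_1,\dots,a_N,b_N\}$ and decompose $A = C \oplus A'$ with $C = \operatorname{span}(a_1,b_1,\dots,a_N,b_N)$ and $A'$ the span of the remaining arrows. For $S \in U(D_{a_1,\dots,a_N}^{b_1,\dots,b_N})$, I aim to construct an automorphism $\Phi_S$ of $R\langle\langle A\rangle\rangle$ with matrix coefficients regular in $S$ such that $\Phi_S(S)$ is cyclically equivalent to $\sum_{k=1}^N a_k b_k + T(S)$ with $T(S) \in R\langle\langle A'\rangle\rangle \cap \mathfrak{m}(A)^3$. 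Setting $H(S) := [T(S)] \in K^{{\mathcal C}(A')}$ then does the job, because $(C,\sum_k a_k b_k)$ is trivial and $(A',T(S))$ reduced, so Theorem~\ref{th:trivial-reduced-splitting} identifies $(A_{\rm red},S_{\rm red})$ with $(A',H(S))$ up to right-equivalence.

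The construction of $\Phi_S$ proceeds in two stages. Stage $1$: an explicit linear change of arrows $\varphi_0$, identity on the $a_k$ and on all arrows of $A'$ and acting on the $b_k$ by a block upper-triangular matrix whose entries are rational in those of each $M_{ij}$ with denominator the chosen maximal minor $\det M_0^{ij}$, brings the quadratic part to $\sum_k a_k b_k$ up to cyclical equivalence; the coefficients of $\varphi_0(S)$ are then regular in $S$ with denominator a power of $D(S)$. Stage $2$: the potential $\varphi_0(S)$ now fits the normal form \eqref{eq:S-normal-form} with $A_{\rm triv}=C$, and, once a fixed canonical rule is adopted for assigning each cyclic monomial involving an $a_k$ or $b_k$ to one of the summands $a_k u_k$ or $v_k b_k$, the extraction of $u_k,v_k,S'$ becomes a polynomial operation on the coefficients. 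I would then run the iteration from the proof of Lemma~\ref{lem:killing-u-v}, producing unitriangular automorphisms $\varphi_n$ of depth $2^{n-1}$ (each a polynomial function of its input potential), whose limit $\varphi_\infty = \lim_n \varphi_n\cdots\varphi_1$ satisfies $\varphi_\infty(\varphi_0(S))$ cyclically equivalent to $\sum_k a_k b_k + T(S)$ with $T(S) \in R\langle\langle A'\rangle\rangle$, and take $\Phi_S := \varphi_\infty \circ \varphi_0$.

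The main obstacle is converting the topological limit in Stage $2$ into a genuinely regular map. This is settled by the depth estimate \eqref{eq:unitriangular-d}: for any fixed cyclic path $p$ of length $d$ in $A'$, the coefficient of $p$ in $\varphi_n\cdots\varphi_1\varphi_0(S)$ stabilizes once $2^{n-1} \ge d - 1$, so it equals the coefficient in the truncated composition $\varphi_{N(d)}\cdots\varphi_1\varphi_0(S)$ for some explicit $N(d)$, and is therefore a polynomial function of the (regular) coefficients of $\varphi_0(S)$. Hence $H$ is regular on $U(D_{a_1,\dots,a_N}^{b_1,\dots,b_N})$, and the identification of $(A_{\rm red},S_{\rm red})$ with $(A',H(S))$ then follows from the uniqueness part of Theorem~\ref{th:trivial-reduced-splitting}.
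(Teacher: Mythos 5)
Your proof is correct and takes essentially the same approach as the paper, whose proof is given only as a one-line instruction to trace through the construction in Lemma~\ref{lem:killing-u-v}; your write-up supplies exactly the details that sentence leaves implicit, in particular the block-diagonal factorization of $D$ into maximal minors of the pairings $M_{ij}$ and the depth estimate guaranteeing degreewise stabilization of the iterated automorphisms.
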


The proof of Proposition~\ref{pr:reduced-2-acyclic} follows by
tracing the construction of $(A_{\rm red},S_{\rm red})$
given in the proof of Lemma~\ref{lem:killing-u-v}.
Note that we use the following convention.
If $A$ is $2$-acyclic from the start then the only collection
$\{a_1, b_1, \dots, a_N, b_N\}$ as above is the empty set; in this
case, the function $D_{a_1, \dots, a_N}^{b_1, \dots, b_N}$ is
understood to be equal to~$1$, and $H$ is just the identity mapping.

\section{Mutations of quivers with potentials}
\label{sec:mutations}

Let $(A,S)$ be a QP.
Suppose that a vertex $k \in Q_0$ does not belong to an oriented $2$-cycle.
In other words,~$k$ satisfies the following condition:
\begin{equation}
\label{eq:no-2-cycles-thru-k}
\text{For every vertex $i$, either $A_{i,k}$ or $A_{k,i}$ is zero.}
\end{equation}
Replacing $S$ if necessary with a cyclically equivalent
potential, we can also assume that
\begin{equation}
\label{eq:no-start-in-k}
\text{No cyclic path occurring in the expansion of~$S$
starts (and ends) at~$k$.}
\end{equation}
Under these conditions, we associate to $(A,S)$ a QP
$\widetilde \mu_k(A,S) = (\widetilde A, \widetilde S)$ on the
same set of vertices~$Q_0$.
We define the homogeneous components $\widetilde A_{i,j}$ as follows:
\begin{equation}
\label{eq:mu-k-A}
\widetilde A_{i,j} =
\begin{cases}
(A_{j,i})^\star & \text{if $i=k$ or $j=k$;} \\[.05in]
A_{i,j} \oplus A_{i,k} A_{k,j}
 & \text{otherwise;}
\end{cases}
\end{equation}
here the product $A_{i,k} A_{k,j}$ is understood as a subspace of
$A^2 \subseteq R\langle \langle A \rangle \rangle$.
Thus, the $R$-bimodule $\widetilde A$ is given by
\begin{equation}
\label{eq:tilde-A}
\widetilde A= \overline{e}_k A \overline{e}_k \oplus  A e_k A \oplus
(e_k A)^\star \oplus (A e_k)^\star,
\end{equation}
where we use the notation
\begin{equation}
\label{eq:overline-ek}
\overline{e}_k = 1 - e_k = \sum_{i \in Q_0 - \{k\}} e_i.
\end{equation}

We associate to $Q_1$
the set of arrows $\widetilde Q_1$
in the following way:
\begin{itemize}
\item Take all the arrows $c \in Q_1$ not incident to~$k$.
\item For each incoming arrow $a$ and outgoing arrow $b$ at $k$,
create a ``composite" arrow $[ba]$ corresponding to
the product $ba \in  \in A e_k A$.
\item Replace each incoming arrow $a$ (resp.~each outgoing arrow $b$) at~$k$ by
the corresponding arrow $a^\star$ (resp.~$b^\star$) oriented in the opposite way.
\end{itemize}
More formally, for $i=k$ or $j=k$, we set
\begin{equation}
\label{eq:arrows-thru-k-reversed}
\widetilde Q_1 \cap \widetilde A_{i,j} = \{a^\star \mid a \in Q_1
\cap A_{j,i}\}
\end{equation}
(the dual basis);
and for $i$ and $j$ different from~$k$, we define
\begin{equation}
\label{eq:arrows-not-thru-k}
\widetilde Q_1 \cap \widetilde A_{i,j} = (Q_1
\cap A_{i,j})\ \bigsqcup \ \{[ba] \mid b \in Q_1 \cap A_{i,k}, \,\,
a \in Q_1 \cap A_{k,j}\},
\end{equation}
where $[ba] \in \widetilde Q_1 \cap A_{i,k} A_{k,j}$ denotes the arrow
in $\widetilde Q_1$ associated with the product $ba$.

We now associate to $S$ the potential
$\widetilde \mu_k(S) = \widetilde S \in R\langle \langle \widetilde A \rangle \rangle$
given by
\begin{equation}
\label{eq:mu-k-S}
\widetilde S = [S] + \Delta_k,
\end{equation}
where
\begin{equation}
\label{eq:Tk}
\Delta_k = \Delta_k(A) = \sum_{a, b \in Q_1: \ h(a) = t(b) = k} [ba]a^\star b^\star,
\end{equation}
and $[S]$ is obtained by substituting $[a_p a_{p+1}]$ for each
factor $a_p a_{p+1}$ with $t(a_p) = h(a_{p+1}) = k$ of any cyclic path
$a_1 \cdots a_d$ occurring in the expansion of~$S$ (recall that none of these
cyclic paths starts at $k$).
It is easy to see that both $[S]$ and $\Delta_k$ do not depend on the
choice of a basis $Q_1$ of $A$.

The following proposition is immediate from the definitions.

\begin{proposition}
\label{pr:mu-tilde-trivial-summand}
Suppose a QP $(A,S)$ satisfies \eqref{eq:no-2-cycles-thru-k}
and \eqref{eq:no-start-in-k}, and a QP $(A',S')$ is such that
$e_k A' = A' e_k = \{0\}$.
Then we have
\begin{equation}
\widetilde \mu_k(A \oplus A',S+S') = \widetilde \mu_k(A,S) \oplus
(A',S').
\end{equation}
\end{proposition}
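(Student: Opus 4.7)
The plan is to verify the claim by a direct unpacking of the definitions \eqref{eq:mu-k-A}--\eqref{eq:Tk}; the key observation is that the hypothesis $e_k A' = A' e_k = \{0\}$ makes $A'$ completely invisible to all the ``mutation at~$k$'' constructions, so everything splits cleanly into an $A$-part and an $A'$-part.

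First, I would check that the arrow span $\widetilde{(A \oplus A')}$ decomposes as $\widetilde{A} \oplus A'$. Looking at \eqref{eq:mu-k-A} component by component: for $i = k$ or $j = k$, the component is $((A \oplus A')_{j,i})^\star = (A_{j,i})^\star$, since $e_k A' = A' e_k = 0$ forces $(A')_{k,i} = (A')_{i,k} = 0$. For $i, j$ both different from $k$, the composite-arrow piece is $(A \oplus A') e_k (A \oplus A') = A e_k A$ by the same vanishing, and the ``old arrows'' piece becomes $A_{i,j} \oplus A'_{i,j}$. Summing up, I get exactly $\widetilde{A}_{i,j} \oplus A'_{i,j}$ in every bidegree, which is the arrow span of $\widetilde{\mu}_k(A,S) \oplus (A',S')$.

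Next I would verify the matching of potentials. The potential assigned to $\widetilde{\mu}_k(A \oplus A', S + S')$ is $[S + S'] + \Delta_k(A \oplus A')$ by \eqref{eq:mu-k-S}. Since the bracket operation $[\,\cdot\,]$ is linear in the potential and acts path by path, and since $S'$ involves no arrows incident to~$k$ (so no factor $a_p a_{p+1}$ with $t(a_p) = h(a_{p+1}) = k$ can appear), I get $[S + S'] = [S] + S'$. Similarly, in the sum \eqref{eq:Tk} defining $\Delta_k(A \oplus A')$, the arrows $a, b$ with $h(a) = t(b) = k$ must all lie in $A$ (again by $e_k A' = A' e_k = 0$), so $\Delta_k(A \oplus A') = \Delta_k(A)$. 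Combining, the mutated potential is $[S] + \Delta_k(A) + S' = \widetilde{S} + S'$, which is exactly the potential of $\widetilde{\mu}_k(A,S) \oplus (A',S')$.

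Finally, one should confirm that the hypotheses \eqref{eq:no-2-cycles-thru-k} and \eqref{eq:no-start-in-k} are inherited by $(A \oplus A', S + S')$, so that $\widetilde{\mu}_k$ is actually defined on the left-hand side: condition \eqref{eq:no-2-cycles-thru-k} holds because the only arrows at~$k$ in $A \oplus A'$ come from $A$, and condition \eqref{eq:no-start-in-k} holds because the cyclic paths of $S'$ do not even pass through~$k$. There is essentially no obstacle here; the only point where one must be slightly careful is the linearity of $[\,\cdot\,]$ and its interaction with $S'$, which is immediate once one observes that no factor of $S'$ can be touched by the bracket substitution. The proposition then follows.
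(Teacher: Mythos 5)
Your proof is correct and takes the same approach as the paper, which simply declares the proposition ``immediate from the definitions''; you have spelled out exactly the bookkeeping (vanishing of $(A')_{k,i}$ and $(A')_{i,k}$, linearity of $[\,\cdot\,]$, and $\Delta_k(A \oplus A') = \Delta_k(A)$) that the authors left implicit.
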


\begin{theorem}
\label{th:mu-tilde-preserves-isomorphisms}
The right-equivalence class of the QP $(\widetilde A, \widetilde S) = \widetilde \mu_k(A,S)$
is determined by the right-equivalence class of $(A,S)$.
\end{theorem}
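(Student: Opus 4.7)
Given a right-equivalence $\varphi:(A,S)\to(A',S')$, the goal is to produce a right-equivalence of the mutated QPs. Using the canonical $R$-bimodule isomorphism $\widetilde A\to\widetilde{A'}$ induced by $\varphi^{(1)}$ via \eqref{eq:mu-k-A}, I would first reduce to the case $A=A'$, with $\varphi$ an automorphism of $R\langle\langle A\rangle\rangle$ fixing $R$ and $\varphi(S)$ cyclically equivalent to $S'$; I would also replace $S,S'$, if necessary, by cyclically equivalent potentials satisfying \eqref{eq:no-start-in-k}. The plan is then to build an automorphism $\widetilde\varphi$ of $R\langle\langle\widetilde A\rangle\rangle$ with $\widetilde\varphi|_R=\mathrm{id}$ and $\widetilde\varphi(\widetilde S)$ cyclically equivalent to $\widetilde{S'}$, specified on the arrows of~$\widetilde A$ and then extended via Proposition~\ref{pr:automorphisms}.

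The technical backbone is a \emph{substitution map} $\pi$ from the subalgebra $B=\overline e_k R\langle\langle A\rangle\rangle\overline e_k$ to $R\langle\langle\widetilde A\rangle\rangle$, defined on path-basis elements $a_1\cdots a_d$ (with $h(a_1),t(a_d)\neq k$) by replacing every consecutive factor $a_\ell a_{\ell+1}$ with $t(a_\ell)=h(a_{\ell+1})=k$ by the composite arrow $[a_\ell a_{\ell+1}]\in\widetilde A$. The absence of loops at~$k$ (condition \eqref{eq:no-loops}) makes these substitutions non-overlapping, and a direct check shows that $\pi$ is a continuous algebra isomorphism onto the closed subalgebra $\widetilde B\subseteq R\langle\langle\widetilde A\rangle\rangle$ generated by the arrows of $\widetilde A$ other than the reversed ones. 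Because $\varphi|_R=\mathrm{id}$, $\varphi$ preserves~$B$, so I would set $\widetilde\varphi|_{\widetilde B}:=\pi\circ(\varphi|_B)\circ\pi^{-1}$. Since $\pi(S)=[S]$, $\pi(S')=[S']$ and $\varphi(S)$ is cyclically equivalent to $S'$, it follows (after checking that $\pi$ descends to cyclic-equivalence classes that admit a representative in $B$) that $\widetilde\varphi([S])$ is cyclically equivalent to $[S']$.

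What remains is to prescribe $\widetilde\varphi$ on the reversed arrows $a^\star,b^\star$ so that $\widetilde\varphi(\Delta_k)$ is cyclically equivalent to $\Delta_k$. For the leading $R$-bimodule part of $\widetilde\varphi$ on $(e_kA)^\star\oplus(Ae_k)^\star$, I would take the dual inverses of the $R$-bimodule isomorphisms $\varphi^{(1)}|_{e_kA}$ and $\varphi^{(1)}|_{Ae_k}$: a short matrix check shows that, with this choice, the degree-three (lowest-degree) component of $\widetilde\varphi(\Delta_k)$ coincides with $\Delta_k$. The higher-order corrections $\widetilde\varphi^{(2)}(a^\star),\widetilde\varphi^{(2)}(b^\star)\in\mathfrak m(\widetilde A)^2$ must then absorb the remaining discrepancy. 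The hard part will be this step: mirroring the strategy of Lemma~\ref{lem:killing-u-v}, one recursively modifies the values of $\widetilde\varphi$ on the reversed arrows so that, at stage $d$, the defect $\widetilde\varphi(\widetilde S)-\widetilde{S'}$ is cyclically equivalent to an element of $\mathfrak m(\widetilde A)^{d+1}$, and at the next stage an adjustment of depth approximately~$d$ (chosen by solving a linear system involving the cyclic Leibniz rule and the leading matrices of $\varphi^{(1)}$ on $e_kA$ and $Ae_k$) pushes the defect into roughly $\mathfrak m(\widetilde A)^{2d}$.

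Taking the $\mathfrak m$-adic limit of these adjustments, in the style of \eqref{eq:limit-automorphism}, produces a well-defined automorphism~$\widetilde\varphi$ of $R\langle\langle\widetilde A\rangle\rangle$; its leading part on $\widetilde A$ is an $R$-bimodule isomorphism by construction, so Proposition~\ref{pr:automorphisms} guarantees that $\widetilde\varphi$ is indeed an algebra automorphism, yielding the required right-equivalence.
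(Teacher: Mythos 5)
Your plan is close in spirit to the paper's — and the leading-order checks you do are correct — but the route is different and you have a genuine gap at the crucial step.

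Where you agree with the paper: you correctly reduce to $A=A'$, you correctly identify the substitution isomorphism $\pi\colon R\langle\langle A\rangle\rangle_{\hat k,\hat k}\to R\langle\langle\widetilde A_{\hat k,\hat k}\rangle\rangle$ (this is the inverse of the map in Lemma~\ref{lem:[]-isomorphism}), you correctly transport $\varphi$ to $\widetilde\varphi|_{\widetilde B}=\pi\circ\varphi\circ\pi^{-1}$, and your leading-order choice on $a^\star,b^\star$ (the dual inverse of $\varphi^{(1)}$) is exactly the degree-one part of the paper's automorphism. Your matrix check that this makes $\widetilde\varphi(\Delta_k)^{(3)}=\Delta_k$ is also correct.

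The gap is in the last paragraph: you defer the whole construction of the higher-order parts of $\widetilde\varphi(a^\star),\widetilde\varphi(b^\star)$ to an unproved recursion. You assert that at each stage one can ``solve a linear system'' that pushes the defect $\widetilde\varphi(\widetilde S)-\widetilde S'$ from $\mathfrak m(\widetilde A)^{d+1}$ to roughly $\mathfrak m(\widetilde A)^{2d}$, but you never show that system is solvable; this is precisely the nontrivial content of the theorem. Unlike Lemma~\ref{lem:killing-u-v}, where the trivial quadratic form $\sum a_kb_k$ directly controls the linear part of the correction, here the correction enters through $\widetilde\varphi(\Delta_k)$, whose structure already involves the unknown higher-order parts of $\widetilde\varphi$ on the reversed arrows, and the solvability of the resulting system is exactly what needs proof.

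The paper sidesteps the recursion entirely. It passes to the larger algebra $R\langle\langle\widehat A\rangle\rangle$ with $\widehat A=A\oplus(e_kA)^\star\oplus(Ae_k)^\star$, which contains both $R\langle\langle A\rangle\rangle$ and $R\langle\langle\widetilde A\rangle\rangle$ as closed subalgebras; there $\widetilde S$ is literally cyclically equivalent to $S+(\sum_b b^\star b)(\sum_a aa^\star)$, so no $\pi$ is needed and the ``$\pi$ respects cyclic equivalence'' step you flagged disappears. It then extends $\varphi$ to $\widehat\varphi$ by the one-shot closed formula $\widehat\varphi(a_p^\star)=\sum_q\bigl((C_0+C_1)^{-1}\bigr)_{pq}a_q^\star$ (and similarly for $b^\star$), where $C_0+C_1$ is the full matrix expressing $\varphi$ on the incoming arrows — not merely its leading scalar part. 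That inverse has entries in $R\langle\langle A\rangle\rangle_{\hat k,\hat k}$, so $\widehat\varphi$ preserves $R\langle\langle\widetilde A\rangle\rangle$, and a one-line matrix identity shows $\widehat\varphi(\sum_p a_pa_p^\star)=\sum_p a_pa_p^\star$ exactly (and likewise for $\sum_q b_q^\star b_q$), hence $\widehat\varphi$ fixes $\Delta_k$ up to cyclic equivalence with no inductive limit to take. To repair your argument, you should replace the recursion with the analogous closed formula $\widetilde\varphi(a_p^\star)=\sum_q\pi\bigl((C_0+C_1)^{-1}_{pq}\bigr)a_q^\star$; otherwise you must actually prove solvability of the linear system at each stage, which is not done.
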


\begin{proof}
Let $\widehat A$ be the finite-dimensional $R$-bimodule given by
\begin{equation}
\label{eq:hat-A}
\widehat A= A \oplus (e_k A)^\star \oplus (A e_k)^\star.
\end{equation}
The natural embedding $A \to \widehat A$ identifies
$R\langle\langle A\rangle\rangle$ with a closed subalgebra
in $R\langle\langle \widehat A\rangle\rangle$.
We also have a natural embedding $\widetilde A \to
R\langle\langle \widehat A\rangle\rangle$ (sending each arrow $[ba]$ to the product $ba$).
This allows us to identify
$R\langle\langle \widetilde A\rangle\rangle$ with another closed
subalgebra in $R\langle\langle \widehat A\rangle\rangle$, namely, with
the closure of the linear span of the paths
$\widehat a_1 \cdots \widehat a_d$ such that $\widehat a_1 \notin e_k A$
and $\widehat a_d \notin A e_k$.
Under this identification, the potential $\widetilde S$ given by
\eqref{eq:mu-k-S} and viewed as an element of
$R\langle\langle \widehat A\rangle\rangle$ is cyclically equivalent to
the potential
$$S + (\sum_{b \in Q_1 \cap A e_k}  b^\star b)
(\sum_{a \in Q_1 \cap  e_k A} a a^\star).$$

Taking this into account, we see that Theorem~\ref{th:mu-tilde-preserves-isomorphisms}
becomes a consequence of the following lemma.

\begin{lemma}
\label{lem:extension}
Every automorphism $\varphi$ of $R\langle\langle A\rangle\rangle$
can be extended to an automorphism $\widehat\varphi$ of
$R\langle\langle \widehat A \rangle\rangle$
satisfying
\begin{equation}
\label{eq:preserving-tildeA}
{\widehat \varphi}(R\langle\langle \widetilde A \rangle\rangle) =
R\langle\langle \widetilde A \rangle\rangle,
\end{equation}
and
\begin{equation}
\label{eq:preserving-quadratic-terms}
{\widehat \varphi}(\sum_{a \in Q_1 \cap  e_k A} a a^\star) =
\sum_{a \in Q_1 \cap  e_k A} a a^\star, \quad
{\widehat \varphi}(\sum_{b \in Q_1 \cap A e_k}  b^\star b) =
\sum_{b \in Q_1 \cap A e_k}  b^\star b.
\end{equation}
\end{lemma}

In order to extend $\varphi$ to an automorphism $\widehat\varphi$ of
$R\langle\langle \widehat A \rangle\rangle$, we need only to define
the elements $\widehat \varphi(a^\star)$ and $\widehat \varphi(b^\star)$ for all
arrows $a \in Q_1 \cap  e_k A$ and $b \in Q_1 \cap A e_k$.

We first deal with $\widehat \varphi(a^\star)$.
Let $Q_1 \cap e_k A = \{a_1, \dots, a_s\}$.
In view of Proposition~\ref{pr:automorphisms},
the action of~$\varphi$ on these arrows is given by
\begin{equation}
\label{eq:phi-on-a}
\begin{pmatrix}
\varphi(a_1) & \varphi(a_2) & \cdots & \varphi(a_s)\end{pmatrix}
=
\begin{pmatrix}
a_1 & a_2 & \cdots & a_s\end{pmatrix}
(C_0 + C_1),
\end{equation}
where:
\begin{itemize}
\item $C_0$ is an invertible $s \times s$ matrix with entries
in~$K$ such that its $(p,q)$-entry is~$0$ unless $t(a_p) = t(a_q)$;
\item $C_1$ is a $s \times s$ matrix whose $(p,q)$-entry belongs
to ${\mathfrak m}(A)_{t(a_p),t(a_q)}$.
\end{itemize}
Note that $C_0 + C_1$ is invertible, and its inverse is of the
same form: indeed, we have
$$(C_0 + C_1)^{-1} = (I + C_0^{-1} C_1)^{-1} C_0^{-1} =
(I + \sum_{n=1}^\infty (-1)^n (C_0^{-1} C_1)^n) C_0^{-1}.$$
Now we define the elements ${\widehat\varphi}(a_p^\star)$ by setting
$$
\begin{pmatrix}
{\widehat \varphi}(a_1^\star)\\
{\widehat\varphi}(a_2^\star)\\
\vdots\\
{\widehat\varphi}(a_s^\star)
\end{pmatrix}
=(C_0 + C_1)^{-1}\begin{pmatrix}
a_1^\star\\
a_2^\star\\
\vdots\\
a_s^\star
\end{pmatrix}.
$$
It follows that
\begin{align*}
{\widehat\varphi}(\sum_p a_p a_p^\star) &=
\begin{pmatrix}
{\widehat\varphi}(a_1) & {\widehat\varphi}(a_2) & \cdots & {\widehat\varphi}(a_s)\end{pmatrix}
\begin{pmatrix}
{\widehat\varphi}(a_1^\star)\\
{\widehat\varphi}(a_2^\star)\\
\vdots\\
{\widehat\varphi}(a_s^\star)
\end{pmatrix}\\
&=
\begin{pmatrix}
a_1 & a_2 & \cdots & a_s\end{pmatrix}
\begin{pmatrix}
a_1^\star\\
a_2^\star\\
\vdots\\
a_s^\star
\end{pmatrix}
=\sum_p a_p a_p^\star.
\end{align*}

For $b \in Q_1 \cap A e_k$, we define $\widehat \varphi(b^\star)$ in a
similar way.
Namely, let
$Q_1 \cap A e_k = \{b_1, \dots, b_t\}$.
As above, the action of~$\varphi$ on these arrows is given by
\begin{equation}
\label{eq:phi-on-b}
\begin{pmatrix}
\varphi(b_1)\\ \varphi(b_2)\\ \vdots \\ \varphi(b_t)\end{pmatrix}
=
(D_0 + D_1)\begin{pmatrix}
b_1 \\ b_2 \\ \vdots \\ b_t\end{pmatrix}
,
\end{equation}
where:
\begin{itemize}
\item $D_0$ is an invertible $t \times t$ matrix with entries
in~$K$ such that its $(p,q)$-entry is~$0$ unless $h(b_p) = h(b_q)$;
\item $D_1$ is a $t \times t$ matrix whose $(p,q)$-entry belongs
to ${\mathfrak m}(A)_{h(b_p),h(b_q)}$.
\end{itemize}
As above, we see that $D_0 + D_1$ is invertible, and its inverse is of the
same form.
Now we define the elements ${\widehat\varphi}(b_q^\star)$ by setting
$$
\begin{pmatrix}
{\widehat \varphi}(b_1^\star) &
{\widehat\varphi}(b_2^\star) &
\cdots &
{\widehat\varphi}(b_t^\star)
\end{pmatrix}
=\begin{pmatrix}
b_1^\star &
b_2^\star &
\cdots &
b_t^\star
\end{pmatrix} (D_0 + D_1)^{-1}.
$$
It follows that
\begin{align*}
{\widehat\varphi}(\sum_q b_q^\star b_q) &=
\begin{pmatrix}
{\widehat\varphi}(b_1)^\star & {\widehat\varphi}(b_2^\star) & \cdots & {\widehat\varphi}(b_t^\star)\end{pmatrix}
\begin{pmatrix}
{\widehat\varphi}(b_1)\\
{\widehat\varphi}(b_2)\\
\vdots\\
{\widehat\varphi}(b_t)
\end{pmatrix}\\
&=
\begin{pmatrix}
b_1^\star & b_2^\star & \cdots & b_t^\star\end{pmatrix}
\begin{pmatrix}
b_1\\
b_2\\
\vdots\\
b_t
\end{pmatrix}
=\sum_q b_q^\star b_q.
\end{align*}

The condition \eqref{eq:preserving-quadratic-terms} is then clearly
satisfied; the construction also makes clear that the automorphism $\widehat \varphi$
of $R\langle\langle \widehat A \rangle\rangle$ preserves
the subalgebra $R\langle\langle \widetilde A \rangle\rangle$.
As a consequence of Proposition~\ref{pr:automorphisms},
$\widehat \varphi$  restricts to an automorphism of
$R\langle\langle \widetilde A \rangle\rangle$, verifying
\eqref{eq:preserving-tildeA} and completing the proofs of
Lemma~\ref{lem:extension} and Theorem~\ref{th:mu-tilde-preserves-isomorphisms}.
\end{proof}

Note that even if a QP $(A,S)$ is assumed to be reduced,
the QP $\widetilde \mu_k(A,S) = (\widetilde A, \widetilde S)$ is
not necessarily reduced because the component
$[S]^{(2)} \in \widetilde A^2$ may be non-zero.
Combining Theorems~\ref{th:trivial-reduced-splitting} and
\ref{th:mu-tilde-preserves-isomorphisms}, we obtain the following
corollary.

\begin{corollary}
\label{cor:mutations-respect-isom}
Suppose a QP $(A,S)$ satisfies \eqref{eq:no-2-cycles-thru-k}
and \eqref{eq:no-start-in-k}, and let
$\widetilde \mu_k(A,S) = (\widetilde A, \widetilde S)$.
Let $(\overline A, \overline S)$ be a reduced QP such that
\begin{equation}
\label{eq:mutilde-mu}
(\widetilde A, \widetilde S) \cong (\widetilde A_{\rm triv}, \widetilde S^{(2)}) \oplus
(\overline A, \overline S)
\end{equation}
(see \eqref{eq:decomposition}).
Then the right-equivalence class of $(\overline A, \overline S)$ is determined by the
right-equivalence class of $(A,S)$.
\end{corollary}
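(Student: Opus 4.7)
The target is a soft corollary that chains together the two main technical results of the paper proved so far: Theorem~\ref{th:mu-tilde-preserves-isomorphisms}, which says that the pre-reduction mutation $\widetilde \mu_k$ is well-defined on right-equivalence classes, and the uniqueness clause of Splitting Theorem~\ref{th:trivial-reduced-splitting}, which says that the reduced part of a QP is determined up to right-equivalence by the right-equivalence class of the QP itself. The plan is simply to compose these two facts.

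More precisely, suppose $(A,S)$ and $(A',S')$ are right-equivalent QPs on the vertex set~$Q_0$, both satisfying the standing hypotheses \eqref{eq:no-2-cycles-thru-k} and \eqref{eq:no-start-in-k}; we want to show that the reduced QPs $(\overline A, \overline S)$ and $(\overline A', \overline S')$ obtained from \eqref{eq:mutilde-mu} applied to $\widetilde\mu_k(A,S)$ and $\widetilde\mu_k(A',S')$ respectively are right-equivalent. First, the right-equivalence between $(A,S)$ and $(A',S')$ identifies $A$ and $A'$ as $R$-bimodules via $\varphi^{(1)}$, so condition \eqref{eq:no-2-cycles-thru-k} transfers automatically; condition \eqref{eq:no-start-in-k} can always be arranged for any representative by passing to a cyclically equivalent potential within the right-equivalence class, so both $\widetilde\mu_k(A,S)$ and $\widetilde\mu_k(A',S')$ are legitimately defined. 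By Theorem~\ref{th:mu-tilde-preserves-isomorphisms}, we then obtain a right-equivalence
\[
\widetilde\mu_k(A,S) \;\cong\; \widetilde\mu_k(A',S').
\]
By the defining decomposition \eqref{eq:mutilde-mu}, $(\overline A, \overline S)$ is a reduced part of $\widetilde\mu_k(A,S)$ in the sense of Definition~\ref{def:reduced-part}, and likewise for $(\overline A', \overline S')$. Applying the uniqueness clause of Theorem~\ref{th:trivial-reduced-splitting} to these two right-equivalent QPs yields $(\overline A, \overline S) \cong (\overline A', \overline S')$, which is the desired conclusion.

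No substantive obstacle is anticipated, since the hard work sits entirely in the two theorems being invoked. The only point to be vigilant about is that condition \eqref{eq:no-start-in-k} is a condition on the potential rather than the right-equivalence class, but this is harmless because the notion of right-equivalence already allows replacing a potential by any cyclically equivalent one, so a suitable representative satisfying \eqref{eq:no-start-in-k} is available for any class in the domain of $\widetilde\mu_k$.
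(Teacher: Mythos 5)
Your proof is correct and is exactly the argument the paper intends: the corollary is stated as an immediate consequence of combining Theorem~\ref{th:mu-tilde-preserves-isomorphisms} (right-equivalence is preserved by $\widetilde\mu_k$) with the uniqueness clause of Splitting Theorem~\ref{th:trivial-reduced-splitting} (the reduced part is determined up to right-equivalence). Your extra care about transferring conditions \eqref{eq:no-2-cycles-thru-k} and \eqref{eq:no-start-in-k} across a right-equivalence is accurate and harmless.
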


\begin{definition}
\label{def:reduced-mutation}
In the situation of Corollary~\ref{cor:mutations-respect-isom},
we use the notation $\mu_k(A,S) = (\overline A, \overline S)$
and call the correspondence $(A,S) \mapsto \mu_k(A,S)$ the \emph{mutation at vertex}~$k$.
\end{definition}

Note that if a QP $(A,S)$ satisfies \eqref{eq:no-2-cycles-thru-k}
then the same is true for $\widetilde \mu_k(A,S)$ and for
$\mu_k(A,S)$.
Thus, the mutation $\mu_k$ is a well-defined transformation on the
set of right-equivalence classes of reduced QPs satisfying
\eqref{eq:no-2-cycles-thru-k}.
(With some abuse of notation, we sometimes denote
a right-equivalence class by the same symbol as any of its
representatives.)

\begin{example}
Consider the quiver $Q$ with vertices $\{1,2,3,4\}$ and arrows
$a:1\to 2$, $b:2\to 3$, $c:3\to 4$ and $d:4\to 1$:
$$
\xymatrix{
4\ar[d]_d & 3\ar[l]_c\\
1\ar[r]_a & 2\ar[u]_b
}
$$
Let $S = dcba$.
Let us perform the mutation at vertex $2$.
The arrow  $a$ is replaced by $e:=a^\star:2\to 1$,
and $b$ is replaced by $f:=b^\star:3\to 2$.
We also have a new arrow
$g := [ba]:1\to 3$.
So $\widetilde \mu_2(A)$ corresponds to the quiver with vertices $\{1,2,3,4\}$ and
arrows $c,d,e,f,g$:
$$
\xymatrix{
4\ar[dd]_d & & 3\ar[ll]_c\ar[dd]^{f=b^\star} \\
\\
1\ar[uurr]^{g=[ba]} &  & 2\ar[ll]^{e=a^\star}
}
$$
The potential $\widetilde \mu_2(S) = \widetilde S$ is given by
$$\widetilde S= dcg + gef;$$
thus, $\widetilde \mu_2(A,S)$ is reduced, and we have $\widetilde \mu_2(A,S) = \mu_2(A,S)$.

Note that $\widetilde S$ does not satisfy condition
\eqref{eq:no-start-in-k} with respect to vertex $k = 3$
since the path $gef$ starts and ends at~$3$.
But we can fix this condition by replacing $\widetilde S$ with a cyclically equivalent
potential, say $S' = dcg + efg$.
Now let us mutate $(\widetilde A, S')$ at vertex $3$.
The arrows $c,f,g$ are replaced by
$c^\star:4\to 3$, $f^\star:2\to 3$ and $g^\star:3 \to 1$, respectively.
We also add new arrows $[cg]:1\to 4$ and $[fg]:1\to 2$.
Thus, $\widetilde \mu_3(\widetilde A,S')$ has arrows
$\{d,e,c^\star,f^\star,g^\star,[cg],[fg]\}$:
$$
\xymatrix{
4\ar[rr]^{c^\star}\ar@<-.5ex>[dd]_d & & 3\ar[lldd]_{g^\star}\\ \\
1\ar@<-.5ex>[uu]_{[cg]}\ar@<.5ex>[rr]^{[fg]} & &
2\ar[uu]_{f^\star}\ar@<.5ex>[ll]^{e}
}
$$
The potential $\widetilde \mu_3(S')$ is given by
$$
\mu_3 (S') = d[cg] + e[fg] + [fg]g^\star f^\star
+ [cg]g^\star c^\star.
$$
It is not reduced, so to obtain the reduced QP
$\mu_3(\widetilde A,S')$, we need to remove the trivial part
of $\widetilde \mu_3(\widetilde A,S')$.
The resulting quiver is as follows:
$$
\xymatrix{
4\ar[rr]^{c^\star}
& & 3\ar[lldd]_{g^\star}\\ \\
1
& &
2\ar[uu]_{f^\star}
}
$$
Since it is acyclic (that is, has no oriented cycles),
the corresponding potential is~$0$.
\end{example}

Our next result is that every mutation is an involution.

\begin{theorem}
\label{th:mutation-involutive}
The correspondence $\mu_k: (A,S) \to (\overline A, \overline S)$
acts as an involution on the set of right-equivalence classes of reduced QPs
satisfying \eqref{eq:no-2-cycles-thru-k}, that is,
$\mu_k^2(A,S)$ is right-equivalent to $(A,S)$.
\end{theorem}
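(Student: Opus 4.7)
The plan is to reduce the theorem to the single technical claim that the unreduced double mutation $\widetilde\mu_k^2(A, S)$ is right-equivalent to $(A, S)$ direct-summed with some trivial QP. Granting this, the uniqueness of the reduced part in Theorem~\ref{th:trivial-reduced-splitting} immediately yields $\mu_k^2(A, S) \cong (A, S)$.

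\textbf{The reduction.} Starting from a reduced $(A, S)$ satisfying \eqref{eq:no-2-cycles-thru-k} and (after cyclic rewriting) \eqref{eq:no-start-in-k}, Theorem~\ref{th:trivial-reduced-splitting} applied to $\widetilde\mu_k(A, S)$ gives $\widetilde\mu_k(A, S) \cong (\widetilde A_{\rm triv}, \widetilde S_{\rm triv}) \oplus \mu_k(A, S)$. Inspection of $\widetilde S = [S] + \sum [ba]\, a^\star b^\star$ shows that $\widetilde S^{(2)} = [S]^{(2)}$ uses neither $a^\star$ nor $b^\star$, so $\widetilde A_{\rm triv}$ contains no $k$-incident arrows. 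Proposition~\ref{pr:mu-tilde-trivial-summand} then yields
\[
\widetilde\mu_k^2(A, S) \cong (\widetilde A_{\rm triv}, \widetilde S_{\rm triv}) \oplus \widetilde\mu_k(\mu_k(A, S)),
\]
and a second invocation of Theorem~\ref{th:trivial-reduced-splitting} identifies the reduced part of the right-hand side as $\mu_k^2(A, S)$; hence it suffices to produce any right-equivalence $\widetilde\mu_k^2(A, S) \cong (C', T') \oplus (A, S)$ with $(C', T')$ trivial.

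\textbf{Main construction.} Unfolding \eqref{eq:mu-k-A}--\eqref{eq:mu-k-S} with the natural identifications $a^{\star\star} \equiv a$, $b^{\star\star} \equiv b$ gives $\widetilde\mu_k^2(A, S) = (A \oplus D, \widetilde{\widetilde S})$, where $D = A e_k A \oplus (A e_k A)^\star$ is spanned by the composites $[ba]$ (from the first mutation) and $[a^\star b^\star]$ (from the second), and
\[
\widetilde{\widetilde S} = [S] + T + U, \qquad T = \sum_{a,b} [ba][a^\star b^\star], \qquad U = \sum_{a,b} [a^\star b^\star]\, b\, a.
\]
I will produce an $R$-fixing automorphism $\psi$ of $R\langle\langle A \oplus D\rangle\rangle$ with $\psi(\widetilde{\widetilde S})$ cyclically equivalent to $S + T$; since $T$ is a trivial potential on $D$ by Proposition~\ref{pr:trivial-potential}, Theorem~\ref{th:trivial-reduced-splitting} applied to $(A \oplus D, S + T)$ then splits it as $(A, S) \oplus (D, T)$, completing the proof.

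The automorphism $\psi$ is built in two stages, mirroring the proof of Theorem~\ref{th:trivial-reduced-splitting}. First, a change of arrows modifying the $D$-arrows by linear combinations of arrows with matching endpoints (e.g.\ $[a^\star b^\star] \mapsto [a^\star b^\star] + \sum_c \alpha_{cba}\, c$, with $\alpha_{cba}$ the coefficient of the length-$3$ cycle $cba$ through $k$ in $S$) exploits the cyclic rewriting $c[ba] \sim [ba]c$ to bring the quadratic part of $\widetilde{\widetilde S}$ into the canonical pairing form between the two summands of $D$. Second, an iterative unitriangular automorphism built in the style of Lemmas~\ref{lem:u-v-d}--\ref{lem:killing-u-v} pushes the residual mixed $A$--$D$ cross-terms arising from $U$ and $[S]^{(\geq 3)}$ into ever higher powers of ${\mathfrak m}$, and the ${\mathfrak m}$-adic convergence argument produces $\psi$ in the limit. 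The main obstacle is verifying that at every iterative step the residual cross-terms lie in ${\mathfrak m}$ times the Jacobian-type ideal generated by the current quadratic part, so that the depth-doubling iteration of Lemma~\ref{lem:u-v-d} applies; this is directly parallel to the Splitting Theorem itself, and once set up the rest is formal bookkeeping.
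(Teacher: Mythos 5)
Your reduction is the same as the paper's: both arguments use Theorem~\ref{th:trivial-reduced-splitting}, Proposition~\ref{pr:mu-tilde-trivial-summand}, and the uniqueness of the reduced part to boil the theorem down to showing that $\widetilde\mu_k^2(A,S)$ is right-equivalent to $(A,S)\oplus(\text{trivial})$. That part is fine, including the observation that $\widetilde A_{\rm triv}$ carries no $k$-incident arrows.

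The gap is in the construction of the right-equivalence, and it is a real one. Your Stage~1 is a change of arrows that modifies only the arrows $[a^\star b^\star]$ (and, by the nature of a change of arrows, can only add \emph{length-one} corrections). Such a map fixes $[S]$ entirely, since $[S]$ does not involve $[a^\star b^\star]$. So nothing in your construction ever turns $[S]$ back into $S$. But the whole difficulty of the theorem is precisely this: $[S]$ lives on $A_{\hat k,\hat k}\oplus Ae_kA$, with each passage of $S$ through $k$ replaced by a single arrow $[ba]$, and one must un-do that substitution. The paper's decisive move is the \emph{unitriangular} automorphism $\varphi_2\colon [ba]\mapsto [ba]+ba$ (not a change of arrows, since $ba$ has length $2$), applied after the sign-flip $\varphi_1\colon b\mapsto -b$; expanding $\varphi_2([S])$ produces exactly $S$ plus terms divisible by a factor $[ba]$, and those are then absorbed into $[a^\star b^\star]$ by a last unitriangular automorphism $\varphi_3$. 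No iteration or $\mathfrak m$-adic limit is needed. Your proposal has no counterpart to this step.

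The Stage~2 you propose does not fill the hole. Running the Lemma~\ref{lem:u-v-d}/\ref{lem:killing-u-v} machinery only shows that \emph{some} reduced QP appears as the reduced part; it does not by itself identify that reduced part as $(A,S)$, which is exactly what needs proving. Moreover your accounting of the residual after Stage~1 is off: with the corrected sign $[a^\star b^\star]\mapsto[a^\star b^\star]-\sum_c\alpha_{cba}c$ the image of $U=\sum[a^\star b^\star]ba$ picks up a term $-\sum_{a,b,c}\alpha_{cba}cba$, which is a \emph{purely $A$-supported} degree-$3$ potential, not a ``mixed $A$--$D$ cross-term.'' It lands straight in the $S'$ slot of the normal form \eqref{eq:S-normal-form}, so if one naively ran Lemma~\ref{lem:killing-u-v} from there, the reduced part would come out as $S_{\text{not thru }k}-S^{(3)}_{\text{thru }k}+(\text{corrections})$, and you would still have to prove that the corrections conspire to yield $S$. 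That is a genuinely nontrivial claim, and the explicit three-step automorphism in the paper's proof is what replaces it.
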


\begin{proof}
Let $(A, S)$ be a reduced QP satisfying \eqref{eq:no-2-cycles-thru-k}
and \eqref{eq:no-start-in-k}.
Let ${\widetilde \mu}_k (A,S)= ({\widetilde A}, {\widetilde S})$ and
${\widetilde \mu}_k^2 (A, S)= {\widetilde \mu}_k({\widetilde A}, {\widetilde S}) =
(\widetilde {\widetilde A}, \widetilde {\widetilde S})$.
In view of Theorem~\ref{th:trivial-reduced-splitting} and
Proposition~\ref{pr:mu-tilde-trivial-summand}, it is enough to show that
\begin{equation}
\label{eq:tilde-twice}
\text{$(\widetilde {\widetilde A}, \widetilde {\widetilde S})$ is
right-equivalent to $(A,S) \oplus (C,T)$, where $(C,T)$ is a trivial QP.}
\end{equation}
Using \eqref{eq:tilde-A} twice, and identifying $(e_k A)^\star$ with
$A^\star e_k$, and $(A e_k )^\star$ with
$e_k A^\star$, where $A^\star$ is the dual $R$-bimodule of $A$,
we conclude that
\begin{equation}
\label{eq:tilde-tilde-A}
\widetilde {\widetilde A} =
A \oplus  A e_k A \oplus
A^\star e_k A^\star.
\end{equation}
Furthermore, the basis of arrows in
$\widetilde {\widetilde A}$ consists of the original set of arrows $Q_1$ in $A$
together with the arrows $[ba] \in  A e_k A$ and
$[a^\star b^\star] \in A^\star e_k A^\star$
for $a \in Q_1 \cap  e_k A$ and $b \in Q_1 \cap A e_k$.

Remembering \eqref{eq:mu-k-S} and \eqref{eq:Tk}, we see that the
potential $\widetilde {\widetilde S}$ is given by
\begin{equation}
\label{eq:tilde-tilde-S}
\widetilde {\widetilde S} = [[S]] + [\Delta_k (A)]+\Delta_k ({\widetilde A})
= [S] + \sum_{a, b \in Q_1: \ h(a) = t(b) = k} ([ba][a^\star b^\star]
+ [a^\star b^\star] ba),
\end{equation}
hence it is cyclically equivalent to
\begin{equation}
\label{eq:S1}
S_1= [S] +  \sum_{a, b \in Q_1: \ h(a) = t(b) = k}
([ba] + ba)[a^\star b^\star]
\end{equation}
(recall that $[S]$ is obtained by substituting $[a_p a_{p+1}]$ for each
factor $a_p a_{p+1}$ with $t(a_p) = h(a_{p+1}) = k$ of any cyclic path
$a_1 \cdots a_d$ occurring in the path expansion of~$S$).
Let us abbreviate
$$(C,T) = (A e_k A \oplus A^\star e_k A^\star,
\sum_{a, b \in Q_1: \ h(a) = t(b) = k} [ba][a^\star b^\star]).$$
This is a trivial QP (cf. Proposition~\ref{pr:trivial-potential});
therefore to prove Theorem~\ref{th:mutation-involutive}
it suffices to show that
the QP $(\widetilde {\widetilde A}, S_1)$ given by
\eqref{eq:tilde-tilde-A} and \eqref{eq:S1} is right-equivalent to
$(A,S) \oplus (C,T)$.
We proceed in several steps.

{\bf Step 1:} Let $\varphi_1$ be the change of arrows
automorphism of
$R\langle\langle \widetilde {\widetilde A} \rangle\rangle$
(see Definition~\ref{def:automorphisms}) multiplying
each arrow $b \in Q_1 \cap A e_k$ by $-1$, and fixing the rest of
the arrows in $\widetilde {\widetilde A}$.
Then the potential $S_2 = \varphi_1(S_1)$ is given by
$$S_2= [S] +  \sum_{a, b \in Q_1: \ h(a) = t(b) = k}
([ba] - ba)[a^\star b^\star].$$

{\bf Step 2:}
Let $\varphi_2$ be the unitriangular automorphism of
$R\langle\langle \widetilde {\widetilde A} \rangle\rangle$
(see Definition~\ref{def:automorphisms})
sending each arrow $[ba] \in  A e_k A$ to $[ba] + ba$,
and fixing the rest of
the arrows in $\widetilde {\widetilde A}$.
Remembering the definition of $[S]$, it is easy to see that
the potential $\varphi_2(S_2)$ is cyclically equivalent
to a potential of the form
$$S_3 = S +  \sum_{a, b \in Q_1: \ h(a) = t(b) = k}
[ba]([a^\star b^\star] + f(a,b))$$
for some elements $f(a,b) \in {\mathfrak m}(A \oplus  A e_k A)^2$.

{\bf Step 3:}
Let $\varphi_3$ be the unitriangular automorphism of
$R\langle\langle \widetilde {\widetilde A} \rangle\rangle$
sending each arrow $[a^\star b^\star] \in A^\star e_k A^\star$ to
$[a^\star b^\star] - f(a,b)$,
and fixing the rest of
the arrows in $\widetilde {\widetilde A}$.
Then we have $\varphi_3(S_3) = S+T$.

Combining these three steps, we conclude that
the QP $(\widetilde {\widetilde A}, S_1)$
is right-equivalent to
$(\widetilde {\widetilde A}, S+T) = (A,S) \oplus (C,T)$, finishing
the proof of Theorem~\ref{th:mutation-involutive}.
\end{proof}

\section{Some mutation invariants}
\label{sec:mut-invariants}

In this section we fix a vertex~$k$ and study the effect of
the mutation~$\mu_k$ on the Jacobian algebra ${\mathcal P}(A,S)$.
We will use the following notation: for an $R$-bimodule $B$, denote
\begin{equation}
\label{eq:k-excluded}
B_{\hat k, \hat k} = \overline e_k B \overline e_k =
\bigoplus_{i,j \neq k} B_{i,j}
\end{equation}
(see \eqref{eq:overline-ek}).
Note that if $B$ is a (topological) algebra then
$B_{\hat k, \hat k}$ is a (closed) subalgebra of~$B$.

\begin{proposition}
\label{pr:k-excluded-mutation-invariant}
Suppose a QP $(A,S)$ satisfies \eqref{eq:no-2-cycles-thru-k}
and \eqref{eq:no-start-in-k}, and let
$(\widetilde A, \widetilde S) = \widetilde \mu_k(A,S)$ be given by
\eqref{eq:tilde-A} and \eqref{eq:mu-k-S}.
Then the algebras ${\mathcal P}(A,S)_{\hat k, \hat k}$
and ${\mathcal P}(\widetilde A,\widetilde S)_{\hat k, \hat k}$
are isomorphic to each other.
\end{proposition}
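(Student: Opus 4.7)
The plan is to construct an explicit algebra isomorphism $\bar\varphi: {\mathcal P}(A,S)_{\hat k, \hat k} \to {\mathcal P}(\widetilde A, \widetilde S)_{\hat k, \hat k}$ induced by path substitution. First I will define a continuous algebra homomorphism $\varphi: R\langle\langle A\rangle\rangle_{\hat k, \hat k} \to R\langle\langle \widetilde A\rangle\rangle_{\hat k, \hat k}$ by linearly extending the path rewriting rule: for a path $a_1 \cdots a_d$ in $A$ with $h(a_1), t(a_d) \neq k$, replace every consecutive sub-segment $a_l a_{l+1}$ with $t(a_l) = h(a_{l+1}) = k$ by the composite arrow $[a_l a_{l+1}]$ of $\widetilde A$. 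Multiplicativity is immediate: the concatenation of two paths whose endpoints avoid $k$ joins at a non-$k$ vertex, so the substitution commutes with concatenation.

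Next I will show $\varphi$ sends $\overline{e}_k J(S) \overline{e}_k$ into $J(\widetilde S)$, so that $\varphi$ descends to $\bar\varphi$. The crucial case is the cyclic derivative with respect to an arrow $c = a_0: j \to k$ incoming to $k$. Any generator $\overline{e}_k u (\partial_{a_0}S) v \overline{e}_k$ has $v$ ending at $k$, so $v$ decomposes as $\sum_{a'} a' v_{a'}$ with $a'$ ranging over incoming-to-$k$ arrows. Letting $X_{b',a_0}$ denote the element of $R\langle\langle A\rangle\rangle$ obtained by un-substituting $[ba] \mapsto ba$ in $\partial_{[b'a_0]}[S]$, one verifies the identity $\partial_{a_0}S = \sum_{b'} X_{b',a_0}\, b'$ directly from the definition of the cyclic derivative (both sides sum over occurrences of $a_0$ in $S$, preceded by the outgoing arrow $b'$). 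Applying $\varphi$ and combining with $\partial_{[b'a_0]}\widetilde S = a_0^* (b')^* + \partial_{[b'a_0]}[S]$ then rewrites $\varphi(u(\partial_{a_0}S)v)$ as
\[
\sum_{a', b'} \varphi(u)\, \partial_{[b'a_0]}\widetilde S\, [b'a']\, \varphi(v_{a'}) \ - \ \sum_{a'} \varphi(u)\, a_0^*\, \partial_{(a')^*}\widetilde S\, \varphi(v_{a'}),
\]
where the second sum uses $\sum_{b'} (b')^*[b'a'] = \partial_{(a')^*}\widetilde S$ (read from $\widetilde S = [S] + \Delta_k$). Both terms lie in $J(\widetilde S)$. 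The outgoing case is symmetric, and arrows not incident to $k$ are immediate since $\varphi(\partial_c S) = \partial_c \widetilde S$.

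For surjectivity of $\bar\varphi$, I will exploit the relation $a^*b^* \equiv -\partial_{[ba]}[S] \pmod{J(\widetilde S)}$ coming from the vanishing of $\partial_{[ba]}\widetilde S$. Since $R\langle\langle \widetilde A\rangle\rangle_{\hat k, \hat k}$ is topologically generated by $A$-arrows off $k$, composite arrows $[ba]$, and through-$k$ pairs $a^*b^*$, and since $\partial_{[ba]}[S]$ lies in $\mathfrak{m}(\widetilde A)^2$, iteratively substituting for $a^*b^*$ segments strictly increases depth and converges in the $\mathfrak{m}$-adic topology to an element in the image of $\bar\varphi$.

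Injectivity will be the main obstacle, since the naive inverse cannot be defined on the individual arrows $a^*$ and $b^*$. My proposed strategy is to pass through the intermediate algebra $R\langle\langle \widehat A\rangle\rangle$ (with $\widehat A = A \oplus (e_kA)^* \oplus (Ae_k)^*$ from the proof of Theorem~\ref{th:mu-tilde-preserves-isomorphisms}) equipped with the potential $S + (\sum_b b^*b)(\sum_a aa^*)$ to which $\widetilde S$ becomes cyclically equivalent under the natural embedding. Computing cyclic derivatives of this intermediate potential with respect to the arrows $a^*, b^*, a, b$ should show that both $\overline{e}_k {\mathcal P}(A,S) \overline{e}_k$ and $\overline{e}_k {\mathcal P}(\widetilde A, \widetilde S) \overline{e}_k$ are canonically isomorphic to the $\hat k, \hat k$-corner of the intermediate Jacobian algebra, and composing these two identifications yields $\bar\varphi^{-1}$. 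The technical difficulty will be carefully tracking how each family of derivatives eliminates the extra generators ($a^*, b^*$ on the $\widehat A$ side and the $k$-incident arrows $a, b$ on the $A$ side) down to a common corner algebra.
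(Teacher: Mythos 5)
The first part of your proposal---defining the path-substitution map $\varphi$ (which is the map $u\mapsto[u]$ of the paper's Lemma~\ref{lem:[]-isomorphism}), verifying via the identities $\partial_{a_0}S=\sum_{b'}X_{b',a_0}b'$ and $\partial_{[b'a_0]}\widetilde S=a_0^\star(b')^\star+\partial_{[b'a_0]}[S]$ that $\varphi$ carries $\overline e_kJ(S)\overline e_k$ into $J(\widetilde S)$, and the surjectivity argument via iterated substitution $a^\star b^\star\mapsto-\partial_{[ba]}[S]$---is correct and agrees with the paper's construction of the epimorphism $\alpha$ in Lemma~\ref{lem:[]-Jacobian-epimorphism} and the inclusions \eqref{eq:J-complements-k-excluded}, \eqref{eq:J-k-excluded-intersection}. (One small point on surjectivity: a single substitution does not ``strictly increase depth''---$\partial_{[ba]}[S]$ may contain degree-one terms---but it does strictly decrease the number of $a^\star b^\star$ segments in each path, so the induction still closes.)

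Your injectivity strategy, however, has a genuine gap: it is \emph{not} true that both ${\mathcal P}(A,S)_{\hat k,\hat k}$ and ${\mathcal P}(\widetilde A,\widetilde S)_{\hat k,\hat k}$ are isomorphic to the $\hat k,\hat k$-corner of ${\mathcal P}(\widehat A,\widehat S)$, where $\widehat S=S+(\sum_b b^\star b)(\sum_a aa^\star)$. Take $Q$ with vertices $1,2,3$, arrows $a:1\to2$, $b:2\to3$, $S=0$, $k=2$. Then ${\mathcal P}(A,S)_{\hat k,\hat k}$ is $3$-dimensional (spanned by $e_1,e_3,ba$). On the $\widehat A$-side, $\widehat S=b^\star baa^\star$ has cyclic derivatives $a^\star b^\star b,\ aa^\star b^\star,\ b^\star ba,\ baa^\star$, every one of which contains $a$ or $a^\star$. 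Since $J(\widehat S)$ is the closed ideal they generate, no element of $J(\widehat S)$ has a nonzero coefficient on a path built solely from $b$ and $b^\star$. Thus the elements $bb^\star,(bb^\star)^2,(bb^\star)^3,\dots$ (all lying in $\overline e_k R\langle\langle\widehat A\rangle\rangle\overline e_k$) are nonzero in ${\mathcal P}(\widehat A,\widehat S)$, so the intermediate corner is infinite-dimensional and cannot coincide with ${\mathcal P}(A,S)_{\hat k,\hat k}$. The conceptual problem is that $\widehat A$ contains the composable pairs $bb^\star$ and $a^\star a$, which exist in neither $A_{\hat k,\hat k}$ nor $\widetilde A_{\hat k,\hat k}$, and the cyclic derivatives $\partial_{a^\star}\widehat S$, $\partial_{b^\star}\widehat S$ do not supply relations that kill them; also $J(\widehat S)$ lacks the generator $\partial_{[ba]}$ that the student relies on on the $\widetilde A$-side, since $[ba]$ is not an arrow of $\widehat A$. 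The paper instead produces a left inverse $\beta$ to $\alpha$ by applying the analogous $[\cdot]$-epimorphism a second time (landing in $(\widetilde{\widetilde A},\widetilde{\widetilde S})$), then composing with the isomorphism of Theorem~\ref{th:mutation-involutive} identifying $(\widetilde{\widetilde A},\widetilde{\widetilde S})$ with $(A\oplus C,S+T)$ for a \emph{trivial} $(C,T)$, and finally with the isomorphism of Proposition~\ref{pr:jacobian-algebra-invariant}. That detour through the double mutation and the genuinely trivial complement is what your route through $(\widehat A,\widehat S)$ is missing, because $(\widehat A,\widehat S)$ is not a trivial extension of either side.
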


\begin{proof}
In view of \eqref{eq:tilde-A}, we have
\begin{equation}
\label{eq:tilde-A-k-excluded}
{\widetilde A}_{\hat k, \hat k} = A_{\hat k, \hat k}  \oplus  A e_k A.
\end{equation}
Thus, the algebra $R\langle \langle \widetilde A_{\hat k, \hat k}\rangle \rangle$
is generated by the arrows $c \in Q_1 \cap A_{\hat k, \hat k}$ and $[ba]$ for
$a \in Q_1 \cap e_k A$ and $b \in Q_1 \cap A e_k$.
The following fact is immediate from the definitions.

\begin{lemma}
\label{lem:[]-isomorphism}
The correspondence sending each $c \in Q_1 \cap A_{\hat k, \hat k}$ to itself,
and each generator $[ba]$ to $ba$ extends to an algebra isomorphism
$$R\langle \langle {\widetilde A}_{\hat k, \hat k} \rangle \rangle
\to {R\langle \langle A \rangle \rangle}_{\hat k, \hat k}.$$
\end{lemma}

Let $u \mapsto [u]$ denote the isomorphism
${R\langle \langle A \rangle \rangle}_{\hat k, \hat k}
\to R\langle \langle \widetilde A_{\hat k, \hat k}\rangle \rangle$
inverse of that in Lemma~\ref{lem:[]-isomorphism}.
It acts in the same way as
the correspondence $S \mapsto [S]$ in \eqref{eq:mu-k-S}:
$[u]$ is obtained by substituting $[a_p a_{p+1}]$ for each
factor $a_p a_{p+1}$ with $t(a_p) = h(a_{p+1}) = k$ of any path
$a_1 \cdots a_d$ occurring in the path expansion of~$u$.

\begin{lemma}
\label{lem:[]-Jacobian-epimorphism}
The correspondence $u \mapsto [u]$ induces an algebra epimorphism
\hfill \break
${\mathcal P}(A,S)_{\hat k, \hat k} \to
{\mathcal P}(\widetilde A,\widetilde S)_{\hat k, \hat k}$.
\end{lemma}

\begin{proof}
It is enough to prove the following two facts:
\begin{equation}
\label{eq:J-complements-k-excluded}
R\langle \langle {\widetilde A}\rangle \rangle_{\hat k, \hat k}
= R\langle \langle {\widetilde A}_{\hat k, \hat k} \rangle \rangle
+ J(\widetilde S)_{\hat k, \hat k};
\end{equation}
\begin{equation}
\label{eq:J-k-excluded-intersection}
[J(S)_{\hat k, \hat k}] \subseteq
R\langle \langle {\widetilde A}_{\hat k, \hat k} \rangle \rangle
\cap  J(\widetilde S)_{\hat k, \hat k}.
\end{equation}

To show \eqref{eq:J-complements-k-excluded}, we note that if a
path $\widetilde a_1 \cdots \widetilde a_d \in
R\langle \langle {\widetilde A}\rangle \rangle_{\hat k, \hat k}$
does not belong to
$R\langle \langle {\widetilde A}_{\hat k, \hat k} \rangle \rangle$
then it must contain one or more factors of the form $a^\star
b^\star$ with $h(a) = t(b) = k$.
In view of \eqref{eq:mu-k-S} and \eqref{eq:Tk}, we have
\begin{equation}
\label{eq:partial-ba}
a^\star b^\star =
\partial_{[ba]}\widetilde S - \partial_{[ba]}[S].
\end{equation}
Substituting this expression for each factor
$a^\star b^\star$, we see that
$\widetilde a_1 \cdots \widetilde a_d \in
R\langle \langle {\widetilde A}_{\hat k, \hat k} \rangle \rangle
+ J(\widetilde S)_{\hat k, \hat k}$, as desired.

To show \eqref{eq:J-k-excluded-intersection}, we note that
$J(S)_{\hat k, \hat k}$ is easily seen to be the closure of the ideal in
${R\langle \langle A \rangle \rangle}_{\hat k, \hat k}$ generated
by the elements $\partial_c S$ for all arrows $c \in Q_1$ with
$t(c) \neq k$ and $h(c) \neq k$, together with the elements
$(\partial_{a} S) a'$ for $a, a' \in Q_1 \cap e_k A$ , and
$b'(\partial_{b}S)$ for $b, b' \in Q_1 \cap A e_k $.
Let us apply the map $u \mapsto [u]$ to these generators.
First, we have:
\begin{equation}
\label{eq:[]-partial-c}
[\partial_c S] = \partial_c \widetilde S.
\end{equation}
With a little bit more work (using \eqref{eq:partial-ba}), we obtain
\begin{align}
\nonumber
[(\partial_{a}S)a'] &= \sum_{t(b) = k} (\partial_{[ba]}[S])[ba']\\
\label{eq:[]-partial-a}
&=  \sum_{t(b) = k} (\partial_{[ba]}\widetilde S - a^\star b^\star)[ba']\\
\nonumber
&= \sum_{t(b) = k} (\partial_{[ba]}\widetilde S)[ba'] -
a^\star \partial_{{a'}^\star}\widetilde S,
\end{align}
and
\begin{align}
\nonumber
[b'(\partial_{b}S)] &= \sum_{h(a) = k}[b'a] (\partial_{[ba]}[S])\\
\label{eq:[]-partial-b}
&=  \sum_{h(a) = k}[b'a] (\partial_{[ba]}\widetilde S - a^\star b^\star)\\
\nonumber
&= \sum_{h(a) = k}[b'a] (\partial_{[ba]}\widetilde S) -
(\partial_{{b'}^\star}\widetilde S) b^\star.
\end{align}
This implies the desired inclusion in
\eqref{eq:J-k-excluded-intersection}.
\end{proof}

To finish the proof of
Proposition~\ref{pr:k-excluded-mutation-invariant},
it is enough to show that the epimorphism in
Lemma~\ref{lem:[]-Jacobian-epimorphism} (let us denote it by $\alpha$)
is in fact an isomorphism.
To do this, we construct the left inverse algebra homomorphism
$\beta: {\mathcal P}(\widetilde A,\widetilde S)_{\hat k, \hat k} \to
{\mathcal P}(A,S)_{\hat k, \hat k}$ (so that $\beta \alpha$ is the identity
map on ${\mathcal P}(A,S)_{\hat k, \hat k}$).
We define $\beta$ as the composition of three maps.
First, we apply the epimorphism ${\mathcal P}(\widetilde A,\widetilde S)_{\hat k, \hat k}
\to {\mathcal P}(\widetilde{\widetilde A},\widetilde{\widetilde S})_{\hat k, \hat k}$
defined in the same way as $\alpha$.
Remembering the proof of Theorem~\ref{th:mutation-involutive} and
using the notation introduced there, we then apply the isomorphism
${\mathcal P}(\widetilde{\widetilde A},\widetilde{\widetilde S})_{\hat k, \hat k}
\to {\mathcal P}(A \oplus C, S + T)_{\hat k, \hat k}$ induced by
the automorphism $\varphi_3 \varphi_2 \varphi_1$ of
$R\langle \langle A \oplus C \rangle \rangle$.
Finally, we apply the isomorphism
${\mathcal P}(A \oplus C, S + T)_{\hat k, \hat k}
\to {\mathcal P}(A, S)_{\hat k, \hat k}$
given in Proposition~\ref{pr:jacobian-algebra-invariant}.

Since all the maps involved are algebra homomorphisms, it is
enough to check that $\beta \alpha$ fixes the generators
$p(c)$ and $p(ba)$ of ${\mathcal P}(A,S)_{\hat k, \hat k}$,
where  $p$ is the projection
$R\langle \langle A \rangle \rangle \to {\mathcal P}(A,S)$, and
$a, b, c$ have the same meaning as above.
This is done by direct tracing of the definitions.
\end{proof}

\begin{proposition}
\label{pr:fin-dim}
In the situation of
Proposition~\ref{pr:k-excluded-mutation-invariant},
if the Jacobian algebra ${\mathcal P}(A,S)$ is finite-dimensional
then so is ${\mathcal P}(\widetilde A,\widetilde S)$.
\end{proposition}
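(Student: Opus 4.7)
The plan is to reduce the finite-dimensionality of $B := \mathcal{P}(\widetilde A,\widetilde S)$ to that of its "$k$-excluded" subalgebra $\overline F := \mathcal{P}(\widetilde A,\widetilde S)_{\hat k,\hat k}$. Since $\mathcal{P}(A,S)$ is finite-dimensional by hypothesis, its direct $K$-summand $\mathcal{P}(A,S)_{\hat k,\hat k} = \overline e_k \mathcal{P}(A,S) \overline e_k$ is finite-dimensional; by Proposition~\ref{pr:k-excluded-mutation-invariant}, the isomorphic algebra $\overline F$ is then finite-dimensional as well.

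Next I would use the idempotent decomposition $1 = e_k + \overline e_k$ to split
\[
B \;=\; \overline e_k B \overline e_k \,\oplus\, \overline e_k B e_k \,\oplus\, e_k B \overline e_k \,\oplus\, e_k B e_k,
\]
so it suffices to bound each summand. The first is $\overline F$. For the others, the key structural observation is that, by \eqref{eq:arrows-thru-k-reversed}, every arrow of $\widetilde A$ with tail $k$ is one of the finitely many dual arrows $a_1^\star,\dots,a_s^\star$ (indexed by $Q_1 \cap e_k A$), and every arrow of $\widetilde A$ with head $k$ is one of $b_1^\star,\dots,b_t^\star$ (indexed by $Q_1 \cap A e_k$). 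Consequently, in each finite-dimensional component $\widetilde A^d$ with $d \geq 1$ we have the basis-level decomposition
\[
\overline e_k \widetilde A^d e_k \;=\; \bigoplus_{l=1}^s \bigl(\overline e_k \widetilde A^{d-1} e_{t(a_l)}\bigr)\cdot a_l^\star,
\]
because each path ending (at its tail) at $k$ terminates uniquely in some $a_l^\star$. Taking the product over $d$ produces, for every $u \in \overline e_k R\langle\langle \widetilde A\rangle\rangle e_k$, a well-defined decomposition $u = \sum_{l=1}^s u_l\, a_l^\star$ with $u_l \in \overline e_k R\langle\langle \widetilde A\rangle\rangle \overline e_k$. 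Projecting to $B$ and invoking \eqref{eq:J-complements-k-excluded} to identify the image of $\overline e_k R\langle\langle \widetilde A\rangle\rangle \overline e_k$ in $B$ with $\overline F$, we obtain $\overline e_k B e_k = \sum_{l=1}^s \overline F \cdot a_l^\star$, which is finite-dimensional. A symmetric "strip the first arrow" argument yields $e_k B \overline e_k = \sum_{j=1}^t b_j^\star \cdot \overline F$. Finally, $\widetilde A$ has no loops (thanks to \eqref{eq:no-2-cycles-thru-k} and loop-freeness of $A$), so any nonzero path from $k$ to $k$ has length $\geq 2$ and must begin with some $b_j^\star$ and end with some $a_l^\star$, giving $e_k B e_k = K e_k + \sum_{j,l} b_j^\star \cdot \overline F \cdot a_l^\star$. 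All four summands being finite-dimensional, so is $B$.

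The main technical point to verify is that the "strip the last (or first) arrow" decomposition, natural at the level of individual paths, passes cleanly through the complete path algebra and then through the Jacobian quotient. This rests on two facts: that each $\widetilde A^d$ is finite-dimensional, so the decomposition converges degree by degree in the product topology on $R\langle\langle \widetilde A\rangle\rangle$, and that \eqref{eq:J-complements-k-excluded} identifies $\overline e_k B \overline e_k$ with the image of $R\langle\langle \widetilde A_{\hat k,\hat k}\rangle\rangle$, i.e., with the finite-dimensional algebra $\overline F$. Granted these, finite-dimensionality of each of the four summands, and hence of $B$, follows with no further complications.
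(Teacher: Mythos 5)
Your proof is correct and follows essentially the same strategy as the paper's, which factors the computation through Lemma~\ref{lem:fin-dim-reduction}: decompose the quotient algebra via the idempotents $e_k,\overline{e}_k$, strip off the first and last arrow incident to~$k$ in each off-diagonal or $(k,k)$-component, and bound the remainder by the $(\hat k,\hat k)$-subalgebra, exactly as you do directly for $\widetilde A$. One minor remark: your appeal to \eqref{eq:J-complements-k-excluded} is superfluous, since the image of $\overline{e}_k R\langle\langle \widetilde A\rangle\rangle \overline{e}_k$ in $B$ is $B_{\hat k,\hat k}=\overline F$ already by the definition \eqref{eq:k-excluded}.
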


\begin{proof}
We start by showing that finite dimensionality of
${\mathcal P}(A,S)$ follows from a seemingly weaker
condition.

\begin{lemma}
\label{lem:fin-dim-reduction}
Let $J \subseteq {\mathfrak m}(A)$ be a closed ideal
in $R\langle \langle A \rangle \rangle$.
Then the quotient algebra $R\langle \langle A \rangle \rangle/J$
is finite dimensional provided the subalgebra
$R\langle \langle A \rangle \rangle_{\hat k, \hat k}/J_{\hat k, \hat k}$
is finite dimensional.
In particular, the Jacobian algebra ${\mathcal P}(A,S)$ is finite-dimensional
if and only if so is the subalgebra ${\mathcal P}(A,S)_{\hat k, \hat k}$.
\end{lemma}

\begin{proof}
Similarly to \eqref{eq:k-excluded},
for an $R$-bimodule $B$, we denote
$$B_{k, \hat k} = e_k B \overline e_k =
\bigoplus_{j \neq k} B_{k,j}, \quad
B_{\hat k, k} = \overline e_k B e_k =
\bigoplus_{i \neq k} B_{i,k}.$$
We need to show that if
$R\langle \langle A \rangle \rangle_{\hat k, \hat k}/J_{\hat k, \hat k}$
is finite dimensional then so is each of the spaces
$R\langle \langle A \rangle \rangle_{k, \hat k}/J_{k, \hat k}$,
$R\langle \langle A \rangle \rangle_{\hat k, k}/J_{\hat k, k}$ and
$R\langle \langle A \rangle \rangle_{k, k}/J_{k, k}$.
Let us treat $R\langle \langle A \rangle \rangle_{k, k}/J_{k, k}$;
the other two cases are done similarly (and a little simpler).

Let
$$Q_1 \cap A_{k,\hat k} = \{a_1, \dots, a_s\}, \quad
Q_1 \cap A_{\hat k, k} = \{b_1, \dots, b_t\}.$$
We have
$$R\langle \langle A \rangle \rangle_{k, k} = K e_k \oplus
\bigoplus_{\ell, m} a_\ell R\langle \langle A \rangle \rangle_{\hat k, \hat k} b_m.$$
It follows that there is a surjective map
$\alpha: K \times \Mat_{s \times t}(R\langle \langle A \rangle \rangle_{\hat k, \hat k})
\to R\langle \langle A \rangle \rangle_{k, k}/J_{k, k}$ given by
$$\alpha(c, C) = p(c e_k +
\begin{pmatrix}
a_1 & a_2 & \cdots & a_s
\end{pmatrix} C
\begin{pmatrix}
b_1\\
b_2\\
\vdots\\
b_t
\end{pmatrix}),$$
where $\Mat_{s \times t}(B)$ stands for the space of $s \times t$
matrices with entries in~$B$, and $p$ is the projection
$R\langle \langle A \rangle \rangle \to R\langle \langle A \rangle \rangle/J$.
The kernel of $\alpha$ contains the space $\Mat_{s \times t}(J_{\hat k, \hat k})$,
hence $R\langle \langle A \rangle \rangle_{k, k}/J_{k, k}$ is
isomorphic to a quotient of the finite-dimensional space
$K \times \Mat_{s \times t}(R\langle \langle A \rangle \rangle_{\hat k, \hat k}/J_{\hat k, \hat k})$.
Thus, $R\langle \langle A \rangle \rangle_{k, k}/J_{k, k}$ is
finite dimensional, as desired.
\end{proof}

To finish the proof of Proposition~\ref{pr:fin-dim},
suppose that ${\mathcal P}(A,S)$ is finite dimensional.
Then ${\mathcal P}(\widetilde A,\widetilde S)_{\hat k, \hat k}$
is finite dimensional by Proposition~\ref{pr:k-excluded-mutation-invariant}.
Applying Lemma~\ref{lem:fin-dim-reduction} to the QP
$(\widetilde A,\widetilde S)$, we conclude that
${\mathcal P}(\widetilde A,\widetilde S)$ is finite dimensional, as desired.
\end{proof}

Remembering \eqref{eq:mutilde-mu} and using
Proposition~\ref{pr:jacobian-algebra-invariant}, we see that
Propositions~\ref{pr:k-excluded-mutation-invariant} and
\ref{pr:fin-dim} have the following corollary.

\begin{corollary}
\label{cor:mutation-preserves-fin-dim}
Suppose $(A,S)$ is a reduced QP satisfying \eqref{eq:no-2-cycles-thru-k},
and let $(\overline A, \overline S) = \mu_k(A,S)$ be a reduced QP
obtained from $(A,S)$ by the mutation at~$k$.
Then the algebras ${\mathcal P}(A,S)_{\hat k, \hat k}$
and ${\mathcal P}(\overline A,\overline S)_{\hat k, \hat k}$
are isomorphic to each other, and
${\mathcal P}(A,S)$ is finite-dimensional
if and only if so is ${\mathcal P}(\overline A,\overline S)$.
\end{corollary}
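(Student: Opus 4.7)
The plan is to string together the three earlier results in the obvious way, using the Splitting Theorem to pass between $\widetilde\mu_k$ and $\mu_k$. First I would, if necessary, replace $S$ with a cyclically equivalent potential to ensure condition~\eqref{eq:no-start-in-k} holds at the vertex~$k$; by Proposition~\ref{pr:cyclic-equivalence} this affects neither $J(S)$ nor the Jacobian algebra. This legitimizes forming $\widetilde\mu_k(A,S) = (\widetilde A,\widetilde S)$ as in Section~\ref{sec:mutations}.

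Next, by \eqref{eq:mutilde-mu} the QP $(\widetilde A,\widetilde S)$ is right-equivalent to a direct sum $(\widetilde A_{\rm triv},\widetilde S^{(2)}) \oplus (\overline A,\overline S)$. Invoking Proposition~\ref{pr:automorphism-respects-jacobian}, the right-equivalence induces an isomorphism of Jacobian algebras ${\mathcal P}(\widetilde A,\widetilde S) \cong {\mathcal P}(\widetilde A_{\rm triv} \oplus \overline A,\widetilde S^{(2)} + \overline S)$, and then Proposition~\ref{pr:jacobian-algebra-invariant}, applied to the trivial summand, identifies the right-hand side with ${\mathcal P}(\overline A,\overline S)$. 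Since these isomorphisms are induced by continuous algebra homomorphisms that fix $R$, they restrict to isomorphisms of the $\hat k,\hat k$-corners. Combining with Proposition~\ref{pr:k-excluded-mutation-invariant}, which gives ${\mathcal P}(A,S)_{\hat k,\hat k} \cong {\mathcal P}(\widetilde A,\widetilde S)_{\hat k,\hat k}$, yields the first claim: ${\mathcal P}(A,S)_{\hat k,\hat k} \cong {\mathcal P}(\overline A,\overline S)_{\hat k,\hat k}$.

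For the finite-dimensionality statement I would argue as follows. If ${\mathcal P}(A,S)$ is finite-dimensional, then by Proposition~\ref{pr:fin-dim} so is ${\mathcal P}(\widetilde A,\widetilde S)$; by the isomorphism of the previous paragraph, ${\mathcal P}(\overline A,\overline S)$ is finite-dimensional as well. For the converse, I would appeal to the involutivity of mutation (Theorem~\ref{th:mutation-involutive}): $\mu_k(\overline A,\overline S)$ is right-equivalent to $(A,S)$, so running the forward implication with $(\overline A,\overline S)$ in the place of $(A,S)$ concludes the argument. The only point needing a small verification is that $(\overline A,\overline S)$ indeed satisfies \eqref{eq:no-2-cycles-thru-k}, which is noted right after Definition~\ref{def:reduced-mutation} and follows from the construction of $\widetilde\mu_k$.

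I do not expect a genuine obstacle here: the content is entirely in Propositions~\ref{pr:k-excluded-mutation-invariant} and \ref{pr:fin-dim} and in Theorem~\ref{th:trivial-reduced-splitting}, and the corollary is essentially the bookkeeping that matches $\widetilde\mu_k$ with $\mu_k$. The only mildly delicate point is to make sure the isomorphism of Jacobian algebras produced from right-equivalence and from the splitting off of a trivial QP is compatible with the decomposition by pairs of vertices, so that restricting to the $\hat k,\hat k$-corner is legitimate; but this is automatic because all homomorphisms in question restrict to the identity on $R$ and hence preserve the idempotent decomposition.
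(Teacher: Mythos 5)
Your proposal is correct, and follows essentially the same chain of reasoning as the paper: use \eqref{eq:mutilde-mu} together with Propositions~\ref{pr:automorphism-respects-jacobian} and~\ref{pr:jacobian-algebra-invariant} to get ${\mathcal P}(\widetilde A,\widetilde S)\cong{\mathcal P}(\overline A,\overline S)$ (compatibly with the idempotent decomposition, since all maps fix $R$), then combine with Proposition~\ref{pr:k-excluded-mutation-invariant} for the corner isomorphism, and with Proposition~\ref{pr:fin-dim} for finite-dimensionality.

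The one place where you diverge slightly from the paper's intent is in deducing the converse of the finite-dimensionality claim. You invoke the involutivity of mutation (Theorem~\ref{th:mutation-involutive}) and rerun the forward implication with $(A,S)$ and $(\overline A,\overline S)$ swapped. That works. But it is slightly heavier than needed: once you have the isomorphism of $\hat k,\hat k$-corners, the ``if and only if'' in Lemma~\ref{lem:fin-dim-reduction} already gives the equivalence directly, without ever appealing to Theorem~\ref{th:mutation-involutive} (which itself rests on the full strength of the Splitting Theorem). Concretely: ${\mathcal P}(A,S)$ finite-dimensional $\iff$ ${\mathcal P}(A,S)_{\hat k,\hat k}$ finite-dimensional $\iff$ ${\mathcal P}(\overline A,\overline S)_{\hat k,\hat k}$ finite-dimensional $\iff$ ${\mathcal P}(\overline A,\overline S)$ finite-dimensional, the outer equivalences being Lemma~\ref{lem:fin-dim-reduction} applied to $(A,S)$ and to $(\overline A,\overline S)$ respectively (the latter being a reduced QP satisfying \eqref{eq:no-2-cycles-thru-k}). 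This is the route the paper implicitly intends by citing Propositions~\ref{pr:k-excluded-mutation-invariant} and~\ref{pr:fin-dim} together. Both routes are valid; yours just depends on an extra, more substantial theorem.
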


We see that the class of QPs with finite dimensional Jacobian
algebras is invariant under mutations.
Let us now present another such class.

\begin{definition}
\label{def:deff-space}
For every QP $(A,S)$, we define its
\emph{deformation space} $\deff(A,S)$ by
\begin{equation}
\label{eq:deff-space}
\deff(A,S) = \Tr({\mathcal P}(A, S))/R
\end{equation}
(see Definitions~\ref{def:cyclic-stuff} and \ref{def:trace-space}).
\end{definition}

\begin{remark}
\label{rem:deformations}
Definition~\ref{def:deff-space} can be motivated as follows
(we keep the following arguments informal although with some work they can be made rigorous).
Let $G = \Aut(R\langle\langle A\rangle\rangle)$
be the group of algebra automorphisms of
$R\langle\langle A\rangle\rangle$ (acting as the identity on~$R$).
Using Proposition~\ref{pr:automorphisms}, we can think of~$G$ as
an infinite dimensional algebraic group.
In view of Definition~\ref{def:trace-space}, $G$
acts naturally on the trace space $\Tr(R\langle\langle A\rangle\rangle)$.
Remembering Definition~\ref{def:AS-isomorphism}, it is natural
to think of the deformation space of a QP $(A,S)$
as the normal space at $\pi(S)$ of the orbit $G \cdot \pi(S)$ in
the ambient space $\pi({\mathfrak m}(A)^2)$
(recall that $\pi$ stands for the natural projection
$R\langle\langle A\rangle\rangle \to \Tr(R\langle\langle A\rangle\rangle)$).
Arguing as in Lemma~\ref{lem:noncommutative-Taylor}, we conclude
that the infinitesimal action of (the Lie algebra of)~$G$ on
 $\pi({\mathfrak m}(A)^2)$ is by the transformations
$$\pi(u) \mapsto \pi(\sum_{k=1}^N b_k \partial_{a_k} u),$$
where $Q_1 = \{a_1, \dots, a_N\}$ is the set of arrows, and
$b_k \in {\mathfrak m}(A)_{h(a_k), t(a_k)}$
(this is well defined in view of Proposition~\ref{pr:cyclic-equivalence}).
This makes it natural to identify the tangent space at $\pi(S)$ of $G \cdot \pi(S)$
with $\pi(J(S))$, hence to identify the corresponding
normal space with $\pi({\mathfrak m}(A))/\pi(J(S))$, or
equivalently, with the space $\deff(S)$ given by
\eqref{eq:deff-space}.
\end{remark}

\begin{proposition}
\label{pr:Tr-mutation-invariant}
In the situation of Proposition~\ref{pr:k-excluded-mutation-invariant},
deformation spaces $\deff(\widetilde A, \widetilde S)$
and $\deff(A, S)$ are isomorphic to each other.
\end{proposition}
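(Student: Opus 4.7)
The plan is to reduce Proposition~\ref{pr:Tr-mutation-invariant} to Proposition~\ref{pr:k-excluded-mutation-invariant} via the observation that the deformation space depends only on the $(\hat k,\hat k)$-part of the Jacobian algebra. Specifically, I would establish the auxiliary claim that for any QP $(A, S)$ with no loops at $k$ (which holds in particular for any QP satisfying \eqref{eq:no-loops}), the inclusion $\iota : \mathcal{P}(A, S)_{\hat k, \hat k} \hookrightarrow \mathcal{P}(A, S)$ induces a $K$-linear isomorphism
\[
\bar\iota_*:\;\Tr\!\bigl(\mathcal{P}(A, S)_{\hat k, \hat k}\bigr)\big/ R_{\hat k, \hat k} \;\xrightarrow{\;\sim\;}\; \deff(A, S).
\]
Once this is proved, one notes that $\widetilde A$ also has no loops at $k$ (its arrows incident to $k$ are the reversals $a^\star$ and $b^\star$ of the original non-loop arrows), so the auxiliary claim applies to $(\widetilde A, \widetilde S)$ as well. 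Combining the two auxiliary isomorphisms with the algebra isomorphism $\mathcal{P}(A, S)_{\hat k, \hat k} \cong \mathcal{P}(\widetilde A, \widetilde S)_{\hat k, \hat k}$ furnished by Proposition~\ref{pr:k-excluded-mutation-invariant} then yields the sought-after isomorphism $\deff(A, S) \cong \deff(\widetilde A, \widetilde S)$.

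To prove the auxiliary claim, I would argue in terms of cyclic paths. The trace space $\Tr(R\langle\langle A\rangle\rangle)/R$ has a natural topological basis consisting of cyclic equivalence classes of cyclic paths in $A$ of positive length, and $\deff(A, S)$ is obtained by further quotienting by the image of $J(S)$. For surjectivity of $\bar\iota_*$: by the no-loops condition, every cyclic path in $A$ that visits $k$ factors as $a\alpha b$ with $a \in Q_1 \cap e_k A$, $b \in Q_1 \cap A e_k$, $\alpha \in R\langle\langle A\rangle\rangle_{\hat k,\hat k}$, and rotates cyclically to $\alpha(ba) \in R\langle\langle A\rangle\rangle_{\hat k,\hat k}$; iterating through each visit to $k$ places every such cyclic class in the image of $\iota_*$. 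For injectivity: the iterated rotation in fact sets up a bijection between cyclic classes of cyclic paths in $A$ (of positive length) and cyclic classes of cyclic paths in $R\langle\langle A\rangle\rangle_{\hat k,\hat k}$ (equivalently, in $R\langle\langle \widetilde A_{\hat k,\hat k}\rangle\rangle$ via Lemma~\ref{lem:[]-isomorphism}), and this bijection plainly intertwines cyclic equivalences on both sides.

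The remaining and most delicate step is to verify compatibility with the Jacobian ideals: namely, that under the bijection of cyclic classes above, the image of $J(S)$ in $\Tr(R\langle\langle A\rangle\rangle)/R$ corresponds to the image of $J(S)_{\hat k, \hat k}$ in $\Tr(R\langle\langle A\rangle\rangle_{\hat k, \hat k})/R_{\hat k, \hat k}$. I would handle this by adapting the computation in the proof of Lemma~\ref{lem:[]-Jacobian-epimorphism}: the cyclic derivatives $\partial_c S$ for arrows $c \in Q_1 \cap A_{\hat k,\hat k}$ lie in $R\langle\langle A\rangle\rangle_{\hat k, \hat k}$ and transfer directly, while for $\xi = a^\star$ (with $a$ incoming at $k$) or $\xi = b^\star$ (with $b$ outgoing at $k$), one flanks $\partial_a S$ or $\partial_b S$ by outgoing/incoming arrows at $k$ to obtain generators of $J(S)_{\hat k, \hat k}$, matching up on the trace level via cyclic equivalence, exactly as in \eqref{eq:[]-partial-c}--\eqref{eq:[]-partial-b}. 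This Jacobian-compatibility is the technical heart of the proof, after which the auxiliary claim follows, and hence so does the proposition.
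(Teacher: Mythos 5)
Your proof is correct and takes the same route as the paper: both reduce the proposition to the isomorphism $\deff(A,S)\cong\Tr\bigl(\mathcal P(A,S)_{\hat k,\hat k}\bigr)/R_{\hat k,\hat k}$ and then invoke Proposition~\ref{pr:k-excluded-mutation-invariant}. The paper justifies that isomorphism with only a terse citation of Proposition~\ref{pr:cyclical-equiv-thru-trace}, whereas you supply the underlying rotation-of-cyclic-classes argument and the check that Jacobian ideals correspond; these details are consistent with what the paper's citation tacitly relies on.
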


\begin{proof}
In view of Proposition \ref{pr:cyclical-equiv-thru-trace},
$\deff(A,S)$ is isomorphic to
$\Tr({\mathcal P}(A, S)_{\hat k, \hat k})/R_{\hat k, \hat k}$.
Therefore, our assertion is immediate from
Proposition~\ref{pr:k-excluded-mutation-invariant}.
\end{proof}

\begin{definition}
\label{def:rigid-QP}
We call a QP $(A,S)$ \emph{rigid} if $\deff(A,S) = \{0\}$, i.e.,
if $\Tr({\mathcal P}(A, S)) = R$.
\end{definition}

Combining Propositions~\ref{pr:jacobian-algebra-invariant}
and \ref{pr:Tr-mutation-invariant}, we obtain the following corollary.

\begin{corollary}
\label{cor:mutation-preserves-rigidity}
If a reduced QP $(A,S)$ satisfies \eqref{eq:no-2-cycles-thru-k}
and is rigid, then the QP $(\overline A, \overline S) = \mu_k(A,S)$
is also rigid.
\end{corollary}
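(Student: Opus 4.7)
The plan is to string together the two mutation invariance results for deformation spaces and Jacobian algebras, together with the Splitting Theorem, to transport rigidity from $(A,S)$ to $(\overline A, \overline S)$.

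First I would replace $S$ (within its cyclic equivalence class) by a potential satisfying \eqref{eq:no-start-in-k}, which is harmless since right-equivalence classes are what we care about, and form $(\widetilde A, \widetilde S) = \widetilde\mu_k(A,S)$. By Proposition~\ref{pr:Tr-mutation-invariant}, the deformation space $\deff(\widetilde A, \widetilde S)$ is isomorphic to $\deff(A,S)$. Since $(A,S)$ is rigid, this gives $\deff(\widetilde A, \widetilde S) = \{0\}$.

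Next, by the Splitting Theorem~\ref{th:trivial-reduced-splitting}, we have a right-equivalence
\[
(\widetilde A, \widetilde S) \;\cong\; (\widetilde A_{\rm triv}, \widetilde S_{\rm triv}) \;\oplus\; (\overline A, \overline S),
\]
where $(\overline A, \overline S) = \mu_k(A,S)$. Applying Proposition~\ref{pr:jacobian-algebra-invariant} with $(C,T) = (\widetilde A_{\rm triv}, \widetilde S_{\rm triv})$, the canonical embedding induces an isomorphism of Jacobian algebras ${\mathcal P}(\overline A, \overline S) \to {\mathcal P}(\widetilde A, \widetilde S)$. Any such $K$-algebra isomorphism restricts to the identity on the common subalgebra $R$ and carries commutators to commutators, hence descends to an isomorphism of trace spaces that identifies $R$ with $R$. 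Consequently
\[
\deff(\overline A, \overline S) \;\cong\; \deff(\widetilde A, \widetilde S) \;=\; \{0\},
\]
so $(\overline A, \overline S)$ is rigid.

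There is essentially no obstacle in this argument: all the hard work has already been done in Propositions~\ref{pr:jacobian-algebra-invariant}, \ref{pr:k-excluded-mutation-invariant}, and \ref{pr:Tr-mutation-invariant}, and in Theorem~\ref{th:trivial-reduced-splitting}. The only mild point to verify is that the Jacobian algebra isomorphism of Proposition~\ref{pr:jacobian-algebra-invariant} induces an isomorphism of deformation spaces, but this is immediate from the fact that it is a $K$-algebra isomorphism fixing $R$ pointwise, so the quotients $\Tr({\mathcal P})/R$ on either side are canonically identified.
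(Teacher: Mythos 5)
Your argument is correct and follows the paper's own one-line proof, which simply combines Propositions~\ref{pr:jacobian-algebra-invariant} and \ref{pr:Tr-mutation-invariant}. Your added observation, that the Jacobian algebra isomorphism of Proposition~\ref{pr:jacobian-algebra-invariant} (being a topological $K$-algebra isomorphism fixing $R$) descends to an isomorphism of trace spaces and hence of deformation spaces, is exactly the glue the paper leaves implicit.
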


Some examples of rigid and non-rigid QPs will be given in
Section~\ref{sec:QP-matrix-mutations}.

\section{Nondegenerate QPs}
\label{sec:generic}

If we wish to be able to apply to a reduced QP  $(A,S)$ the mutation at
\emph{every} vertex of $Q_0$, the $R$-bimodule $A$ must satisfy
\eqref{eq:no-2-cycles-thru-k} at all vertices.
Thus, the arrow span $A$ must be $2$-acyclic (see
Definition~\ref{def:2-acyclic}).
Such an arrow span $A$ can be encoded by a skew-symmetric integer
matrix $B = B(A) = (b_{i,j})$ with rows and columns labeled by $Q_0$, by
setting
\begin{equation}
\label{eq:B-thru-A}
b_{i,j} = \dim A_{i,j} - \dim A_{j,i}.
\end{equation}
Indeed, the dimensions of the components $A_{i,j}$ are recovered
from~$B$ by
\begin{equation}
\label{eq:A-thru-B}
\dim A_{i,j} =[b_{i,j}]_+,
\end{equation}
where we use the notation
\begin{equation}
\label{eq:x+}
[x]_+ = \max(x,0).
\end{equation}

\begin{proposition}
\label{pr:A-B-mutation}
Let $(A,S)$ be a $2$-acyclic reduced QP, and
suppose that the reduced QP $\mu_k(A,S) = (\overline A, \overline S)$
obtained from $(A,S)$ by the mutation at some vertex~$k$
(see Definition~\ref{def:reduced-mutation}) is also $2$-acyclic.
Let $B(A) = (b_{i,j})$ and $B(\overline A) = (\overline b_{i,j})$
be the skew-symmetric integer matrices given by \eqref{eq:B-thru-A}.
Then we have
\begin{equation}
\label{eq:B-mutation}
\overline b_{i,j} =
\begin{cases}
-b_{i,j} & \text{if $i=k$ or $j=k$;} \\[.05in]
b_{i,j} + [b_{i,k}]_+ \ [b_{k,j}]_+ - [-b_{i,k}]_+\ [-b_{k,j}]_+
 & \text{otherwise.}
\end{cases}
\end{equation}
\end{proposition}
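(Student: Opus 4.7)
The claim is really a dimension count on the $R$-bimodule $\widetilde A$ of $\widetilde \mu_k(A,S) = (\widetilde A, \widetilde S)$, combined with the observation that the passage to the reduced part $\overline A$ changes arrow dimensions symmetrically between any pair of opposite directions. My plan is to first read off $\dim \widetilde A_{i,j}$ directly from the construction, then use Splitting Theorem~\ref{th:trivial-reduced-splitting} and Proposition~\ref{pr:trivial-potential} to compare $\widetilde A$ with $\overline A$, and finally translate back to the matrix $B$ via \eqref{eq:A-thru-B}.

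\emph{Step 1 (dimensions of $\widetilde A$).} From \eqref{eq:mu-k-A} we see at once that for $j \neq k$ we have $\dim \widetilde A_{k,j} = \dim (A_{j,k})^{\star} = \dim A_{j,k}$, and symmetrically $\dim \widetilde A_{i,k} = \dim A_{k,i}$ for $i \neq k$. For $i, j$ both distinct from $k$, the space $A_{i,k} A_{k,j} \subseteq A^{2}$ is isomorphic to $A_{i,k} \otimes_K A_{k,j}$ (because distinct products of path-basis elements through the common vertex $k$ remain linearly independent, cf.\ \eqref{eq:nonzero-products}), hence
\begin{equation*}
\dim \widetilde A_{i,j} = \dim A_{i,j} + \dim A_{i,k} \cdot \dim A_{k,j}.
\end{equation*}

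\emph{Step 2 (removing the trivial part).} Apply Theorem~\ref{th:trivial-reduced-splitting} to $(\widetilde A, \widetilde S)$ to obtain a right-equivalence $(\widetilde A, \widetilde S) \cong (\widetilde A_{\mathrm{triv}}, \widetilde S_{\mathrm{triv}}) \oplus (\overline A, \overline S)$; in particular $\widetilde A \cong \widetilde A_{\mathrm{triv}} \oplus \overline A$ as $R$-bimodules. By Proposition~\ref{pr:trivial-potential}, $\widetilde A_{\mathrm{triv}}$ admits a basis consisting of pairs of arrows $\{a_p, b_p\}$ such that each $a_p b_p$ is a cyclic $2$-path; hence the number of $a_p$ with $h(a_p) = i, \ t(a_p) = j$ equals the number of $b_p$ with $h(b_p) = j, \ t(b_p) = i$, giving
\begin{equation*}
\dim (\widetilde A_{\mathrm{triv}})_{i,j} = \dim (\widetilde A_{\mathrm{triv}})_{j,i} \quad \text{for all } i,j.
\end{equation*}
Since $\overline A$ is assumed $2$-acyclic, \eqref{eq:B-thru-A} gives $\overline b_{i,j} = \dim \overline A_{i,j} - \dim \overline A_{j,i}$, and the symmetry above yields $\overline b_{i,j} = \dim \widetilde A_{i,j} - \dim \widetilde A_{j,i}$.

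\emph{Step 3 (assembling the formula).} For $i = k$, $j \neq k$ Step~1 immediately gives $\overline b_{k,j} = \dim A_{j,k} - \dim A_{k,j} = -b_{k,j}$, and symmetrically $\overline b_{i,k} = -b_{i,k}$. For $i, j \neq k$, substituting the dimensions from Step~1 and applying \eqref{eq:A-thru-B} together with the elementary identities $[b_{j,k}]_+ = [-b_{k,j}]_+$ and $[b_{k,i}]_+ = [-b_{i,k}]_+$ yields
\begin{equation*}
\overline b_{i,j} = b_{i,j} + [b_{i,k}]_+ \, [b_{k,j}]_+ - [-b_{i,k}]_+ \, [-b_{k,j}]_+,
\end{equation*}
which is exactly \eqref{eq:B-mutation}. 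The only genuinely non-tautological ingredient is the symmetry of trivial arrow spans in Step~2, which is precisely what Proposition~\ref{pr:trivial-potential} supplies; everything else is bookkeeping. Thus there is no real obstacle once the Splitting Theorem and the normal form for trivial QPs are in hand.
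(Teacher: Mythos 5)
Your proof is correct and follows essentially the same route as the paper's: both rest on the observation (via Proposition~\ref{pr:trivial-potential}) that a trivial QP has $\dim C_{i,j} = \dim C_{j,i}$, so that passing from $\widetilde A$ to $\overline A$ does not change the skew-symmetric differences, and then on the dimension count for $\widetilde A$ from \eqref{eq:mu-k-A} plus \eqref{eq:A-thru-B}. The only cosmetic difference is the order of the steps — the paper records the symmetry of trivial arrow spans first and the dimensions of $\widetilde A$ second, while you do the reverse — but the ingredients and the logic are identical.
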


\begin{proof}
First we note that by Proposition~\ref{pr:trivial-potential},
if $(C,T)$ is a trivial QP then $\dim C_{i,j} = \dim C_{j,i}$
for all $i, j$.
In view of \eqref{eq:mutilde-mu}, this implies that
\begin{equation}
\label{eq:B-via-tildeA}
\overline b_{i,j} = \dim \overline A_{i,j} - \dim \overline A_{j,i} =
\dim \widetilde A_{i,j} - \dim \widetilde A_{j,i},
\end{equation}
where $(\widetilde A, \widetilde S) = \widetilde \mu_k(A,S)$.
Using \eqref{eq:mu-k-A}, we obtain
$$\dim \widetilde A_{i,j} =
\begin{cases}
\dim A_{j,i} & \text{if $i=k$ or $j=k$;} \\[.05in]
\dim A_{i,j} + \dim A_{i,k} \dim A_{k,j}
 & \text{otherwise.}
\end{cases}$$
To obtain \eqref{eq:B-mutation}, it remains to substitute
these expressions into \eqref{eq:B-via-tildeA} and
use \eqref{eq:A-thru-B}.
\end{proof}

An easy calculation using the obvious identity
$x=[x]_+ - [-x]_+$ shows that the second case in \eqref{eq:B-mutation}
can be rewritten in several equivalent ways as follows:
\begin{align*}
\overline b_{i,j} &= b_{i,j} + {\rm sgn}(b_{i,k}) \ [b_{i,k}b_{k,j}]_+\\
&= b_{i,j} + [-b_{i,k}]_+\,b_{k,j} +b_{i,k} [b_{k,j}]_+\\
& = b_{i,j} + \displaystyle\frac{|b_{i,k}| b_{k,j} + b_{i,k} |b_{k,j}|}{2}.
\end{align*}
It follows that the transformation $B \mapsto \overline B$ given
by \eqref{eq:B-mutation} coincides with the \emph{matrix mutation}
at~$k$ which plays a crucial part in the theory of cluster algebras
(cf.~\cite[(4.3)]{ca1}, \cite[(2.2), (2.5)]{ca4}).

We see that the mutations of $2$-acyclic QPs provide a
natural framework for matrix mutations.
With some abuse of notation, we denote by $\mu_k(A)$ the
$2$-acyclic $R$-bimodule such that the skew-symmetric matrix
$B(\mu_k(A))$ is obtained from $B(A)$ by the mutation at~$k$;
thus, $\mu_k(A)$ is determined by~$A$ up to an isomorphism.

Note that the matrix mutations at arbitrary vertices can be iterated
indefinitely, while the $2$-acyclicity condition \eqref{eq:no-2-cycles}
can be destroyed by a QP mutation.
We will study QPs for which this does not happen.

\begin{definition}
\label{def:nondegenerate}
Let $k_1,\dots,k_\ell\in Q_0$ be a finite sequence of vertices such that
$k_p \neq k_{p+1}$ for $p = 1, \dots, \ell-1$.
We say that a QP $(A,S)$ is $(k_\ell,\cdots,k_1)$-\emph{nondegenerate}
if all the QPs $(A,S), \mu_{k_1}(A,S),  \mu_{k_2} \mu_{k_1}(A,S),
\dots, \mu_{k_\ell}\cdots \mu_{k_1}(A,S)$ are $2$-acyclic (hence well-defined).
We say that $(A,S)$ is \emph{nondegenerate} if it is
$(k_\ell,\dots,k_1)$-nondegenerate for every sequence of vertices
as above.
\end{definition}

To state our next result recall the terminology introduced before
Proposition~\ref{pr:reduced-2-acyclic}.
In particular, for a given quiver with the arrow span~$A$, the QPs
on~$A$ are identified with the elements of $K^{{\mathcal C}(A)}$.

\begin{proposition}
\label{pr:finite-nondegeneracy}
Suppose that the base field $K$ is infinite, $Q$ is a $2$-acyclic
quiver with the arrow span~$A$, a sequence of vertices
$k_1,\dots,k_\ell$ is as in Definition~\ref{def:nondegenerate},
and $A' = \mu_{k_\ell}\cdots \mu_{k_1}(A)$.
Then there exist a non-zero polynomial function
$F: K^{{\mathcal C}(A)} \to K$ and a regular
map $G: U(F) \to K^{{\mathcal C}(A')}$
such that every QP $(A,S)$ with $S \in U(F)$ is
$(k_\ell,\dots,k_1)$-nondegenerate, and, for any QP $(A,S)$ with $S \in U(F)$,
the QP $\mu_{k_\ell}\cdots \mu_{k_1}(A,S)$ is right-equivalent to $(A', G(S))$.
\end{proposition}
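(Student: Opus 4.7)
The plan is to proceed by induction on $\ell$, with the base case $\ell=0$ being trivial: take $F\equiv 1$ and $G=\operatorname{id}$.

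For the inductive step, suppose the proposition is established for length $\ell-1$, producing a nonzero polynomial $F_{\ell-1}$ on $K^{\mathcal{C}(A)}$ and a regular map $G_{\ell-1}\colon U(F_{\ell-1})\to K^{\mathcal{C}(A'')}$, where $A''=\mu_{k_{\ell-1}}\cdots\mu_{k_1}(A)$, so that for each $S\in U(F_{\ell-1})$ the QP $(A,S)$ is $(k_{\ell-1},\dots,k_1)$-nondegenerate and $\mu_{k_{\ell-1}}\cdots\mu_{k_1}(A,S)\cong (A'',G_{\ell-1}(S))$. I will trace the three steps of $\mu_{k_\ell}$ through the construction. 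First, the passage from $G_{\ell-1}(S)$ to a cyclically equivalent representative satisfying \eqref{eq:no-start-in-k} is a $K$-linear (hence polynomial) change of coordinates on $K^{\mathcal{C}(A'')}$. Second, $\widetilde{\mu}_{k_\ell}$ is polynomial: by the explicit formulas \eqref{eq:mu-k-S}--\eqref{eq:Tk}, $\widetilde{S''}=[G_{\ell-1}(S)]+\Delta_{k_\ell}$ depends polynomially on the coefficients. Third, the reduction step uses Proposition~\ref{pr:reduced-2-acyclic}: the matrix mutation formula \eqref{eq:B-mutation} pins down the dimensions of $A'$, and hence specifies a maximal disjoint collection of $2$-cycles $\{\alpha_1,\beta_1,\dots,\alpha_N,\beta_N\}$ in $\widetilde{A''}$ whose removal yields a quiver with arrow span isomorphic to $A'$. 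Let $D=D^{\beta_1,\dots,\beta_N}_{\alpha_1,\dots,\alpha_N}$ be the associated determinant from \eqref{eq:max-minor}; Proposition~\ref{pr:reduced-2-acyclic} provides a regular map $H\colon U(D)\to K^{\mathcal{C}(A')}$ such that whenever $D(\widetilde{S''})\neq 0$, the reduced part of $(\widetilde{A''},\widetilde{S''})$ is right-equivalent to $(A',H(\widetilde{S''}))$.

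Combining, the function $\tilde{D}(S):=D(\widetilde{\mu}_{k_\ell}(G_{\ell-1}(S)))$ is regular on $U(F_{\ell-1})$ and may be written as $P/F_{\ell-1}^m$ for some polynomial $P$. I would then set
\[
F_\ell := F_{\ell-1}\cdot P, \qquad G_\ell := H\circ\widetilde{\mu}_{k_\ell}\circ G_{\ell-1},
\]
so that $G_\ell$ is regular on $U(F_\ell)$. For $S\in U(F_\ell)$ all conditions line up, the entire chain of mutations is defined, and $\mu_{k_\ell}\cdots\mu_{k_1}(A,S)\cong (A',G_\ell(S))$.

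The main obstacle is showing that $F_\ell$ is nonzero as a polynomial, i.e.\ that $P\not\equiv 0$, which is equivalent to exhibiting a single $S\in U(F_{\ell-1})$ at which $\tilde{D}$ does not vanish. I plan to address this via a parallel/reverse-direction argument using Theorem~\ref{th:mutation-involutive}. The inductive hypothesis applied to the reversed sequence $(k_{\ell-1},k_{\ell-2},\dots,k_1)$ starting from $A''$ (which still has length $\ell-1$, hence is covered by the induction) yields a nonzero polynomial $F^{\mathrm{rev}}$ on $K^{\mathcal{C}(A'')}$ and a regular map $G^{\mathrm{rev}}\colon U(F^{\mathrm{rev}})\to K^{\mathcal{C}(A)}$ realizing the inverse chain of mutations. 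Pick any $T''\in K^{\mathcal{C}(A'')}$ lying in $U(F^{\mathrm{rev}})$ for which the companion determinant (controlling $2$-acyclicity of $\mu_{k_\ell}$ applied to $(A'',T'')$) is nonzero; this is a nonempty Zariski-open condition on $T''$ since the companion determinant is a nonzero polynomial and $K$ is infinite. Setting $S:=G^{\mathrm{rev}}(T'')$, the involution property gives $\mu_{k_{\ell-1}}\cdots\mu_{k_1}(A,S)\cong (A'',T'')$, so all intermediate mutations succeed and the final mutation at $k_\ell$ lands on a $2$-acyclic reduced QP with arrow span $A'$. The heart of the argument — and where I anticipate the main technical work — is in checking that the collection of arrows whose non-cancellation is witnessed by $\tilde D(S)\neq 0$ can be chosen compatibly with the one implicit in the inductive hypothesis, so that this $S$ indeed lies in $U(F_\ell)$ rather than merely producing a QP-mutation-nondegenerate representative via a different maximal $2$-cycle collection. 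Resolving this compatibility cleanly may require strengthening the inductive statement to keep track of the specific collections used at each mutation, so that the induction supplies matching collections in both directions.
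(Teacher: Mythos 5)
Your overall strategy — induct on $\ell$, build $F$ as a product of a one-step polynomial with a polynomial supplied by the inductive hypothesis, and witness nonvanishing by pulling back a generic point along the reverse mutations via Theorem~\ref{th:mutation-involutive} — is essentially the strategy of the paper. The structural difference is which end you peel: you remove the \emph{last} mutation $\mu_{k_\ell}$ and invoke the inductive hypothesis on the head $(k_1,\dots,k_{\ell-1})$ from $A$, whereas the paper removes the \emph{first} mutation $\mu_{k_1}$, sets $A_1=\mu_{k_1}(A)$, and invokes the inductive hypothesis on the tail $(k_2,\dots,k_\ell)$ from $A_1$. The paper's choice is more economical for the nonvanishing step: it picks the test potential $S_1^{(0)}$ directly in $U(F')\cap U(F'')$, where $F'$ is the inductive polynomial and $F''$ controls the single reverse mutation $\mu_{k_1}\colon A_1\to A$, so only the already-established case $\ell=1$ is needed for the reverse direction and the condition $F'(S_1^{(0)})\neq 0$ is arranged by fiat; by contrast you need a second full inductive call $(F^{\mathrm{rev}},G^{\mathrm{rev}})$ for the reversed sequence of length $\ell-1$ and must then push nonvanishing through all $\ell-1$ steps.

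On the gap you flag: it is a genuine one. The involution argument shows that each intermediate QP in the chain starting from $S=G^{\mathrm{rev}}(T'')$ is $2$-acyclic as a \emph{right-equivalence class}, hence that \emph{some} maximal collection of $2$-cycles has nonvanishing determinant at that step; it does not by itself show that the \emph{fixed} collection underlying $P$ (or underlying $F_{\ell-1}$) gives a nonzero value at $S$. Note also that you do not address the separate condition $F_{\ell-1}(G^{\mathrm{rev}}(T''))\neq 0$, which is required before $G_{\ell-1}(S)$ is even defined; this is a second instance of the same compatibility issue, spread now over $\ell-1$ steps. Your suggestion to strengthen the inductive statement so as to keep track of the specific collections is a workable way to close it, but it is worth knowing that the paper sidesteps the multi-step version by peeling off the first mutation, so that the genericity is imposed where the inductive polynomial already lives and only a one-step compatibility (between $S_0=\mu_{k_1}(A_1,S_1^{(0)})$, $F_1$, and $G_1$) remains to be checked; there the freedom to choose the collection defining $F_1$ \emph{after} $S_0$ is known does the job, since the proposition asserts only existence of $F$ and $G$.
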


\begin{proof}
We proceed by induction on $\ell$.
First let us deal with the case $\ell=1$, that is, with a single
mutation $\mu_k$.
Recall that $\mu_k(A,S) = (\overline A, \overline S)$ is the reduced part of the QP
$\widetilde \mu_k(A,S) = (\widetilde A, \widetilde S)$ given by
\eqref{eq:mu-k-A} and \eqref{eq:mu-k-S}.
It is clear from the definition that
$\widetilde S = \widetilde G(S)$
for a polynomial map
$\widetilde G: K^{{\mathcal C}(A)} \to K^{{\mathcal C}(\widetilde A)}$.
Now let us apply Proposition~\ref{pr:reduced-2-acyclic} to the
quiver with the arrow span $\widetilde A$.
We see that there exists a polynomial function of the form
$D_{c_1, \dots, c_N}^{d_1, \dots, d_N}$ on $K^{{\mathcal C}(\widetilde A)}$
(see \eqref{eq:max-minor},
where we have changed the notation for the arrows to avoid the
notation conflict with Section~\ref{sec:mutations})
such that the reduced part $(\overline A, \overline S)$ of a QP
$(\widetilde A, \widetilde S)$ is $2$-acyclic whenever
$\widetilde S \in U(D_{c_1, \dots, c_N}^{d_1, \dots, d_N})$.
Furthermore, for $\widetilde S \in U(D_{c_1, \dots, c_N}^{d_1, \dots, d_N})$,
the QP $(\overline A, \overline S)$ is right-equivalent to $(A',H(\widetilde S))$
for some regular map $H: U(D_{c_1, \dots, c_N}^{d_1, \dots, d_N}) \to K^{{\mathcal C}(A')}$,
where $A' = \mu_k(A)$.
We now define a polynomial function $F: K^{{\mathcal C}(A)} \to K$
and a regular map $G: U(F) \to K^{{\mathcal C}(A')}$ by setting
\begin{equation}
\label{eq:D-3-cycles}
F = D_{c_1, \dots, c_N}^{d_1, \dots, d_N} \circ \widetilde G,
\quad G = H \circ \widetilde G.
\end{equation}
To finish the argument for $\ell = 1$, it remains to show that $F$ is not
identically equal to zero.
But this is clear from the definitions \eqref{eq:max-minor} and
\eqref{eq:mu-k-S}, since the oriented $2$-cycles in $\widetilde A$
(up to cyclical equivalence) are of the form $c[ba]$ and so are in a bijection with the oriented
$3$-cycles $cba$ in $A$ that pass through $k$.

Now assume that $\ell \geq 2$, and that our assertion holds if we
replace~$\ell$ by $\ell - 1$.
Let $A_1 = \mu_{k_1}(A)$, so $A' = \mu_{k_\ell}\cdots \mu_{k_2}(A_1)$.
By the inductive assumption, there exist a non-zero polynomial function
$F': K^{{\mathcal C}(A_1)} \to K$ and a regular
map $G': U(F') \to K^{{\mathcal C}(A')}$
such that, for any QP $(A_1,S_1)$ with $S_1 \in U(F')$,
the QP $\mu_{k_\ell}\cdots \mu_{k_2}(A_1,S_1)$ is right-equivalent to $(A', G'(S_1))$.
Also by the already established case $\ell = 1$, there exists a non-zero polynomial function
$F'': K^{{\mathcal C}(A_1)} \to K$ such that, for any QP $(A_1,S_1)$ with $S_1 \in U(F'')$,
the QP $\mu_{k_1}(A_1,S_1)$ is $2$-acyclic, hence is
right-equivalent to some QP on $A$.
Since the base field~$K$ is assumed to be infinite, we have $U(F')
\cap U(F'') \neq \emptyset$.
Choose $S_1^{(0)} \in U(F') \cap U(F'')$, and let $(A,S_0) = \mu_k(A_1,S_1^{(0)})$.
By Theorem~\ref{th:mutation-involutive}, we have $\mu_k(A,S_0) = (A_1,S_1^{(0)})$.
By the above argument for $\ell = 1$, there exist a nonzero
polynomial function $F_1: K^{{\mathcal C}(A)} \to K$ and a regular map
$G_1: U(F_1) \to K^{{\mathcal C}(A_1)}$ (of the type \eqref{eq:D-3-cycles})
such that $\mu_k(A,S) = (A_1, G_1(S))$ for $S \in U(F_1)$.
In particular, we have $G_1(S_0) = S_1^{(0)}$ implying that
$F' \circ G_1$ is a nonzero polynomial function on $K^{{\mathcal C}(A)}$.
It follows that the nonzero polynomial function $F(S) = F_1(S) F'(G_1(S))$ and the
regular map $G = G' \circ G_1: U(F) \to K^{{\mathcal C}(A')}$ are
well-defined and satisfy all the required conditions.
This completes the proof of
Proposition~\ref{pr:finite-nondegeneracy}.
\end{proof}

\begin{corollary}
\label{cor:nondegenerate-exists}
For every $2$-acyclic arrow span~$A$, there exists a countable
family ${\mathcal F}$ of nonzero polynomial functions on
$K^{{\mathcal C}(A)}$ such that the QP $(A,S)$ is nondegenerate
whenever $S \in \bigcap_{F \in {\mathcal F}} U(F)$.
In particular, if the base field $K$ is uncountable, then
there exists a nondegenerate QP on~$A$.
\end{corollary}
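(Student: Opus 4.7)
The plan is to apply Proposition~\ref{pr:finite-nondegeneracy} to every admissible finite sequence of vertices, assemble the resulting polynomial functions into a single countable family, and then invoke the classical fact that an uncountable field is not a countable union of proper Zariski-closed subsets of affine space.

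More concretely, for each finite sequence $\mathbf{k}=(k_1,\dots,k_\ell)$ of vertices of $Q_0$ with $k_p\neq k_{p+1}$ for $1\le p<\ell$ (including the empty sequence), Proposition~\ref{pr:finite-nondegeneracy} produces a nonzero polynomial function $F_{\mathbf{k}}$ on $K^{{\mathcal C}(A)}$ such that every QP $(A,S)$ with $S\in U(F_{\mathbf{k}})$ is $(k_\ell,\dots,k_1)$-nondegenerate. Since $Q_0$ is finite, there are only countably many such sequences, so $\mathcal F:=\{F_{\mathbf{k}}\}$ is a countable family. By Definition~\ref{def:nondegenerate}, any $S\in\bigcap_{F\in\mathcal F}U(F)$ then yields a nondegenerate QP, which is the first assertion.

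For the second assertion I would show that $\bigcap_{F\in\mathcal F}U(F)$ is nonempty when $K$ is uncountable. Each $F\in\mathcal F$ depends on only finitely many coordinates of $K^{{\mathcal C}(A)}$; since ${\mathcal C}(A)$ itself is countable, the subset $C\subseteq{\mathcal C}(A)$ of all coordinates actually appearing in some $F\in\mathcal F$ is countable, and I would enumerate it as $x_1,x_2,\dots$. The problem then reduces to the standard fact that a countable collection $f_1,f_2,\dots$ of nonzero polynomials over an uncountable field $K$, each depending on finitely many of the variables $x_1,x_2,\dots$, has a common nonvanishing point. This is proved by induction on the number of variables involved: in one variable each $f_i$ has finitely many zeros and the union of countably many finite sets is not all of an uncountable $K$; for the inductive step, one writes each $f_i$ as a polynomial in the new variable whose coefficients are polynomials in the previous variables, applies the inductive hypothesis to the leading coefficients to fix $x_1,\dots,x_{n-1}$ so that every $f_i$ remains a nonzero polynomial in $x_n$, and then chooses $x_n$ avoiding the countable union of the (finite) zero sets.

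The only genuine input required from this paper is Proposition~\ref{pr:finite-nondegeneracy}; given it, I do not foresee any real difficulty, and the remaining work is bookkeeping together with the cardinality argument above. If there is any subtle point, it is simply making sure that the union over $\mathbf{k}$ is honestly countable (which follows from finiteness of $Q_0$) and that the standard ``uncountable field'' lemma is applied in the correct infinite-dimensional ambient space $K^{{\mathcal C}(A)}$ by restricting to the countable set $C$ of relevant coordinates.
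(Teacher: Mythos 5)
Your proposal is correct and follows essentially the same route as the paper: invoke Proposition~\ref{pr:finite-nondegeneracy} once for each of the countably many admissible vertex sequences to obtain the family $\mathcal F$, observe that ${\mathcal C}(A)$ is countable, and then run a coordinate-by-coordinate argument to find a common nonvanishing point when $K$ is uncountable. Your write-up of the ``uncountable field'' lemma is slightly more explicit about why each partial evaluation remains a nonzero polynomial (via tracking a nonzero coefficient), whereas the paper's sequential choice is stated somewhat loosely; both amount to the same standard argument, and you also implicitly cover the paper's separate remark about the acyclic case since then ${\mathcal C}(A)=\emptyset$ and there is nothing to intersect.
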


\begin{proof}
By Proposition~\ref{pr:finite-nondegeneracy},
for every sequence $k_1, \dots,k_\ell$ as in Definition~\ref{def:nondegenerate},
there exists a nonzero polynomial function
$F_{k_1,\dots,k_\ell}$ on $K^{{\mathcal C}(A)}$ such that
a QP $(A,S)$ is $(k_\ell,\dots,k_1)$-nondegenerate for
$S \in U(F_{k_1,\dots,k_\ell})$.
These functions form a desired countable family~$\mathcal F$.

It remains to prove that $\bigcap_{F \in {\mathcal F}} U(F) \neq \emptyset$ provided
$K$ is uncountable.
If $A$ is acyclic, i.e., has no oriented cycles, then $K^{{\mathcal C}(A)} = \{0\}$, and each of
the functions in ${\mathcal F}$ is just a nonzero constant, so there is nothing to
prove; no assumption on $K$ is needed here.
If $A$ has at least one oriented cycle then the set ${\mathcal C}(A)$ is
countable (recall that it consists of cyclic paths up to cyclical equivalence).
Thus, we can realize $K^{{\mathcal C}(A)}$ as the polynomial ring
$K[X_1, X_2, \dots]$ in countably many indeterminates.
Since $K$ is uncountable, we can choose $x_1$ so that $F(x_1)\neq 0$ for all
$F \in {\mathcal F}\cap K[X_1]$.
Then we choose $x_2$ so that $F(x_1,x_2)\neq 0$ for all
$F \in {\mathcal F}\cap K[X_1,X_2]$.
Continuing like this, we find a sequence $x_1,x_2,\dots$ such that $F(x_1,x_2,\dots)\neq 0$
for all $F \in {\mathcal F}$.
\end{proof}

\section{Rigid QPs}
\label{sec:QP-matrix-mutations}

\begin{proposition}
\label{pr:rigidity-no-2-cycles}
Every rigid reduced QP $(A,S)$ is $2$-acyclic.
\end{proposition}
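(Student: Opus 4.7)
My plan is to argue by contradiction. Suppose $(A,S)$ is a reduced rigid QP that is not $2$-acyclic, so there exist arrows $a \in Q_1 \cap A_{i,j}$ and $b \in Q_1 \cap A_{j,i}$ with $i \neq j$; then $ab \in A^2_{i,i}$ is a nonzero cyclic $2$-path. Rigidity amounts to $\Tr({\mathcal P}(A,S)) = R$, which unfolds to
\[ R\langle\langle A\rangle\rangle = R + J(S) + \{R\langle\langle A\rangle\rangle, R\langle\langle A\rangle\rangle\}, \]
so $ab$ must lie in this sum. I will construct a continuous $K$-linear map $p\colon R\langle\langle A\rangle\rangle \to A^2_{\rm cyc}/{\sim}$ (where $\sim$ is the cyclic-rotation equivalence identifying $xy$ with $yx$ for any cyclic $2$-path) that sends $ab$ to its nonzero class but annihilates all three summands on the right, producing the contradiction.

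The map $p$ will be the composition: project $R\langle\langle A\rangle\rangle$ onto its degree-$2$ component $A^2$, then onto the cyclic sub-bimodule $A^2_{\rm cyc} = \bigoplus_k A^2_{k,k}$, then pass to $A^2_{\rm cyc}/{\sim}$. Since $\ker p \supseteq \mathfrak{m}^3$, the map factors through the finite-dimensional quotient $R\langle\langle A\rangle\rangle/\mathfrak{m}^3$ and is therefore continuous. It kills $R$ trivially, and $p(ab) = [ab] \neq 0$.

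The crucial step is to show $p(J(S)) = 0$. Since $(A,S)$ is reduced, $S \in \mathfrak{m}^3$, so every $\partial_\xi S \in \mathfrak{m}^2$, forcing $J(S) \subseteq \mathfrak{m}^2$. The image of $J(S)$ in $\mathfrak{m}^2/\mathfrak{m}^3 = A^2$ is the $R$-sub-bimodule generated by $\{\partial_\xi S^{(3)} : \xi \in A^\star\}$, where $S^{(3)}$ is the degree-$3$ part of $S$. The key observation, which rests on the no-loops condition \eqref{eq:no-loops}, is that for any cyclic $3$-path $c_1 c_2 c_3$ formula \eqref{eq:cyclic-derivative} writes $\partial_\xi(c_1 c_2 c_3)$ as a $K$-linear combination of the $2$-paths $c_2 c_3$, $c_3 c_1$, $c_1 c_2$; none of these is itself cyclic, since cyclicity of, say, $c_2 c_3$ would mean $h(c_2) = t(c_3)$, and combining this with $h(c_2) = t(c_1)$ and $t(c_3) = h(c_1)$ would force $c_1$ to be a loop. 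Thus each $\partial_\xi S^{(3)}$ lies entirely in the non-cyclic part of $A^2$; because the projection $A^2 \twoheadrightarrow A^2_{\rm cyc}$ is $R$-bimodule-linear, so does the full bimodule it generates, and $p(J(S)) = 0$.

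It remains to kill the commutator subspace. In total degree $2$, commutators are spanned by elements of two types: $[r,x]$ with $r \in R$, $x \in A^2$, and $[\alpha,\beta]$ with $\alpha,\beta \in A$. The first equals $(r_i - r_j)x$ for $x \in A^2_{i,j}$, hence vanishes on the cyclic components $i=j$; the second, $\alpha\beta - \beta\alpha$, is either non-cyclic on both sides or, when cyclic, has $\alpha\beta \sim \beta\alpha$ and so vanishes under $\sim$. Both map to zero under $p$, and continuity carries this to the closure $\{R\langle\langle A\rangle\rangle, R\langle\langle A\rangle\rangle\}$. Hence $p$ vanishes on $R + J(S) + \{R\langle\langle A\rangle\rangle, R\langle\langle A\rangle\rangle\}$ while $p(ab) \neq 0$, the contradiction. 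The main technical obstacle is the no-loops verification in paragraph three; the commutator bookkeeping and the continuity check are routine.
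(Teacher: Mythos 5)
Your proof is correct and follows essentially the same route as the paper's: both hinge on the observation that, since $(A,S)$ is reduced and $Q$ has no loops, the degree-$2$ cyclic part of $J(S)$ vanishes, while $ab$ is a nonzero cyclic $2$-path whose class survives modulo cyclic rotation. The paper states this tersely via the reformulation \eqref{eq:rigidity-working-def} together with the claim that the cyclic part of $J(S)$ lies in ${\mathfrak m}^3$; your explicit functional $p$ and the accompanying commutator bookkeeping simply unfold that same argument.
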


\begin{proof}
First note that the definition of rigidity can be conveniently
restated as follows:
\begin{eqnarray}
\label{eq:rigidity-working-def}
&\text{a QP $(A,S)$ is rigid if and only if every potential}\\
\nonumber
&\text{on~$A$ is cyclically equivalent to an element of $J(S)$.}
\end{eqnarray}

Now suppose for the sake of contradiction that for some $i \neq j$ both components
$A_{i,j}$ and $A_{j,i}$ are non-zero.
Choose non-zero elements $a \in A_{i,j}$ and $b \in A_{j,i}$.
Remembering the definition of the Jacobian ideal
(see Definition~\ref{def:cyclic-stuff}), it is easy to see that
the cyclic part of $J(S)$ is contained in ${\mathfrak m}(A)^3$.
It follows that $ab$ is \emph{not} cyclically equivalent to an element of $J(S)$,
in contradiction with \eqref{eq:rigidity-working-def}.
\end{proof}

Combining Proposition~\ref{pr:rigidity-no-2-cycles}
with Corollary~\ref{cor:mutation-preserves-rigidity},
we obtain the following result.

\begin{corollary}
\label{cor:rigid-is-nondegenerate}
Any rigid QP is nondegenerate.
\end{corollary}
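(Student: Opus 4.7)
The plan is to derive the corollary by a short induction on the length of the mutation sequence, with the two immediately preceding results doing all the real work.

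For the base case, let $(A,S)$ be rigid. By Proposition~\ref{pr:rigidity-no-2-cycles}, $(A,S)$ is $2$-acyclic, so in particular it satisfies \eqref{eq:no-2-cycles-thru-k} at every vertex $k \in Q_0$. After replacing $S$ with a cyclically equivalent potential to arrange \eqref{eq:no-start-in-k} (which does not affect the Jacobian ideal, by Proposition~\ref{pr:cyclic-equivalence}), the mutation $\mu_k(A,S)$ is well-defined as a reduced QP for every $k$.

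For the inductive step, suppose a sequence $k_1, \dots, k_\ell$ of vertices with $k_p \neq k_{p+1}$ is given, and that we have shown $(A,S)$ is $(k_{\ell-1},\dots,k_1)$-nondegenerate, so that the reduced QP $(A^{(\ell-1)}, S^{(\ell-1)}) := \mu_{k_{\ell-1}} \cdots \mu_{k_1}(A,S)$ is well-defined and $2$-acyclic. Applying Corollary~\ref{cor:mutation-preserves-rigidity} successively $\ell-1$ times (the hypothesis \eqref{eq:no-2-cycles-thru-k} needed at each intermediate step is free, since $2$-acyclicity at every intermediate stage follows inductively from rigidity via Proposition~\ref{pr:rigidity-no-2-cycles}), we conclude that $(A^{(\ell-1)}, S^{(\ell-1)})$ is itself rigid. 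Invoking Proposition~\ref{pr:rigidity-no-2-cycles} once more, $(A^{(\ell-1)}, S^{(\ell-1)})$ is $2$-acyclic, hence satisfies \eqref{eq:no-2-cycles-thru-k} at $k_\ell$, so $\mu_{k_\ell}(A^{(\ell-1)}, S^{(\ell-1)})$ is a well-defined reduced QP. This shows $(A,S)$ is $(k_\ell,\dots,k_1)$-nondegenerate, completing the induction.

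Since the sequence $k_1, \dots, k_\ell$ was arbitrary, $(A,S)$ is nondegenerate in the sense of Definition~\ref{def:nondegenerate}. There is no substantive obstacle to overcome here: the corollary is formally just the conjunction of the two preceding results, packaged as an inductive statement. The only minor point to be careful about is verifying that the hypothesis of Corollary~\ref{cor:mutation-preserves-rigidity}, namely \eqref{eq:no-2-cycles-thru-k}, is automatically inherited along the sequence of mutations; this is precisely what Proposition~\ref{pr:rigidity-no-2-cycles} guarantees at each step.
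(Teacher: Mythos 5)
Your proof is correct and takes the same route the paper does: the paper simply states ``Combining Proposition~\ref{pr:rigidity-no-2-cycles} with Corollary~\ref{cor:mutation-preserves-rigidity}, we obtain the following result,'' and your argument is precisely that combination, made explicit as an induction on the length of the mutation sequence. Nothing is missing; you have just spelled out the step the paper leaves implicit, including the observation that $2$-acyclicity at each intermediate stage (needed to apply $\mu_{k_{\ell}}$) is itself a consequence of rigidity via Proposition~\ref{pr:rigidity-no-2-cycles}.
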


Let us now give some examples.

\begin{example}
\label{ex:acyclic}
Recall that a skew-symmetric integer matrix~$B$ is \emph{acyclic}
if the corresponding directed graph (with an arrow $i \to j$
associated with each entry $b_{i,j} > 0$) has no oriented cycles.
If the matrix $B(A)$ given by \eqref{eq:B-thru-A} is acyclic, then
$R\langle \langle A \rangle \rangle_{\rm cyc} = \{0\}$, and so the
only QP associated with $A$ is $(A,0)$, which is clearly rigid.

Now suppose that $A$ is $2$-acyclic, and that
$B(A)$ is not necessarily acyclic but is mutation
equivalent to an acyclic matrix (i.e., can be transformed to an
acyclic matrix by a sequence of mutations).
As a consequence of Corollary~\ref{cor:mutation-preserves-rigidity}
and Theorem~\ref{th:mutation-involutive}, there exists a potential $S$ such that
$(A,S)$ is a rigid reduced QP; moreover, $(A,S)$ is unique up
to right-equivalences.
\end{example}

\begin{example}
\label{ex:0-potential}
For $A$ arbitrary, the deformation space of a QP $(A,0)$ is
naturally identified with the space of potentials modulo cyclical
equivalence, hence it is infinite-dimensional provided $A$ has at least one
oriented cycle.
\end{example}

\begin{example}[Cyclic triangle]
\label{ex:simple-triangle-any-S}
Let $Q$ be the quiver with three vertices $1,2,3$ and
three arrows $a: 1 \to 2$, $b: 2 \to 3$ and
$c: 3 \to 1$:
$$
\xymatrix{
& 2\ar^{b}[rd] & \\
1\ar[ru]^a & & 3\ar[ll]^c
}
$$
An arbitrary potential $S$ is cyclically equivalent to the one of
the form $S = F(cba)$, where $F \in K[[t]]$ is a formal power series.
The deformation space $\deff(A,S)$ is naturally isomorphic to the
quotient space of $t K[[t]]$ modulo the ideal generated by $t dF/dt$.
If ${\rm char} K = 0$, and $n \geq 1$ is the smallest exponent such that
$t^n$ appears in~$F$, then $\dim \deff(A,S) = n-1$.
In particular, $(A,S)$ is rigid if and only if $n=1$.

Now let us consider the QP $(\widetilde A, \widetilde S) =
\widetilde \mu_2 (A,S)$; in view of
\eqref{eq:arrows-thru-k-reversed},
\eqref{eq:arrows-not-thru-k}
and \eqref{eq:mu-k-S}, $\widetilde A$ has four arrows
$a^\star, b^\star, c, [ba]$, and
$$\widetilde S = F(c[ba]) + [ba] a^\star b^\star.$$
Thus, if $n \geq 2$ then $(\widetilde A, \widetilde S)$ is reduced
and so is equal to $\mu_2(A,S) = (\overline A, \overline S)$.
Since $\mu_2(A,S)$ has an oriented $2$-cycle formed by the arrows
$c$ and $[ba]$, the mutations at vertices $1$ and $3$ cannot be applied.
We see that the QP $(A,F(cba))$ is degenerate for $n \geq 2$.
\end{example}

\begin{example}[Double cyclic triangle]
\label{ex:double-triangle}
Now consider the quiver with three vertices $1,2,3$ and
six arrows $a_1, a_2: 1 \to 2$, $b_1, b_2: 2 \to 3$ and
$c_1, c_2: 3 \to 1$:
$$
\xymatrix{
& 2\ar@<.5ex>[rd]^{b_1}\ar@<-.5ex>[rd]_{b_2} & \\
1\ar@<.5ex>[ru]^{a_1}\ar@<-.5ex>[ru]_{a_2} & & 3\ar@<-.5ex>[ll]_{c_2}\ar@<.5ex>[ll]^{c_1}}
$$

Any potential~$S$ on~$A$ is cyclically equivalent to the one whose
degree $3$ component belongs to the $8$-dimensional space
$A^3_{1,1} = A_{1,3} A_{3,2} A_{2,1}$.
It is known that the diagonal action of the group $GL_2^3$
on $K^2 \otimes K^2 \otimes K^2$ has seven orbits, see e.g.,
\cite[Chapter 14, Example 4.5]{gkz}.
Thus, by performing a change of arrows automorphism, we can assume
that the degree~$3$ component of~$S$ is one of the representatives
of these orbits.
An easy case-by-case analysis shows that no potential can give rise to
a rigid QP on~$A$.

For instance, let
\begin{equation}
\label{eq:S-double-triangle}
S = c_1 b_1 a_1 + c_2 b_2 a_2.
\end{equation}
Then $J(S)$ is the closure of the ideal in $R\langle \langle A \rangle \rangle$
generated by six elements
$$c_1 b_1, b_1 a_1, a_1 c_1, c_2 b_2, b_2 a_2, a_2 c_2.$$
One checks easily that the cyclic path $c_1 b_2 a_1 c_2 b_1 a_2$
is not cyclically equivalent to an element of $J(S)$, hence
$(A,S)$ is not rigid.

Now let us compute $\mu_2(A,S)$.
Again setting $(\widetilde A, \widetilde S) =
\widetilde \mu_2 (A,S)$, we see that
$\widetilde A$ has ten arrows
$$a_1^\star, a_2^\star, b_1^\star, b_2^\star, c_1, c_2,
[b_1 a_1], [b_1 a_2], [b_2 a_1], [b_2 a_2],$$
and
$$\widetilde S = c_1 [b_1 a_1] + c_2 [b_2 a_2] +
\sum_{i,j=1}^2 [b_i a_j] a_j^\star b_i^\star.$$
To obtain the splitting \eqref{eq:decomposition} of
$(\widetilde A, \widetilde S)$, we apply the automorphism
$\varphi$ of $R\langle \langle \widetilde A \rangle \rangle$
fixing all arrows except $c_1$ and $c_2$, and such that
$\varphi (c_i) = c_i - a_i^\star b_i^\star$.
An easy check shows that $\mu_2(A,S) = (\overline A, \overline S)$
can be described as follows: $\overline A$ is $6$-dimensional with
the arrows $a_1^\star, a_2^\star, b_1^\star, b_2^\star,
[b_1 a_2], [b_2 a_1]$, and
$$\overline S = [b_1 a_2] a_2^\star b_1^\star +
[b_2 a_1] a_1^\star b_2^\star.$$
Thus, the mutated QP $(\overline A, \overline S)$ can be
obtained from the initial QP $(A,S)$ by a renumbering of the vertices.
This implies that one can apply to $(A,S)$ unlimited mutations at
arbitrary vertices, so $(A,S)$ is a non-rigid, nondegenerate QP.
\end{example}

\begin{example}
\label{ex:triangular-grid}
For each $n \geq 0$, let us consider the following quiver $Q(n)$,
which we refer to as the \emph{triangular grid of order}~$n$.
The vertex set of $Q(n)$ is
$$Q(n)_0 = \{(p,q,r) \in \Z_{\geq 0}^3 \mid p+q+r = n\};$$
and there is a single arrow $(p_1,q_1,r_1) \to(p_2,q_2,r_2)$
if and only if $(p_2,q_2,r_2) - (p_1,q_1,r_1)$ is one of the three
vectors $(-1,1,0), (0,-1,1), (1,0,-1)$.
Thus, the 
vertices of $Q(n)$ form a regular triangular grid
with $n^2$ cyclically oriented unit triangles.
For example, the quiver $Q(4)$ is
$$
\xymatrix{
& & & & 040 \ar[rd] & & & &\\
& & & 130 \ar[ru]\ar[rd] & & 031 \ar[ll]\ar[rd] & & &\\
& & 220 \ar[ru]\ar[rd] & & 121 \ar[ll]\ar[ru]\ar[rd] & & 022 \ar[ll] \ar[rd] & &\\
& 310 \ar[ru]\ar[rd] & & 211 \ar[ll]\ar[ru]\ar[rd] & & 112\ar[ll]\ar[ru]\ar[rd] & & 013\ar[ll]\ar[rd] &\\
 400 \ar[ru] & & 301 \ar[ll]\ar[ru] & & 202\ar[ll]\ar[ru] & & 103\ar[ll]\ar[ru] & &004 \ar[ll]
}
$$


Let~$A=A(n)$ be the arrow span of $Q(n)$, and
let~$a \in A$ (resp.~$b \in A$, $c \in A$) denote the sum of all
arrows of $Q(n)$ that are parallel translates of $(-1,1,0)$
(resp. $(0,-1,1)$, $(1,0,-1)$).
Thus, every interior vertex~$i$ has three incoming arrows
$e_i a, e_i b, e_i c$ and three outgoing arrows
$a e_i, b e_i, c e_i$.
Every path of length~$d$ can be uniquely written as
$a_d \cdots a_2 a_1 e_j$, where each $a_s$ is one of the
elements $a, b, c$, and $j$ is a vertex; this expression is
non-zero if and only if the polygonal line obtained by attaching
to the vertex~$j$ the vectors corresponding to $a_1, a_2, \dots,
a_d$ (in this order) is contained in our grid.

Define a potential $S \in A^3$ by setting
$$S = cba - bca.$$
Then the Jacobian ideal $J(S)$ is generated by the elements
$$(cb -bc)e_j, (ac -ca)e_j, (ba -ab)e_j$$
for all vertices~$j$.
It follows that the image of the path $a_d \cdots a_2 a_1 e_j$
in the Jacobian algebra ${\mathcal P}(A,S)$ does not change under
any permutation of the factors $a_1, \dots, a_d$.
In particular, we see that ${\mathcal P}(A,S)$ is spanned by the
images of the paths $c^k b^\ell a^m e_j$ for all vertices~$j$ and
all $k, \ell, m$ such that $0 \leq k, \ell, m \leq n$; hence
${\mathcal P}(A,S)$ is finite-dimensional.
By a similar argument, it is easy to see that $(A,S)$ is rigid.
Indeed, every potential on $A$ is cyclically equivalent to an
element of the closure of the span of the elements
$(cba)^m e_j$ for all vertices $j$ and all $m \geq 0$.
Denoting by $p$ the projection
$R\langle \langle A \rangle \rangle \to \Tr({\mathcal P}(A,S))$, we see
that the rigidity of $(A,S)$ follows from the fact that
$p(cba e_j)=0$ for all~$j$.
Now if $ae_j \neq 0$ and $h(a e_j) = i$ then we have
$$p(cba e_j) = p(acbe_i) = p(cab e_i) = p(cba e_i).$$
Continuing in the same way, we obtain that, for any $m \geq 1$
such that $a^m e_j \neq 0$, we have
$p(cba e_j) = p(cba e_k)$, where $k$ is the end-point of the
path $a^m e_j$.
Taking $m$ the largest such that $a^m e_j \neq 0$, we conclude
that $p(cba e_j) = 0$, as desired.
\end{example}

As shown in \cite{kr}, the quiver $Q(3)$ in Example~\ref{ex:triangular-grid}
is \emph{not} mutation-equivalent to an acyclic one.
So there exist QPs with
finite-dimensional Jacobian algebras (and also rigid QPs), which
are not mutation-equivalent to acyclic ones.

We now describe a procedure to obtain new QPs with
finite-dimensional Jacobian algebras (and new rigid QPs)
from old ones.

\begin{definition}
\label{def:sub-QP}
For a QP $(A,S)$ and a subset $I$ of the vertex set $Q_0$,
we define the \emph{restriction} of $(A,S)$ to $I$ as the
QP $(A|_I, S|_I)$ given by
$$A|_I = \bigoplus_{i,j \in I} A_{i,j}$$
and
$$S|_I = \psi_I(S),$$
where $\psi_I: R\langle \langle A \rangle \rangle \to
R\langle \langle A|_I \rangle \rangle$ is the algebra homomorphism
such that $\psi_I(a) = a$ for $a \in A|_I$, and
$\psi_I(b) = 0$ for any arrow $b$ not belonging to $A|_I$.
\end{definition}

\begin{proposition}
\label{pr:QP-restriction}
The homomorphism $\psi_I$ induces an epimorphism
of Jacobian algebras
${\mathcal P}(A,S) \to {\mathcal P}(A|_I,S|_I)$ and an epimorphism
of deformation spaces $\deff(A,S) \to \deff(A|_I,S|_I)$.
Therefore, if ${\mathcal P}(A,S)$ is finite-dimensional, or if
$(A,S)$ is rigid, then the same is true for $(A|_I,S|_I)$.
\end{proposition}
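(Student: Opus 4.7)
The plan is to show that $\psi_I$ is a continuous, surjective $R$-algebra homomorphism carrying $J(S)$ into $J(S|_I)$; from this the three epimorphism claims will follow by passing to quotients, then to trace spaces, then modulo (the image of)~$R$, and the consequences for finite-dimensionality and rigidity will be automatic since both properties pass through algebra quotients.

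First I would produce $\psi_I$ as a topological $R$-algebra homomorphism by invoking Proposition~\ref{pr:automorphisms} with $A' = A|_I$, with $\psi_I^{(1)}$ the $R$-bimodule projection onto $A|_I$ along the complementary summand $\bigoplus_{(i,j)\notin I\times I} A_{i,j}$, and with $\psi_I^{(2)} = 0$. Since $\psi_I({\mathfrak m}(A)^n) \subseteq {\mathfrak m}(A|_I)^n$, continuity with respect to the ${\mathfrak m}$-adic topologies is automatic, and surjectivity is clear.

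The central computation is the identity
\[
\psi_I(\partial_a S) =
\begin{cases}
\partial_a(S|_I) & \text{if } a \in Q_1 \cap A|_I,\\
0 & \text{if } a \in Q_1 \setminus A|_I,
\end{cases}
\]
for each arrow $a \in Q_1$, which by continuity reduces to the case of a single cyclic path $p = a_1 \cdots a_d$ in the expansion of~$S$ via formula \eqref{eq:cyclic-derivative}. When $a \in A|_I$ and $a_k = a$, the rotation $a_{k+1} \cdots a_d a_1 \cdots a_{k-1}$ survives $\psi_I$ iff every $a_j$ with $j \neq k$ lies in $A|_I$; combined with $a_k = a \in A|_I$ this is equivalent to $p$ itself lying in $A|_I^d$, i.e.\ contributing to $S|_I$, and summing over~$k$ and over contributing cycles recovers $\partial_a(S|_I)$. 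When $a \notin A|_I$, say $h(a) \notin I$, cyclicity gives $t(a_{k-1}) = h(a_k) = h(a) \notin I$ (with indices read mod~$d$), so $a_{k-1} \notin A|_I$ and every rotation is annihilated by $\psi_I$; the case $t(a) \notin I$ is symmetric.

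Once this identity is in hand, the continuity and surjectivity of $\psi_I$ yield $\psi_I(J(S)) \subseteq J(S|_I)$, hence a surjection ${\mathcal P}(A,S) \twoheadrightarrow {\mathcal P}(A|_I, S|_I)$. This surjection sends commutators to commutators and carries (the image of)~$R$ onto itself, producing successively an epimorphism $\Tr({\mathcal P}(A,S)) \twoheadrightarrow \Tr({\mathcal P}(A|_I,S|_I))$ and then $\deff(A,S) \twoheadrightarrow \deff(A|_I,S|_I)$. Finite-dimensionality and rigidity then pass to the image by elementary linear algebra. The only step that is not entirely formal is the cyclic-derivative identity above, but it is a short case analysis on the positions of a cycle's arrows relative to~$I$, so I do not anticipate a genuine technical obstacle.
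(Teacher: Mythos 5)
Your proof is correct and takes essentially the same route as the paper's: the paper's entire proof is the observation that $\psi_I(\partial_a S) = \partial_a(\psi_I(S))$ for $a \in A|_I$ and $\psi_I(\partial_b S) = 0$ for $b \notin A|_I$, from which $\psi_I(J(S)) = J(\psi_I(S))$ and everything else follows; you simply spell out the case analysis on rotations of a cyclic path that the paper leaves as ``easy to see.''
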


\begin{proof}
Remembering \eqref{eq:cyclic-derivative}, it is easy to see that
$\psi_I(\partial_a S) = \partial_a \psi_I(S)$ for any arrow
$a \in  A|_I$, and $\psi_I(\partial_b S) = 0$ for any arrow
$b$ not belonging to $A|_I$.
Therefore, we have $\psi_I(J(S)) = J(\psi_I(S))$, implying all the assertions.
\end{proof}

\begin{corollary}
\label{cor:submatrix-rigid}
Suppose that $A$ and $A'$ are $2$-acyclic, and
there is a rigid QP $(A,S)$ on~$A$.
Let $B(A)$ and $B(A')$ be the corresponding
skew-symmetric integer matrices given by \eqref{eq:B-thru-A}.
Suppose that $B(A')$ can be obtained by a simultaneous permutation of
rows and columns from some principal submatrix of a matrix
mutation-equivalent to $B(A)$.
Then there exists a rigid QP $(A',S')$ on~$A'$.
\end{corollary}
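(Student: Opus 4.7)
The plan is to combine two preservation results already established in the paper: that mutation preserves rigidity (Corollary~\ref{cor:mutation-preserves-rigidity}), and that restriction to a subset of vertices preserves rigidity (Proposition~\ref{pr:QP-restriction}).

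First, since a simultaneous permutation of rows and columns of $B(A')$ amounts only to relabeling the vertex set, I may assume without loss of generality that $B(A')$ is literally equal to a principal submatrix of some skew-symmetric matrix $B$ in the mutation-equivalence class of $B(A)$. Fix a sequence of vertices $k_1, \dots, k_\ell \in Q_0$ such that $B = \mu_{k_\ell} \cdots \mu_{k_1}(B(A))$.

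Next, because $(A,S)$ is rigid, Corollary~\ref{cor:rigid-is-nondegenerate} ensures that $(A,S)$ is nondegenerate, so the entire sequence of QP mutations $\mu_{k_1}, \mu_{k_2} \mu_{k_1}, \dots, \mu_{k_\ell} \cdots \mu_{k_1}$ can be successively applied, producing a sequence of reduced $2$-acyclic QPs. Let $(A_1, S_1) = \mu_{k_\ell} \cdots \mu_{k_1}(A,S)$. By iterated application of Corollary~\ref{cor:mutation-preserves-rigidity}, the QP $(A_1, S_1)$ is again rigid, and by Proposition~\ref{pr:A-B-mutation} its skew-symmetric matrix satisfies $B(A_1) = B$.

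Finally, let $I \subseteq Q_0$ be the subset of vertices indexing the principal submatrix of $B$ equal to $B(A')$. Applying Proposition~\ref{pr:QP-restriction} to the rigid QP $(A_1, S_1)$, the restriction $(A_1|_I, S_1|_I)$ is also rigid. Its arrow span $A_1|_I$ has skew-symmetric matrix equal to $B(A')$ by construction, hence is isomorphic to $A'$ as an $R$-bimodule (after the appropriate relabeling). Transporting the potential $S_1|_I$ along this isomorphism yields a rigid QP $(A', S')$, as required. The only subtlety is ensuring that the entire mutation sequence remains well-defined, but this is precisely the content of nondegeneracy guaranteed by Corollary~\ref{cor:rigid-is-nondegenerate}, so no genuine obstacle arises.
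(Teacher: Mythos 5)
Your proof is correct and follows essentially the same route as the paper, combining Proposition~\ref{pr:QP-restriction}, Corollary~\ref{cor:mutation-preserves-rigidity}, and Proposition~\ref{pr:A-B-mutation}. You additionally make explicit the appeal to Corollary~\ref{cor:rigid-is-nondegenerate} to guarantee that the full sequence of QP mutations is well-defined (stays $2$-acyclic), a detail the paper's terse proof leaves implicit.
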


\begin{proof}
In view of \eqref{eq:B-thru-A}, the matrix $B(A|_I)$ is the
principal submatrix of $B(A)$ involving rows and columns from~$I$.
Therefore, our assertion follows by combining Proposition~\ref{pr:QP-restriction}
with Corollary~\ref{cor:mutation-preserves-rigidity} and
Proposition~\ref{pr:A-B-mutation}.
\end{proof}

We conclude this section with a combinatorial application of
Corollary~\ref{cor:submatrix-rigid}.

\begin{corollary}
\label{cor:double-triangle-not-in-grid}
Let $B = B(A(n))$ be the matrix associated with the
triangular grid of some order~$n$
(see Example~\ref{ex:triangular-grid}).
Then none of the matrices mutation-equivalent to~$B$
contains
$$B' =
\begin{pmatrix}
0 & 2 & -2\\
-2 & 0 & 2\\
2 & -2 & 0
\end{pmatrix}$$
as a principal submatrix.
\end{corollary}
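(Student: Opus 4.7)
The plan is to derive the corollary by combining three ingredients already assembled in the paper: the rigidity of the natural potential on the triangular grid, the behavior of rigidity under mutation and restriction, and the non-existence of a rigid QP on the double cyclic triangle.

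First, I would invoke Example~\ref{ex:triangular-grid}, where the potential $S = cba - bca$ on $A(n)$ was shown to give a rigid QP $(A(n),S)$. Second, I would argue by contradiction: suppose there exists a matrix $\widetilde B$ mutation-equivalent to $B = B(A(n))$ that contains $B'$ as a principal submatrix (after some reordering of rows/columns). By Corollary~\ref{cor:submatrix-rigid} (with $A = A(n)$ and $A'$ the $2$-acyclic arrow span with $B(A') = B'$), the existence of a rigid QP on $A(n)$ would force the existence of a rigid QP $(A', S')$ on $A'$.

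The final step is to observe that $A'$ is exactly the arrow span of the double cyclic triangle treated in Example~\ref{ex:double-triangle}. That example asserts (via a case-by-case analysis using the classification of $GL_2^3$-orbits on $K^2 \otimes K^2 \otimes K^2$) that no potential on $A'$ yields a rigid QP. This contradicts the existence of $(A',S')$ produced in the previous step, completing the proof.

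The main conceptual point, and the only place where real content enters beyond citation, is the verification that the non-rigidity claim of Example~\ref{ex:double-triangle} is being used in full generality (for every potential, not just the specific $S = c_1 b_1 a_1 + c_2 b_2 a_2$). There is no genuine calculational obstacle in the corollary itself; the entire proof is a short deduction, and I would present it in essentially two sentences: ``If $B'$ were such a principal submatrix, then by Corollary~\ref{cor:submatrix-rigid} combined with the rigidity of $(A(n),S)$ from Example~\ref{ex:triangular-grid}, the double cyclic triangle $A'$ would admit a rigid QP, contradicting Example~\ref{ex:double-triangle}.''
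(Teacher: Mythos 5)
Your proof is correct and follows the paper's argument exactly: invoke the rigidity of the grid QP from Example~\ref{ex:triangular-grid}, apply Corollary~\ref{cor:submatrix-rigid}, and contradict the non-existence of a rigid QP on the double cyclic triangle from Example~\ref{ex:double-triangle}. Your observation that the non-rigidity in Example~\ref{ex:double-triangle} must be used in full generality (for all potentials, not just $S = c_1 b_1 a_1 + c_2 b_2 a_2$) is well taken and is precisely what that example's case-by-case analysis establishes.
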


\begin{proof}
Note that $B' = B(A')$, where $A'$ is the quiver in
Example~\ref{ex:double-triangle}.
We have seen that there exists a rigid QP on~$A$,
but not on~$A'$.
Thus our statement is a special case of
Corollary~\ref{cor:submatrix-rigid}.
\end{proof}

\begin{remark}
\label{rem:triangular-grid}
Using the results in \cite[Section~2.6]{ca3} (cf. also \cite[Theorem~1]{gls}),
it is easy to see that the quiver $Q(n)$ in Example~\ref{ex:triangular-grid}
is naturally associated with the cluster algebra structure
in the coordinate ring of the base affine space of the group $SL_{n+3}$.
We have been informed by J.~Schr\"oer that, following his
suggestion, A.~Seven has shown that
a skew-symmetric matrix $B'$ associated with an arbitrary tree
appears as a principal submatrix in some matrix
mutation-equivalent to the matrix $B(A(n))$ for some~$n$.
J.~Schr\"oer also informed us that Corollary~\ref{cor:double-triangle-not-in-grid}
has been also proved by B.~Keller (using a different method).
\end{remark}

\section{Relation to cluster-tilted algebras}
\label{sec:Dynkin-rigid}

Let $Q$ be a quiver with the vertex span~$R$ and
the arrow span $A$.
Assume that $Q$ is $2$-acyclic.
Let $B(A)$ be the corresponding skew-symmetric integer matrix
given by \eqref{eq:B-thru-A}.
As shown in \cite{ca2}, the matrix $B(A)$ gives rise to a
cluster algebra of finite type if and only if $Q$ is
mutation-equivalent to a Dynkin quiver (that is, an arbitrary orientation of a
Dynkin diagram of one of the types $A_n$, $D_n$, $E_6$, $E_7$, or $E_8$).
In particular, as we have seen in Example~\ref{ex:acyclic},
there is a rigid reduced QP $(A,S)$, and it is unique up
to a right-equivalence; in fact, (the right-equivalence class of)
$(A,S)$ is obtained by a sequence of mutations from a QP
$(A_0,0)$, where $A_0$ is associated to a Dynkin quiver.
We now present an explicit choice of such a potential~$S$.
To do this, we need some preparation.

First note that, according to \cite[Theorem~1.8]{ca2},
every quiver mutation-equivalent to a Dynkin quiver has no
multiple edges, that is $|b_{i,j}| \leq 1$ for every entry of $B(A)$.
In other words, we have:
\begin{equation}
\label{eq:simply-laced}
\text{$\dim A_{i,j} \leq 1$ for all $i$ and $j$.}
\end{equation}
Therefore, we can unambiguously denote by
$a_{i,j}$ the only arrow in a non-zero subspace $A_{i,j}$.
We will also use the convention that $a_{i,j} = 0$ whenever
$A_{i,j} = \{0\}$.

Second, we use the following terminology:
a \emph{chordless cycle} in (the underlying graph of)~$Q$
is a $d$-subset of vertices $I \subseteq Q_0$
that can be bijectively labeled by the elements of
$\Z/d\Z$ so that the edges between them
are precisely $\{i, i+1\}$ for $i \in \Z/d\Z$.
According to \cite[Proposition~9.7]{ca2}, if $Q$ is mutation-equivalent to a Dynkin quiver
then the arrows of every chordless cycle in~$Q$ must be cyclically oriented.
In terms of the arrow span~$A$, this condition can be stated as follows:
\begin{align}
\label{eq:chordless}
&\text{For any chordless $d$-cycle~$I$,
there exists a bijection $\nu: \Z/d\Z \to I$ such}\\
\nonumber
&\text{that the set of arrows in the restriction $A|_I$
is $\{a_{\nu(i),\nu(i+1)} \mid i \in \Z/d\Z\}$}.
\end{align}
(see Definition~\ref{def:sub-QP}).
Note that the choice of a bijection $\nu$ in \eqref{eq:chordless} is
unique up to a cyclic shift.

We call a potential~$S$ on~$A$ \emph{primitive} if it has the form
\begin{equation}
\label{eq:chordless-potential}
S = \sum_I x_I a_{\nu(1),\nu(2)} \cdots a_{\nu(d-1),\nu(d)}
a_{\nu(d),\nu(1)},
\end{equation}
where the (finite) sum is over all chordless cycles~$I$ in~$Q$,
for each~$I$ there is a bijection~$\nu$ chosen as in
\eqref{eq:chordless}, and the coefficients
$x_I$ are some non-zero elements of the base field~$K$.


\begin{proposition}
\label{pr:finite-type-potential}
If a quiver~$Q$ with the arrow span~$A$ is mutation-equivalent to a Dynkin quiver,
and $S$ is a primitive potential on~$A$, then the QP $(A,S)$ is rigid.
\end{proposition}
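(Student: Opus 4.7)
The strategy exploits the uniqueness-up-to-right-equivalence of rigid reduced QPs on $A$ guaranteed by Example~\ref{ex:acyclic}: since $Q$ is mutation-equivalent to a Dynkin (hence acyclic) quiver, there exists a rigid reduced QP $(A,S_0)$ on $A$, and it is unique up to right-equivalence. It therefore suffices to establish two things: (a) some rigid $S_0$ may be chosen to be primitive; and (b) any two primitive potentials on $A$ are right-equivalent. Granted these, the given primitive $S$ is right-equivalent to the rigid $S_0$ and hence rigid.

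I would prove (a) by induction on the minimum length $\ell$ of a mutation sequence from $Q$ to a Dynkin quiver. The base case $\ell=0$ is immediate: the underlying graph of a Dynkin quiver is a tree, so there are no chordless cycles, the only primitive potential is $S_0=0$, and $(A,0)$ is rigid. For the inductive step, choose $k$ with $\mu_k(Q)$ at mutation distance $\ell-1$ and, by induction, a primitive rigid reduced QP $(A',S_0')$ on $\mu_k(Q)$; then $(A,S_0):=\mu_k(A',S_0')$ is a rigid reduced QP on $A$ by Corollary~\ref{cor:mutation-preserves-rigidity} and Theorem~\ref{th:mutation-involutive}. The substance of (a) is to verify that $S_0$ is right-equivalent to a primitive potential, which I would do by unpacking \eqref{eq:mu-k-S} and \eqref{eq:Tk}: chordless cycles of $\mu_k(Q)$ avoiding $k$ contribute identical chordless-cycle terms of $Q$ with the same coefficients; chordless cycles of $\mu_k(Q)$ of length $d$ through $k$ contribute, after substituting composite arrows $[ba]$ and reducing the trivial $2$-cycles $[ba]a^\star b^\star$ via Theorem~\ref{th:trivial-reduced-splitting}, a primitive term on the corresponding chordless cycle of $Q$ of length $d$; and finally the chordless $3$-cycles of $Q$ newly created at $k$ receive their nonzero coefficients from the trivial summand of $\widetilde\mu_k$.

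For (b), note that a change-of-arrows automorphism (Definition~\ref{def:automorphisms}) rescaling each arrow $a_{i,j}$ by a nonzero $\lambda_{i,j}\in K^\times$ multiplies each chordless-cycle monomial $c_I$ by $\prod_{(i,j)\in I}\lambda_{i,j}$. Hence (b) reduces to solving the monomial system $\prod_{(i,j)\in I}\lambda_{i,j}=y_I$ for arbitrary prescribed nonzero $y_I$. This solvability can again be verified by induction on $\ell$ in parallel with (a), since the mutation $\mu_k$ furnishes a bijection between chordless cycles of $Q$ and of $\mu_k(Q)$ (once one accounts for length-$3$ cycles being collapsed through $k$ and new ones created) that is compatible with the arrow-rescaling action.

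The main obstacle is the combinatorial bookkeeping in step (a): the reduction supplied by Theorem~\ref{th:trivial-reduced-splitting} is non-canonical and could in principle introduce higher-degree terms beyond the primitive ones, which would then need to be killed by a further unitriangular automorphism (for which Proposition~\ref{pr:up-to-square} is the natural tool). The argument here leans essentially on the Dynkin-mutation-equivalence structure of $Q$, namely that $A$ has no multiple edges, so $\dim A_{i,j}\le 1$ (\eqref{eq:simply-laced}), and that every chordless cycle of $Q$ is cyclically oriented (\eqref{eq:chordless}). These constraints force the composite arrows $[ba]$ created at $k$ to match bijectively with chordless cycles of $Q$ passing through $k$, so that no extraneous cyclic terms can survive the reduction, and the resulting $S_0$ is indeed primitive up to right-equivalence.
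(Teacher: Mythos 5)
Your decomposition into (a) ``some rigid $S_0$ is primitive'' and (b) ``any two primitive potentials on $A$ are right-equivalent'' is a valid reduction, and your sketch of (a) is essentially the content of Lemma~\ref{lem:chordless-mutation-invariant} in the paper. But the proposal has a genuine gap at (b), and the gap matters because without (b) your argument only establishes rigidity of the one specific primitive $S_0$ produced by tracking mutations forward from the Dynkin quiver, not of the arbitrary primitive $S$ in the statement.

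The proof you offer for (b) does not hold up under scrutiny. First, the claim that a change-of-arrows rescaling suffices to connect any two primitives requires that the monomial map $\lambda\mapsto(\prod_{(i,j)\in I}\lambda_{i,j})_I$ from $(K^\times)^{Q_1}$ to $(K^\times)^{\mathcal C}$ be surjective; this is a nontrivial combinatorial assertion about the incidence structure of chordless cycles (essentially that their characteristic vectors generate a direct summand of $\mathbb Z^{Q_1}$), and you have not established it. Over non--algebraically-closed fields one would even need the stronger integral statement. Second, the ``induction on $\ell$ in parallel with (a)'' you invoke to justify this solvability is not a real argument: as you yourself concede in the parenthetical, mutation at $k$ does not furnish a bijection between chordless cycles of $Q$ and $\mu_k(Q)$ — chordless $3$-cycles through $k$ disappear (they become $2$-cycles absorbed into the trivial part), new chordless $3$-cycles at $k$ are created out of $\Delta_k$, and longer chordless cycles through $k$ change length. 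Both the arrow set and the cycle set change nontrivially, so ``compatibility with the arrow-rescaling action'' is not a statement one can simply assert. Finally, even if the monomial system were solvable, you would also need to rule out the possibility that the right-equivalence between two primitives must involve a unitriangular part; you are implicitly assuming a change of arrows is enough, which is exactly the content you would need to prove.

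The paper sidesteps (b) entirely by a small but crucial shift: Lemma~\ref{lem:chordless-mutation-invariant} is proved for an \emph{arbitrary} primitive potential (the coefficients $x_I$ can be any nonzero scalars), and its proof carefully tracks how the new coefficients arise — e.g., the new degree-$\geq 3$ terms involve $-x_{\{i,j,k\}}^{-1}f_{i,j}$, which remain nonzero because all $x_I$ are. One then applies the lemma to the \emph{given} primitive $S$ along the mutation sequence from $Q$ to a Dynkin quiver, concluding that after $\ell$ mutations the resulting QP is right-equivalent to one with a primitive potential on a Dynkin arrow span; but a Dynkin quiver has no chordless cycles, so that potential is $0$, and $(A_0,0)$ is rigid. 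Rigidity is then transported back to $(A,S)$ via Theorem~\ref{th:mutation-involutive} and Corollary~\ref{cor:mutation-preserves-rigidity}. No separate uniqueness claim like your (b) is ever needed; rather, (b) is a \emph{consequence} of the proposition together with Example~\ref{ex:acyclic}. To repair your argument you should either replace (b) with this backward-transport step, or supply a genuine proof of the rescaling surjectivity for Dynkin-mutation-equivalent quivers.
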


To prove Proposition~\ref{pr:finite-type-potential}, we impose on $Q$ a
weaker assumption that its arrow span $A$ satisfies \eqref{eq:no-2-cycles}, \eqref{eq:simply-laced}
and \eqref{eq:chordless}.
Choose some vertex~$k$, and (as in Section~\ref{sec:generic})
let $\mu_k(A)$ denote the $2$-acyclic $R$-bimodule such that the skew-symmetric matrix
$B(\mu_k(A))$ is obtained from $B(A)$ by the mutation at~$k$.
It is easy to see that $\mu_k(A)$ satisfies \eqref{eq:simply-laced}
but not necessarily \eqref{eq:chordless}.
In view of Corollary~\ref{cor:mutation-preserves-rigidity},
the assertion of Proposition~\ref{pr:finite-type-potential}
is a consequence of the following lemma.

\begin{lemma}
\label{lem:chordless-mutation-invariant}
Suppose that the arrow span $A$ of a quiver $Q$
satisfies \eqref{eq:no-2-cycles}, \eqref{eq:simply-laced}
and \eqref{eq:chordless}, and that $\mu_k(A)$ also satisfies
\eqref{eq:chordless} for some vertex~$k$.
Let $S$ be a primitive potential on~$A$.
Then the QP $(\overline A, \overline S)= \mu_k(A,S)$ is right-equivalent
to a QP on $\mu_k(A)$ with a primitive potential.
\end{lemma}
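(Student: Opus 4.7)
The plan is to compute $\widetilde\mu_k(A,S)=(\widetilde A,\widetilde S)$ explicitly in terms of the chordless cycles of $Q$, identify the degree-2 trivial summand, apply the Splitting Theorem, and read off the reduced potential $\overline S$ to verify that it is primitive on $\mu_k(A)$.

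First I would decompose $S=S_k+S_{\hat k}$ according to whether each chordless cycle of $Q$ passes through $k$ or not. Since $A$ is simply-laced and $2$-acyclic at $k$, every chordless cycle $I$ through $k$ contains a unique consecutive factor $b_I a_I$ with $t(b_I)=h(a_I)=k$; write $a_I\colon j_I\to k$ and $b_I\colon k\to i_I$ for the two arrows of $I$ incident to $k$. Formulas \eqref{eq:mu-k-S} and \eqref{eq:Tk} then give $\widetilde S=S_{\hat k}+[S_k]+\Delta_k$, where $[S_k]$ replaces each $b_I a_I$ by the composite arrow $[b_I a_I]$, producing a cycle of length $|I|-1$ in $\widetilde A$. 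In particular the degree-2 part is $\widetilde S^{(2)}=\sum_{I\text{ a 3-cycle through }k}x_I\,c_I[b_I a_I]$, where $c_I\colon i_I\to j_I$ is the third arrow of $I$; hence the trivial arrow span $A_{\mathrm{triv}}=\partial\widetilde S^{(2)}$ is spanned by the pairs $\{c_I,[b_I a_I]\}$, and $A_{\mathrm{red}}$ is isomorphic to $\mu_k(A)$ by a direct dimension count via \eqref{eq:B-thru-A} and \eqref{eq:B-mutation}.

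Next I would observe two consequences of \eqref{eq:chordless} for $A$ that govern how the trivial arrows appear in $\widetilde S^{(\geq 3)}$: the arrow $c_I$ can appear only inside cycles of $S_{\hat k}$ using the edge $c_I$, because any longer chordless cycle of $A$ through $k$ containing the pair $a_I,b_I$ would have $c_I$ as a chord; and the composite $[b_I a_I]$ can appear only in the summand $[b_I a_I]a_I^\star b_I^\star$ of $\Delta_k$. After a preliminary change-of-arrows rescaling absorbing the scalars $x_I$, I would match $\widetilde S$ to the normal form \eqref{eq:S-normal-form} with trivial pairs $(c_I,[b_I a_I])$, so that $v_I=a_I^\star b_I^\star$ and $u_I=\sum_{I''\ni c_I}x_{I''}\cdot(\text{arc of }I''\text{ from }h(c_I)\text{ back to }t(c_I))$. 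Invoking the iterated construction of Lemma~\ref{lem:u-v-d} whose limit \eqref{eq:limit-automorphism} produces the right-equivalence $(\widetilde A,\widetilde S)\cong(A_{\mathrm{triv}},\widetilde S^{(2)})\oplus(\mu_k(A),\overline S)$.

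Finally I would match the cyclic monomials of $\overline S$ with the chordless cycles of $\mu_k(A)$. They fall into four families: (a)~unchanged cycles from $S_{\hat k}$ that never involve any $c_I$; (b)~shortened cycles from $[S_k^{(\geq 4)}]$; (c)~the surviving triangles $[ba]a^\star b^\star$ from $\Delta_k$ for pairs $(a,b)$ at $k$ with no $3$-cycle closure in $A$; and (d)~new cyclic monomials $a_I^\star b_I^\star u_I$ produced by the substitution $c_I\mapsto c_I-x_I^{-1}v_I$, one for each chordless cycle $I''$ of $A$ through the edge $c_I$. For types (a)--(c) chordlessness in $\mu_k(A)$ is straightforward from chordlessness in $A$ together with the cancellation pattern of composite arrows; for (d) the cyclic monomial $a_I^\star b_I^\star u_I$ is a chordless $(|I''|+1)$-cycle through $k$ in $\mu_k(A)$, and conversely every chordless cycle of $\mu_k(A)$ through $k$ of length $\geq 4$ arises in this way from a unique pair $(I,I'')$. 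The main obstacle is step~(d): checking that the cyclic paths $a_I^\star b_I^\star u_I$ are chordless in $\mu_k(A)$ requires combining \eqref{eq:chordless} for $A$ (to rule out inherited chords) with \eqref{eq:chordless} for $\mu_k(A)$ (to rule out chords via other composite arrows), and verifying that the higher-depth corrections from successive iterations of Lemma~\ref{lem:u-v-d} only rescale the nonzero coefficients in front of the already-identified cyclic monomials rather than generating new ones.
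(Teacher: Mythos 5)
Your overall plan matches the paper's: compute $\widetilde \mu_k(A,S)$ explicitly, identify the quadratic trivial part coming from $3$-cycles through~$k$, split, and read off a primitive reduced potential. However, there is a genuine gap at exactly the point you flag, and your proposed way around it (``higher-depth corrections only rescale the coefficients'') is not what actually happens and does not hold up. The paper avoids the infinite iteration of Lemma~\ref{lem:u-v-d} entirely: it exhibits a \emph{single} explicit automorphism $\varphi$ of $R\langle\langle\widetilde A\rangle\rangle$ that produces the split $(A_{\rm triv},\widetilde S^{(2)})\oplus(\overline A,\overline S)$ in one step, and then checks directly that $\overline S$ is primitive. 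The reason one step suffices is a structural fact you did not establish: in the normal form \eqref{eq:S-normal-form} for $\widetilde S$ with trivial pairs $(c_I,[b_Ia_I])$, both $u_I$ and $v_I$ are supported on the \emph{reduced} arrows $\overline A$ (no $c_{I'}$ and no $[b_{I'}a_{I'}]$ appear in them), so the substitution $c_I\mapsto c_I-v_I$, $[b_Ia_I]\mapsto[b_Ia_I]-u_I$ already kills all cross terms, leaving $\sum_I c_I[b_Ia_I]+\bigl(S'-\sum_I v_Iu_I\bigr)$ with no $u'_I,v'_I$ to iterate on.

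That structural fact is the real content that needs justification, and it is a nontrivial consequence of \eqref{eq:chordless} for $A$: one must show that every chordless cycle of $Q$ meets $In(k)$ and $Out(k)$ in at most one vertex each, so that each such cycle contains at most one arrow with both endpoints in $In(k)\cup Out(k)$. Only then does the arc $u_I$ coming from a chordless cycle $I''$ through $c_I$ avoid the other trivial arrows $c_{I'}$. Your sketch implicitly assumes this when writing $u_I$ as ``arc of $I''$ from $h(c_I)$ back to $t(c_I)$'' without further trivial arrows appearing in the arc, but you never derive it from \eqref{eq:chordless}. The paper makes this explicit by sorting $S$ into four families $S_1,\dots,S_4$ (according to whether a cycle passes through $k$, has length $3$ or $\geq 4$, or merely contains an arrow of $Q'_1$) and noting that ``every term in $S_1$ or $S_3$ contains exactly one arrow from $Q'_1$, while none of the terms in $S_2$ or $S_4$ contain such arrows.'' Once this is in place, the explicit one-step automorphism finishes the argument and the matching with chordless cycles of $\mu_k(A)$ (your families (a)--(d)) goes through cleanly; \eqref{eq:chordless} for $\mu_k(A)$ is then invoked only at the very end, to know that the monomials so obtained actually form a primitive potential on $\overline A$.
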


\begin{proof}
Let $\widetilde \mu_k(A,S) = (\widetilde A, \widetilde S)$ be the QP
given by \eqref{eq:arrows-thru-k-reversed}, \eqref{eq:arrows-not-thru-k}
and \eqref{eq:mu-k-S}.
Denote by $In(k)$ (resp. $Out(k)$) the set of vertices $j$
(resp.~$i$) such that $\dim A_{k,j} = 1$ (resp. $\dim A_{i,k} = 1$).
In view of \eqref{eq:chordless}, every arrow~$a$ with both ends in
$In(k) \cup Out(k)$ has $h(a) \in In(k)$ and $t(a) \in Out(k)$.
We denote the set of these arrows by $Q'_1$.
The arrows of~$\widetilde A$ can be unambiguously denoted as follows:
\begin{itemize}
\item $\widetilde a_{i,j} = a_{i,j}$ for all $i,j$ different
from~$k$ and such that $a_{i,j} \neq 0$.
\item $\widetilde a_{i,j} = [a_{i,k} a_{k,j}]$ for
all $i \in Out(k), j \in In(k)$.
\item $\widetilde a_{j,k} = a_{k,j}^\star$ for $j \in In(k)$.
\item $\widetilde a_{k,i} = a_{i,k}^\star$ for $i \in Out(k)$.
\end{itemize}

We can split $S$ into the sum of four terms
\begin{equation}
\label{eq:S-4-parts}
S = S_1 + S_2 + S_3 + S_4,
\end{equation}
where
\begin{itemize}
\item $S_1$ involves (oriented) $3$-cycles $a_{i,k} a_{k,j}
a_{j,i}$.
\item $S_2$ involves chordless $d$-cycles through~$k$ with $d \geq 4$;
\item $S_3$ involves chordless cycles having an arrow $a_{j,i} \in Q'_1$ but not passing through~$k$;
\item $S_4$ involves chordless cycles having no arrows with both
ends in $In(k) \cup \{k\} \cup Out(k)$.
\end{itemize}
Using \eqref{eq:chordless}, it is easy to see that every chordless
cycle~$I$ involved in $S_2$ or $S_3$ has exactly one common point with
each of the sets $In(k)$ and $Out(k)$.
Also note that every term in $S_1$ or $S_3$ contains exactly one arrow
from $Q'_1$, while none of the terms in $S_2$ or $S_4$ contain
such arrows.
Remembering \eqref{eq:mu-k-S}, we write the potential $\widetilde S$
as follows:
\begin{equation}
\label{eq:tilde-S-split}
\widetilde S = [S_1] + [S_2] + [S_3] + [S_4] + \Delta_k.
\end{equation}
We have
$$[S_1] = \sum_{a_{j,i} \in Q'_1} x_{\{i,j,k\}} \widetilde a_{i,j} a_{j,i},$$
and this is the degree~$2$ component $\widetilde S^{(2)}$ of $\widetilde S$.
In view of \eqref{eq:mutilde-mu}, the arrows in $\overline A$ are
obtained from those in $\widetilde A$ by removing all arrows
$a_{j,i} \in Q'_1$ and their opposites $\widetilde a_{i,j}$.

Inspecting the other terms in \eqref{eq:tilde-S-split},
it is easy to see that
$$[S_2] = \overline S_3, \quad [S_4] = \overline S_4,$$
$$\Delta_k = \overline S_1 + \sum_{a_{j,i} \in Q'_1}
\widetilde a_{i,j} \widetilde a_{j,k} \widetilde a_{k,i},$$
$$[S_3] = \sum_{a_{j,i} \in Q'_1} f_{i,j} a_{j,i},$$
where $f_{i,j} = \partial_{a_{j,i}} S_3$,
and the terms $\overline S_1, \overline S_3$ and $\overline S_4$
have the same meaning as in \eqref{eq:S-4-parts} with $A$ replaced
by $\overline A$.
Let $\varphi$ be the automorphism of $R \langle \langle \widetilde
A \rangle \rangle$ acting on arrows as follows:
$$\varphi(a_{j,i}) = x_{\{i,j,k\}}^{-1}(a_{j,i} - \widetilde a_{j,k} \widetilde a_{k,i}),
\quad \varphi(\widetilde a_{i,j}) = \widetilde a_{i,j} - x_{\{i,j,k\}}^{-1}f_{i,j}$$
for every $a_{j,i} \in Q'_1$, and $\varphi$ fixes the rest of the arrows.
Then we have
\begin{align*}
\varphi(\widetilde S) &= \overline S_1 + \overline S_3 + \overline S_4
+ \varphi(\sum_{a_{j,i} \in Q'_1}
(x_{\{i,j,k\}} \widetilde a_{i,j} a_{j,i} + \widetilde a_{i,j} \widetilde a_{j,k} \widetilde a_{k,i}
+ f_{i,j} a_{j,i}))\\
&= \overline S_1 + \overline S_3 + \overline S_4 +
\sum_{a_{j,i} \in Q'_1} (\widetilde a_{i,j}a_{j,i}
 - x_{\{i,j,k\}}^{-1} f_{i,j}\widetilde a_{j,k} \widetilde a_{k,i}).
\end{align*}
We see that the degree~$2$ component of $\varphi(\widetilde S)$ is
$$\varphi(\widetilde S)^{(2)} = \sum_{a_{j,i} \in Q'_1} \widetilde a_{i,j}a_{j,i},$$
and a moment's reflection shows that
$$-\sum_{a_{j,i} \in Q'_1}x_{\{i,j,k\}}^{-1} f_{i,j}\widetilde a_{j,k} \widetilde
a_{k,i}$$
can be viewed as the component $\overline S_2$ of a primitive
potential on~$\overline A$.
We conclude that $\mu_k(A,S)$ is right-equivalent to
$(\overline A, \overline S_1 + \overline S_2 + \overline S_3 + \overline
S_4)$, finishing the proof.
\end{proof}

We conclude this section by briefly discussing a connection between
Jacobian algebras of rigid QPs and cluster-tilted algebras
introduced in \cite{BMR1}.
We refer to \cite{BMR1} for precise definitions; roughly speaking,
cluster-tilted algebras are endomorphism rings of tilting objects
in cluster categories.
One can associate such an algebra to any quiver~$Q$ which is mutation-equivalent
to a Dynkin quiver.
As shown in \cite[Theorem~4.1]{CCS} (for type~$A$) and
\cite[Theorems~4.1, 4.2]{BMR2}, the cluster-tilted algebra associated to~$Q$ is
isomorphic to the path algebra of~$Q$ modulo an explicitly described ideal of
relations.
By inspection, this ideal is exactly the Jacobian ideal of a
primitive potential~$S$ given by \eqref{eq:chordless-potential}.
Thus, Proposition~\ref{pr:finite-type-potential}
has the following corollary, which shows that the Jacobian algebras of QPs can be viewed as
generalizations of cluster-tilted algebras.

\begin{corollary}
\label{cor:jacobian-cluster-tilted}
If a quiver~$Q$ with the arrow span~$A$ is mutation-equivalent to a Dynkin quiver,
then the Jacobian algebra ${\mathcal P}(A,S)$ of a rigid QP
on~$A$ (explicitly given by \eqref{eq:chordless-potential})
is isomorphic to the cluster-tilted algebra associated to~$Q$.
\end{corollary}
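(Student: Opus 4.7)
The plan is to deduce the corollary by combining Proposition~\ref{pr:finite-type-potential} with the explicit presentations of cluster-tilted algebras established in~\cite{CCS, BMR2}. First, by Proposition~\ref{pr:finite-type-potential}, the QP $(A,S)$ defined by the primitive potential~\eqref{eq:chordless-potential} is rigid, so $\mathcal{P}(A,S)$ is well-defined; moreover, by Example~\ref{ex:acyclic}, any two reduced rigid QPs on~$A$ are right-equivalent, and Proposition~\ref{pr:automorphism-respects-jacobian} guarantees that the associated Jacobian algebras are isomorphic. Thus $\mathcal{P}(A,S)$ depends, up to isomorphism, only on~$Q$, and in particular the specific values of the scalars $x_I$ in~\eqref{eq:chordless-potential} can be normalized at will (for instance, to $x_I = 1$) by a suitable change of arrows automorphism in the sense of Definition~\ref{def:automorphisms}.

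Next, I would invoke \cite[Theorem~4.1]{CCS} (for type~$A$) and \cite[Theorems~4.1, 4.2]{BMR2} (for the remaining Dynkin-mutation classes) to identify the cluster-tilted algebra of~$Q$ with the quotient $K\langle A\rangle / I$ of the path algebra by an explicit admissible ideal~$I$. In the BMR2/CCS description, $I$ is generated by elements naturally indexed by the arrows of~$Q$: for each arrow $a = a_{i,j}$, the associated relation is a $K$-linear combination of paths from~$j$ to~$i$, with one summand for each chordless cycle of~$Q$ passing through~$a$, given by the ``complementary arc'' traversing the rest of that cycle.

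The main step is then a direct combinatorial comparison of $I$ with the closed Jacobian ideal $J(S)$. Since $J(S)$ is the closure of the two-sided ideal generated by $\partial_a S$ for $a \in Q_1$, it suffices to match these cyclic derivatives with the BMR2/CCS relations arrow by arrow. For a fixed arrow $a = a_{i,j}$, formula~\eqref{eq:cyclic-derivative} shows that only those summands of~\eqref{eq:chordless-potential} corresponding to chordless cycles containing~$a$ contribute to $\partial_a S$, and for any such cycle $\nu(1), \dots, \nu(d)$ with $a = a_{\nu(k), \nu(k+1)}$, the resulting summand of $\partial_a S$ is exactly $x_I \cdot a_{\nu(k+1), \nu(k+2)} \cdots a_{\nu(k-1), \nu(k)}$, i.e., the cyclic rotation appearing in the BMR2/CCS presentation (up to the nonzero scalar $x_I$ which we have already normalized away). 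Hence $J(S) = I$, and passing to the quotient we obtain $\mathcal{P}(A,S) \cong K\langle A\rangle / I$, which is the cluster-tilted algebra associated to~$Q$.

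The only real obstacle is bookkeeping: one must keep track of the convention for the cyclic bijection~$\nu$ in~\eqref{eq:chordless} (which is unique only up to a cyclic shift) and verify that cyclically shifting $\nu$ does not alter $J(S)$ — this is automatic since cyclically equivalent potentials have the same Jacobian ideal by Proposition~\ref{pr:cyclic-equivalence}. One must also check that the BMR2/CCS ideal admits no additional relations beyond those accounted for by chordless cycles, but this is guaranteed by \cite[Theorem~4.1]{CCS} and \cite[Theorems~4.1, 4.2]{BMR2} themselves. No further argument is needed.
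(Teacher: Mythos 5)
Your proof is correct and follows essentially the same route as the paper: the paper also cites \cite[Theorem~4.1]{CCS} and \cite[Theorems~4.1,~4.2]{BMR2} and then remarks that ``by inspection'' the explicit ideal of relations given there coincides with the Jacobian ideal of a primitive potential; you have simply written out that inspection in detail. One small caveat: your claim that the scalars $x_I$ can always be normalized to~$1$ by a change of arrows is not obviously true in general (it amounts to solvability of a multiplicative system indexed by arrows and chordless cycles), but it is also unnecessary --- the uniqueness up to right-equivalence of the rigid reduced QP on~$A$ (Example~\ref{ex:acyclic}), which you already invoke, already shows that the isomorphism class of ${\mathcal P}(A,S)$ is independent of the particular nonzero coefficients~$x_I$.
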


\section{Decorated representations and their mutations}
\label{sec:reps}

The following definition is inspired by \cite{mrz}.

\begin{definition}
\label{def:dec-rep}
A \emph{decorated representation} of a QP $(A,S)$
is a pair ${\mathcal M}=(M,V)$, where $V$ is  a finite-dimensional (left)
$R$-module, and  $M$ is a finite-dimensional
$R \langle\langle A \rangle \rangle$-module annihilated by $J(S)$.
\end{definition}

Equivalently, $M$ is a finite-dimensional ${\mathcal P}(A,S)$-module.
We will sometimes write $\mathcal M = (A,S,M,V)$ and refer to
such a quadruple as a \emph{QP-representation}.

We have $M = \bigoplus_{i \in Q_0}  M_i$ and $V = \bigoplus_{i \in Q_0} V_i$,
where $M_i = e_i M$ and $V_i = e_i V$.
With some abuse of notation, for $u \in R \langle\langle A \rangle \rangle$
or $u \in {\mathcal P}(A,S)$, we denote the multiplication
operator $m \mapsto um$ on $M$ simply as $u: M \to M$; we write $u
= u_M$ if we need to emphasize the dependence of this operator on~$M$.
In particular, for each arrow $a \in A$, we have $a: M_{t(a)} \to M_{h(a)}$,
and $a|_{M_i} = 0$ for $i \neq t(a)$.

Note that every finite-dimensional
$R \langle\langle A \rangle \rangle$-module~$M$ is
\emph{nilpotent}, i.e.,~$M$ is annihilated by ${\mathfrak m}^n$ for $n \gg 0$.
We thank Bill Crawley-Boevey for pointing this out to us; the
following argument was also suggested by him.
The above claim is a special case of the following more general
fact: if a ring $U$ with unit is complete in the $I$-adic topology
for some two-sided ideal~$I$, and $M$ is a $U$-module of finite
length~$n$, then $I^n M = \{0\}$.
Indeed, the element $1+x$ is invertible in $U$ for any $x \in I$,
since it has inverse $1-x+x^2-x^3+ \cdots$.
Thus $I$ is contained in the Jacobson radical $J$ (since $J$ is the set of
$x \in U$ such that $1+xy$ is invertible for all $y \in U$).
Thus $IS = \{0\}$ for any simple $U$-module $S$ (since $J$ is the intersection of
annihilators of all simple modules).
Now if $M$ has composition series
$$\{0\} = M_0 \subset M_1 \subset \cdots \subset M_n = M,$$
then for all $i \geq 1$, we have $I(M_i/M_{i-1}) = \{0\}$, so $IM_i \subseteq M_{i-1}$.
It follows that $I^n M = \{0\}$, as claimed.

The aim of this section is to extend the definition of QP-mutations
in Corollary~\ref{cor:mutations-respect-isom}
and Definition~\ref{def:reduced-mutation} to the level of
QP-representations, and to prove a representation-theoretic
extension of Theorem~\ref{th:mutation-involutive}.
To do this, we first introduce right-equivalence for
QP-representations.

\begin{definition}
\label{def:right-equiv-reps}
Let $(A,S)$ and $(A',S')$ be QPs on the same set of vertices, and
let ${\mathcal M}=(M,V)$ (resp.~${\mathcal M}'=(M',V')$) be a
decorated representation of $(A,S)$ (resp. of~$(A',S')$).
A \emph{right-equivalence} between ${\mathcal M}$  and ${\mathcal M}'$
is a triple $(\varphi, \psi, \eta)$, where:
\begin{itemize}
\item $\varphi: R \langle \langle A \rangle \rangle \to
R \langle \langle A' \rangle \rangle$ is a
right-equivalence between $(A,S)$ and $(A',S')$
(see Definition~\ref{def:AS-isomorphism});
\item $\psi: M \to M'$ is a vector space isomorphism such that
$\psi \circ u_M = \varphi(u)_{M'} \circ \psi$ for
$u \in R \langle \langle A \rangle \rangle$;
\item $\eta: V \to V'$ is an isomorphism of $R$-modules.
\end{itemize}
\end{definition}

\begin{remark}
\label{rem:right-equiv-reps}
The usual notion of isomorphism of decorated representations
${\mathcal M}=(M,V)$ and ${\mathcal M}'=(M',V')$
of the same QP $(A,S)$ (namely, that $M$ and $M'$ are isomorphic ${\mathcal P}(A,S)$-modules,
and $V$ and $V'$ are isomorphic $R$-modules) is a special case of
right-equivalence with the $\varphi$-component being the identity.
The right-equivalence seems to be more relevant for
applications to cluster algebras.
To illustrate, consider the \emph{Kronecker quiver} $Q$ with two
vertices $1$ and $2$, and two arrows $a$ and $b$ from $1$ to $2$.
Since $Q$ has no oriented cycles, it has only one QP $(A,S)$:
the one with $S=0$.
Thus, a decorated representation ${\mathcal M}=(M,V)$ with $V = 0$
is just a usual representation of the quiver $Q$, i.e., it
consists of two vector spaces $M_1$ and $M_2$, and two linear maps
$a$ and $b$ from $M_1$ to $M_2$.
Such representations were classified by Kronecker.
In particular, he proved that, for every $n \geq 1$, the
isomorphism classes of indecomposable $Q$-representations
with $\dim M_1 = \dim M_2 = n$ are naturally parameterized
by the projective line.
However all these representations are right-equivalent to each other.
This is more in sync with the fact that, as discussed in
\cite{calzel}, all these representations give rise to the same
element of the corresponding cluster algebra.
\end{remark}

We now present a representation-theoretic extension of
Theorem~\ref{th:trivial-reduced-splitting}.
Let $\mathcal M = (A,S,M,V)$ be a QP-representation,
and let $\varphi: R \langle \langle A_{\rm red} \oplus C \rangle \rangle \to
R \langle \langle A \rangle \rangle$ be a right-equivalence
of the QPs $(A_{\rm red}, S_{\rm red}) \oplus (C, T)$
and $(A,S)$, where $(A_{\rm red}, S_{\rm red})$ is a reduced QP,
and $(C,T)$ is a trivial QP,
see Theorem~\ref{th:trivial-reduced-splitting}.
We define a $R \langle \langle A_{\rm red} \rangle \rangle$-module
$M'$ by setting $M'=M$ as a $K$-vector space with the action of
$R \langle \langle A_{\rm red} \rangle \rangle$ given by
$u_{M'} = \varphi(u)_M$.
In view of Proposition~\ref{pr:jacobian-algebra-invariant},
this makes a quadruple $\mathcal M_{\rm red} = (A_{\rm red}, S_{\rm red}, M',V)$
a well-defined QP-representation.

\begin{definition}
\label{def:reduced-part-rep}
We call the QP-representation $\mathcal M_{\rm red}$ given by the above construction
the \emph{reduced part} of $\mathcal M$.
\end{definition}

This terminology is justified by the following.

\begin{proposition}
\label{pr:rep-splitting}
The right-equivalence class of $\mathcal M_{\rm red}$
is determined by the right-equivalence class of $\mathcal M$.
\end{proposition}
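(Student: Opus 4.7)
The plan is as follows. Let $(\varphi, \psi, \eta)$ be a right-equivalence between the QP-representations $\mathcal M = (A, S, M, V)$ and $\mathcal M' = (A', S', M', V')$, and fix reducing right-equivalences $\varphi_1: R\langle\langle A_{\rm red} \oplus C\rangle\rangle \to R\langle\langle A\rangle\rangle$ and $\varphi_1': R\langle\langle A'_{\rm red} \oplus C'\rangle\rangle \to R\langle\langle A'\rangle\rangle$ that define $\mathcal M_{\rm red}$ and $\mathcal M'_{\rm red}$ per Definition~\ref{def:reduced-part-rep}. I will construct a triple $(\overline\varphi, \overline\psi, \overline\eta)$ that is a right-equivalence between $\mathcal M_{\rm red}$ and $\mathcal M'_{\rm red}$. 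Specializing $\mathcal M' = \mathcal M$ and $\varphi = \mathrm{id}$ (but allowing $\varphi_1 \neq \varphi_1'$) shows $\mathcal M_{\rm red}$ is independent of the choice of splitting; the general case gives the proposition.

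Form the composite $\Phi := (\varphi_1')^{-1} \circ \varphi \circ \varphi_1$, a right-equivalence between $(A_{\rm red} \oplus C, S_{\rm red} + T)$ and $(A'_{\rm red} \oplus C', S'_{\rm red} + T')$. The proof of Proposition~\ref{pr:cancel-trivial} applies essentially verbatim (the assumption $A = A'$ there is inessential; only $\Phi|_R = \mathrm{id}$ and the decomposition $\Phi(J(S_{\rm red}) \oplus L) = J(S'_{\rm red}) \oplus L'$ are used). It yields that $\overline\varphi_0 := p' \circ \Phi|_{R\langle\langle A_{\rm red}\rangle\rangle}$, where $p'$ is the projection of $R\langle\langle A'_{\rm red} \oplus C'\rangle\rangle$ onto $R\langle\langle A'_{\rm red}\rangle\rangle$ along the closed ideal $L'$ generated by $C'$, is an algebra isomorphism $R\langle\langle A_{\rm red}\rangle\rangle \to R\langle\langle A'_{\rm red}\rangle\rangle$ with $\overline\varphi_0|_R = \mathrm{id}$; that $(A'_{\rm red}, \overline\varphi_0(S_{\rm red}))$ is reduced; and that $S'_{\rm red} - \overline\varphi_0(S_{\rm red})$ is cyclically equivalent to an element of $J(\overline\varphi_0(S_{\rm red}))^2$. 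Applying Proposition~\ref{pr:up-to-square}(2) to the pair $(\overline\varphi_0(S_{\rm red}), S'_{\rm red})$ produces an automorphism $\xi$ of $R\langle\langle A'_{\rm red}\rangle\rangle$ with $\xi(\overline\varphi_0(S_{\rm red}))$ cyclically equivalent to $S'_{\rm red}$ and $\xi(u) - u \in J(\overline\varphi_0(S_{\rm red})) = J(S'_{\rm red})$ for every $u$. Set $\overline\varphi := \xi \circ \overline\varphi_0$; then $\overline\varphi$ is a right-equivalence of QPs between $(A_{\rm red}, S_{\rm red})$ and $(A'_{\rm red}, S'_{\rm red})$.

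Finally, take $\overline\psi := \psi$ (as a $K$-linear isomorphism between the underlying vector spaces of $M_{\rm red}$ and $M'_{\rm red}$, which coincide with those of $M$ and $M'$) and $\overline\eta := \eta$; the latter intertwining is automatic since the decoration $V$ is unchanged by passing to the reduced part. For $\overline\psi$, the required identity $\overline\psi \circ u_{M_{\rm red}} = \overline\varphi(u)_{M'_{\rm red}} \circ \overline\psi$ for $u \in R\langle\langle A_{\rm red}\rangle\rangle$ translates, using $u_{M_{\rm red}} = \varphi_1(u)_M$, $u_{M'_{\rm red}} = \varphi_1'(u)_{M'}$, and the original intertwining $\psi \circ x_M = \varphi(x)_{M'} \circ \psi$, into the requirement that $\varphi_1'\bigl(\Phi(u) - \overline\varphi(u)\bigr)$ annihilates $M'$. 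Writing $\Phi(u) - \overline\varphi(u) = \bigl(\Phi(u) - \overline\varphi_0(u)\bigr) - \bigl(\xi(\overline\varphi_0(u)) - \overline\varphi_0(u)\bigr)$, the first summand lies in $L'$ by the definition of $\overline\varphi_0$, and the second lies in $J(S'_{\rm red})$ by the property of $\xi$; both therefore lie in $J(S'_{\rm red} + T') = J(S'_{\rm red}) \oplus L'$, and applying $\varphi_1'$ (which sends this Jacobian ideal to $J(S')$ by Proposition~\ref{pr:automorphism-respects-jacobian}) puts the image in $J(S')$, which annihilates $M'$ by Definition~\ref{def:dec-rep}. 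The principal technical obstacle, namely producing $\overline\varphi$ on the reduced parts out of $\Phi$, is already handled by Propositions~\ref{pr:cancel-trivial} and~\ref{pr:up-to-square}; the module-level verification then reduces to tracking that every discrepancy lies in $J(S')$.
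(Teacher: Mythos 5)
Your proof is correct and follows essentially the same route as the paper's: both reduce the problem, via the decomposition $R\langle\langle A_{\rm red}\oplus C\rangle\rangle = R\langle\langle A_{\rm red}\rangle\rangle \oplus L$, to producing an automorphism of $R\langle\langle A_{\rm red}\rangle\rangle$ from the projection $p'\circ\Phi$ (the argument of Proposition~\ref{pr:cancel-trivial}, i.e., \eqref{eq:psi-does-the-job}) and then correcting it by an automorphism from Proposition~\ref{pr:up-to-square}(2) whose deviation from the identity lies in the Jacobian ideal, so that the discrepancy annihilates the module. The paper streamlines the bookkeeping by first reducing (via the identifications implicit in a right-equivalence) to an auto-right-equivalence $\varphi$ of a single split QP $(A\oplus C, S+T)$ acting on one module $M$, whereas you track $\varphi_1$, $\varphi_1'$, $\Phi$, and $\psi$ explicitly throughout; the key technical inputs and the final check that $\Phi(u)-\overline\varphi(u)\in L'+J(S'_{\rm red})$ annihilates $M'$ are the same.
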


\begin{proof}
To get more in sync with the notation in
Proposition~\ref{pr:cancel-trivial}, we change our notation
a little bit and assume that $\mathcal M$ is a decorated
representation of a QP $(A \oplus C, S+T)$, where $(A,S)$
is a reduced QP, and $(C,T)$ a trivial one.
Let $\varphi$ be an auto-right-equivalence
of $(A \oplus C, S + T)$, that is, an algebra
automorphism of $R \langle \langle A \oplus C \rangle \rangle$
such that $\varphi(S + T)$ is cyclically
equivalent to $S + T$.
To prove Proposition~\ref{pr:rep-splitting}, it suffices to show
the following:
\begin{eqnarray}
\label{eq:phi'}
&\text{there exists an algebra automorphism $\varphi'$ of
$R \langle \langle A \rangle \rangle$ such that}\\
\nonumber
&\text{$\varphi'(S)$ is cyclically
equivalent to $S$, and
$\varphi'(u)_M = \varphi(u)_M$ for
$u \in R \langle \langle A \rangle \rangle$.}
\end{eqnarray}

As in the proof of Proposition~\ref{pr:cancel-trivial},
let~$L$ denote the closure of the two-sided ideal in
$R \langle \langle A \oplus C \rangle \rangle$ generated by~$C$.
Recall from \eqref{eq:L-splits} that we have
$$R \langle \langle A \oplus C \rangle \rangle =
R \langle \langle A \rangle \rangle \oplus L,$$
and
$$J(S+T) = J(S) \oplus L$$
(in the last equality, $J(S)$ is understood as the Jacobian ideal
of~$S$ in $R \langle \langle A \rangle \rangle$).
In particular, we have $u_M = 0$ for $u \in L$.

Let $\psi: R\langle \langle A \rangle \rangle \to R\langle \langle A \rangle \rangle$
denote the restriction to $R\langle \langle A \rangle \rangle$
of the composition $p \circ \varphi$, where $p$ is the projection of
$R\langle \langle A \oplus C \rangle \rangle$ onto $R\langle \langle A \rangle \rangle$ along $L$.
Then we have $\psi(u)_M = \varphi(u)_M$ for
$u \in R\langle \langle A \rangle \rangle$.

According to \eqref{eq:psi-does-the-job},
$\psi$ is an algebra automorphism
of $R\langle \langle A \rangle \rangle$.
Furthermore, using \eqref{eq:psi-does-the-job} in conjunction with
Proposition~\ref{pr:up-to-square}, we conclude that
$J(\psi(S)) = J(S)$, and that there exists an algebra automorphism
$\eta$ of $R\langle \langle A \rangle \rangle$ such that
$\eta(\psi(S))$ is cyclically equivalent to~$S$, and
$\eta(u) - u \in J(S)$ for $u \in R\langle \langle A \rangle \rangle$.
Taking $\varphi' = \eta \circ \psi$, we see that
$$\varphi'(u)_M = \eta(\psi(u))_M = \psi(u)_M = \varphi(u)_M$$
for $u \in R\langle \langle A \rangle \rangle$.
Thus $\varphi'$ satisfies all the conditions in \eqref{eq:phi'},
and we are done.
\end{proof}

We turn to the definition of \emph{mutations} for a
QP-representation $\mathcal M = (A,S,M,V)$.
We fix a vertex~$k$ satisfying \eqref{eq:no-2-cycles-thru-k}, and let $a_1, \dots, a_s$ be all arrows with
$h(a_p) = k$, and $b_1, \dots, b_t$ be all arrows with
$t(b_q) = k$.
We denote
\begin{equation}
\label{eq:in-out}
M_{\rm in} = \bigoplus_{p = 1}^s M_{t(a_p)}, \quad
M_{\rm out} = \bigoplus_{q = 1}^t M_{h(b_q)}.
\end{equation}
Let $\alpha = \alpha_M: M_{\rm in}  \to M_k$ and $\beta = \beta_M: M_k \to M_{\rm out} $ be
$K$-linear maps given in the matrix form by
\begin{equation}
\label{eq:alpha-beta}
\alpha  = \begin{pmatrix}
a_1 & a_2 & \cdots & a_s\end{pmatrix}, \quad
\beta = \begin{pmatrix}
b_1\\
b_2\\
\vdots\\
b_t
\end{pmatrix}.
\end{equation}
We also introduce a $K$-linear map $\gamma = \gamma_M: M_{\rm out} \to M_{\rm in} $ as follows.
Replacing $S$ if necessary by a cyclically equivalent potential,
we may assume that $S \in {R\langle \langle A \rangle \rangle}_{\hat k, \hat k}$
(see \eqref{eq:k-excluded}).
We identify ${R\langle \langle A \rangle \rangle}_{\hat k, \hat k}$
with $R\langle \langle {\widetilde A}_{\hat k, \hat k} \rangle \rangle$
as in Lemma~\ref{lem:[]-isomorphism}.
This allows us to define the component
$\gamma_{p,q}: M_{h(b_q)} \to M_{t(a_p)}$ of $\gamma$ by
setting
\begin{equation}
\label{eq:gamma-mu-nu}
\gamma_{p,q} = \partial_{[b_q a_p]} S.
\end{equation}
Thus, we have constructed the following triangle of linear maps:
\begin{equation}
\label{eq:triangle}
\xymatrix{
& M_k\ar@<.5ex>[rd]^{\beta} &\\
M_{\rm in}
\ar@<.5ex>[ru]^{\alpha} & & M_{\rm out}\ar@<.5ex>[ll]^{\gamma}
}.
\end{equation}

\begin{lemma}
\label{lem:triangle-compositions}
We have $\alpha \gamma = 0$ and $\gamma \beta = 0$.
\end{lemma}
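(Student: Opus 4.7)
The plan is to show that each entry of the row $\alpha\gamma\colon M_{\rm out}\to M_k$ and each entry of the column $\gamma\beta\colon M_k\to M_{\rm in}$ acts on $M$ as the ordinary cyclic derivative of $S$ with respect to an arrow incident to $k$, whence both compositions vanish because $M$ is annihilated by $J(S)$. Concretely, I reduce the lemma to the two identities
\begin{equation*}
\sum_{p=1}^{s} a_p\,\partial_{[b_q a_p]}S \;=\; \partial_{b_q}S,
\qquad
\sum_{q=1}^{t} \partial_{[b_q a_p]}S \cdot b_q \;=\; \partial_{a_p}S,
\end{equation*}
read inside $R\langle\langle A\rangle\rangle$ after using the identification of Lemma~\ref{lem:[]-isomorphism} to regard each $\partial_{[b_q a_p]}S$ as an element of $R\langle\langle A\rangle\rangle_{\hat k,\hat k}$. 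Granting these, the $q$-th component of $\alpha\gamma$ is $\partial_{b_q}S$ acting on $M_{h(b_q)}$, and the $p$-th component of $\gamma\beta$ is $\partial_{a_p}S$ acting on $M_k$; both vanish because $\partial_{a_p}S,\partial_{b_q}S\in J(S)$.

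To verify the first identity I would argue one cyclic path at a time. Let $c=x_1\cdots x_d$ be a cyclic path appearing in the expansion of $S$. Condition \eqref{eq:no-start-in-k} together with the absence of loops at $k$ forces every index $i$ with $x_i=a_p$ to satisfy $i\ge 2$ and $x_{i-1}\in\{b_1,\dots,b_t\}$, and every index $i$ with $x_i=b_q$ to satisfy $i\le d-1$ and $x_{i+1}\in\{a_1,\dots,a_s\}$. Hence the consecutive pairs $x_{i-1}x_i=b_qa_p$ occurring in $c$ are in bijection with the occurrences of the generator $[b_qa_p]$ in the $\widetilde A_{\hat k,\hat k}$-expression $[c]$, and the term of $\partial_{[b_qa_p]}[c]$ attached to such a pair, read back in $R\langle\langle A\rangle\rangle$, is precisely the rotation $x_{i+1}\cdots x_d x_1\cdots x_{i-2}$. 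Multiplying on the left by $a_p=x_i$, summing over $p$, and shifting the index $i\mapsto i+1$ transforms the left-hand side into $\sum_{j:\,x_j=b_q} x_{j+1}\cdots x_d x_1\cdots x_{j-1}=\partial_{b_q}(c)$, which proves the identity.

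The second identity is the mirror of the first: multiplying the rotation $x_{i+1}\cdots x_d x_1\cdots x_{i-2}$ on the right by $b_q=x_{i-1}$ and summing over $q$ produces $\partial_{a_p}(c)$ in exactly the same way. There is no genuine obstacle here beyond careful bookkeeping between the two cyclic orderings (on $A$ and on $\widetilde A_{\hat k,\hat k}$); once the bijection between pairs $b_qa_p$ in $c$ and composite arrows $[b_qa_p]$ in $[c]$ is fixed, both identities are direct reindexings and the lemma follows.
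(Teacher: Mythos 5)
Your proof is correct and takes essentially the same approach as the paper's, which reduces both $\alpha\gamma=0$ and $\gamma\beta=0$ to the identities $\sum_p a_p\,\partial_{[b_q a_p]}S=\partial_{b_q}S$ and $\sum_q (\partial_{[b_q a_p]}S)\,b_q=\partial_{a_p}S$ and then invokes that $M$ is annihilated by $J(S)$. The paper simply states these identities; your path-by-path verification, using \eqref{eq:no-start-in-k} and the absence of loops at $k$ to pair each occurrence of $a_p$ with a preceding $b_q$ (and vice versa), supplies the routine bookkeeping the authors left implicit.
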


\begin{proof}
The $q$-th component of $\alpha \gamma$ is equal to
$$\sum_{p} a_p \partial_{[b_q a_p]}S = \partial_{b_q}S;$$
therefore, $\alpha \gamma = 0$, since $M$ is a representation of ${\mathcal P}(A,S)$.
Similarly, the $p$-th component of $\gamma \beta$ is equal to
$$\sum_{q} (\partial_{[b_q a_p]}S) b_q = \partial_{a_p}S,$$
implying that $\gamma \beta = 0$.
\end{proof}

Now let $(\widetilde A, \widetilde S)$ be given by
\eqref{eq:tilde-A} and \eqref{eq:mu-k-S}.
We associate to a QP-representation $\mathcal M = (A,S,M,V)$
the QP-representation $\widetilde \mu_k ({\mathcal M})=
(\widetilde A, \widetilde S,\overline M, \overline V)$ defined as follows.
First, we set
\begin{equation}
\label{eq:M-unchanged}
\overline M_i = M_i, \quad  \overline V_i = V_i \quad (i \neq k).
\end{equation}
We define $\overline M_k$ and $\overline V_k$ by
\begin{equation}
\label{eq:new-Mk}
\overline M_k = \frac{\ker \gamma}{{\rm im}\ \beta} \oplus
{\rm im}\ \gamma \oplus \frac{\ker \alpha}{{\rm im}\ \gamma}
\oplus V_k, \quad  \overline V_k =
\frac{\ker \beta}{\ker \beta \cap {\rm im}\ \alpha}\ .
\end{equation}

We now define the action on $\overline M$ of all arrows in $\widetilde A$
(recall that they are given by \eqref{eq:arrows-thru-k-reversed}
and \eqref{eq:arrows-not-thru-k}).
Thus, for every such arrow~$c$, we need to define a linear map
$c_{\overline M}: \overline M_{t(c)} \to \overline M_{h(c)}$.

First, we set
$$c_{\overline M} = c_{M}$$
for every arrow~$c$
not incident to~$k$, and
$$[b_q a_p]_{\overline M} =  {(b_q a_p)}_M$$
for all $p$ and $q$.

To define the action of the remaining arrows $a_p^\star$ and
$b_q^\star$, we assemble them into the operators
$$\overline \alpha  = \begin{pmatrix}
b_1^\star & b_2^\star & \cdots & b_t^\star \end{pmatrix}$$
and
$$\overline \beta = \begin{pmatrix}
a_1^\star \\
a_2^\star \\
\vdots\\
a_s^\star
\end{pmatrix}$$
in the same way as in \eqref{eq:alpha-beta}.
Thus, we need to define linear maps
$$
\overline \alpha: M_{\rm out} = \overline M_{\rm in}  \to  \overline M_k,
\quad \overline \beta:  \overline M_k \to \overline M_{\rm out} = M_{\rm in}.$$
We will use the following notational convention: whenever we have
a pair $U_1 \subseteq U_2$ of vector spaces,
denote by $\iota:U_1 \to U_2$ the inclusion map, and by $\pi: U_2 \to U_2/U_1$
the natural projection.
We now introduce the following \emph{splitting data}:
\begin{align}
\label{eq:rho}
& \text{Choose a linear map $\rho: M_{\rm out} \to \ker \gamma$
such that $\rho \iota = {\rm id}_{\ker \gamma}$.}\\
\label{eq:sigma}
&\text{Choose a linear map $\sigma: \ker \alpha / {\rm im}\ \gamma
\to \ker \alpha$ such that $\pi \sigma = {\rm id}_{\ker \alpha/ {\rm im}\ \gamma}$.}
\end{align}
Then we define:
\begin{equation}
\label{eq:alpha-beta-mutated}
\overline \alpha =
\begin{pmatrix}
- \pi \rho\\
- \gamma\\
0\\
0
\end{pmatrix}, \quad
\overline \beta  =
\begin{pmatrix}
0 & \iota & \iota \sigma & 0\end{pmatrix}.
\end{equation}

Having defined the action of all arrows in $\widetilde A$ on $\overline M$, we can view
$\overline M$ as a module over the path
algebra $R \langle \widetilde A \rangle$.
The property that $M$ is annihilated by ${\mathfrak m}(A)^n$ for $n \gg
0$ implies that $\overline M$ is annihilated by $\widetilde A^n$ for $n \gg 0$.
This allows us to view $\overline M$ as a module over the completed path
algebra $R \langle \langle \widetilde A \rangle \rangle$.

\begin{proposition}
\label{pr:mutation-well-defined}
The above definitions make $\widetilde \mu_k ({\mathcal M})=(\overline M, \overline V)$
a decorated representation of $(\widetilde A, \widetilde S)$.
\end{proposition}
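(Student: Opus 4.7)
The plan is to verify that the Jacobian ideal $J(\widetilde S)$ annihilates $\overline M$. Because $\overline M$ is finite-dimensional, its annihilator in $R\langle\langle \widetilde A\rangle\rangle$ is automatically closed, so it is enough to check that each generator $(\partial_c\widetilde S)_{\overline M}$ vanishes as $c$ ranges over $\widetilde Q_1$. By \eqref{eq:mu-k-S} and \eqref{eq:Tk}, $\widetilde S = [S] + \sum_{p,q}[b_q a_p]\,a_p^\star\,b_q^\star$, and the arrows of $\widetilde Q_1$ fall into the three families described in \eqref{eq:arrows-thru-k-reversed}--\eqref{eq:arrows-not-thru-k}; I would dispatch each family in turn.

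For an arrow $c \in Q_1$ not incident to $k$, formula \eqref{eq:[]-partial-c} gives $\partial_c\widetilde S = [\partial_c S]$. Since $\overline M_i = M_i$ for $i\neq k$ and the action on $\overline M$ of a composite arrow $[b_q a_p]\in\widetilde A$ is by construction the action on $M$ of $b_q a_p$, the operator $([\partial_c S])_{\overline M}$ agrees with $(\partial_c S)_M$, which vanishes by hypothesis. For a composite $[b_q a_p]$,
\begin{equation*}
\partial_{[b_q a_p]}\widetilde S \;=\; \partial_{[b_q a_p]}[S] + a_p^\star b_q^\star.
\end{equation*}
The first summand acts on $\overline M$ as $\gamma_{p,q}$ by \eqref{eq:gamma-mu-nu}, and the second as the $(p,q)$-block of $\overline\beta\circ\overline\alpha\colon M_{\rm out}\to M_{\rm in}$. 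Multiplying the column \eqref{eq:alpha-beta-mutated} for $\overline\alpha$ by the row for $\overline\beta$ collapses everything to $\iota\circ(-\gamma) = -\gamma$, so the two terms cancel.

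The substantive case is the reversed arrows, and this is where the splitting data \eqref{eq:rho}--\eqref{eq:sigma} enter. For $a_p^\star$, $\partial_{a_p^\star}\widetilde S = \sum_q b_q^\star[b_q a_p]$; since $(b_q a_p)_M$ is the $(q,p)$-block of $\beta\circ\alpha$ and $(b_q^\star)_{\overline M}$ is the $q$-th column of $\overline\alpha$, the operator on $\overline M_{t(a_p)}=M_{t(a_p)}$ equals $\overline\alpha\circ\beta\circ\alpha\circ\iota_p$. Thus the vanishing reduces to $\overline\alpha\circ\beta = 0$: the component $-\gamma\beta$ dies by Lemma~\ref{lem:triangle-compositions}, while for $-\pi\rho\beta$ one uses $\im\beta\subseteq\ker\gamma$ (again $\gamma\beta=0$) together with $\rho\iota=\id_{\ker\gamma}$ to get $\rho\beta(m) = \beta(m)\in\im\beta$, which $\pi$ then kills. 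Symmetrically, $\partial_{b_q^\star}\widetilde S = \sum_p[b_q a_p]\,a_p^\star$ acts as $\beta_q\circ\alpha\circ\overline\beta$, and $\alpha\circ\overline\beta = 0$ because the only non-zero columns of $\overline\beta$ land in $\im\gamma\subseteq\ker\alpha$ and in $\ker\alpha$ via $\sigma$ (by \eqref{eq:sigma}).

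No individual step is genuinely hard; the main point requiring care is the bookkeeping---keeping straight the head/tail conventions for the reversed arrows and matching the four-summand decomposition of $\overline M_k$ with the row/column presentation \eqref{eq:alpha-beta-mutated} of $\overline\alpha$ and $\overline\beta$. Once this bookkeeping is set up, every vanishing reduces to either Lemma~\ref{lem:triangle-compositions} or to one of the splitting identities $\rho\iota = \id_{\ker\gamma}$ and $\pi\sigma = \id_{\ker\alpha/\im\gamma}$. The decoration summand $V_k$ of $\overline M_k$ is untouched by every arrow and contributes no further relations.
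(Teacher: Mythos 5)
Your proof is correct and follows exactly the paper's strategy: split by the three types of arrows in $\widetilde Q_1$, reduce the vanishing of $(\partial_c\widetilde S)_{\overline M}$ to the identities $\overline\beta\overline\alpha = -\gamma$, $\overline\alpha\beta = 0$, $\alpha\overline\beta = 0$ via equations \eqref{eq:[]-partial-c}, \eqref{eq:partial-ba}, \eqref{eq:gamma-mu-nu}, and verify those by multiplying out \eqref{eq:alpha-beta-mutated} using Lemma~\ref{lem:triangle-compositions} and the splitting data. The only small addition you make is the explicit (and correct) remark that the annihilator of the nilpotent finite-dimensional module $\overline M$ is automatically closed, which the paper leaves implicit.
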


\begin{proof}
We only need to show that
$(\partial_c \widetilde S)_{\overline M} = 0$ for every arrow~$c$
in $\widetilde A$.
If~$c$ is not incident to~$k$, the desired statement follows from
\eqref{eq:[]-partial-c}.
If~$c$ is one of the arrows $[b_q a_p]$, then, in view of
\eqref{eq:partial-ba} and \eqref{eq:gamma-mu-nu}, it is enough to
show that
$$a_p^\star b_q^\star  + \gamma_{p,q} = 0$$
for all $p$ and $q$.
In other words, we need to show that
$\overline \beta \overline \alpha = - \gamma$;
But this follows at once by multiplying the row and column given
by \eqref{eq:alpha-beta-mutated}.

It remains to show that $(\partial_{a_p^\star} \widetilde S)_{\overline M} = 0$
and $(\partial_{b_q^\star} \widetilde S)_{\overline M} = 0$ for all $p$ and $q$.
We first deal with the first equality.
Remembering \eqref{eq:mu-k-S} and \eqref{eq:Tk}, we see that
$$(\partial_{a_p^\star} \widetilde S)_{\overline M} =
(\sum_q b_q^\star [b_q a_p])_{\overline M} =
(\sum_q (b_q^\star)_{\overline M} (b_q)_M) (a_p)_M.$$
Thus it suffices to show that
$$\sum_q (b_q^\star)_{\overline M} (b_q)_M = 0,$$
or equivalently, $\overline \alpha \beta = 0$.
In view of \eqref{eq:alpha-beta-mutated}, we have
$$\overline \alpha \beta = \begin{pmatrix}
- \pi \rho \beta\\
- \gamma \beta\\
0\\
0
\end{pmatrix} = 0,$$
as desired (the equality $\pi \rho \beta = 0$ is immediate from
the definitions, while $\gamma \beta = 0$ by
Lemma~\ref{lem:triangle-compositions}).

The remaining equality $(\partial_{b_\nu^\star} \widetilde S)_{\overline M} = 0$
is proved in a similar way.
We have
$$(\partial_{b_q^\star} \widetilde S)_{\overline M} =
(\sum_p [b_q a_p] a_p^\star )_{\overline M} =
(b_q)_M \sum_p (a_p)_M (a_p^\star)_{\overline M}.$$
Thus, it suffices to observe that
$$\alpha \overline \beta = \begin{pmatrix}
0 & \alpha \iota & \alpha \iota \sigma & 0\end{pmatrix} = 0.$$
\end{proof}

\begin{example}
\label{ex:sources-sinks}
Let us illustrate the definition of the operation $\widetilde \mu_k$
by a special case where the vertex~$k$ is a sink or a source.
First suppose~$k$ is a sink, that is, there are no arrows~$b$
with $t(b) = k$.
Then we have $M_{\rm out} = \{0\}$, hence $\beta = 0$ and $\gamma = 0$.
Thus, \eqref{eq:new-Mk} simplifies to
$$\overline M_k = \ker \alpha \oplus V_k, \quad  \overline V_k =
{\rm coker}\ \alpha\ .$$
The arrow span $\widetilde A$ is obtained from~$A$ by reversing
all arrows~$a$ with $h(a) = k$, that is, replacing every such
arrow~$a$ with $a^\star$.
Thus, $k$ becomes a source for~$\widetilde A$, hence
$\overline M_{in} = \{0\}$ and $\overline \alpha = 0$.
The choice of splitting data \eqref{eq:rho} and
\eqref{eq:sigma} becomes immaterial, and the second equality in
\eqref{eq:alpha-beta-mutated} simplifies to
$$\overline \beta  =
\begin{pmatrix}
\iota & 0\end{pmatrix}.$$
Note that we have $\widetilde A_{\hat k, \hat k} = A_{\hat k, \hat k}$,
and the potential
$\widetilde S \in R\langle \langle \widetilde A_{\hat k, \hat k}\rangle \rangle$
is naturally identified with~$S$.

The case where~$k$ is a source is completely similar.
In this case we have  $M_{\rm in} = \{0\}$, hence $\alpha = 0$ and $\gamma = 0$.
It follows that
$$\overline M_k = {\rm coker}\  \beta \oplus V_k, \quad  \overline V_k =
\ker \beta\ ,$$
and the map $\overline \alpha: \overline M_{\rm in} = M_{\rm out}
\to \overline M_k$ is given by
$$\overline \alpha  =
\begin{pmatrix}
-\pi\\ 0\end{pmatrix}.$$

In both cases, $\widetilde \mu_k$ coincides with the ``extended reflection functor"
introduced in \cite{mrz}; furthermore, if we ignore the decorations (and the potentials),
it becomes the classical Bernstein-Gelfand-Ponomarev reflection
functor at~$k$, see \cite{bgp}.
\end{example}

Now we return to the case of an arbitrary vertex~$k$.
\begin{proposition}
\label{pr:independence-of-splitting}
The isomorphism class of the decorated representation $\widetilde \mu_k ({\mathcal M})$ does not depend
on the choice of the splitting data \eqref{eq:rho} -- \eqref{eq:sigma}.
\end{proposition}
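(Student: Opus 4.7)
The plan is to construct, given any two choices of splitting data, an explicit isomorphism between the resulting decorated representations. The key observation is that the direct-sum decomposition of $\overline M_k$ in \eqref{eq:new-Mk}, together with the prescriptions $\overline M_i = M_i$ and $\overline V_i = V_i$ for $i \neq k$ and the value of $\overline V_k$, are manifestly independent of the splitting data, as are the actions on $\overline M$ of every arrow in $\widetilde Q_1$ other than the reversed arrows $a_p^\star$ and $b_q^\star$. So the only discrepancy between the two resulting representations lies in the maps $\overline\alpha$ and $\overline\beta$ of \eqref{eq:alpha-beta-mutated}, and matching them will require only a change of basis at vertex~$k$.

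Next I would analyze how this discrepancy depends on the splittings. Given two choices $(\rho,\sigma)$ and $(\rho',\sigma')$, both $\rho$ and $\rho'$ retract the inclusion $\ker\gamma \hookrightarrow M_{\rm out}$, so $\rho'-\rho$ vanishes on $\ker\gamma$ and factors through the canonical surjection $M_{\rm out} \twoheadrightarrow M_{\rm out}/\ker\gamma \cong \im\gamma$; thus $\rho'-\rho = \tau_1 \bar\gamma$ for a unique $\tau_1\colon \im\gamma \to \ker\gamma$, where $\bar\gamma$ denotes the induced surjection $M_{\rm out}\twoheadrightarrow\im\gamma$. Symmetrically, $\pi(\sigma'-\sigma)=0$ forces $\sigma'-\sigma$ to take values in $\ker\pi=\im\gamma\subseteq\ker\alpha$, yielding a unique $\tau_2\colon \ker\alpha/\im\gamma \to \im\gamma$ with $\sigma'-\sigma = \iota'\tau_2$, where $\iota'\colon\im\gamma \hookrightarrow \ker\alpha$ is the inclusion.

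Using $\tau_1$ and $\tau_2$, I would then define an isomorphism $\psi\colon \overline M \to \overline M'$ by setting $\psi_i=\id$ for all $i\neq k$, $\eta=\id$ on the decorated part, and letting $\psi_k$ be the block upper-unitriangular automorphism of $\overline M_k$ with $\pi\tau_1\colon \im\gamma \to \ker\gamma/\im\beta$ in position $(1,2)$ and $-\tau_2\colon \ker\alpha/\im\gamma \to \im\gamma$ in position $(2,3)$, all other off-diagonal blocks being zero. A direct block-matrix computation, reducing to the two factorizations above, will then verify the intertwining identities $\psi_k \overline\alpha = \overline\alpha'$ and $\overline\beta = \overline\beta'\psi_k$. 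Combined with the (trivial) intertwining conditions for arrows not incident to $k$ and for $[b_q a_p]$, this shows that $\psi$ is an $R\langle\langle \widetilde A\rangle\rangle$-module isomorphism and hence gives an isomorphism of decorated representations.

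The main obstacle is essentially notational: one has to carefully track the various inclusions and projections among the four summands of \eqref{eq:new-Mk} and the identifications between $\im\gamma$, $\ker\alpha$, and $M_{\rm in}$. Conceptually the argument is transparent, reflecting the fact that any two splittings differ by elements of $\Hom(\im\gamma,\ker\gamma)$ and $\Hom(\ker\alpha/\im\gamma,\im\gamma)$, and these $\Hom$-groups act freely through exactly the unitriangular automorphisms of $\overline M_k$ that leave the rest of the representation structure unchanged.
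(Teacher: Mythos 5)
Your proposal is correct and follows essentially the same route as the paper's own proof: parameterize the two splittings' difference by maps $\tau_1\colon\im\gamma\to\ker\gamma$ and $\tau_2\colon\ker\alpha/\im\gamma\to\im\gamma$ (the paper's $\xi$ and $\eta$), then build a block-unitriangular automorphism of $\overline M_k$ with $\pi\tau_1$ and $-\tau_2$ as the only nonzero off-diagonal entries, which intertwines $\overline\alpha,\overline\beta$ with $\overline\alpha',\overline\beta'$. The paper records precisely this $4\times 4$ block matrix and leaves the verification to direct matrix multiplication, just as you propose.
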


\begin{proof}
We have the following freedom in choosing the splitting data: one
can replace $\rho: M_{\rm out} \to \ker \gamma$
with $\rho' = \rho + \xi \gamma$ for some linear map
$\xi: {\rm im}\ \gamma \to \ker \gamma$, and replace
$\sigma: \ker \alpha / {\rm im}\ \gamma \to \ker \alpha$
by $\sigma' = \sigma + \eta$ for some linear map
$\eta: \ker \alpha / {\rm im}\ \gamma \to {\rm im}\ \gamma$.
Let $\overline \alpha'$ and $\overline \beta'$ be the
maps obtained by replacing $\rho$ with $\rho'$, and
$\sigma$ with $\sigma'$ in \eqref{eq:alpha-beta-mutated}.
It is enough to show that $\psi \overline \alpha =
\overline \alpha'$ and $\overline \beta' \psi = \overline \beta$
for some linear automorphism $\psi: \overline M_k \to  \overline M_k$.
Decomposing $\overline M_k$ as in \eqref{eq:new-Mk}, we define
$\psi$ as the block-triangular matrix
$$
\psi = \begin{pmatrix}
I & \pi \xi & 0 & 0\\
0 & I & -\eta & 0\\
0 & 0 & I & 0\\
0 & 0 & 0 & I
\end{pmatrix},
$$
where $I$ stands for the identity transformation.
The invertibility of $\psi$ is obvious, and the desired
equalities $\psi \overline \alpha =
\overline \alpha'$ and $\overline \beta' \psi = \overline \beta$
are checked by direct matrix multiplication.
\end{proof}

\begin{proposition}
\label{pr:mutation-acts-on-right-equiv}
The right-equivalence class of the representation $\widetilde \mu_k ({\mathcal M})$
is determined by the right-equivalence class of~${\mathcal M}$.
\end{proposition}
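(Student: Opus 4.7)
The plan is to upgrade a given right-equivalence $(\varphi, \psi, \eta)$ between $\mathcal M = (A,S,M,V)$ and $\mathcal M' = (A',S',M',V')$ to a right-equivalence between $\widetilde \mu_k(\mathcal M)$ and $\widetilde \mu_k(\mathcal M')$. As noted after Definition~\ref{def:AS-isomorphism}, we may identify $A$ with $A'$ via $\varphi^{(1)}$ and regard $\varphi$ as an automorphism of $R\langle\langle A\rangle\rangle$. For the QP component of the desired right-equivalence, apply Lemma~\ref{lem:extension} to extend $\varphi$ to an automorphism $\widehat\varphi$ of $R\langle\langle\widehat A\rangle\rangle$ satisfying \eqref{eq:preserving-tildeA} and \eqref{eq:preserving-quadratic-terms}, and set $\widetilde\varphi := \widehat\varphi|_{R\langle\langle\widetilde A\rangle\rangle}$; as in the proof of Theorem~\ref{th:mu-tilde-preserves-isomorphisms}, $\widetilde\varphi$ is a right-equivalence between $\widetilde\mu_k(A,S)$ and $\widetilde\mu_k(A',S')$.

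Next I would analyse how $\varphi$ acts on the triangle \eqref{eq:triangle}. By the formulas \eqref{eq:phi-on-a} and \eqref{eq:phi-on-b} from the proof of Lemma~\ref{lem:extension}, $\varphi(\alpha) = \alpha(C_0+C_1)$ and $\varphi(\beta) = (D_0+D_1)\beta$, with entries in $R\langle\langle A\rangle\rangle$. Since $M'$ is nilpotent, the evaluations $\Phi_\alpha := (C_0+C_1)_{M'}$ and $\Phi_\beta := (D_0+D_1)_{M'}$ are vector-space automorphisms of $M'_{\rm in}$ and $M'_{\rm out}$ respectively. Setting
$$\widetilde\psi_{\rm in} := \Phi_\alpha\,\psi_{\rm in}\colon M_{\rm in}\to M'_{\rm in},\qquad \widetilde\psi_{\rm out} := \Phi_\beta^{-1}\psi_{\rm out}\colon M_{\rm out}\to M'_{\rm out},$$
the intertwining identity $\psi\,u_M = \varphi(u)_{M'}\psi$, applied to the arrows in $\alpha$ and $\beta$, yields $\psi_k\alpha_M = \alpha'_{M'}\widetilde\psi_{\rm in}$ and $\widetilde\psi_{\rm out}\beta_M = \beta'_{M'}\psi_k$. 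Applied to the elements $\partial_{[b_qa_p]}S$, viewed inside $R\langle\langle A\rangle\rangle_{\hat k,\hat k}$ via the identification of Lemma~\ref{lem:[]-isomorphism}, and combined with the cyclic chain rule (Lemma~\ref{lem:chain-rule}) together with $\widetilde\varphi(\widetilde S)\sim\widetilde{S'}$, the same identity should yield the third compatibility $\widetilde\psi_{\rm in}\gamma_M = \gamma'_{M'}\widetilde\psi_{\rm out}$.

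These three compatibilities force $\widetilde\psi_{\rm in}$, $\psi_k$, $\widetilde\psi_{\rm out}$ to match the canonical subquotients appearing in \eqref{eq:new-Mk}: $\widetilde\psi_{\rm out}$ descends to $\ker\gamma_M/\im\beta_M \cong \ker\gamma'_{M'}/\im\beta'_{M'}$; $\widetilde\psi_{\rm in}$ restricts to $\im\gamma_M \cong \im\gamma'_{M'}$ and descends to $\ker\alpha_M/\im\gamma_M \cong \ker\alpha'_{M'}/\im\gamma'_{M'}$; and $\psi_k$ descends to $\ker\beta_M/(\ker\beta_M\cap\im\alpha_M) \cong \ker\beta'_{M'}/(\ker\beta'_{M'}\cap\im\alpha'_{M'})$. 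Combined with $\eta_k$ on the $V_k$-summand and with $\psi_i$, $\eta_i$ for $i\neq k$, these isomorphisms assemble, via \eqref{eq:new-Mk}, into a $K$-linear isomorphism $\widetilde\psi\colon\overline M\to\overline{M'}$ and an $R$-module isomorphism $\widetilde\eta\colon\overline V\to\overline{V'}$. By Proposition~\ref{pr:independence-of-splitting} we may freely transport splitting data across $\widetilde\psi_{\rm in}$ and $\widetilde\psi_{\rm out}$, reducing the verification of the full intertwining $\widetilde\psi\circ u_{\overline M} = \widetilde\varphi(u)_{\overline{M'}}\circ\widetilde\psi$ to the generators $c\in\widetilde Q_1$. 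For $c$ not incident to $k$ and for composite arrows $c = [b_qa_p]$ this is immediate from the intertwining property of $\psi$ together with the definition $[b_qa_p]_{\overline M} = (b_qa_p)_M$. For the reversed arrows $c = a_p^\star$ or $c = b_q^\star$, the explicit formulas for $\widehat\varphi(a_p^\star)$ and $\widehat\varphi(b_q^\star)$ in the proof of Lemma~\ref{lem:extension}, which involve the inverses $(C_0+C_1)^{-1}$ and $(D_0+D_1)^{-1}$, combine with the twists by $\Phi_\alpha$ and $\Phi_\beta^{-1}$ built into $\widetilde\psi_{\rm in}$ and $\widetilde\psi_{\rm out}$ to produce exactly the identity prescribed by \eqref{eq:alpha-beta-mutated}.

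The hard part will be the third compatibility $\widetilde\psi_{\rm in}\gamma_M = \gamma'_{M'}\widetilde\psi_{\rm out}$ in the second step. The reasoning for $\alpha$ and $\beta$ is a direct matrix computation, but the statement for $\gamma$ requires unwinding the identification of Lemma~\ref{lem:[]-isomorphism} and showing that the correction terms produced by the cyclic chain rule combine with $\Phi_\alpha$ and $\Phi_\beta$ in precisely the right way. Condition \eqref{eq:preserving-quadratic-terms} on $\widehat\varphi$ should be exactly what is needed here, mirroring its role in the verification of Theorem~\ref{th:mu-tilde-preserves-isomorphisms}.
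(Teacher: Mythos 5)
Your overall strategy is the same as the paper's: twist $\psi_{\rm in}$ and $\psi_{\rm out}$ by the evaluations of the matrices $C=C_0+C_1$ and $D=D_0+D_1$, establish the three compatibilities $\psi_k\alpha_M = \alpha'_{M'}\widetilde\psi_{\rm in}$, $\widetilde\psi_{\rm out}\beta_M = \beta'_{M'}\psi_k$, and $\widetilde\psi_{\rm in}\gamma_M = \gamma'_{M'}\widetilde\psi_{\rm out}$, transport the splitting data across the twist, and check the intertwining on the arrows of $\widetilde A$ using Lemma~\ref{lem:extension}. The block-diagonal map you describe on $\overline M_k$ (with $\widetilde\psi_{\rm out}$ on the first block, $\widetilde\psi_{\rm in}$ on the second and third, and $\eta_k$ on the fourth) is exactly the $\psi|_{\overline M_k}$ of the paper, and your handling of the $\alpha$ and $\beta$ relations, the kernel/image bookkeeping, and the final verification for arrows $a_p^\star, b_q^\star$ via the $C^{-1}, D^{-1}$ factors built into $\widehat\varphi$ is correct.

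The gap is the $\gamma$ compatibility, which you explicitly defer, and your proposed mechanism for it is not quite right. Condition \eqref{eq:preserving-quadratic-terms} is what forces $\widehat\varphi$ to send $a_p^\star$ to $(C^{-1}a^\star)_p$ and $b_q^\star$ to $(b^\star D^{-1})_q$ — that is what you need in the \emph{final} intertwining checks — but it plays no role in the identity $\gamma'_{M'} = C_{M'}\gamma_M D_{M'}$, which concerns only $\varphi$ acting on $R\langle\langle A\rangle\rangle_{\hat k,\hat k}$. What actually makes the $\gamma$ computation work is the cyclic chain rule (Lemma~\ref{lem:chain-rule}), applied to $\partial_{[b_qa_p]}(\varphi(S))$, combined with the fact that $M$ is annihilated by $\partial_c S$ for every arrow $c\in Q_1$. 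Expanding, the terms with $c$ a non-composite arrow of $\widetilde A_{\hat k,\hat k}$ vanish outright because $\varphi(\partial_c S)_{M'} = (\partial_c S)_M = 0$; the diagonal term from $c = [b_qa_p]$ yields the $(p,q)$-entry of $C_{M'}\gamma_M D_{M'}$; and the remaining cross terms reassemble into $\sum_{p'}\Delta_{[b_qa_p]}(\varphi(a_{p'}))\square\varphi(\partial_{a_{p'}}S)$ and $\sum_{q'}\Delta_{[b_qa_p]}(\varphi(b_{q'}))\square\varphi(\partial_{b_{q'}}S)$, which vanish on $M'$ for the same reason (this is where the relations $\sum_p a_p\partial_{[b_qa_p]}S = \partial_{b_q}S$ and $\sum_q (\partial_{[b_qa_p]}S)b_q = \partial_{a_p}S$, as in Lemma~\ref{lem:triangle-compositions}, enter). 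Without carrying out that bookkeeping, the central identity of the proof is unestablished, so the proposal as written is incomplete.
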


\begin{proof}
Let
$\varphi$ be an automorphism of
$R \langle \langle A \rangle \rangle$,
and let ${\mathcal M}'=(A,\varphi(S),M',V')$ be
the QP-representation defined as follows:
$V' = V$ and $M' = M$ as $R$-modules,
while the
$R \langle \langle A \rangle \rangle$-actions
in $M$ and $M'$ are related by
\begin{equation}
\label{eq:M-M'}
u_M = \varphi(u)_{M'} \quad (u \in R \langle \langle A \rangle \rangle)
\end{equation}
(note that \eqref{eq:M-M'} indeed defines a representation of
$(A,\varphi(S))$ in view of
Proposition~\ref{pr:automorphism-respects-jacobian}).
To prove Proposition~\ref{pr:mutation-acts-on-right-equiv}, it
suffices to show that the representations $\widetilde \mu_k({\mathcal M})$
and $\widetilde \mu_k({\mathcal M}')$ are right-equivalent.

We retain all the above notation related to ${\mathcal M}$ and
$\widetilde \mu_k({\mathcal M})$; in particular, $\alpha, \beta$ and $\gamma$
stand for the linear maps in the triangle \eqref{eq:triangle}.
Let $\alpha', \beta'$ and $\gamma'$ denote the corresponding maps
for the representation ${\mathcal M}'$.
Our first order of business is to relate these maps to
$\alpha, \beta$ and $\gamma$.

We can write the action of
$\varphi$ on the arrows $a_1, \dots, a_s$ as follows:
\begin{equation}
\label{eq:phi-on-a-2}
\begin{pmatrix}
\varphi(a_1) & \varphi(a_2) & \cdots & \varphi(a_s)\end{pmatrix}
=
\begin{pmatrix}
a_1 & a_2 & \cdots & a_s\end{pmatrix}
C,
\end{equation}
where $C = C_0 + C_1$ is an invertible $s \times s$ matrix
as in \eqref{eq:phi-on-a}.
Similarly, the action of
$\varphi$ on the arrows $b_1, \dots, b_s$ can be written as
\begin{equation}
\label{eq:phi-on-b-2}
\begin{pmatrix}
\varphi(b_1)\\ \varphi(b_2)\\ \vdots \\ \varphi(b_t)\end{pmatrix}
= D \begin{pmatrix}
b_1 \\ b_2 \\ \vdots \\ b_t\end{pmatrix},
\end{equation}
where $D = D_0 + D_1$ is an invertible $t \times t$ matrix
as in \eqref{eq:phi-on-b}.
Therefore we have
\begin{align}
\label{eq:alpha-alpha'}
\alpha  &= \begin{pmatrix}
a_1 & a_2 & \cdots & a_s\end{pmatrix}_M =
\begin{pmatrix}
\varphi(a_1) & \varphi(a_2) & \cdots &
\varphi(a_s)\end{pmatrix}_{M'}\\
\nonumber
&= (\begin{pmatrix}
a_1 & a_2 & \cdots & a_s\end{pmatrix} C)_{M'} =
\alpha' C_{M'},
\end{align}
and similarly,
\begin{equation}
\label{eq:beta-beta'}
\beta   = D_{M'} \beta';
\end{equation}
here $C_{M'}$ (resp. $D_{M'}$) is understood as an
$R$-module automorphism of $M'_{\rm in} = M_{\rm in}$
(resp. of $M'_{\rm out} = M_{\rm out}$).

Turning to the maps $\gamma$ and $\gamma'$, we claim that they are
related by
\begin{equation}
\label{eq:gamma-gamma'}
\gamma' = C_{M'} \gamma D_{M'}.
\end{equation}
To see this, we use \eqref{eq:gamma-mu-nu} and \eqref{eq:chain-rule} to write
\begin{align}
\label{eq:gamma'-pq-1}
\gamma'_{p,q} &= (\partial_{[b_q a_p]} \varphi(S))_{M'}\\
\nonumber
& = (\sum_{c} \Delta_{[b_q a_p]} (\varphi(c)) \square
\varphi(\partial_{c}S))_{M'},
\end{align}
where the sum is over all arrows~$c$ in
${\widetilde A}_{\hat k, \hat k}$.
If~$c$ is one of the arrows in~$A$ then by \eqref{eq:M-M'} we have
$$\varphi(\partial_{c}S)_{M'} = (\partial_{c}S)_{M} = 0;$$
remembering the definition \eqref{eq:square}, we see that~$c$ does
not contribute to \eqref{eq:gamma'-pq-1}.
Thus, we have
\begin{equation}
\label{eq:gamma'-pq-2}
\gamma'_{p,q} = (\sum_{p', q'} \Delta_{[b_q a_p]} (\varphi(b_{q'} a_{p'})) \square
\varphi(\partial_{[b_{q'} a_{p'}]}S))_{M'}.
\end{equation}
Remembering \eqref{eq:delta-xi}, and using \eqref{eq:phi-on-a-2}
and \eqref{eq:phi-on-b-2}, we see that the summand with $(p',q') = (p,q)$
in \eqref{eq:gamma'-pq-2} contains among its terms the $(p,q)$-entry of
the matrix
$$(C \varphi(\partial_{[b_{q} a_{p}]}S) D)_{M'} =
C_{M'} \gamma D_{M'}.$$
Thus, to prove \eqref{eq:gamma-gamma'}, it remains to show that
the rest of the terms in \eqref{eq:gamma'-pq-2} add up to~$0$.
Again using the definitions \eqref{eq:delta-xi} and
\eqref{eq:square}, we can rewrite the rest of the sum in
\eqref{eq:gamma'-pq-2} as $S_1 + S_2$, where
$$S_1 = (\sum_{p', q'} \Delta_{[b_q a_p]} (\varphi(b_{q'})) \square
\varphi(a_{p'} \cdot \partial_{[b_{q'} a_{p'}]}S))_{M'},$$
$$S_2 = (\sum_{p', q'} \Delta_{[b_q a_p]} (\varphi(a_{p'})) \square
\varphi(\partial_{[b_{q'} a_{p'}]}S \cdot b_{q'}))_{M'}.$$
It remains to observe that
$$S_1 = (\sum_{q'} \Delta_{[b_q a_p]} (\varphi(b_{q'})) \square
\varphi(\partial_{b_{q'}}S))_{M'} = 0$$
since $\varphi(\partial_{b_{q'}}S)_{M'} = (\partial_{b_{q'}}S)_{M} =
0$;
and similarly,
$$S_2 = (\sum_{p'} \Delta_{[b_q a_p]} (\varphi(a_{p'})) \square
\varphi(\partial_{a_{p'}}S))_{M'} = 0.$$

In view of \eqref{eq:alpha-alpha'}, \eqref{eq:beta-beta'} and
\eqref{eq:gamma-gamma'}, we have
\begin{eqnarray}
\nonumber
\ker \alpha =& C_{M'}^{-1}(\ker \alpha'),
\quad {\rm im}\ \alpha &= {\rm im}\ \alpha',\\
\label{eq:kernels-images}
\ker \beta =& \ker \beta',
\quad \quad \quad \,\,\,{\rm im}\ \beta &=  D_{M'}({\rm im}\ \beta'),\\
\nonumber
\ker \gamma =& D_{M'}(\ker \gamma'),
\quad {\rm im}\ \gamma &= C_{M'}^{-1}({\rm im}\ \gamma').
\end{eqnarray}
Recall that the spaces $\overline M$ and $\overline V$
in the decorated representation $\widetilde \mu_k ({\mathcal M})=(\overline M, \overline V)$ of
$(\widetilde A, \widetilde S)$ are given by \eqref{eq:M-unchanged}
and \eqref{eq:new-Mk}.
We express the decorated representation
$\widetilde \mu_k ({\mathcal M}') =(\overline {M'}, \overline {V'})$ of
$(\widetilde A, \widetilde {\varphi(S)})$ in the same way, with
the maps $\alpha, \beta$ and $\gamma$ replaced by $\alpha', \beta'$ and $\gamma'$.
In particular, we have $\overline {V'} = \overline {V}$, and
$\overline {M'}_i = \overline {M}_i = M_i$ for $i \neq k$.
To specify the actions of $R \langle \langle A \rangle \rangle$ in
$\overline M$ and $\overline {M'}$, we need to choose the splitting data
$(\rho, \sigma)$ and $(\rho', \sigma')$ as in
\eqref{eq:rho} and \eqref{eq:sigma}.
Note that, in view of \eqref{eq:kernels-images}, we can choose
\begin{equation}
\label{eq:twisted-splitting-data}
\rho' = D_{M'}^{-1} \rho D_{M'}, \quad
\sigma' = C_{M'} \sigma C_{M'}^{-1};
\end{equation}
here with some abuse of notation we use the same notation
$C_{M'}^{-1}$ for the isomorphism $\ker \alpha' \to \ker \alpha$
and the induced isomorphism $\ker \alpha' / {\rm im}\ \gamma'
\to \ker \alpha / {\rm im}\ \gamma$.

Everything is now in place for defining the desired
right-equivalence $(\widehat \varphi, \psi, \eta)$
between $\widetilde \mu_k ({\mathcal M})$  and $\widetilde \mu_k ({\mathcal M}')$
(see Definition \ref{def:right-equiv-reps}).
First of all, we define
$\widehat \varphi: R \langle \langle \widetilde A \rangle \rangle \to
R \langle \langle \widetilde A \rangle \rangle$ as the
right-equivalence between $(\widetilde A, \widetilde S)$ and
$(\widetilde {A},\widetilde {\varphi(S)}$ constructed in
the proof of Lemma~\ref{lem:extension}.
In particular, we have
$$
\begin{pmatrix}
{\widehat \varphi}(a_1^\star)\\
{\widehat\varphi}(a_2^\star)\\
\vdots\\
{\widehat\varphi}(a_s^\star)
\end{pmatrix}
= C^{-1}\begin{pmatrix}
a_1^\star\\
a_2^\star\\
\vdots\\
a_s^\star
\end{pmatrix},
\quad
\begin{pmatrix}
{\widehat \varphi}(b_1^\star) &
{\widehat\varphi}(b_2^\star) &
\cdots &
{\widehat\varphi}(b_t^\star)
\end{pmatrix}
=\begin{pmatrix}
b_1^\star &
b_2^\star &
\cdots &
b_t^\star
\end{pmatrix} D^{-1}.
$$

Next we define $\psi: \overline M \to \overline {M'}$ as the
identity map on $\oplus_{i \neq k} \overline M_i =
\oplus_{i \neq k} M_i = \oplus_{i \neq k} \overline {M'}_i$, and
the restriction $\psi|_{\overline M_k} : \overline M_k \to \overline {M'}_k$ given by the
block-diagonal matrix
\begin{equation}
\label{eq:psi-Mk}
\psi|_{\overline M_k} = \begin{pmatrix}
D_{M'}^{-1} & 0 & 0 & 0\\
0 & C_{M'} & 0 & 0\\
0 & 0 & C_{M'} & 0\\
0 & 0 & 0 & I
\end{pmatrix}
\end{equation}
(this is well-defined in view of \eqref{eq:kernels-images}).
Finally, we define $\eta: \overline V \to \overline {V'}$ simply
as the identity map.

The only thing to check is the equality
$\psi \circ c_{\overline M} = \widehat \varphi(c)_{\overline {M'}} \circ \psi$
for any arrow $c \in \widetilde A$.
And the only case that may require some consideration is when $c$ is one of the arrows
$a_p^\star$ or $b_q^\star$.
Unraveling the definitions, it suffices to show that
$$\overline \beta = C_{M'}^{-1} \overline {\beta'} \circ
\psi|_{\overline M_k}, \quad
\psi|_{\overline M_k} \circ \overline \alpha
= \overline {\alpha'} D_{M'}^{-1}.$$
But this is an immediate consequence of the definitions
\eqref{eq:psi-Mk} and \eqref{eq:alpha-beta-mutated}
(we also need an analogue of \eqref{eq:alpha-beta-mutated}
for the maps  $\overline {\beta'}$ and $\overline {\alpha'}$,
using the splitting data \eqref{eq:twisted-splitting-data}).
This completes the proof of
Proposition~\ref{pr:mutation-acts-on-right-equiv}.
\end{proof}

Note that in the above treatment of the operation
$\mathcal M \mapsto \widetilde \mu_k (\mathcal M)$
for a QP-representation $\mathcal M = (A,S,M,V)$,
the QP $(A,S)$ was not assumed to be reduced.
Recall from Proposition~\ref{pr:rep-splitting}
that we have a well-defined operation
$\mathcal M \mapsto {\mathcal M}_{\rm red}$
on (right-equivalence classes of) QP-representations.
The following property is immediate from definitions.

\begin{proposition}
\label{pr:mu-tilde-of-reduced-rep}
Let $(A,S)$ be a QP satisfying \eqref{eq:no-2-cycles-thru-k}.
Then, for every representation $\mathcal M$ of $(A,S)$,
the representation $\widetilde \mu_k (\mathcal M)_{\rm red}$ is
right-equivalent to $\widetilde \mu_k (\mathcal M_{\rm red})_{\rm red}$.
\end{proposition}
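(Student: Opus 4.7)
My plan is to reduce to a concrete situation where $(A,S)$ already decomposes as a direct sum of its reduced and trivial parts, and then exploit Proposition~\ref{pr:mu-tilde-trivial-summand}. First, I would invoke Proposition~\ref{pr:mutation-acts-on-right-equiv} (stating that $\widetilde\mu_k$ preserves right-equivalence) together with Proposition~\ref{pr:rep-splitting} (the same for the reduced-part operation) to see that both sides of the desired right-equivalence depend only on the right-equivalence class of $\mathcal M$. Using the Splitting Theorem~\ref{th:trivial-reduced-splitting} to fix a right-equivalence $(A,S) \cong (A_{\rm red}, S_{\rm red}) \oplus (C,T)$ with $(C,T)$ trivial, I pull $\mathcal M$ back along it and reduce to the case $A = A_{\rm red}\oplus C$, $S = S_{\rm red}+T$. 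Since $J(S) = J(S_{\rm red}) \oplus L$ (where $L$ is the closure of the two-sided ideal generated by $C$) annihilates $M$, the arrows of $C$ act as zero on $M$, and $\mathcal M_{\rm red}$ is just the restriction of $\mathcal M$ along the natural inclusion $R\langle\langle A_{\rm red}\rangle\rangle \hookrightarrow R\langle\langle A\rangle\rangle$.

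The key observation unlocking the rest of the argument is that the hypothesis \eqref{eq:no-2-cycles-thru-k} forces $e_k C = C e_k = 0$: by Proposition~\ref{pr:trivial-potential} every arrow of $C$ participates in a cyclic $2$-path, and any such $2$-path incident to $k$ would create an oriented $2$-cycle at~$k$ in $A$. After replacing $S$ by a cyclically equivalent potential (which preserves the decomposition, since $T$ involves only arrows of $C$ and $S_{\rm red}$ only arrows of $A_{\rm red}$), I arrange \eqref{eq:no-start-in-k}, and Proposition~\ref{pr:mu-tilde-trivial-summand} then gives at the QP level
\begin{equation*}
\widetilde\mu_k(A,S) \;=\; \widetilde\mu_k(A_{\rm red}, S_{\rm red}) \oplus (C,T).
\end{equation*}

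Next, I would verify that this splitting lifts to the representation level: concretely, that $\widetilde\mu_k(\mathcal M)$ coincides with $\widetilde\mu_k(\mathcal M_{\rm red})$ extended by the zero action of $C$ on $\overline M$. Inspecting the construction in Section~\ref{sec:reps}, the only data entering the definition of $\widetilde\mu_k$ at vertex $k$ are the spaces $M_i$ (unchanged), the operators $\alpha, \beta$ of \eqref{eq:alpha-beta} assembled from arrows at~$k$ (all lying in $A_{\rm red}$), the operator $\gamma$ with components $\partial_{[b_q a_p]}S$, and the splitting data \eqref{eq:rho}--\eqref{eq:sigma}. Because $T$ uses only arrows of $C$, none of which are incident to~$k$, one has $\partial_{[b_q a_p]}S = \partial_{[b_q a_p]}S_{\rm red}$, so the triangle \eqref{eq:triangle} literally coincides whether computed from $\mathcal M$ or from $\mathcal M_{\rm red}$. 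Choosing the same splitting data in both cases, $\overline M_k$, $\overline V_k$, $\overline\alpha$, $\overline\beta$ agree, and the arrows of $C$, being zero on $M$, continue to act as zero on $\overline M$.

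Finally, applying Proposition~\ref{pr:rep-splitting} one more time, the trivial summand $(C,T)$ of $\widetilde\mu_k(A,S)$ merges with the trivial part of $\widetilde\mu_k(A_{\rm red},S_{\rm red})$ under the representation-level Splitting Theorem, leaving the same reduced part up to right-equivalence; this yields $\widetilde\mu_k(\mathcal M)_{\rm red} \cong \widetilde\mu_k(\mathcal M_{\rm red})_{\rm red}$. I expect the main technical point to be the careful verification in the preceding paragraph that adjoining $(C,T)$ does not disturb the construction of $\widetilde\mu_k$ at~$k$; but this reduces entirely to the absence of arrows of $C$ at~$k$ forced by \eqref{eq:no-2-cycles-thru-k}, after which the remaining checks are essentially unwinding of definitions.
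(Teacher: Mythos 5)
Your proof is correct and follows the approach the paper has in mind. The paper offers no argument beyond declaring the statement ``immediate from the definitions,'' and your write-up supplies exactly the expected unwinding, the pivotal observation being that \eqref{eq:no-2-cycles-thru-k} together with Proposition~\ref{pr:trivial-potential} forces the trivial summand $C$ to carry no arrows at~$k$, so that $\widetilde\mu_k$ does not interact with~$C$ at the level of either QPs or their representations.
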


Recall that, according to Corollary~\ref{cor:mutations-respect-isom}
and Definition~\ref{def:reduced-mutation},
the correspondence $\widetilde \mu_k: (A,S) \mapsto
\widetilde \mu_k(A,S) = (\widetilde A, \widetilde S)$
gives rise to the \emph{mutation}
$(A,S) \mapsto \mu_k(A,S) = (\overline A, \overline S)$,
which is a well-defined bijective transformation on the set of
right-equivalence classes of reduced QPs satisfying
\eqref{eq:no-2-cycles-thru-k}.
Here $(\overline A, \overline S)$ is the reduced part
of $(\widetilde A, \widetilde S)$.
Now for every QP-representation $\mathcal M = (A,S,M,V)$ of a
reduced QP $(A,S)$ we define
\begin{equation}
\label{eq:rep-mutation}
\mu_k (\mathcal M) = \widetilde \mu_k (\mathcal M)_{\rm red};
\end{equation}
thus, $\mu_k (\mathcal M)$ is a decorated representation
$(\overline A, \overline S, \overline M, \overline V)$ of a reduced QP
$(\overline A, \overline S)$.
Combining Propositions~\ref{pr:rep-splitting} and
\ref{pr:mutation-acts-on-right-equiv}, we obtain
the following important corollary.

\begin{corollary}
\label{cor:rep-mutations-respect-right-equiv}
The correspondence $\mathcal M \mapsto \mu_k (\mathcal M)$
is a well-defined transformation on the
set of right-equivalence classes of decorated representations
of reduced QPs satisfying
\eqref{eq:no-2-cycles-thru-k}.
\end{corollary}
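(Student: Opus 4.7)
The plan is to assemble the corollary from two preparatory results that each handle one half of the definition~\eqref{eq:rep-mutation} of $\mu_k$: Proposition~\ref{pr:mutation-acts-on-right-equiv} shows that $\widetilde\mu_k$ respects right-equivalence, and Proposition~\ref{pr:rep-splitting} shows that the passage to the reduced part does as well. The corollary then reduces to composing these facts and checking that the output lands in the prescribed class.

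More precisely, suppose $\mathcal M = (A,S,M,V)$ and $\mathcal M' = (A',S',M',V')$ are right-equivalent decorated representations of reduced QPs, both satisfying \eqref{eq:no-2-cycles-thru-k}. By Definition~\ref{def:right-equiv-reps}, any right-equivalence between them comes with an algebra isomorphism $\varphi: R\langle\langle A\rangle\rangle \to R\langle\langle A'\rangle\rangle$ fixing $R$, which by Proposition~\ref{pr:automorphisms} induces an $R$-bimodule isomorphism $A \to A'$. Using this isomorphism together with $\psi: M\to M'$ and $\eta: V\to V'$ to identify $A$ with $A'$, $M$ with $M'$, and $V$ with $V'$, we reduce to the setting of Proposition~\ref{pr:mutation-acts-on-right-equiv}: the right-equivalence is now given by an automorphism $\varphi$ of $R\langle\langle A\rangle\rangle$ with $\varphi(S)$ cyclically equivalent to $S'$, and the two module structures are related by \eqref{eq:M-M'}. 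Proposition~\ref{pr:mutation-acts-on-right-equiv} therefore yields a right-equivalence $\widetilde\mu_k(\mathcal M) \cong \widetilde\mu_k(\mathcal M')$.

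Applying Proposition~\ref{pr:rep-splitting} to both sides then gives $\mu_k(\mathcal M) = \widetilde\mu_k(\mathcal M)_{\rm red} \cong \widetilde\mu_k(\mathcal M')_{\rm red} = \mu_k(\mathcal M')$, which is exactly well-definedness on right-equivalence classes. To close, we verify that $\mu_k(\mathcal M)$ is again a decorated representation of a reduced QP satisfying \eqref{eq:no-2-cycles-thru-k}: by Definition~\ref{def:reduced-part-rep} its underlying QP is $(\overline A, \overline S) = \mu_k(A,S)$, which is reduced by construction and satisfies \eqref{eq:no-2-cycles-thru-k} by the remark immediately following Definition~\ref{def:reduced-mutation}. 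I do not expect a serious obstacle here: all the technical content --- the bookkeeping of the splitting data \eqref{eq:rho}--\eqref{eq:sigma} and the transformation rules for $\alpha,\beta,\gamma$ under an algebra automorphism --- has already been carried out in the proofs of the two propositions being invoked, so the corollary is essentially a packaging statement.
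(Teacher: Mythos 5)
Your proposal is correct and follows the paper's own argument exactly: the corollary is stated in the paper simply as a combination of Proposition~\ref{pr:rep-splitting} and Proposition~\ref{pr:mutation-acts-on-right-equiv}, which is precisely the two-step composition you carry out. Your additional unpacking of the reduction to an automorphism of a single path algebra, and the check that the target QP remains reduced and satisfies~\eqref{eq:no-2-cycles-thru-k}, merely makes explicit what the paper leaves implicit.
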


We refer to the transformation $\mathcal M \mapsto \mu_k (\mathcal M)$
in Corollary~\ref{cor:rep-mutations-respect-right-equiv}
as the \emph{mutation at vertex}~$k$.
With some abuse of terminology, we will talk about mutations of
decorated representations (rather than their right-equivalence classes).

The following result naturally extends
Theorem~\ref{th:mutation-involutive}.

\begin{theorem}
\label{th:rep-mutation-involutive}
The mutation $\mu_k$ of decorated representations is an
involution; that is, for every decorated representation
$\mathcal M$ of a reduced QP $(A,S)$
satisfying \eqref{eq:no-2-cycles-thru-k},
the decorated representation $\mu_k^2 (\mathcal M)$
of a QP $\mu_k^2(A,S)$ is right-equivalent to $\mathcal M$.
\end{theorem}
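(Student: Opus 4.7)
The plan is to mirror the proof of Theorem~\ref{th:mutation-involutive} at the level of representations, lifting the QP-level right-equivalence used there to a right-equivalence of decorated representations. Iterating \eqref{eq:rep-mutation} gives $\mu_k^2(\mathcal M) = \widetilde\mu_k(\widetilde\mu_k(\mathcal M)_{\rm red})_{\rm red}$, and Proposition~\ref{pr:mu-tilde-of-reduced-rep} applied to $\widetilde\mu_k(\mathcal M)$ (viewed as a representation of $(\widetilde A, \widetilde S)$, which still satisfies \eqref{eq:no-2-cycles-thru-k}) then yields $\mu_k^2(\mathcal M) \sim \widetilde\mu_k^2(\mathcal M)_{\rm red}$. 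It therefore suffices to exhibit a right-equivalence between $\widetilde\mu_k^2(\mathcal M)$ and $\mathcal M \oplus \mathcal N_0$, where $\mathcal N_0 = (C,T,0,0)$ is the zero representation of the trivial QP $(C,T)$ with $C = A e_k A \oplus A^\star e_k A^\star$ appearing in the proof of Theorem~\ref{th:mutation-involutive}: the reduced part of such a direct sum is $\mathcal M$ by inspection of Definition~\ref{def:reduced-part-rep}.

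First I would compute the triangle \eqref{eq:triangle} for the second application of $\widetilde\mu_k$. The analogs of $\alpha$ and $\beta$ are the operators $\overline\alpha$ and $\overline\beta$ from the first application, and the key observation is that the analog of $\gamma$ is the ordinary composition $\beta\circ\alpha$. This is because, under Lemma~\ref{lem:[]-isomorphism} applied to $(\widetilde A, \widetilde S)$, one has $\partial_{[a_p^\star b_q^\star]} \widetilde S = [b_q a_p]$, and by construction $[b_q a_p]$ acts on $\widetilde M$ as the composition $(b_q a_p)_M$. Coupled with the identities $\im\overline\beta = \ker\alpha$ and $\ker\overline\alpha = \im\beta$ (both immediate from \eqref{eq:alpha-beta-mutated} together with Lemma~\ref{lem:triangle-compositions}), formula \eqref{eq:new-Mk} produces
\[
\widetilde{\widetilde M}_k = \frac{\alpha^{-1}(\ker\beta)}{\ker\alpha} \oplus \beta(\im\alpha) \oplus \frac{\im\beta}{\beta(\im\alpha)} \oplus \frac{\ker\beta}{\ker\beta\cap\im\alpha}, \quad \widetilde{\widetilde V}_k = V_k.
\]
Choosing complements so that $\im\alpha = K_1\oplus A'$, $\ker\beta = K_1\oplus K_2$, and $M_k = (\im\alpha+\ker\beta)\oplus B$, where $K_1 = \im\alpha\cap\ker\beta$, identifies the four summands of $\widetilde{\widetilde M}_k$ with $K_1$, $A'$, $B$, and $K_2$ respectively (via $\alpha$, $\beta$, $\beta$, and inclusion), yielding an explicit vector-space isomorphism $\psi_k\colon\widetilde{\widetilde M}_k \to M_k$. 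Together with $\psi_i = \id_{M_i}$ for $i\neq k$ and $\eta = \id_V$, this produces the candidate right-equivalence data.

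Next I would verify that the triple $(\varphi_3\varphi_2\varphi_1, \psi, \eta)$ is a right-equivalence from $\widetilde\mu_k^2(\mathcal M)$ to $\mathcal M \oplus \mathcal N_0$, where $\varphi_1, \varphi_2, \varphi_3$ are the explicit automorphisms of $R\langle\langle \widetilde{\widetilde A}\rangle\rangle$ constructed in the proof of Theorem~\ref{th:mutation-involutive}. The intertwining condition $\psi\circ c_{\widetilde{\widetilde M}} = (\varphi_3\varphi_2\varphi_1)(c)_{M\oplus 0}\circ\psi$ must be checked for each arrow $c$ of $\widetilde{\widetilde A}$. For $c\in A$ not incident to $k$, $\varphi$ fixes $c$ and both sides reduce to $c_M$. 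For the composites $[ba]$ and their duals $[a^\star b^\star]$, the substitutions $[ba]\mapsto[ba]+ba$ (from $\varphi_2$) and $[a^\star b^\star]\mapsto[a^\star b^\star]-f(a,b)$ (from $\varphi_3$) supply exactly the corrections needed to make both arrows act as zero on $M\oplus 0$, matching their role as arrows of the trivial summand. For the reinstated arrows $a_p$ and $b_q$ (the double reversals $(a_p^\star)^\star, (b_q^\star)^\star$), the identity compares formula \eqref{eq:alpha-beta-mutated} (applied in the second mutation) with the original $\alpha$ and $\beta$, with the sign introduced by $\varphi_1$ absorbing the minus signs in \eqref{eq:alpha-beta-mutated}.

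The main obstacle lies in this last case: the operators $(a_p)_{\widetilde{\widetilde M}}$ and $(b_q)_{\widetilde{\widetilde M}}$ are built from \eqref{eq:alpha-beta-mutated} using two independent sets of splitting data (one per iteration of $\widetilde\mu_k$), and their compatibility with $\psi$ is somewhat intricate to track. This is not a genuine difficulty: by Proposition~\ref{pr:independence-of-splitting} the right-equivalence class of $\widetilde\mu_k^2(\mathcal M)$ does not depend on the splitting data, so we may choose the splittings compatibly with the decomposition $M_k = K_1\oplus A'\oplus K_2\oplus B$ (for instance, taking $\rho$ and $\sigma$ to be the projections along these complements). With such a choice the required intertwining reduces to block matrix identities directly analogous to those appearing in Steps~1--3 of the proof of Theorem~\ref{th:mutation-involutive}, completing the verification and hence the theorem.
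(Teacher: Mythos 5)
Your argument is correct and follows essentially the same route as the paper's: reduce via Proposition~\ref{pr:mu-tilde-of-reduced-rep}, observe that the analogue of $\gamma$ in the second triangle is $\beta\alpha$, and then produce an explicit vector-space isomorphism on the $k$-component matching the four-term decomposition of $\widetilde{\widetilde M}_k$ against a decomposition of $M_k$ coming from the filtration $\{0\}\subseteq\ker\beta\cap\im\alpha\subseteq\im\alpha\subseteq\ker\beta+\im\alpha\subseteq M_k$, with signs coming from the $\varphi_1$-twist. The one genuine, if minor, divergence is in the framing: the paper first passes to $\mu_k^2(\mathcal M)$, realized as $(A,S,M',V')$ with $u_{M'}=\varphi_0(u)_{\overline{\overline M}}$ for $\varphi_0$ the sign change on the $b_q$'s, and then only has to check intertwining on the arrows of $A$ (i.e., on $\alpha'$ and $\beta'$); you instead produce the right-equivalence directly between $\widetilde\mu_k^2(\mathcal M)$ and $\mathcal M\oplus\mathcal N_0$ on the un-reduced level, which obliges you to also verify the intertwining on the arrows $[ba]$ and $[a^\star b^\star]$ of the trivial summand. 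The $[ba]$ case is immediate, but the $[a^\star b^\star]$ case is not quite the one-liner your phrase ``supply exactly the corrections needed'' suggests: you must check that $[a_p^\star b_q^\star]_{\widetilde{\widetilde M}}=-\gamma_{pq}$ (which the paper establishes as $\overline\beta\overline\alpha=-\gamma$ in the proof of Proposition~\ref{pr:mutation-well-defined}) agrees with $-f(a_p,b_q)_{M\oplus 0}$; this does hold, either by noting that $f(a_p,b_q)-\partial_{[b_qa_p]}[S]$ lies in the ideal generated by the $[ba]$'s (which act as zero on $M\oplus 0$) or, more cleanly, because $[a^\star b^\star]+\partial_{[ba]}[S]\in J(\widetilde{\widetilde S})$ forces the intertwining for $[a^\star b^\star]$ to follow from those already verified on $c$ and $[ba]$. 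The paper's choice to reduce first sidesteps exactly this extra verification, which is why its checklist is shorter; the underlying computation (the formula $\overline\gamma=\beta\alpha$, the explicit map $\psi$, and the sign bookkeeping) is identical in both treatments.
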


\begin{proof}
In view of Proposition~\ref{pr:mu-tilde-of-reduced-rep},
$\mu_k^2 (\mathcal M)$ is right-equivalent to
$\widetilde \mu_k^2 (\mathcal M)_{\rm red}$.
Therefore, is suffices to show that
$\widetilde \mu_k^2 (\mathcal M)_{\rm red}$ is right-equivalent
to $\mathcal M$.

We write the QP-representation $\widetilde \mu_k^2 (\mathcal M)$ as
$\widetilde \mu_k^2 (\mathcal M)=
(\widetilde {\widetilde A}, \widetilde {\widetilde S},
\overline {\overline M}, \overline {\overline V})$.
The QP $(\widetilde {\widetilde A}, \widetilde {\widetilde S})$
is given by \eqref{eq:tilde-tilde-A} and \eqref{eq:tilde-tilde-S}.
In particular, there is a natural embedding of~$A$ into
$\widetilde {\widetilde A}$ identifying $A$ with the reduced part
$\widetilde {\widetilde A}_{\rm red}$.
Furthermore, as shown in the proof of
Theorem~\ref{th:mutation-involutive}, an automorphism of
$R \langle \langle \widetilde {\widetilde A} \rangle \rangle$
that establishes the right-equivalence in \eqref{eq:tilde-twice}
can be chosen so that it restricts to an automorphism
$\varphi_0: R \langle \langle A \rangle \rangle \to R \langle \langle A \rangle \rangle$
acting as follows:
\begin{align}
\label{eq:phi0}
&\text{$\varphi_0$ multiplies each of the arrows $b_1, \dots, b_t$ by $-1$,}\\
\nonumber
&\text{and fixes the rest of
the arrows in~$A$.}
\end{align}
In view of Definition~\ref{def:reduced-part-rep},
the QP-representation
$\mu_k^2 (\mathcal M) = \widetilde \mu_k^2 (\mathcal M)_{\rm red}$
can be realized as $(A,S,M',V')$, where $M' = \overline {\overline M}$
and $V' = \overline {\overline V}$ as vector spaces, and the
action of $R \langle \langle A \rangle \rangle$ in $M'$ is given by
\begin{equation}
\label{eq:mu-square-action}
u_{M'} = \varphi_0(u)_{\overline {\overline M}} \quad \quad (u \in R \langle \langle A \rangle \rangle).
\end{equation}
To prove Theorem~\ref{th:rep-mutation-involutive}, it suffices to
show that the decorated representation $(M',V')$ of $(A,S)$ is
isomorphic to $(M,V)$.

We first compute $M' = \overline {\overline M}$
and $V' = \overline {\overline V}$ as vector spaces.
According to \eqref{eq:M-unchanged}, we have
$$M'_i = \overline {\overline M}_i = {\overline M}_i = M_i, \quad
V'_i = \overline {\overline V}_i = {\overline V}_i = V_i$$
for all $i \neq k$.
As for the spaces $M'_k$ and $V'_k$, they
are given as in \eqref{eq:new-Mk}, with the maps $\alpha, \beta$,
and $\gamma$ replaced by $\overline \alpha, \overline \beta$,
and $\overline \gamma$, respectively.
Recall that $\overline \alpha$ and $\overline \beta$ are given by
\eqref{eq:alpha-beta-mutated}.
As for $\overline \gamma$, by applying the definition \eqref{eq:gamma-mu-nu}
to the potential $\widetilde S$ given by \eqref{eq:mu-k-S} and
\eqref{eq:Tk}, we see that
\begin{equation}
\label{eq:overline-gamma}
\overline \gamma = \beta \alpha.
\end{equation}
As a direct consequence of the definitions, we conclude that
\begin{eqnarray}
\nonumber
&\ker \overline \alpha = {\rm im}\ \beta,
\quad {\rm im}\ \overline \alpha =
\frac{\ker \gamma}{{\rm im}\ \beta} \oplus
{\rm im}\ \gamma \oplus \{0\}
\oplus \{0\},\\
\label{eq:overline-kernels-images}
&\ker \overline \beta =
\frac{\ker \gamma}{{\rm im}\ \beta} \oplus
\{0\} \oplus \{0\}
\oplus V_k,
\quad \,\,\,{\rm im}\ \overline \beta = \ker \alpha ,\\
\nonumber
&\ker \overline \gamma = \ker(\beta \alpha),
\quad {\rm im}\ \overline \gamma = {\rm im}\ (\beta \alpha).
\end{eqnarray}
It follows that
$$V'_k = \overline {\overline V}_k =
\frac{\ker \overline \beta}{\ker \overline \beta
\cap {\rm im}\ \overline \alpha} = V_k,$$
and so $V' = V$, as desired.
We also have
$$M'_k  = \frac{\ker(\beta \alpha)}{\ker \alpha} \oplus
{\rm im}\ (\beta \alpha) \oplus \frac{{\rm im}\ \beta}{{\rm im}\ (\beta \alpha)}
\oplus \frac{\ker \beta}{\ker \beta \cap {\rm im}\ \alpha}\ .$$
We now make the following easy observations:
\begin{itemize}
\item the map~$\alpha$ induces an isomorphism
$\ker(\beta \alpha)/\ker \alpha  \to \ker \beta \cap {\rm im}\ \alpha$;
\item the map~$\beta$ induces an isomorphism
${\rm im}\ \alpha/(\ker \beta \cap {\rm im}\ \alpha)
\to {\rm im}\ (\beta \alpha)$;
\item the map~$\beta$ induces an isomorphism
$M_k/(\ker \beta + {\rm im}\ \alpha)
\to {\rm im}\ \beta/{\rm im}\ (\beta \alpha)$.
\item there is a natural isomorphism
$\ker \beta/(\ker \beta \cap {\rm im}\ \alpha) \to
(\ker \beta + {\rm im}\ \alpha)/{\rm im}\ \alpha$.
\end{itemize}
Using these isomorphisms, we can identify~$M'_k$ with
the space
\begin{equation}
\label{eq:M'k-rewritten}
M'_k = (\ker \beta \cap {\rm im}\ \alpha) \oplus
\frac{{\rm im}\ \alpha}{\ker \beta \cap {\rm im}\ \alpha}
\oplus \frac{M_k}{\ker \beta + {\rm im}\ \alpha}
\oplus \frac{\ker \beta + {\rm im}\ \alpha}{{\rm im}\ \alpha}\ .
\end{equation}

To describe the action of $R \langle \langle A \rangle \rangle$ in
$M'$, we need only to describe the maps $\alpha': M_{\rm in} \to M'_k$
and $\beta': M'_k \to M_{\rm out}$ constructed in the same way
as in \eqref{eq:alpha-beta}.
As in \eqref{eq:alpha-beta-mutated}, the definition of $\alpha'$
and $\beta'$ involves splitting data \eqref{eq:rho} - \eqref{eq:sigma}.
In view of \eqref{eq:overline-kernels-images}, in the current situation
the splitting data take the following form:
\begin{align*}
& \text{Choose a linear map $\overline \rho: M_{\rm in} \to \ker (\beta \alpha)$
such that $\overline \rho \iota = {\rm id}_{\ker (\beta \alpha)}$.}\\
&\text{Choose a linear map $\overline \sigma: {\rm im}\ \beta /
{\rm im}\ (\beta \alpha) \to {\rm im}\ \beta$ such that
$\pi \overline \sigma = {\rm id}_{{\rm im}\ \beta /{\rm im}\ (\beta \alpha)}$.}
\end{align*}
Adapting \eqref{eq:alpha-beta-mutated} to the current situation
(in particular, realizing $M'_k$ as in \eqref{eq:M'k-rewritten}),
we see that the maps $\alpha'$ and $\beta'$ take the following
form:
\begin{equation}
\label{eq:alpha'-beta'}
\alpha' =
\begin{pmatrix}
- \alpha \overline \rho\\
- \pi \alpha\\
0\\
0
\end{pmatrix}, \quad
\beta'  =
\begin{pmatrix}
0 & - \beta & - \iota \overline \sigma \beta & 0\end{pmatrix};
\end{equation}
here with some abuse of notation we denote by the same symbol
$\beta$ the two maps ${\rm im}\ \alpha/\ker \beta \cap {\rm im}\ \alpha
\to M_{\rm out}$ and $M_k/(\ker \beta + {\rm im}\ \alpha) \to
{\rm im}\ \beta / {\rm im}\ (\beta \alpha)$ induced by~$\beta$.
Note that the appearance of the minus sign in $\beta'$ is caused
by the minus sign in \eqref{eq:phi0}.

To complete the proof of Theorem~\ref{th:rep-mutation-involutive},
it remains to construct an isomorphism of vector spaces
$\psi: M'_k \to M_k$ such that
\begin{equation}
\label{eq:psi-conditions}
\psi \alpha' = \alpha, \quad \quad \beta \psi = \beta'.
\end{equation}
To do this, notice that the four direct summands in \eqref{eq:M'k-rewritten}
are the factors in the filtration
$$\{0\} \subseteq \ker \beta \cap {\rm im}\ \alpha \subseteq
{\rm im}\ \alpha \subseteq \ker \beta + {\rm im}\ \alpha
\subseteq M_k.$$
Choose some sections
\begin{align*}
&\sigma_1: {\rm im}\ \alpha/(\ker \beta \cap {\rm im}\ \alpha)
\to {\rm im}\ \alpha,\\
&\sigma_2: (\ker \beta + {\rm im}\ \alpha)/{\rm im}\ \alpha
\to (\ker \beta + {\rm im}\ \alpha),\\
&\sigma_3: M_k/(\ker \beta + {\rm im}\ \alpha) \to M_k
\end{align*}
for the three factors of this filtration, so that they satisfy:
$${\rm im}\ \sigma_1 = \alpha (\ker \overline \rho), \quad \quad
{\rm im}\ \sigma_2 \subseteq \ker \beta, \quad \quad
{\rm im}\ (\beta \sigma_3) =  {\rm im}\ \overline \sigma.$$
Now define an an isomorphism $\psi: M'_k \to M_k$ by setting
$$\psi =
\begin{pmatrix}
- \iota & - \iota \sigma_1 & - \iota \sigma_3 & - \iota \sigma_2
\end{pmatrix}.$$
The equalities \eqref{eq:psi-conditions} are checked by a direct
inspection, finishing the proof.
\end{proof}

Note that there is an obvious way to define direct sums for decorated
representations of a given QP $(A,S)$.
Hence we can talk about \emph{indecomposable} QP-representations.
Clearly, the right-equivalence relation respects direct sums and
indecomposability.
It is also immediate from the definitions that any mutation $\mu_k$
of QP-representations sends direct sums to direct sums.
Combining this with Theorem~\ref{th:rep-mutation-involutive}, we
obtain the following corollary.

\begin{corollary}
\label{cor:rep-mutations-respect-idecomposables}
Any mutation $\mu_k$ is an involution on the
set of right-equivalence classes of indecomposable decorated representations
of reduced QPs satisfying
\eqref{eq:no-2-cycles-thru-k}.
\end{corollary}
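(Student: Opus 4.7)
The plan is to deduce this from Theorem~\ref{th:rep-mutation-involutive} together with the observation that $\mu_k$ commutes with direct sums. Since Theorem~\ref{th:rep-mutation-involutive} already establishes that $\mu_k$ is an involution on right-equivalence classes of decorated representations of reduced QPs satisfying \eqref{eq:no-2-cycles-thru-k}, it suffices to prove that $\mu_k$ sends (right-equivalence classes of) indecomposable representations to indecomposables.

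First I would verify in detail the claim made just before the corollary that $\mu_k$ respects direct sums up to right-equivalence. Given $\mathcal{M}=\mathcal{M}_1\oplus\mathcal{M}_2$ as a decorated representation of $(A,S)$, the spaces $M_{\mathrm{in}},M_k,M_{\mathrm{out}}$ in \eqref{eq:in-out} decompose as direct sums of the corresponding spaces for $\mathcal{M}_1$ and $\mathcal{M}_2$, and the maps $\alpha,\beta,\gamma$ in \eqref{eq:triangle} are block-diagonal with respect to this decomposition (for $\gamma$ this uses that $\partial_{[b_q a_p]}S$ acts on each summand by the corresponding block). Hence the kernels, images, and quotients appearing in \eqref{eq:new-Mk} all split compatibly, so both $\overline{M}_k$ and $\overline{V}_k$ are naturally direct sums. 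Choosing the splitting data \eqref{eq:rho}--\eqref{eq:sigma} in block-diagonal form $\rho=\rho_1\oplus\rho_2$ and $\sigma=\sigma_1\oplus\sigma_2$, the formulas \eqref{eq:alpha-beta-mutated} show that $\overline{\alpha}$ and $\overline{\beta}$ are block-diagonal as well. Thus $\widetilde{\mu}_k(\mathcal{M})=\widetilde{\mu}_k(\mathcal{M}_1)\oplus\widetilde{\mu}_k(\mathcal{M}_2)$ literally as a representation of $(\widetilde{A},\widetilde{S})$; by Proposition~\ref{pr:independence-of-splitting} any other choice of splitting data gives an isomorphic representation. Next, the passage to the reduced part uses a right-equivalence between $(\widetilde{A},\widetilde{S})$ and $(\overline{A},\overline{S})\oplus(\widetilde{A}_{\mathrm{triv}},\widetilde{S}^{(2)})$ that depends only on the QP and not on the representation, so by Definition~\ref{def:reduced-part-rep} the construction $\mathcal{N}\mapsto\mathcal{N}_{\mathrm{red}}$ commutes with direct sums up to right-equivalence. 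Combining these two steps yields $\mu_k(\mathcal{M}_1\oplus\mathcal{M}_2)\cong\mu_k(\mathcal{M}_1)\oplus\mu_k(\mathcal{M}_2)$.

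With direct-sum compatibility in hand, suppose $\mathcal{M}$ is indecomposable and write $\mu_k(\mathcal{M})\cong\mathcal{N}_1\oplus\mathcal{N}_2$. Applying $\mu_k$ once more and using Theorem~\ref{th:rep-mutation-involutive} together with the compatibility just proved, we obtain
\[
\mathcal{M}\;\cong\;\mu_k^2(\mathcal{M})\;\cong\;\mu_k(\mathcal{N}_1)\oplus\mu_k(\mathcal{N}_2).
\]
Indecomposability of $\mathcal{M}$ forces $\mu_k(\mathcal{N}_i)$ to be the zero representation $(0,0)$ for some $i$; applying $\mu_k$ one more time and using $\mu_k(0,0)=(0,0)$ (immediate from the construction) together with the involutivity gives $\mathcal{N}_i\cong\mu_k^2(\mathcal{N}_i)\cong(0,0)$. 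Hence the decomposition of $\mu_k(\mathcal{M})$ is trivial, so $\mu_k(\mathcal{M})$ is indecomposable.

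The main obstacle is the first paragraph: one must check carefully that the splitting data can be chosen in a direct-sum-compatible way and that Proposition~\ref{pr:independence-of-splitting} is strong enough to absorb the ambiguity, so that the equality $\widetilde{\mu}_k(\mathcal{M}_1\oplus\mathcal{M}_2)\cong\widetilde{\mu}_k(\mathcal{M}_1)\oplus\widetilde{\mu}_k(\mathcal{M}_2)$ holds as an isomorphism of representations of $(\widetilde{A},\widetilde{S})$, not merely as a right-equivalence with a nontrivial $\varphi$-component. Once this is secured, the rest of the argument is formal.
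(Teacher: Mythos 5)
Your proposal is correct and follows essentially the same route as the paper, which simply asserts that mutation respects direct sums and deduces the corollary by combining this with Theorem~\ref{th:rep-mutation-involutive}; you supply the verification details that the paper treats as immediate. One trivial point worth noting: at the end you should also observe that $\mu_k(\mathcal M)\neq(0,0)$ (this follows by the same involutivity argument, since $\mu_k(0,0)=(0,0)$ would force $\mathcal M\cong\mu_k^2(\mathcal M)\cong(0,0)$), so that the conclusion "$\mu_k(\mathcal M)$ is indecomposable" is fully justified.
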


We call a QP-representation $(A,S,M,V)$ \emph{positive} if $V = \{0\}$.
Thus, indecomposable positive representations are just
indecomposable ${\mathcal P}(A,S)$-modules.
In particular, for every vertex~$k$, the \emph{simple} representation
${\mathcal S}_k(A,S)$ is the indecomposable positive representation of $(A,S)$ such that
$\dim M_i = \delta_{i,k}$.
We denote by ${\mathcal S}_k^-(A,S)$ the indecomposable representation $(M,V)$
of $(A,S)$ such that $M = \{0\}$ and $\dim V_i = \delta_{i,k}$.
We refer to ${\mathcal S}_k^-(A,S)$ as the \emph{negative simple} representation at~$k$.
The following proposition is immediate from the definitions.

\begin{proposition}
\label{pr:positive-negative-mutations}
Any indecomposable QP-representation
is either positive, or negative simple.
If $\mu_k(A,S) = (\overline A, \overline S)$ then we have
\begin{equation}
\label{eq:mutation-simples}
\mu_k({\mathcal S}_k(A,S)) = {\mathcal S}_k^-(\overline A, \overline S),
\quad \mu_k({\mathcal S}_k^-(A,S)) = {\mathcal S}_k(\overline A, \overline S);
\end{equation}
and this is the only mutation that interchanges positive and
negative indecomposable representations.
\end{proposition}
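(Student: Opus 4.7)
The plan is to establish the three parts of the proposition in order, each by a short direct argument from the definitions and the explicit formulas for $\widetilde\mu_k$.

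For the first assertion, I observe that in Definition~\ref{def:dec-rep} the decoration $V$ carries only an $R$-module structure and plays no role in the $R\langle\langle A\rangle\rangle$-action on $M$. Hence whenever $M$ and $V$ are both nonzero there is a canonical splitting ${\mathcal M}=(M,0)\oplus(0,V)$, so an indecomposable ${\mathcal M}$ must have $M=0$ or $V=0$. If $V=0$ then ${\mathcal M}$ is positive by definition, while if $M=0$ the indecomposability of the $R$-module $V=\bigoplus_i V_i$ forces $V=V_j=K$ for a unique vertex $j$, giving ${\mathcal M}={\mathcal S}_j^-(A,S)$.

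To obtain \eqref{eq:mutation-simples} I plug each simple directly into the construction \eqref{eq:in-out}--\eqref{eq:alpha-beta-mutated} and check that the outcome is already reduced. Starting from ${\mathcal S}_k(A,S)$, the no-loops condition \eqref{eq:no-loops} forces $M_{\rm in}=M_{\rm out}=0$, and all three maps in the triangle \eqref{eq:triangle} vanish; formula \eqref{eq:new-Mk} then collapses to $\overline M_k=0$ and $\overline V_k=\ker\beta/(\ker\beta\cap\im\alpha)=M_k=K$, producing the negative simple at $k$ for $(\widetilde A,\widetilde S)$, which passes unchanged through the reduction of Definition~\ref{def:reduced-part-rep} to ${\mathcal S}_k^-(\overline A,\overline S)$. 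Starting from ${\mathcal S}_k^-(A,S)$, every vector space in \eqref{eq:triangle} is zero, and \eqref{eq:new-Mk} gives $\overline M_k=V_k=K$ with $\overline V_k=0$; each arrow of $\widetilde A$ incident to $k$ has its other endpoint at a vertex where $\overline M$ vanishes, so acts as zero, and the resulting simple at $k$ descends through the reduction to ${\mathcal S}_k(\overline A,\overline S)$.

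The last assertion follows by tracking vertex supports. Equation \eqref{eq:M-unchanged} gives $\overline M_i=M_i$ and $\overline V_i=V_i$ for every $i\neq k$, and this vertex-wise identification survives the reduction of Definition~\ref{def:reduced-part-rep}, because the right-equivalence $\varphi$ used there acts as the identity on $R$ and hence preserves the vertex decomposition of $\overline M$ and $\overline V$. Suppose now that ${\mathcal M}$ is indecomposable positive and that $\mu_k({\mathcal M})$ is negative simple. Positivity makes $\overline V_i=V_i=0$ for $i\neq k$ automatic, while the vanishing of the $M$-part of a negative simple forces $M_i=\overline M_i=0$ for $i\neq k$; then $M_{\rm in}=M_{\rm out}=0$, the triangle \eqref{eq:triangle} collapses exactly as above, and the computation of the preceding paragraph yields $\overline M_k=0$ and $\overline V_k=M_k$, whence indecomposability of ${\mathcal M}$ forces $\dim M_k=1$ and ${\mathcal M}={\mathcal S}_k(A,S)$. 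The symmetric claim, that an indecomposable negative simple with positive $\mu_k$-image must be ${\mathcal S}_k^-(A,S)$, then follows at once from the involutivity of $\mu_k$ (Theorem~\ref{th:rep-mutation-involutive}). The only subtle point is the effect of the reduction step on vertex supports, but as noted this is harmless because $\varphi|_R=\id$.
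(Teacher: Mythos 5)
The paper gives no proof for this proposition—it says only that the result ``is immediate from the definitions.'' Your write-up correctly spells out the verification that the paper leaves implicit, and the approach is exactly what the authors have in mind: the canonical splitting $(M,V)=(M,0)\oplus(0,V)$ for the first claim, a direct evaluation of \eqref{eq:new-Mk} for the two simples using the no-loops condition to kill $M_{\rm in}$ and $M_{\rm out}$, and the observation that \eqref{eq:M-unchanged} together with $\varphi|_R=\id$ forces any sign change under $\mu_k$ to be concentrated at vertex~$k$. The only point worth underlining, which you handle correctly, is that the reduced-part operation of Definition~\ref{def:reduced-part-rep} changes the path-algebra action but not the $R$-module structure, so dimension vectors (and in particular the vanishing of $\overline M_i$ or $\overline V_i$ at vertices $i\neq k$) are read off unambiguously from $\widetilde\mu_k(\mathcal M)$.
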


\section{Some three-vertex examples}
\label{sec:band}

In this section we illustrate the action of mutations on
QP-representations by some examples dealing with three-vertex quivers.
All the representations $(M,V)$ considered below will be positive,
i.e., $V = \{0\}$.

\begin{example}
\label{ex:A3-cyclic-triangle}
Let $Q$ be the quiver with three vertices $1,2,3$ and
two arrows $a: 1 \to 2$ and $b: 2 \to 3$:
$$
\xymatrix{
& 2\ar^{b}[rd] & \\
1\ar[ru]^a & & 3 
}
$$
Since $Q$ is acyclic, the only QP on it is $(A,0)$.
We have $\mu_2(A,0) = (\overline A, \overline S)$, where
$\overline A$ is the arrow span of the quiver $\overline Q$ given
by
$$
\xymatrix{
& 2\ar_{a^\star}[ld] & \\
1\ar[rr]_{[ba]} & & 3\ar[lu]_{b^\star}
}
$$
and $\overline S = b^\star [ba] a^\star$.
Thus, positive representations of $(A,0)$ are the representations of the quiver~$Q$,
while positive representations of $(\overline A, \overline S)$ are
the representations of the quiver $\overline Q$ satisfying
the relations
\begin{equation}
\label{eq:double-products-0}
b^\star [ba] =   [ba] a^\star =  a^\star b^\star = 0.
\end{equation}
In view of Corollary~\ref{cor:rep-mutations-respect-idecomposables}
and Proposition~\ref{pr:positive-negative-mutations}, the mutation
$\mu_2$ establishes a  bijection between the set of right-equivalence
classes of indecomposable positive representations of $(A,0)$
different from the simple representation ${\mathcal S}_2$, and the
same set for $(\overline A, \overline S)$.
Since $Q$ is a Dynkin quiver of type $A_3$, by Gabriel's theorem,
an indecomposable positive representation $M$ of $(A,0)$ is uniquely
up to an isomorphism determined by its dimension vector
${\bf dim}\, M = (\dim M_1, \dim M_2, \dim M_3)$, and these
dimension vectors are the positive roots of type $A_3$
(note that in this case, the right-equivalence classes are the same
as isomorphism classes).
Computing the images of these representations under $\mu_2$, we obtain the
correspondence between the dimension vectors given in
Table~\ref{tab:A3-cyclic-triangle}.
\begin{table}[ht]
\begin{center}
\begin{tabular}{|c|c|c|c|c|c|c|}
\hline
&&&&&\\[-.1in]
${\bf dim} \, M$ & (1,0,0) & (0,0,1) & (1,1,0) & (0,1,1) & (1,1,1) \\
\hline
&&&&&\\[-.1in]
${\bf dim} \,\mu_2(M)$ & (1,1,0) & (0,1,1) & (1,0,0) & (0,0,1) & (1,0,1) \\[.05in]
\hline
\end{tabular}
\end{center}
\medskip
\caption{Indecomposable representations for $A_3$ and the cyclic triangle.}
\label{tab:A3-cyclic-triangle}
\end{table}
\vspace{-.1in}
We conclude that an indecomposable positive representation of $(\overline A, \overline S)$ is uniquely
up to right-equivalence determined by its dimension vector, and
these dimension vectors are given in the second line of
Table~\ref{tab:A3-cyclic-triangle}, with the exception of ${\bf dim} \,
{\mathcal S}_2 = (0,1,0)$.
\end{example}

\begin{example}
\label{ex:A2-1}
Now let $Q$ be the quiver with three vertices $1,2,3$ and
three arrows $a: 1 \to 2$, $b: 2 \to 3$, and $c: 1 \to 3$:
$$
\xymatrix{
& 2\ar^{b}[rd] & \\
1\ar[ru]^a \ar[rr]_c& & 3 
}
$$
Again, the only QP on $Q$ is $(A,0)$.
We have $\mu_2(A,0) = (\overline A, \overline S)$, where
$\overline A$ is the arrow span of the quiver $\overline Q$ given
by
$$
\xymatrix{
& 2\ar_{a^\star}[ld] & \\
1 \ar@<-.5ex>[rr]_{c}\ar@<.5ex>[rr]^{[ba]} & & 3\ar[lu]_{b^\star}
}
$$
and $\overline S = b^\star [ba] a^\star$.
Again, positive representations of $(A,0)$ are the representations of the quiver~$Q$,
while positive representations of $(\overline A, \overline S)$ are
the representations of the quiver $\overline Q$ satisfying
the relations \eqref{eq:double-products-0}.

We consider indecomposable positive representations of $(A,0)$
with the dimension vector $(n,n,n)$ for some $n \geq 1$.
Assume that $K$ is algebraically closed.
Since $Q$ is an extended Dynkin quiver of type $A^{(1)}_2$, and
$(n,n,n)$ is an isotropic imaginary root, by Kac's extension of
Gabriel's theorem, the isomorphism classes of indecomposable
$Q$-representations of this dimension form a $1$-parametric family.
An easy check shows that these representations break into three
right-equivalence classes.
Their representatives can be described as follows.
For each of them we have $M_1 = M_2 = M_3 = K^n$, and two of the
maps $a_M, b_M, c_M$ are equal to the identity map $I$, while the
third one is the nilpotent Jordan block~$N$.
If $a_M = N$ (resp. $b_M = N$, $c_M = N$) then we denote the
corresponding $Q$-representation by $M(a)$ (resp. $M(b)$, $M(c)$).
In view of \eqref{eq:new-Mk}, if $M$ is one of these
representations then $\mu_2(M) = \overline M$ is positive, and we have
$$\overline M_2 = {\rm coker}\ b_M \oplus \ker a_M$$
(note that since $S = 0$, we have $\gamma = 0$).
It follows that $\overline {M(c)}$ has dimension vector $(n,0,n)$,
with the maps $[ba]_{\overline {M(c)}}\ , c_{\overline {M(c)}}: K^n
\to K^N$ given by $[ba]_{\overline {M(c)}} = I, c_{\overline {M(c)}}= N$.
Also both representations $\overline {M(a)}$ and $\overline {M(b)}$
have dimension vector $(n,1,n)$.
In each of them, the arrows $[ba]$ and $c$ act as
$[ba] = N, c = I$.
We also have $b^\star_{\overline {M(a)}} = 0$, while
the map $a^\star_{\overline {M(a)}}: K \to K^n$ has
${\rm im}\ a^\star_{\overline {M(a)}} = \ker N$; similarly,
$a^\star_{\overline {M(b)}} = 0$, while
the map $b^\star_{\overline {M(b)}}: K^n \to K$ has
$\ker b^\star_{\overline {M(a)}} = {\rm im}\  N$.
\end{example}

\begin{example}
\label{ex:double-cyclic-triangle}
Our last example deals with the QP $(A,S)$ from
Example~\ref{ex:double-triangle}.
Thus, the quiver in question has three vertices $1,2,3$ and
six arrows $a_1, a_2: 1 \to 2$, $b_1, b_2: 2 \to 3$ and
$c_1, c_2: 3 \to 1$; and the potential~$S$ is given by
\eqref{eq:S-double-triangle}.
$$
\xymatrix{
& 2\ar@<.5ex>[rd]^{b_1}\ar@<-.5ex>[rd]_{b_2} & \\
1\ar@<.5ex>[ru]^{a_1}\ar@<-.5ex>[ru]_{a_2} & & 3\ar@<-.5ex>[ll]_{c_2}\ar@<.5ex>[ll]^{c_1}}
$$
To specify a positive representation $M$ of $(A,S)$, we need to define three vector
spaces $M_1, M_2, M_3$, and six linear maps
$(a_1)_M, (a_2)_M: M_1 \to M_2$, $(b_1)_M, (b_2)_M: M_2 \to M_3$,
and $(c_1)_M, (c_2)_M: M_3 \to M_1$.
In our case, $J(S)$ is the closure of the ideal in $R\langle \langle A \rangle \rangle$
generated by six elements
$$c_1 b_1, b_1 a_1, a_1 c_1, c_2 b_2, b_2 a_2, a_2 c_2.$$
Thus, all the compositions $(c_1)_M (b_1)_M, \dots, (a_2)_M
(c_2)_M$ must be equal to~$0$.

We first consider the indecomposable positive
representation $M$ of $(A,S)$ given by:
\begin{equation}
\label{eq:band-1-0}
M_1 = M_2 = K, \quad M_3 = 0; \quad (a_1)_M = (a_2)_M = 1.
\end{equation}
Let us compute $\mu_2(M) = (\overline M, \overline V)$.
First of all, the QP $\mu_2(A,S) = (\overline A, \overline S)$
was computed in Example~\ref{ex:double-triangle}: recall that the
arrows in $\overline A$ are $a_1^\star, a_2^\star, b_1^\star, b_2^\star,
[b_1 a_2], [b_2 a_1]$, and the potential $\overline S$ is given by
$$\overline S = [b_1 a_2] a_2^\star b_1^\star +
[b_2 a_1] a_1^\star b_2^\star.$$
To compute $\overline M$ and  $\overline V$, we apply
\eqref{eq:M-unchanged} and \eqref{eq:new-Mk}
to the triangle \eqref{eq:triangle} given by
$$M_{\rm in} = K^2, \,\, M_k = M_2 = K, \,\, M_{\rm out} = \{0\},
\,\, \alpha =
\begin{pmatrix}
1 & 1 \end{pmatrix}$$
(so we have $\beta = 0$ and $\gamma = 0$).
It follows that $\overline V = \{0\}$, i.e., $\mu_2(M)$ is
positive; we also have
$\overline M_1 = M_1 = K$, $\overline M_3 = M_3 = \{0\}$,
and
$$\overline M_2 = \ker \alpha = K \cdot \begin{pmatrix}
1 \\ -1 \end{pmatrix}$$
(this is the third term in the decomposition of $\overline M_k$ in
\eqref{eq:new-Mk}).
Since $M_3 = 0$, the arrows $b_1^\star, b_2^\star,
[b_1 a_2], [b_2 a_1]$ act as $0$ in $\overline M$.
As for $a_1^\star$ and  $a_2^\star$, their action is given by
the second equality in \eqref{eq:alpha-beta-mutated}
(note that the choice of a splitting \eqref{eq:sigma}
is immaterial here).
Namely, identifying $\overline M_2$ with $K$ via choosing
$\begin{pmatrix}
1 \\ -1 \end{pmatrix}$ as the standard basis vector,
we obtain
$$(a_1^\star)_{\overline M} = 1, \quad (a_2^\star)_{\overline M} =
-1$$
as maps $\overline M_2 = K \to K = \overline M_1$.

Note that the resulting representation $\mu_2 (M)$ can be
conveniently described as follows: by renumbering the vertices of
our quiver via
\begin{equation}
\label{eq:renumbering-vertices-1}
1'= 2, \quad 2'=1, \quad 3'=3,
\end{equation}
and setting
\begin{equation}
\label{eq:renumbering-arrows-1}
a'_1 = -a_2^\star, \,\,a'_2 = a_1^\star, \,\,b'_1 = [b_1a_2],
\,\,b'_2 = [b_2a_1],\,\, c'_1 = -b_1^\star,\,\, c'_2 = b_2^\star,
\end{equation}
the representation $\mu_2 (M)$ gets identified
with the initial representation $M$ of the initial
QP $(A,S)$.

The mutation $\mu_1(M)$ can be computed in a similar way.
But since we have already computed the QP $\mu_2(A,S) = (\overline A,
\overline S)$, we find it more convenient to renumber the vertices via
$1'= 3$, $2'=1$, $3'=2$, so that $\mu_1(M)$ gets
identified with $\mu_2({M}')$, where ${M}'$
is given by:
\begin{equation}
\label{eq:band-0-1}
M'_1 = 0, \quad M'_2 = M'_3 = K, ; \quad (b_1)_{M'} = (b_2)_{M'} = 1.
\end{equation}
Now the triangle \eqref{eq:triangle} is given by
$$M'_{\rm in} = \{0\}, \,\, M'_k = M'_2 = K, \,\, M'_{\rm out} = K^2,
\,\, \beta =
\begin{pmatrix}
1 \\ 1 \end{pmatrix}$$
(so we have $\alpha = 0$ and $\gamma = 0$).
It follows that $\mu_2({M}')$ is positive, and we have
$\overline {M'}_1 = M'_1 = \{0\}$, $\overline {M'}_3 = M'_3 = K$,
and
$$\overline {M'}_2 = K^2/K \cdot \begin{pmatrix}
1 \\ 1 \end{pmatrix}$$
(this is the first term in the decomposition of $\overline M_k$ in
\eqref{eq:new-Mk}).
Since $M'_1 = 0$, the arrows $a_1^\star, a_2^\star,
[b_1 a_2], [b_2 a_1]$ act as $0$ in $\overline {M'}$.
As for $b_1^\star$ and  $b_2^\star$, their action is given by
the first equality in \eqref{eq:alpha-beta-mutated}
(note that the choice of a splitting \eqref{eq:rho}
is immaterial here).
Namely, identifying $\overline {M'}_2$ with $K$ via choosing
$\pi(\begin{pmatrix}
1 \\ 0 \end{pmatrix}) = - \pi(\begin{pmatrix}
0 \\ 1 \end{pmatrix})$ as the standard basis vector,
we obtain
$$(b_1^\star)_{\overline {M'}} = -1, \quad (b_2^\star)_{\overline {M'}} = 1$$
as maps $\overline {M'}_3 = K \to K = \overline {M'}_2$.

As above, by renumbering the vertices of
our quiver via
\begin{equation}
\label{eq:renumbering-vertices-2}
1'= 1, \quad 2'=3, \quad 3'=2,
\end{equation}
and setting
\begin{equation}
\label{eq:renumbering-arrows-2}
a'_1 = [b_2a_1], \,\,a'_2 = [b_1a_2], \,\,b'_1 = b_2^\star,
\,\,b'_2 = -b_1^\star,\,\, c'_1 = a_1^\star,\,\, c'_2 = -a_2^\star,
\end{equation}
the resulting representation $\mu_2 ({M}')$ gets identified
with the initial representation ${M}'$.

We now include the representations $M$ and ${M}'$
given by \eqref{eq:band-1-0} and \eqref{eq:band-0-1}
into a family of positive representations of $(A,S)$ defined as
follows: for every pair of nonnegative integers $(m,n) \neq
(0,0)$, we define the positive representation $M = M(m,n)$
of $(A,S)$ by setting
\begin{equation}
\label{eq:band-general-spaces}
M_1 = K^m, \,\, M_2 = K^{m+n}, \,\, M_3 = K^n
\end{equation}
and
\begin{align}
\label{eq:band-general-maps}
&(a_1)_M =\begin{pmatrix} I_m \\ 0  \end{pmatrix}, \quad
(a_2)_M =\begin{pmatrix} 0 \\ I_m   \end{pmatrix}, \quad
(b_1)_M =\begin{pmatrix} 0 & I_n \end{pmatrix},\\
\nonumber
&(b_2)_M =\begin{pmatrix} I_n & 0 \end{pmatrix}, \quad
(c_1)_M = 0, \quad (c_2)_M = 0,
\end{align}
where $I_n$ is the $n\times n$ identity matrix.

We refer to the representations $M(m,n)$ as well as
those obtained from them by renumbering the vertices
as \emph{band representations}; they are a special case of band
modules studied in \cite{BR,F} in the context of
string algebras.
Note that both representations $M$ and ${M}'$
treated above are indeed special cases of band representations: we have
$M = M(1,0)$, ${M}' = M(0,1)$.
By a direct generalization of the above computations, we obtain the following
proposition.

\begin{proposition}\
\label{pr:band}
\begin{enumerate}
\item If $m \geq n$ then after renumbering of vertices as in \eqref{eq:renumbering-vertices-1}
and the change of arrows as in \eqref{eq:renumbering-arrows-1},
the representation $\mu_2(M(m,n))$ can be identified
with $M(m-n,n)$.
\item If $m \leq n$ then after renumbering of vertices as in \eqref{eq:renumbering-vertices-2}
and the change of arrows as in \eqref{eq:renumbering-arrows-2},
the representation $\mu_2(M(m,n))$ can be identified
with $M(m,n-m)$.
\end{enumerate}
\end{proposition}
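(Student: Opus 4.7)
The plan is to verify Proposition~\ref{pr:band} by directly applying the mutation recipe of Section~\ref{sec:reps} to the band representation $M = M(m,n)$ at vertex $k=2$. I shall carry out case~(1) explicitly; case~(2) is structurally identical with the roles of $\alpha$ and $\beta$ (equivalently, of the $a_p$ and the $b_q$) exchanged.

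First I would assemble the triangle~\eqref{eq:triangle} for $M(m,n)$: using \eqref{eq:band-general-spaces}--\eqref{eq:band-general-maps}, the maps $\alpha : K^{2m} \to K^{m+n}$ and $\beta : K^{m+n} \to K^{2n}$ are explicit block matrices whose images and kernels are immediate to read off. The potential $S = c_1 b_1 a_1 + c_2 b_2 a_2$ is already cyclically equivalent (under the identification of Lemma~\ref{lem:[]-isomorphism}) to $c_1 [b_1 a_1] + c_2 [b_2 a_2]$, so by \eqref{eq:gamma-mu-nu} the operator $\gamma$ has only the diagonal components $\gamma_{p,p} = (c_p)_M$; since $(c_p)_M = 0$ in our band module, $\gamma = 0$. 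Consequently the splitting data \eqref{eq:rho}--\eqref{eq:sigma} are vacuous and one may take $\rho = \mathrm{id}_{K^{2n}}$ and $\sigma = \mathrm{id}_{\ker \alpha}$.

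Second I would apply \eqref{eq:new-Mk}. When $m \geq n$ a short block-matrix check shows that $\alpha$ and $\beta$ are both surjective; three of the four summands of $\overline M_2$ therefore collapse, yielding $\overline M_2 = \ker \alpha \cong K^{m-n}$ and $\overline V_2 = 0$, so $\mu_2(M)$ is positive. Parameterizing $\ker \alpha$ explicitly by $(x_{n+1}, \dots, x_m) \in K^{m-n}$ one extracts the two component maps of the inclusion $\iota \colon \ker \alpha \hookrightarrow K^m \oplus K^m$; by \eqref{eq:alpha-beta-mutated} these are precisely $(a_1^\star)_{\overline M}$ and $(a_2^\star)_{\overline M}$, while $\overline \alpha = 0$ gives $(b_q^\star)_{\overline M} = 0$. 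The composite arrows act by $([b_q a_p])_{\overline M} = (b_q a_p)_M$, which are read off directly from \eqref{eq:band-general-maps}.

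Third, one must pass from $\widetilde \mu_2(M)$ to its reduced part $\mu_2(M)$. As computed in Example~\ref{ex:double-triangle}, the automorphism $\varphi$ that splits the trivial summand off $\widetilde \mu_2(A,S)$ acts nontrivially only on $c_1$ and $c_2$; since both of these already act as zero on $M$, the induced action of each surviving arrow on $\mu_2(M)$ coincides with its action on $\widetilde \mu_2(M)$, and no further correction arises. Applying the vertex relabeling~\eqref{eq:renumbering-vertices-1} together with the sign-adjusted identification of arrows~\eqref{eq:renumbering-arrows-1}, the matrices produced in the previous step match the defining data~\eqref{eq:band-general-maps} of $M(m-n,n)$.

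The main delicate point is the bookkeeping of signs: different parameterizations of $\ker \alpha$ (in case~(1)) or of the cokernel $K^{2n}/\mathrm{im}\,\beta$ (in case~(2)) differ from each other by sign choices, and it is precisely this freedom that the sign conventions in \eqref{eq:renumbering-arrows-1}--\eqref{eq:renumbering-arrows-2} are designed to absorb; once the basis is fixed consistently everything else is routine linear algebra. Case~(2) then proceeds in parallel: when $m \leq n$ the map $\alpha$ is injective and $\beta$ is injective with cokernel of dimension $n-m$, so that only the first summand of $\overline M_2$ in \eqref{eq:new-Mk} survives and the resulting maps $\overline \alpha$ and $\overline \beta$ swap roles with their case~(1) counterparts; the identifications \eqref{eq:renumbering-vertices-2}--\eqref{eq:renumbering-arrows-2} then play the role that \eqref{eq:renumbering-vertices-1}--\eqref{eq:renumbering-arrows-1} played in case~(1).
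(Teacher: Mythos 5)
Your proposal is correct and carries out precisely the ``direct generalization of the above computations'' that the paper invokes without writing out: it reproduces, for general $(m,n)$, the block-matrix analysis of $\alpha,\beta,\gamma$ done for $M(1,0)$ and $M(0,1)$ in Example~\ref{ex:double-cyclic-triangle}, applies \eqref{eq:new-Mk} and \eqref{eq:alpha-beta-mutated}, and observes that the splitting automorphism from Example~\ref{ex:double-triangle} fixes every arrow of $\overline A$, so no further correction enters when passing to the reduced part. One minor imprecision is worth flagging: the reason the reduction does not alter the arrow actions is that $\varphi$ and its inverse fix all arrows of $\overline A$ (they only modify $c_1,c_2$), not that $c_1,c_2$ act by zero on $M$ --- the latter fact is true but not what the argument actually hinges on.
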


Remembering Theorem~\ref{th:rep-mutation-involutive}, we obtain
the following corollary.

\begin{corollary}\
\label{cor:band-mu1-3}
\begin{enumerate}
\item After renumbering of vertices as in \eqref{eq:renumbering-vertices-1},
the representation $\mu_1(M(m,n))$ becomes
right-equivalent to $M(m+n,n)$.
\item After renumbering of vertices as in \eqref{eq:renumbering-vertices-2},
the representation $\mu_3(M(m,n))$ becomes
right-equivalent to $M(m,m+n)$.
\end{enumerate}
\end{corollary}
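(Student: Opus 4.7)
The plan is to derive both parts as immediate consequences of Proposition~\ref{pr:band} combined with the involutivity of mutations established in Theorem~\ref{th:rep-mutation-involutive}. The key observation is that the renumberings \eqref{eq:renumbering-vertices-1} and \eqref{eq:renumbering-vertices-2}, together with the respective arrow changes \eqref{eq:renumbering-arrows-1} and \eqref{eq:renumbering-arrows-2}, implement quiver symmetries that intertwine $\mu_2$ with $\mu_1$ (respectively $\mu_3$).

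For part~(1), I will apply Proposition~\ref{pr:band}(1) with the pair $(m,n)$ replaced by $(m+n,n)$, noting that the hypothesis $m+n \geq n$ is automatic. This yields that $\mu_2(M(m+n,n))$, after the renumbering \eqref{eq:renumbering-vertices-1} and change of arrows \eqref{eq:renumbering-arrows-1}, is right-equivalent to $M(m,n)$. Let $\tau$ denote the operation of performing this renumbering and arrow change, viewed as a transformation on right-equivalence classes of QP-representations; since $\tau$ exchanges the roles of vertices $1$ and $2$, one has the intertwining identity $\tau \circ \mu_2 = \mu_1 \circ \tau$. Reading this together with the displayed right-equivalence and applying $\mu_1$ to both sides, one concludes, using Theorem~\ref{th:rep-mutation-involutive} to collapse $\mu_1 \circ \mu_1$, that $\mu_1(M(m,n))$ is right-equivalent to $\tau(M(m+n,n))$. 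This is precisely the assertion of part~(1).

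Part~(2) is entirely parallel: I will apply Proposition~\ref{pr:band}(2) to $M(m,m+n)$ (which satisfies the hypothesis $m \leq m+n$), use the intertwining of $\mu_2$ with $\mu_3$ afforded by the symmetry \eqref{eq:renumbering-vertices-2} and arrow change \eqref{eq:renumbering-arrows-2}, and invoke the involutivity of $\mu_3$ to transfer the right-equivalence from $\mu_2(M(m,m+n))$ to $\mu_3(M(m,n))$.

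The only genuinely delicate point — and what I expect to be the main obstacle — is the careful verification of the intertwining identity $\tau \circ \mu_2 = \mu_1 \circ \tau$ (and its analogue for vertex~$3$). This requires unpacking the definition of the mutation $\widetilde \mu_k$ from Section~\ref{sec:reps} and checking that the specific arrow substitutions in \eqref{eq:renumbering-arrows-1} and \eqref{eq:renumbering-arrows-2} — including the minus signs — are exactly those needed so that, under the relabeling, mutation at the new vertex~$2$ (respectively~$3$) corresponds to mutation at the original vertex~$1$ (respectively~$3$). Once this bookkeeping is completed, the corollary follows from the formal argument above.
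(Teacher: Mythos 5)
Your proposal is correct and follows essentially the same route as the paper, which disposes of the corollary with the single remark that it follows from Proposition~\ref{pr:band} together with Theorem~\ref{th:rep-mutation-involutive}. The intertwining identity you flag as the one delicate step is indeed the formal glue, but it is not a serious obstacle: the vertex-relabeling part is purely notational, and the arrow-change part is a right-equivalence, with which mutations commute by Proposition~\ref{pr:mutation-acts-on-right-equiv}.
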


\begin{corollary}
\label{cor:band-reps-mutation-closed}
The class of band representations is
closed under mutations.
\end{corollary}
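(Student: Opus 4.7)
The plan is to deduce Corollary~\ref{cor:band-reps-mutation-closed} as a direct consequence of Proposition~\ref{pr:band} and Corollary~\ref{cor:band-mu1-3} together with the naturality of the mutation operation under renumbering of vertices. By definition, the class of band representations consists of the representations $M(m,n)$ for $(m,n) \neq (0,0)$ together with those obtained from them by any permutation of the three vertices and the corresponding change-of-arrows relabeling. In particular, this class is tautologically closed under vertex renumberings, so it suffices to verify that for every band representation $N$ and every vertex $k \in \{1,2,3\}$, the mutation $\mu_k(N)$ belongs to this class.

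First I would treat the case $N = M(m,n)$ directly. Proposition~\ref{pr:band}(1) shows that when $m \geq n$, after the renumbering \eqref{eq:renumbering-vertices-1} and change of arrows \eqref{eq:renumbering-arrows-1}, the representation $\mu_2(M(m,n))$ is identified with $M(m-n,n)$; Proposition~\ref{pr:band}(2) handles the symmetric case $m \leq n$. The common case $m=n$ produces $M(0,n)$ (or equivalently $M(n,0)$), which is a legitimate band representation since $n \neq 0$. Corollary~\ref{cor:band-mu1-3} handles the remaining mutations $\mu_1$ and $\mu_3$ in an analogous way, producing $M(m+n,n)$ and $M(m,m+n)$ respectively after the appropriate renumbering. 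Thus each of the three mutations sends $M(m,n)$ to a band representation.

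To pass from this to the general case, I would invoke naturality: if $N$ is obtained from $M(m,n)$ by a renumbering $\sigma$ of the vertex set (combined with a consistent relabeling of arrows that preserves the potential up to right-equivalence), then for each $k$ we have $\mu_k(N) \cong \sigma \cdot \mu_{\sigma^{-1}(k)}(M(m,n))$, where $\sigma \cdot (-)$ denotes transport of structure along $\sigma$. This is essentially tautological from the vertex-symmetric construction of $\widetilde{\mu}_k$ in Section~\ref{sec:reps} (formulas \eqref{eq:tilde-A}, \eqref{eq:mu-k-S}, and \eqref{eq:alpha-beta-mutated} only use the local data at vertex $k$, so relabeling commutes with the construction). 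Combining this naturality with the base case just established, $\mu_k(N)$ is again (a renumbering of) some $M(m',n')$, hence is a band representation.

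I do not anticipate a genuine obstacle here; the statement is essentially a bookkeeping corollary of the explicit calculations already carried out. The only point requiring a small amount of care is verifying that the renumberings \eqref{eq:renumbering-vertices-1}--\eqref{eq:renumbering-arrows-2} genuinely define right-equivalences of the ambient QP $(A,S)$ with itself (so that the result lands in the same QP, not a different one) — but this was already noted in Example~\ref{ex:double-triangle}, where it was observed that the mutated QP $\mu_2(A,S)$ is isomorphic to $(A,S)$ via precisely such a renumbering. With that point noted, the argument assembles immediately.
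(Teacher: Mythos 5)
Your proposal is correct and matches the paper's (implicit) proof: the corollary is stated without argument precisely because it follows immediately from Proposition~\ref{pr:band} and Corollary~\ref{cor:band-mu1-3}, combined with the tautological compatibility of the mutation construction with renumberings of vertices, which is exactly what you spell out. The extra bookkeeping you supply (closure under renumbering by definition, reduction to the base case $N = M(m,n)$, and transport of structure) is the content the paper leaves to the reader, and you handle it correctly.
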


Note that if we iterate the mutations in Proposition~\ref{pr:band},
the pair $(m,n)$ gets transformed according to the Euclid algorithm for
finding $\gcd(m,n)$.
Thus, after a sequence of mutations (and appropriate renumberings
of vertices), every $M(m,n)$ can be transformed into $M(\gcd(m,n),0)$.
Since $M(d,0)$ is obviously isomorphic to the direct sum
of~$d$ copies of $M(1,0)$, by backtracking this sequence of
mutations, we obtain the following well-known corollary.

\begin{corollary}
\label{cor:band-reps-indecomposable}
The representation $M(m,n)$ is indecomposable if and
only if $m$ and $n$ are relatively prime.
Furthermore, if $\gcd(m,n)=d$ then $M(m,n)$ is
right-equivalent to the direct sum
of~$d$ copies of $M(m/d,n/d)$.
\end{corollary}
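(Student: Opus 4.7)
The plan is to prove the corollary by strong induction on $m+n$, using the Euclidean-algorithm reduction furnished by Proposition~\ref{pr:band}, the involutivity of mutations (Theorem~\ref{th:rep-mutation-involutive}), and the fact that $\mu_k$ commutes with direct sums and preserves both right-equivalence and indecomposability (Corollary~\ref{cor:rep-mutations-respect-idecomposables}).

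First I would dispose of the base cases $M(1,0)$ and $M(0,1)$. The representation $M(1,0)$ has $M_1=M_2=K$, $M_3=0$, $(a_1)_M=(a_2)_M=1$; any nontrivial decomposition would require splitting the one-dimensional space $M_2$, which is impossible, so $M(1,0)$ is indecomposable, and $M(0,1)$ is handled symmetrically. Moreover, $M(d,0)$ is literally equal (not merely right-equivalent) to $M(1,0)^{\oplus d}$: its spaces $K^d$ at vertices $1$ and $2$ decompose coordinate-wise, and both $(a_1)_M=(a_2)_M=I_d$ preserve this decomposition termwise; similarly for $M(0,d)$.

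For the inductive step, I assume $m,n\ge 1$, and without loss of generality $m\ge n$ (the case $m\le n$ being symmetric via Proposition~\ref{pr:band}(2)). Set $d=\gcd(m,n)$ and note $\gcd(m-n,n)=d$, with $\gcd((m-n)/d,n/d)=1$ iff $\gcd(m/d,n/d)=1$. By Proposition~\ref{pr:band}(1), $\mu_2(M(m,n))$ becomes right-equivalent to $M(m-n,n)$ after the renumbering of vertices \eqref{eq:renumbering-vertices-1} and the change of arrows \eqref{eq:renumbering-arrows-1}. The inductive hypothesis, applied to the pair $(m-n,n)$ with $(m-n)+n<m+n$, supplies a right-equivalence $M(m-n,n)\sim M((m-n)/d,n/d)^{\oplus d}$. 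Applying $\mu_2$ once more (which, by Theorem~\ref{th:rep-mutation-involutive}, recovers the original QP-representation up to right-equivalence) and undoing the renumbering, I would then conclude that $M(m,n)$ is right-equivalent to $M(m/d,n/d)^{\oplus d}$, using that mutation commutes with direct sums.

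Finally, the indecomposability assertion follows. If $\gcd(m,n)=1$, then iterating the Euclidean step lands at either $M(1,0)$ or $M(0,1)$, both indecomposable by the base case; back-propagating through the inverse mutations, indecomposability is preserved at each step by Corollary~\ref{cor:rep-mutations-respect-idecomposables}, so $M(m,n)$ is indecomposable. Conversely, if $d=\gcd(m,n)\ge 2$, the right-equivalence $M(m,n)\sim M(m/d,n/d)^{\oplus d}$ produced above exhibits $M(m,n)$ as a direct sum of $d\ge 2$ summands, so it is decomposable. The only real subtlety — a bookkeeping issue rather than a genuine obstacle — is to verify that the vertex renumberings and arrow relabelings in \eqref{eq:renumbering-vertices-1}--\eqref{eq:renumbering-arrows-1} (and their $\mu_3$-analogs) compose correctly under iteration, so that at the end of the Euclidean algorithm one has returned to the original QP rather than to a relabeled copy; this amounts to checking that each renumbering is an involution on $\{1,2,3\}$ compatible with the direction reversals of the arrows $a_i^\star, b_i^\star$.
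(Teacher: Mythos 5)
Your proof follows the same strategy as the paper's: reduce $(m,n)$ via the Euclidean algorithm using Proposition~\ref{pr:band}, observe that $M(d,0)$ is visibly the direct sum of $d$ copies of $M(1,0)$, and backtrack through the (involutive, direct-sum-preserving) mutations. You have merely packaged the paper's ``iterate and backtrack'' argument as a strong induction on $m+n$ and spelled out the base cases and the renumbering bookkeeping, which the paper leaves implicit; the underlying reasoning is the same.
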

\end{example}

\begin{remark}
\label{rem:string-modules}
By the same methods as above, one can compute all the mutations
for another family of representations of the QP $(A,S)$ in
Example~\ref{ex:double-cyclic-triangle}: \emph{string modules}
introduced and studied in \cite{BR,F}.
\end{remark}

\section{Some open problems}
\label{sec:open-problems}

Here we collect some natural questions that we find important for
better understanding of QPs and their representations.
In what follows, suppose that $(A,S)$ is a reduced QP with the
Jacobian algebra ${\mathcal P}(A,S)$.
Let ${\mathcal M}(A,S)$ denote the category of
finite dimensional ${\mathcal P}(A,S)$-modules.
Suppose also that $k\in Q_0$ is a vertex satisfying
\eqref{eq:no-2-cycles-thru-k}, so that the mutated reduced QP
$\mu_k(A,S)$ is well-defined.


\begin{question}
Is the isomorphism class of ${\mathcal P}(A,S)$
determined by the equivalence class of the category ${\mathcal M}(A,S)$?
\end{question}

\begin{question}
Is the isomorphism class of ${\mathcal P}(\mu_k(A,S))$
determined by the isomorphism class of ${\mathcal P}(A,S)$?
\end{question}

\begin{question}
Is the category ${\mathcal M}(\mu_k(A,S))$
determined up to equivalence by ${\mathcal M}(A,S)$?
\end{question}

Note that the right-equivalence class of $(A,S)$ is \emph{not} determined by the
isomorphism class of the Jacobian algebra ${\mathcal P}(A,S)$.
In fact, we can construct a QP $(A,S)$ which is \emph{not}
right-equivalent to $(A,cS)$ for some nonzero $c \in K$, while we
obviously have ${\mathcal P}(A,S) = {\mathcal P}(A,cS)$ (the possibility of
such an example was brought to our attention by Bill Crawley-Boevey).

We conclude with the following intriguing question.

\begin{question}
Is there a proper analogue of the cluster category for a non-acyclic quiver with potential?
\end{question}

\section{Appendix. Proof of Lemma~\ref{lem:tr-IJ}}
\label{sec:topological-appendix}

We include Lemma~\ref{lem:tr-IJ} into a more general setup.
We call a $K$-vector space $V$ a \emph{$C$-space} (for the lack of a better
term) if $V$ has an increasing filtration $\{0\} = V_0\subseteq V_1\subseteq
\cdots$ such that all $V_n$ are finite dimensional, and $V =
\bigcup_{n \geq 0} V_n$.
(Equivalently, $V$ is either finite dimensional, or it has
countable dimension.)
The class of $C$-spaces is clearly closed under taking subspaces,
quotient spaces, finite direct sums, and finite tensor products.
We always consider $C$-spaces equipped with discrete topology; in
particular, this applies to the base field~$K$.

We refer to the dual space $V^\star$ of a $C$-space~$V$ as a
\emph{$D$-space} (the dual is understood as the space of all
linear forms $V \to K$).
Most of the properties of $D$-spaces discussed below are undoubtedly
well-known; for the convenience of the reader, we provide a
self-contained treatment.

\begin{example}
\label{ex:RA-D-space}
The complete path algebra $R\langle \langle A
\rangle \rangle$ can be naturally viewed as a $D$-space $V^\star$,
corresponding to the $C$-space $V = \oplus_{d=0}^\infty
{(A^d)}^\star$, and the filtration $(V_n)$ given by
$$V_n = \oplus_{d=0}^{n-1} {(A^d)}^\star \quad (n \geq 1).$$
\end{example}

For a subspace $W$ of $V$, we denote by
$W^\perp \subset V^\star$ its orthogonal complement, that is,
$$W^{\perp}=\{f\in V^\star\mid f(W)=0\}.$$
We make $V^\star$ into a topological vector space by taking the
sets $V_n^\perp$ for all $n \geq 0$ as a basic system of open neighborhoods of~$0$.
In particular, in Example~\ref{ex:RA-D-space}, we have
$V_n^\perp = {\mathfrak m}(A)^n$, so the $D$-space topology on
$R\langle \langle A \rangle \rangle$ coincides with the topology
introduced in Section~\ref{sec:path-algebras}.

Since every $v \in V$ belongs to some~$V_n$, a sequence
$f_1, f_2, \dots$ converges in $V^\star$ if and only if,
for every $v \in V$, the sequence $(f_k(v))$
stabilizes as $k \to \infty$.
This implies in particular that $W^\perp$ is a closed subspace
of $V^\star$ for every subspace $W$ of $V$.
In fact, the converse is also true.

\begin{lemma}
\label{lem:closed-perp}
A vector subspace $Z$
of $V^\star$ is closed if and only if $Z = W^{\perp}$ for some subspace $W$ of $V$.
\end{lemma}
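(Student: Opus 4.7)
The plan is to prove the two implications separately. For the easy direction, suppose $W \subseteq V$ is arbitrary. I would show that $W^\perp$ is closed in $V^\star$ by the characterization of convergence given just above the lemma: if a sequence $(f_k)$ in $W^\perp$ converges to some $f \in V^\star$, then for every $v \in W$ the sequence $(f_k(v))$ must stabilize to $f(v)$; but $f_k(v) = 0$ for all $k$, so $f(v) = 0$, hence $f \in W^\perp$.

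For the non-trivial direction, given a closed subspace $Z \subseteq V^\star$, the natural candidate is
\[
W = \{v \in V \mid g(v) = 0 \text{ for all } g \in Z\}.
\]
The inclusion $Z \subseteq W^\perp$ is immediate from the definition of $W$. To prove the reverse inclusion $W^\perp \subseteq Z$, I would fix $f \in W^\perp$ and, using that $Z$ is closed, show that $f$ lies in the closure of $Z$ by producing, for every $n \geq 0$, an element $g_n \in Z$ such that $f - g_n \in V_n^\perp$, i.e., $f$ and $g_n$ agree on $V_n$.

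The key step is a reduction to finite dimensions via the filtration $(V_n)$. Let $\rho_n : V^\star \to V_n^\star$ be the restriction map, and set $Z_n = \rho_n(Z) \subseteq V_n^\star$. Since $V_n$ is finite-dimensional, so is $V_n^\star$, and $Z_n$ is automatically a (closed) subspace of $V_n^\star$; by the finite-dimensional version of the lemma, $Z_n$ coincides with the annihilator (in $V_n^\star$) of
\[
W_n := \{v \in V_n \mid h(v) = 0 \text{ for all } h \in Z_n\}
= \{v \in V_n \mid g(v) = 0 \text{ for all } g \in Z\} = V_n \cap W.
\]
Thus to check $\rho_n(f) \in Z_n$ it suffices to verify that $\rho_n(f)$ vanishes on $V_n \cap W$, and this is immediate from $f \in W^\perp$. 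Any preimage $g_n \in Z$ of $\rho_n(f)$ then satisfies $f - g_n \in V_n^\perp$, so $f$ is in the closure of $Z$, which equals $Z$.

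The argument is essentially bookkeeping; the only mildly subtle point is the appeal to finite-dimensional duality to identify $Z_n$ with $W_n^\perp \subseteq V_n^\star$. This requires $Z_n$ to equal its double annihilator inside $V_n^\star$, which holds for \emph{any} subspace of a finite-dimensional space, so no additional closedness hypothesis on $Z_n$ is needed. This is the main (and only) obstacle, and it is resolved by the finite-dimensionality of each $V_n$ built into the definition of a $C$-space.
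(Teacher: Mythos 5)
Your proof is correct and follows essentially the same route as the paper: define $W$ as the common kernel of all elements of $Z$, note $Z \subseteq W^\perp$ is immediate, and show $W^\perp \subseteq \overline Z = Z$ by restricting $f$ to each finite-dimensional $V_n$ and producing $g_n \in Z$ with $f - g_n \in V_n^\perp$. The only difference is that you spell out, via the restriction maps $\rho_n$ and finite-dimensional double-annihilator duality, why $f|_{V_n}$ actually lies in $\rho_n(Z)$ — a step the paper asserts without comment.
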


\begin{proof}
Let $Z$ be a vector subspace of $V^\star$.
Let
$$\text{$W = \{v \in V \mid f(v) = 0$ for $f \in Z\}$.}$$
It suffices to show that $W^\perp$ is contained in the closure~$\overline Z$ of~$Z$.
Let $f \in W^\perp$.
Restricting $f$ to each finite-dimensional subspace $V_n$ of $V$, we
conclude that $f \mid_{V_n} = h_n \mid_{V_n}$ for some $h_n \in
Z$.
Thus, the sequence $h_1, h_2, \dots,$ converges to~$f$, implying
that $f \in \overline Z$, as required.
\end{proof}

In view of Lemma~\ref{lem:closed-perp}, for every closed subspace
$Z$ of $V^\star$, the spaces $Z$ and $V^\star/Z$ can be naturally viewed
as $D$-spaces: indeed, we have
$$Z = W^\perp = (V/W)^\star, \quad
V^\star/Z  = V^\star/W^\perp = W^\star$$
for some subspace $W$ of $V$.
The following lemma is immediate from the definitions.

\begin{lemma}
\label{lem:induced-topology}
For every closed subspace $Z \subseteq V^\star$, the
$D$-space topologies on $Z$ and $V^\star/Z$ coincide with the topologies induced
from~$V^\star$.
In particular, the embedding $Z \to V^\star$ and the projection
$V^\star \to V^\star/Z$ are continuous.
\end{lemma}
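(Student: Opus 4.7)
The plan is to use Lemma~\ref{lem:closed-perp} to write $Z = W^{\perp}$ for the subspace
$$W = \{v \in V \mid f(v) = 0 \text{ for all } f \in Z\}\subseteq V,$$
and then to make the standard identifications $Z = (V/W)^\star$ (every $f \in W^\perp$ descends to $V/W$) and $V^\star/Z = W^\star$ (via the restriction map $f \mapsto f|_W$, whose kernel is $W^\perp = Z$ and which is surjective since linear functionals on $W$ always extend to $V$). Both $V/W$ and $W$ are $C$-spaces in a natural way, via the finite-dimensional filtrations
$$(V/W)_n = (V_n+W)/W,\qquad W_n = V_n\cap W,$$
and these endow $Z$ and $V^\star/Z$ with their intrinsic $D$-space topologies, whose basic neighborhoods of~$0$ are the orthogonals of $(V/W)_n$ in $(V/W)^\star$ and of $W_n$ in $W^\star$ respectively.

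With these identifications in place, the proof splits into two matching computations. For $Z$, I would check that
$$Z \cap V_n^\perp = \{f \in W^\perp \mid f(V_n)=0\} = (V_n + W)^\perp = ((V_n+W)/W)^\perp,$$
which shows that the subspace topology induced on $Z$ from $V^\star$ has the same basic neighborhoods as the $D$-space topology on $(V/W)^\star$. For $V^\star/Z$, under the restriction identification with $W^\star$, I need to show that the image of $V_n^\perp$ in $W^\star$ equals $W_n^\perp = (V_n\cap W)^\perp$. The inclusion $\subseteq$ is immediate. For the reverse inclusion, given any $g \in W^\star$ with $g|_{V_n\cap W}=0$, I would define $f$ on the subspace $W + V_n$ by
$$f(w+v) = g(w) \qquad (w \in W,\ v \in V_n),$$
which is well-defined precisely because $g$ vanishes on $W \cap V_n$, then extend $f$ to all of $V$ by picking any vector-space complement of $W+V_n$. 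The resulting $f \in V^\star$ lies in $V_n^\perp$ and restricts to~$g$, so the image of $V_n^\perp$ is exactly $W_n^\perp$, proving the quotient topology coincides with the $D$-space topology on $W^\star$.

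The continuity of the inclusion $Z \hookrightarrow V^\star$ and the projection $V^\star \twoheadrightarrow V^\star/Z$ is then automatic from the identifications of topologies. The only nontrivial ingredient is the extension-of-functionals step for the quotient topology; this is where one might worry, but it is purely a linear-algebra splitting (no completeness or Hahn--Banach is needed, since we have honest algebraic duals rather than continuous ones), so the argument should go through cleanly without any topological subtleties beyond what is already packaged in Lemma~\ref{lem:closed-perp}.
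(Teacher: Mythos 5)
Your proof is correct and follows the same route as the paper: the paper makes exactly the identifications $Z = W^{\perp} = (V/W)^\star$ and $V^\star/Z = W^\star$ via Lemma~\ref{lem:closed-perp} and then declares the statement immediate from the definitions. Your write-up simply supplies the omitted verification (matching the basic neighborhoods, with the extension-of-functionals step for the quotient), and it is accurate.
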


\begin{lemma}
\label{lem:sum}
If $Z_1$ and $Z_2$ are closed subspaces of $V^\star$, then
$Z_1+Z_2$ is a closed subspace of $V^\star$ as well.
\end{lemma}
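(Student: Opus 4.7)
The plan is to reduce the statement to the description of closed subspaces provided by Lemma~\ref{lem:closed-perp} and then identify $Z_1 + Z_2$ explicitly as the orthogonal complement of a single subspace of $V$. By Lemma~\ref{lem:closed-perp}, I can write $Z_1 = W_1^\perp$ and $Z_2 = W_2^\perp$ for some subspaces $W_1, W_2 \subseteq V$. I will show that
$$Z_1 + Z_2 = W_1^\perp + W_2^\perp = (W_1 \cap W_2)^\perp,$$
which, again by Lemma~\ref{lem:closed-perp}, is closed in $V^\star$.

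The inclusion $W_1^\perp + W_2^\perp \subseteq (W_1 \cap W_2)^\perp$ is immediate, since any $f_i \in W_i^\perp$ vanishes on $W_1 \cap W_2$. For the reverse inclusion, given $f \in (W_1 \cap W_2)^\perp$, I will consider the linear map
$$\phi : V \longrightarrow V/W_1 \oplus V/W_2, \qquad v \mapsto (v + W_1,\ v + W_2).$$
Its kernel is exactly $W_1 \cap W_2$, so $f$ descends to a well-defined linear functional on the image $\phi(V) \subseteq V/W_1 \oplus V/W_2$. Extending this functional to all of $V/W_1 \oplus V/W_2$ (using, e.g., a Hamel basis argument, which is valid in any vector space), I obtain a pair $(g_1, g_2)$ of linear functionals on $V/W_1$ and $V/W_2$ respectively. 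Setting $f_i = g_i \circ \pi_i$, where $\pi_i : V \to V/W_i$ is the projection, gives $f_i \in W_i^\perp$ and $f = f_1 + f_2$, establishing the reverse inclusion.

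No serious obstacle is anticipated: the argument is essentially finite-dimensional in spirit, with the only subtlety being the use of Zorn/a basis extension to extend a linear functional from a subspace to the whole vector space $V/W_1 \oplus V/W_2$. Note also that nowhere do we need the filtered/$C$-space structure of $V$ itself for this particular assertion; only the general description of closed subspaces from Lemma~\ref{lem:closed-perp} is invoked, so the proof will be short and purely algebraic.
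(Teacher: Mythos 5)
Your proof is correct and follows essentially the same route as the paper: both reduce to the identity $W_1^\perp + W_2^\perp = (W_1\cap W_2)^\perp$ and then invoke Lemma~\ref{lem:closed-perp}. The paper establishes the identity by choosing direct complements, while you phrase it via the map $V \to V/W_1 \oplus V/W_2$ and a functional-extension step; these are interchangeable manifestations of the same basis-extension argument.
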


\begin{proof}
By Lemma~\ref{lem:closed-perp}, $Z_1 = W_1^\perp$ and
$Z_2 = W_2^\perp$ for some subspaces $W_1$ and $W_2$ of $V$.
Choosing some direct complements of $W_1 \cap W_2$ in $W_1$ and
$W_2$, and a direct complement of $W_1 + W_2$ in $V$, it is easy
to see that
$$Z_1 + Z_2 = W_1^\perp + W_2^\perp = (W_1 \cap W_2)^\perp,$$
proving that $Z_1 + Z_2$ is closed.
\end{proof}

\begin{lemma}
\label{lem:continuous-linear-map}
Let $U$ and $V$ be $C$-spaces, and $U^\star$ and $V^\star$ be the
corresponding $D$-spaces.
A linear map $\alpha: U^\star \to V^\star$ is continuous if and
only if $\alpha = \beta^\star$ for some linear map $\beta: V \to U$.
\end{lemma}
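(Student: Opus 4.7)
The plan is to handle the two directions separately: $(\Leftarrow)$ is a routine dualization, while $(\Rightarrow)$ reduces to the structural fact that every continuous linear functional on $U^\star$ is evaluation at a point of $U$.

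For $(\Leftarrow)$, given $\beta\colon V\to U$, set $\beta^\star(f)(v)=f(\beta(v))$. Fix $n\geq 0$; since $V_n$ is finite-dimensional, $\beta(V_n)$ lies inside some $U_m$, and directly from the definitions $\beta^\star(U_m^\perp)\subseteq V_n^\perp$, which is the required continuity at $0$ (and hence everywhere by linearity).

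For $(\Rightarrow)$, I will first establish the sub-claim that every continuous linear functional $\ell\colon U^\star\to K$ has the form $\ell=\mathrm{ev}_u$ for a unique $u\in U$, where $\mathrm{ev}_u(f)=f(u)$. Granting this, for each $v\in V$ the evaluation $\mathrm{ev}_v\colon V^\star\to K$ is continuous (its kernel $(Kv)^\perp$ contains $V_n^\perp$ whenever $v\in V_n$), so $\ell_v:=\mathrm{ev}_v\circ\alpha\colon U^\star\to K$ is continuous. The sub-claim then yields a unique $\beta(v)\in U$ with $\alpha(f)(v)=f(\beta(v))$ for all $f\in U^\star$; the uniqueness, combined with injectivity of the canonical map $U\hookrightarrow U^{\star\star}$, makes $v\mapsto\beta(v)$ linear, and the identity $\alpha(f)(v)=f(\beta(v))$ is precisely $\alpha=\beta^\star$.

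The substantive work sits in the sub-claim. If $\ell\neq 0$, then $\ker\ell$ is closed of codimension $1$ in $U^\star$, so by Lemma~\ref{lem:closed-perp} we may write $\ker\ell=W^\perp$ for some subspace $W\subseteq U$. The key observation is the canonical identification $U^\star/W^\perp\cong W^\star$, which together with the general inequality $\dim W^\star\geq \dim W$ forces $\dim W=1$. Writing $W=Ku$, we obtain $\ker\ell=(Ku)^\perp=\ker\mathrm{ev}_u$; two linear functionals with the same codimension-$1$ kernel are proportional, so $\ell=c\cdot\mathrm{ev}_u=\mathrm{ev}_{cu}$. The main obstacle is precisely this dimension-counting step: it is exactly what the countable-dimensionality of $C$-spaces --- which enters through Lemma~\ref{lem:closed-perp} --- is designed to make available.
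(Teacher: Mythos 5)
Your proof is correct and follows the same route as the paper: the forward implication is the same finite-dimensionality observation, and the converse is obtained by fixing $v\in V$, noting that $f\mapsto\alpha(f)(v)$ is a continuous functional on $U^\star$, and invoking Lemma~\ref{lem:closed-perp} to realize it as $\mathrm{ev}_u$. The only difference is that you make explicit the dimension-counting step ($U^\star/W^\perp\cong W^\star$ of dimension $1$ forces $\dim W=1$) that the paper's proof leaves implicit in the phrase ``we conclude that there exists a unique $u$.''
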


\begin{proof}
First let us show that $\alpha = \beta^\star$ is continuous.
By the definition, it is enough to show that, for every $n$, there
exists an index~$k$ such that $U_k^\perp \subset \alpha^{-1}(V_n^\perp)$.
Since the subspace $\beta(V_n) \subset U$ is finite dimensional,
it is contained in some $U_k$, implying the desired inclusion
$U_k^\perp \subset \alpha^{-1}(V_n^\perp)$.

Conversely, suppose $\alpha: U^\star \to V^\star$ is a continuous
linear map.
Let $v \in V$.
Then the linear form $f \mapsto \alpha(f)(v)$ is a continuous
linear map $U^\star \to K$, and so its kernel is a closed subspace
of $U^\star$.
Using Lemma~\ref{lem:closed-perp}, we conclude that there exists
a unique $u \in U$ such that $\alpha(f)(v) = f(u)$ for all
$f \in U^\star$.
The correspondence $v \mapsto u$ is the desired linear map $\beta: V \to U$
such that $\alpha = \beta^\star$.
\end{proof}

\begin{lemma}
\label{lem:closed-map}
Any continuous linear map of
$D$-spaces $\alpha: U^\star \to V^\star$ sends closed vector
subspaces of $U^\star$ to closed vector subspaces of $V^\star$.
\end{lemma}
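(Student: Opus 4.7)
The plan is to leverage the duality tools already developed in the appendix to reduce closedness to an explicit algebraic computation of orthogonal complements. By Lemma~\ref{lem:continuous-linear-map}, continuity of $\alpha$ gives us a linear map $\beta : V \to U$ with $\alpha = \beta^\star$. Given a closed subspace $Z \subseteq U^\star$, Lemma~\ref{lem:closed-perp} provides a subspace $W \subseteq U$ with $Z = W^\perp$. The goal then becomes the identification
\[
\alpha(Z) \;=\; \beta^\star(W^\perp) \;=\; (\beta^{-1}(W))^\perp,
\]
which exhibits $\alpha(Z)$ as an orthogonal complement in $V^\star$ and is therefore closed, since $W'^\perp$ is closed for any subspace $W' \subseteq V$ by the observation preceding Lemma~\ref{lem:closed-perp}.

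The first inclusion $\beta^\star(W^\perp) \subseteq (\beta^{-1}(W))^\perp$ is routine: for $f \in W^\perp$ and $v \in \beta^{-1}(W)$, we have $\beta^\star(f)(v) = f(\beta(v)) = 0$, since $\beta(v) \in W$.

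The reverse inclusion is the main point, and it is where the argument requires a little care. Starting from $g \in (\beta^{-1}(W))^\perp$, I would build $f \in W^\perp$ with $\beta^\star(f) = g$ in three steps. First, define $\tilde f$ on the subspace $\beta(V) \subseteq U$ by $\tilde f(\beta(v)) := g(v)$; this is well-defined because if $\beta(v_1) = \beta(v_2)$ then $v_1 - v_2 \in \ker \beta \subseteq \beta^{-1}(W)$, so $g(v_1) = g(v_2)$. Second, check that $\tilde f$ vanishes on $\beta(V) \cap W$: indeed, if $\beta(v) \in W$ then $v \in \beta^{-1}(W)$, hence $g(v) = 0$. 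This compatibility allows a consistent extension of $\tilde f$ to $\beta(V) + W$ by declaring it to vanish on $W$. Third, extend to all of $U$ arbitrarily by picking a complement of $\beta(V) + W$. The resulting $f \in U^\star$ lies in $W^\perp$ by construction and satisfies $\beta^\star(f)(v) = f(\beta(v)) = \tilde f(\beta(v)) = g(v)$ for all $v \in V$, proving $\beta^\star(f) = g$.

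I do not anticipate a serious obstacle: all the machinery (continuity $\Leftrightarrow$ being a transpose, closed $\Leftrightarrow$ being an orthogonal complement) has already been set up in the appendix, and the identity $\beta^\star(W^\perp) = (\beta^{-1}(W))^\perp$ is purely algebraic. The only mildly subtle point is the well-definedness step in constructing $f$, which hinges on the inclusion $\ker \beta \subseteq \beta^{-1}(W)$ (trivial, since $0 \in W$) together with the compatibility on $\beta(V) \cap W$. After that, extending $f$ from $\beta(V) + W$ to all of $U$ uses nothing beyond choosing a vector space complement.
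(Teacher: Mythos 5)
Your proof is correct and follows exactly the paper's route: reduce via Lemma~\ref{lem:closed-perp} and Lemma~\ref{lem:continuous-linear-map} to the identity $\beta^\star(W^\perp) = (\beta^{-1}(W))^\perp$. The paper dispatches that identity with the phrase ``the definitions imply,'' whereas you have spelled out both inclusions (including the extension argument for the harder one), which is a welcome but not essentially different elaboration.
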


\begin{proof}
Let $Z\subseteq U^\star$ be a closed vector subspace.
By Lemma~\ref{lem:closed-perp}, $Z = W^\perp$ for some vector
subspace $W \subset U$.
Also by Lemma~\ref{lem:continuous-linear-map}, we have
$\alpha = \beta^\star$ for a linear map $\beta: V \to U$.
The definitions imply that $\alpha(Z) =
\beta^\star (W^\perp) = (\beta^{-1}(W))^\perp$, hence
$\alpha(Z)$ is  a closed subspace of $V^\star$, as claimed.
\end{proof}

We will call a $D$-space $V^\star$ a \emph{$D$-algebra} if it has
a structure of an associative $K$-algebra such that
$V_m^\perp V_n^\perp \subset V_{m+n}^\perp$ for all $m, n \geq 0$.
In particular, $R\langle \langle A \rangle \rangle$ is a
$D$-algebra.

\begin{lemma}
\label{lem:If}
If $I_1, \dots, I_N$ are closed subspaces
in a $D$-algebra $V^\star$, then the subspace
$I_1 f_1 + \cdots + I_N f_N$ is closed for every
$f_1, \dots, f_N \in V^\star$.
In particular, finitely generated left ideals in $V^\star$ are closed.
\end{lemma}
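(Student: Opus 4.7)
The plan is to reduce closedness of $I_1 f_1 + \cdots + I_N f_N$ to the closedness of a single product $If$, and then to obtain the latter by exhibiting right multiplication by $f$ as a continuous linear map and invoking Lemma~\ref{lem:closed-map}.

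First, I would apply Lemma~\ref{lem:sum} inductively to reduce the assertion to the following special case: for every closed subspace $I \subseteq V^\star$ and every $f \in V^\star$, the product $If$ is closed in $V^\star$.

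Next, I would consider the right-multiplication operator $R_f : V^\star \to V^\star$, $g \mapsto gf$. The key observation is that the element $f$ lies in $V_0^\perp = V^\star$; hence the $D$-algebra axiom $V_m^\perp V_n^\perp \subseteq V_{m+n}^\perp$ specialized to $n=0$ gives $V_m^\perp \cdot f \subseteq V_m^\perp$ for every $m \geq 0$. Since the subspaces $V_m^\perp$ form a basic system of open neighborhoods of~$0$ in the $D$-space topology on $V^\star$, this shows that $R_f$ sends each such neighborhood into itself and is therefore a continuous linear map $V^\star \to V^\star$. Applying Lemma~\ref{lem:closed-map} with $U = V$ then yields that $If = R_f(I)$ is closed, finishing the proof of the main claim.

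For the last sentence of the lemma, I note that any finitely generated left ideal in $V^\star$ has the form $V^\star f_1 + \cdots + V^\star f_N$; since $V^\star = V_0^\perp$ is trivially closed, the already-established assertion with each $I_k = V^\star$ delivers its closedness. I do not foresee any serious obstacle here: the whole argument is a clean consequence of the continuity of right multiplication built into the definition of a $D$-algebra, combined with Lemma~\ref{lem:closed-map}; the only point that warrants a moment's care is the use of $f \in V_0^\perp$ to obtain the continuity estimate for an arbitrary (not necessarily small) element $f$.
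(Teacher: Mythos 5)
Your proof is correct and takes essentially the same route as the paper's: show right multiplication by $f$ is continuous (via the $D$-algebra axiom with $n=0$), deduce that each $I_k f_k$ is closed by Lemma~\ref{lem:closed-map}, and conclude by Lemma~\ref{lem:sum}. You spell out the continuity estimate in more detail than the paper does, but the argument is the same.
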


\begin{proof}
By the definition of a $D$-algebra, the operator of right multiplication with any $f \in V^\star$ is continuous.
Thus each subspace $I_k f_k$ is closed by Lemma~\ref{lem:closed-map}, and our assertion
follows from Lemma~\ref{lem:sum}.
\end{proof}

Recall from Definition~\ref{def:trace-space}, that the trace space
of a $D$-algebra $V^\star$ is the quotient
$\Tr (V^\star) = V^\star/\{V^\star,V^\star\}$, where $\{V^\star,V^\star\}$
is the closure of the vector subspace in~$V^\star$ spanned by all commutators.
We denote by $\pi: V^\star \to \Tr(V^\star)$ the canonical projection.
By Lemma~\ref{lem:induced-topology}, $\pi$ is continuous with respect
to the $D$-space topologies.

In view of Proposition~\ref{pr:cyclical-equiv-thru-trace},
the assertion of Lemma~\ref{lem:tr-IJ} is a special case of the
following.

\begin{lemma}
\label{lem:tr-IJ-general}
Let $I$ be a closed (two-sided) ideal of a $D$-algebra $V^\star$,
and $J$ be the closure of an ideal generated by
finitely many elements $f_1, f_2, \dots, f_N$.
Then the subspace $\pi(I J) \subseteq \Tr (V^\star)$ is equal to
$\pi(I f_1 + \cdots + I f_N)$.
\end{lemma}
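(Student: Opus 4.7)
The plan is to reduce the problem to two independent pieces: an algebraic computation using cyclicity of the trace, and a topological closure statement, and then glue them together via continuity of multiplication and of $\pi$.

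First I would work at the level of the non-closed ideal $J_0 = \sum_{k=1}^N V^\star f_k V^\star$. Any element of $I \cdot J_0$ is a finite sum of monomials of the form $a f_k b$ with $a \in I$ and $b \in V^\star$ (using that $I$ is a two-sided ideal, so $I V^\star \subseteq I$). In the trace space, the key cyclicity identity $\pi(a f_k b) = \pi(b a f_k)$ holds, and since $ba \in I$ (again because $I$ is two-sided), we obtain $\pi(a f_k b) \in \pi(I f_k)$. Summing, this gives the algebraic inclusion
\[
\pi(I \cdot J_0) \subseteq \sum_{k=1}^N \pi(I f_k).
\]
The reverse inclusion $\sum_k \pi(I f_k) \subseteq \pi(IJ)$ is immediate, since $f_k \in J$.

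Next I would verify that the right-hand side $\sum_k \pi(I f_k)$ is closed in $\Tr(V^\star)$. Each $I f_k$ is closed in $V^\star$ by Lemma~\ref{lem:If}, and a finite sum of closed subspaces is closed by Lemma~\ref{lem:sum}, so $I f_1 + \cdots + I f_N$ is a closed subspace of $V^\star$. Since $\{V^\star, V^\star\}$ is closed by definition, $\Tr(V^\star)$ is a $D$-space and the projection $\pi: V^\star \to \Tr(V^\star)$ is continuous by Lemma~\ref{lem:induced-topology}. Applying Lemma~\ref{lem:closed-map} to $\pi$ then shows that $\pi(\sum_k I f_k)$ is closed in $\Tr(V^\star)$.

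Finally I would promote the algebraic inclusion to the full closure $J = \overline{J_0}$. Given $i \in I$ and $j \in J$, write $j = \lim_{n} j_n$ with $j_n \in J_0$. Since $V^\star$ is a $D$-algebra, left multiplication by $i$ is continuous (Lemma~\ref{lem:continuous-linear-map} applied to the $D$-algebra structure, or directly from $V_m^\perp V_n^\perp \subseteq V_{m+n}^\perp$), so $i j_n \to i j$; by continuity of $\pi$, $\pi(i j_n) \to \pi(i j)$. Each $\pi(i j_n)$ lies in $\sum_k \pi(I f_k)$ by the first step, and this subspace is closed by the second step, so $\pi(ij) \in \sum_k \pi(I f_k)$. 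Since $IJ$ is spanned by such products $ij$, we conclude $\pi(IJ) \subseteq \sum_k \pi(I f_k) = \pi(\sum_k I f_k)$, completing the proof.

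The only mildly subtle point is the need to close up on the right side after replacing $J_0$ by $J$: the algebraic argument yields containment in the image of a sum of subspaces, and without knowing this image is closed in $\Tr(V^\star)$ the limit argument would only give containment in its closure. This is exactly what the chain Lemma~\ref{lem:If} $\Rightarrow$ Lemma~\ref{lem:sum} $\Rightarrow$ Lemma~\ref{lem:closed-map} is designed to handle, so no real obstacle arises once these tools are in place.
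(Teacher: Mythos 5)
Your proof is correct and follows essentially the same route as the paper: reduce to the non-closed ideal $J_0$ generated by the $f_k$, use cyclicity of $\pi$ together with the two-sidedness of $I$ to push everything into $\sum_k \pi(If_k)$, and then invoke Lemmas~\ref{lem:If} and~\ref{lem:closed-map} to see that this target is closed so that the density argument closes the gap between $J_0$ and $J$. The only cosmetic difference is that you unpack the density step into an explicit sequential-limit argument and invoke Lemma~\ref{lem:sum} separately, whereas the paper folds the finite-sum closedness directly into Lemma~\ref{lem:If}; both are fine.
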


\begin{proof}
Let $J^0$ be the ideal generated by $f_1, f_2, \dots, f_N$, that
is, the linear span of elements of the form $uf_kv$ with $u, v \in
V^\star$ and $k = 1, \dots, N$.
Thus the ideal $IJ^0$ is the linear span of elements of the form $guf_kv$
with $g \in I$.
By the definition, we have $\pi(guf_kv) = \pi (vguf_k)$, and so
$\pi(I J^0) = \pi(I f_1 + \cdots + I f_N)$.
Since $I J^0$ is dense in $IJ$, it follows that
$\pi(I J^0)$ is dense in $\pi(IJ)$.
On the other hand, the subspace
$\pi(I f_1 + \cdots + I f_N) \subseteq \Tr (V^\star)$
is closed by Lemmas~\ref{lem:If} and \ref{lem:closed-map}.
We conclude that $\pi(I J^0) = \pi(I f_1 + \cdots + I f_N) =
\pi(IJ)$, as required.
\end{proof}
\section*{Acknowledgments}

We thank Victor Ginzburg for helpful comments and bibliographic
guidance, Bill Crawley-Boevey for valuable remarks,
Daniel Labardini Fragoso for careful reading
of the manuscript and many useful suggestions and
Christopher Herzog for making us aware of and discussing
the use of quivers with potentials in theoretical physics.
After this paper was completed, we have been informed by Maxim
Kontsevich that he has rediscovered some of the results in
Sections~\ref{sec:mut-invariants} and \ref{sec:generic}
in the context of  $A_\infty$-categories.

\end{document}